\def\ie{\emph{i.e.} }
\def\as{\emph{a.s.} }
\def\beq{\begin{equation}}
\def\eeq{\end{equation}}
\def\N{\ensuremath \mathbb{N}}
\def\R{\ensuremath \mathbb{R}}
\def\V{\ensuremath \mathcal{V}}
\def\SS{\ensuremath \mathbb{S}}
\def\eps{\ensuremath \varepsilon}
\def\sign{ \mbox{sign}\,}
\newcommand{\la}{\left\langle}
\newcommand{\ra}{\right\rangle}
\def\supp{ \mbox{supp}\,}
\def\Ac{\textbf{(A1)}}
\def\Ad{\textbf{(A2)}}
\def\Abc{\textbf{(A1')}}
\def\v0{\mathbf{v}_0}
\def\vt{\mathbf{v}_\tau}
\def\vm{\v0}
\def\vmin{\mathbf{v}_{\mathrm{min}}}
\def\bbf{\mathbf{f}}
\def\bg{\mathbf{g}}
\def\la{\left\langle}
\def\ra{\right\rangle}
\def\T{\mathcal{T}}
\newtheorem{theorem}{Theorem}
\newtheorem{proposition}[theorem]{Proposition}
\newtheorem{corollary}[theorem]{Corollary}
\newtheorem{definition}[theorem]{Definition}
\newtheorem{lemma}[theorem]{Lemma}
\newtheorem{remark}[theorem]{Remark}
\numberwithin{equation}{section}\numberwithin{theorem}{section}
\title{Chemotactic waves of bacteria at the mesoscale}
\author{Vincent Calvez}
\address{Unit\'e de Math\'ematiques Pures et
Appliqu\'ees, UMR 5669 CNRS \& Ecole Normale Sup\'erieure de Lyon, and
 project-team Inria NUMED, Lyon, France}
\email{vincent.calvez@ens-lyon.fr}
\date{\today}                                           
\begin{document}

\begin{abstract}
The existence of travelling waves for a model  of concentration waves of bacteria is investigated. The model consists in a  kinetic equation for the biased motion of cells following a run-and-tumble process, coupled with two reaction-diffusion equations for the chemical signals. Strong mathematical difficulties arise in comparison with the diffusive regime which was studied in a previous work. The cornerstone of the proof consists in establishing monotonicity properties of the  spatial density of cells. Travelling waves exist under certain conditions on the parameters. Counter-examples to both existence and uniqueness are found numerically after  careful analysis of the discrete velocity problem.  
\end{abstract}
\maketitle

\tableofcontents

\section{Introduction}
\subsection{The run-and-tumble model}

The existence of travelling waves for bacterial chemotaxis is investigated at the mesoscopic scale. We study the standard run-and-tumble model proposed originally by Stroock \cite{stroock_stochastic_1974}, Alt \cite{alt_biased_1980}, and discussed further by Othmer, Dunbar and Alt \cite{othmer_models_1988},
\begin{equation}
\label{eq:meso model}
\displaystyle \partial_t f(t,x,v) + v\cdot \nabla_x f(t,x,v) =  \int  {\bf T} (t,x,v,v')  f(t,x,v')\, d\nu(v') -    \boldsymbol\lambda (t,x,v) f(t,x,v)\, .
\end{equation}
This equation describes the evolution of the density of bacteria $f(t,x,v)$, at time $t>0$, position $x\in \R^d$, and velocity $v\in V\subset \R^d$. Here, $\nu$ is a symmetric probability measure on $\R^d$. The set of admissible velocities is $V = \supp \nu$. It is assumed to be compact all along this work.

Bacteria perform run-and-tumble motion in a liquid medium, as described in the seminal tracking experiments by Berg and Brown \cite{berg_chemotaxis_1972}, Macnab and Koshland \cite{macnab_gradient-sensing_1972} (see also Berg's book \cite{berg_e._2004}). They alternate between so-called run phases of ballistic motion (say, with velocity $v'$), and tumble phases of rotational diffusion (Figure \ref{fig:tracking}). Fast reorientation of the cell occurs during tumble. After tumbling, bacteria make a new run with another velocity $v$. For the sake of simplicity, we assume that tumbles are instantaneous reorientation events, where the cell changes velocity from $v'$ to $v$. On the other hand, duration of each run phase is a random time. It is assumed to follow an exponential distribution with heterogeneous rate $ \boldsymbol\lambda (t,x,v)$. By modulating the time of runs, bacteria are able to distinguish between favourable and unfavourable directions. This strategy based on temporal-sensing chemotaxis allows them to navigate in heterogeneous environments. 

The tumbling frequency distribution ${\bf T}$ can be decomposed as \[{\bf T} (t,x,v,v') = {\bf K}(t,x,v,v') \boldsymbol\lambda (t,x,v')\,,\] 
for some probability density function ${\bf K}(t,x,\cdot,v')$, satisfying $\int {\bf K}(t,x,v,v')\, d\nu(v) = 1$. The kernel ${\bf K}$ describes the distribution of post-tumbling velocities. 

Following \cite{saragosti_mathematical_2010, saragosti_directional_2011}, we make the hypothesis that runs are modulated by two chemoattractant signals. We denote by $S(t,x)$ the concentration of some amino-acid released by the bacterial population ({\em e.g.} aspartate, serine). We denote by $N(t,x)$ the concentration of some nutrient consumed by the bacterial population ({\em e.g.} oxygen, glucose). 

We assume that both signals have an additive effect on  temporal sensing. Furthermore, they proceed with the same signal integration function $\boldsymbol\phi$, with possibly different relative amplitudes $\chi_S,\chi_N>0$. 
Accordingly, the tumbling frequency reads
\beq\label{eq:lambda} \boldsymbol\lambda (t,x,v') = \boldsymbol\lambda_S (t,x,v') + \boldsymbol\lambda_N (t,x,v')\, , \quad \begin{cases}  \boldsymbol\lambda_S (t,x,v') =  \lambda_0\left(\dfrac12 + \chi_S \boldsymbol\phi\left( \left.\dfrac{D\log S}{Dt}\right|_{v'} \right) \right)   \medskip \\   \boldsymbol\lambda_N (t,x,v') = \lambda_0\left(\dfrac12 +  \chi_N \boldsymbol\phi\left( \left.\dfrac{D\log N}{Dt}\right|_{v'} \right)\right) \end{cases}\, , \eeq
where $\left.\frac{D}{Dt}\right|_{v'} = \partial_t + v'\cdot\nabla_x$ denotes the material derivative along the direction $v'$. Chemotaxis is positive when $\boldsymbol \phi$ is globally decreasing (tumbling is more likely when the concentration of chemoattractant is decreasing).

Equation \eqref{eq:meso model} is complemented with two reaction-diffusion equations for both chemical concentrations $S, N$, 
\begin{equation}
\label{eq:chemoattractant}
\begin{cases}
\partial_t S(t,x)  -  D_S \partial^2_{x} S(t,x) + \alpha S (t,x)=  \beta \rho(t,x) \medskip\\
\partial_t N (t,x) -  D_N \partial^2_{x} N(t,x)  = - \gamma \rho(t,x) N(t,x)\, ,
\end{cases}
\end{equation}
where $D_S, D_N, \alpha,\beta,\gamma$ are positive constants, denoting respectively the diffusion coefficient of $S$, the diffusion coefficient of $N$, the rate of degradation of $S$, the rate of production of $S$ by the bacteria, and the rate of consumption of the nutrient $N$ by the bacteria. In addition, $\rho$ denotes the spatial density of bacteria,
\[ 
\rho(t,x) = \int f(t,x,v)\, d\nu(v)\, .
\]

\begin{figure}
\begin{center}
\includegraphics[width = .4\linewidth]{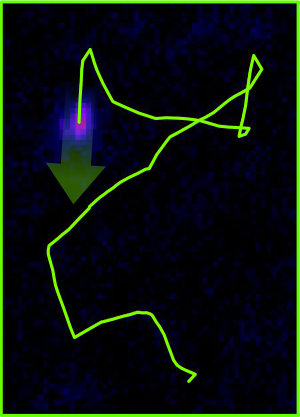}
\caption{\small Typical trajectory of the swimming bacteria {\em E. coli}. Motion alternates between run phases (straight motion), and reorientation events (tumble). At the mesoscopic scale, it is reasonable to assume that both duration of run and duration of tumble follow exponential distributions, but the timescale of run is one order of magnitude longer. (Courtesy of J. Saragosti)\label{fig:tracking}}
\end{center}
\end{figure}

\subsection{Chemotactic waves in micro-channels}

Concentration waves of chemotactic bacteria {\em E. coli} were described in the seminal article by  Adler \cite{adler_chemotaxis_1966}. They inspired the second article of Keller and Segel about mathematical modelling of chemotaxis  \cite{keller_traveling_1971}. We refer to \cite{tindall_overview_2008} for a featured review on the mathematical modelling of these chemotactic waves. Mesoscopic models describing this remarkable propagation phenomenon were proposed independently in \cite{xue_travelling_2010}, and \cite{saragosti_mathematical_2010,saragosti_directional_2011}. Relevance of modelling at the mesoscopic scale relies on tracking experiments. They reveal the directional distribution of individuals, and in particular the spatially dependent biases in the trajectories. 

\begin{figure} 
\begin{center}
\includegraphics[width=.8\linewidth]{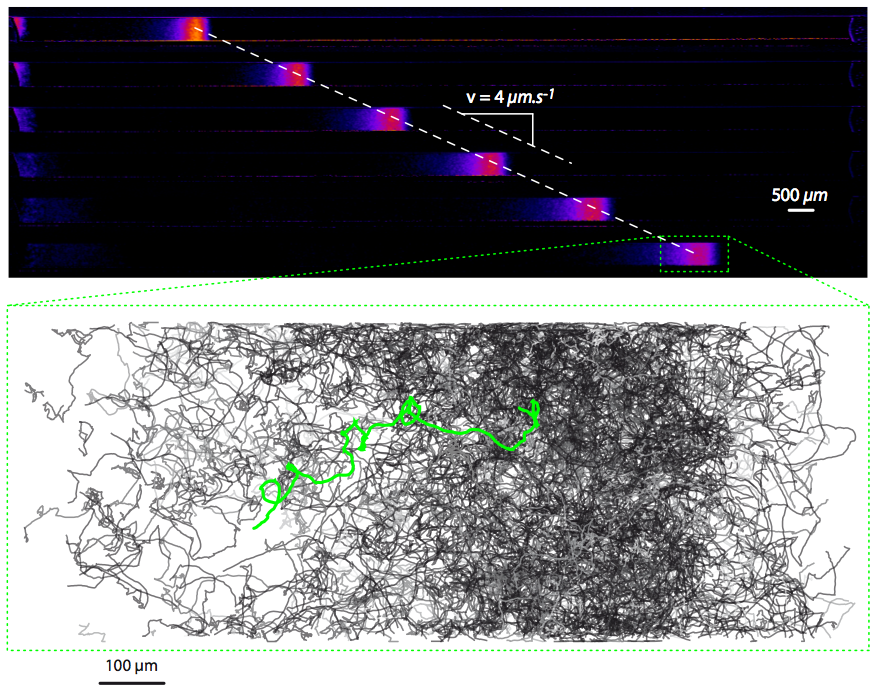}
\caption{\small Concentration waves of bacteria in a micro-channel \protect\cite{saragosti_directional_2011}. (top) A solitary wave of high density of bacteria travels from left to right with constant speed and almost constant profile. (bottom) Massive tracking experiments reveal the mesoscopic structure of the wave. (Courtesy of J. Saragosti)}
\label{fig:wave tracking}
\end{center}  
\end{figure} 

Model \eqref{eq:meso model}-\eqref{eq:lambda}-\eqref{eq:chemoattractant} has been validated on some experimental data in \cite{saragosti_directional_2011}, whereas its macroscopic diffusive limit has been validated in \cite{saragosti_mathematical_2010} on some other experimental set. It is worth noticing that the diffusive regime is valid when the chemotactic biases $\chi_S, \chi_N$ are small. This is not the case in the experiments presented in  \cite{saragosti_directional_2011}. We believe that in the latter, the mesoscopic scale is more appropriate to describe the bacterial population dynamics.

Experimental setting described in \cite{saragosti_mathematical_2010,saragosti_directional_2011} is as follows: cells (approx. $10^5$ bacteria {\em E. coli}) are initially located on the left side of a micro-channel after centrifugation. The width of the micro-channel is $500\mu m$, the height is $100\mu m$, and the total length is $2 cm$. The time span of experiments is about a few hours.   After short time, a significant fraction of the population moves towards the right side of the channel, at constant speed, within a constant profile (see Figure \ref{fig:wave tracking}). 

The goal of the present work is to construct travelling waves for the run-and-tumble equation, coupled to a pair of reaction-diffusion equations \eqref{eq:meso model}-\eqref{eq:lambda}-\eqref{eq:chemoattractant}. This is an original result, up to our knowledge. Note that equation \eqref{eq:meso model} is conservative. Hence, such travelling waves are in some sense solitary waves. They are very different from reaction-diffusion travelling waves that occur in the Fisher-KPP equation, for instance (see {\em e.g.} \cite{fisher_wave_1937,kolmogorov_etude_1937,aronson_multidimensional_1978}).

\subsection{Simplifying modelling assumptions}
The construction of travelling  waves for the system \eqref{eq:meso model}-\eqref{eq:lambda}-\eqref{eq:chemoattractant} requires some modelling simplifications. One crucial assumption is about the signal integration function $\boldsymbol\phi$. In order to maintain the spatial cohesion of the band, it is important that the function $\boldsymbol\phi$ has a sharp transition for small temporal variations. Accordingly, we make the following choice 
\[
\boldsymbol\phi = - \sign\,.
\]
This relies on some well documented amplification mechanism, see \cite[Chapter 11.
Gain Paradox]{berg_e._2004}. This specific choice imposes conditions on the pre-factors $\chi_S, \chi_N$, for the frequencies $\boldsymbol\lambda_S, \boldsymbol\lambda_N$ to be non-negative, namely $0\leq \chi_S,\chi_N\leq 1/2$. 

We further ignore any delay effects during the signal integration process, as the tumbling rate is a function of the time derivative \eqref{eq:lambda}. This is to say that bacteria are supposed to react to instantaneous variations of signal. This assumption is very restrictive, as compared to experimental data \cite{segall_temporal_1986,berg_e._2004}. However, including any memory effect, as in \cite{erban_signal_2005,xue_travelling_2010,perthame_derivation_2015} would make the analysis much more difficult. This is left for future perspective. 

In addition, assume that the post-tumbling velocity $v$ is drawn uniformly at random,   \beq \boldsymbol K\equiv 1\, . \label{eq:A1}\eeq
Assumption \eqref{eq:A1} ignores angular persistence during tumbling, as opposed to experimental evidence \cite{berg_e._2004}. 

Finally, we restrict to dimension $d = 1$, as we are looking for planar waves. 

We discuss possible relaxations of these assumptions in Section \ref{sec:perspectives}.

\subsection{The measure $\nu$ on the velocity set $V$}
We deal with two classes of probability measures separately, namely the case of a continuum of velocities, and the discrete case. 

\subsubsection{The continuous case}
\begin{description}
\item[(A1)] $\nu$ is absolutely continuous with respect to the Lebesgue measure. Moreover, the probability density function belongs to $L^p$ for some $p>1$. 
Precisely, we have:
\[
d\nu(v) = \omega(v)\,dv\, , \quad \omega\in L^p\, , \quad  p\in (1,\infty]\, , \quad  \text{$V = \supp\nu$ is compact}\,. 
\] 
\end{description} 

We do not include the case $p = 1$, for technical reasons that involve H\"older regularization by averaging lemma. On the other hand, the case $p=\infty$ is also excluded in the course of analysis, as it is borderline. However, it is included in the final result because any $L^\infty$ p.d.f. having a compact support belongs to all $L^p$ spaces. 

\paragraph{Examples:} the projection of the uniform measure on the unit circle $\SS^1$ onto the interval $[-1,1]$ under the hypothesis of axisymmetry writes $d\nu(v) = \frac1{\pi}(1 - v^2)^{-1/2}\, dv$. This imposes $p <2$. The same procedure in case of the unit sphere $\SS^2$ yields the uniform measure $d\nu(v) = \frac12\, dv$. 

During the course of analysis, the following assumption will be required in order to refine the asymptotic decay of solutions. 
\begin{description}
\item[(A1')] In addition to $\Ac$, the p.d.f. $\omega$ is bounded below by some $\omega_0>0$ on $V$. 
\end{description} 

\paragraph{Example:} the projection of the uniform measure on the two-dimensional unit ball $B(0,1)$ onto the unit interval $[-1,1]$ under the hypothesis of axisymmetry writes $d\nu(v) = \frac2\pi(1 - v^2)^{1/2}\, dv$. It belongs to $L^p$, but it vanishes on $v = \pm 1$. Similarly, the same procedure in case of the the three-dimensional unit ball $B(0,1)$  yields $d\nu(v) = \frac34(1 - v^2)\, dv$. 
\subsubsection{The discrete case}

\begin{description}
\item[(A2)] $\nu$ is a finite sum of dirac masses, 
\[ \nu = \sum_{i = 1}^N \omega_i \delta(v - v_i)\, , \quad (\forall i)\; v_i\in \R\, , \quad (\forall i)\;  \omega_i>0\, , \quad \sum_{i = 1}^N \omega_i = 1\,.\] 
\end{description}

The latter assumption  is motivated by numerical analysis, where $(\omega_i)_i$ are  quadrature weights. This case is not included in the main result. However, comprehensive numerical analysis performed in Section \ref{sec:discrete velocity} enables to find counter-examples for the existence and uniqueness of travelling waves. This completes our main result, as it shows that additional conditions on the parameters are mandatory, on the contrary to the macroscopic model obtained in the diffusion limit (see Section \ref{sec:diffusion limit}). 

A companion paper develops accurate well-balanced schemes for solving the Cauchy problem \cite{companion}. Consequences of the lack of existence and uniqueness are investigated in the long-time asymptotic of \eqref{eq:meso model}-\eqref{eq:lambda}-\eqref{eq:chemoattractant}.
We also refer to \cite{rousset_simulating_2013,filbet_inverse_2013,yasuda_monte_2015} for numerical analysis of variants of \eqref{eq:meso model}, based either on Monte Carlo simulations, or inverse Lax-Wendroff methods. 

\subsection{Notations and conventions}

As the problem is linear with respect to the cell density $f$, we can normalize the latter to have unit mass,
\beq\label{eq:f unit mass} \iint f(z,v)\, d\nu(v) dz = 1\, . \eeq

The space variable in the moving frame at speed $c$ is denoted by $z = x-ct$.

We denote by $\v0 = \max (\supp\nu)$ the maximal velocity. 

We assume that the amplitude of the tumbling rate is $\lambda_0 = 1$ without loss of generality. 

We adopt the following convention: the superscripts ${}^+$ and ${}^-$ denote the position of the velocity $v$ with respect to the speed $c$ (resp. $v>c$ and $v<c$), whereas the subscripts ${}_+$ and ${}_-$ denote the relative position with respect to the origin (resp. $z>0$ and $z<0$). For instance, $f_+^-(z,v)$ denotes the density of bacteria on the right side ($z>0$), moving towards the origin ($v<c$). Similarly, $\rho_+^-(z) = \int_{\{v<c\}} f_+(z,v)\, d\nu(v)$ denotes the spatial density of the same group of bacteria. 

The tumbling rate can take only four values $1\pm \chi_S\pm \chi_N$, depending on the sign of the gradients. Due to the particular ansatz that underlies the existence of travelling waves, $T$ is can only change value if $v$ crosses $c$, or if $z$ crosses the origin. Accordingly, the four possible values are denoted by $T_+^+, T_+^-, T_-^+, T_-^-$. We refer to \eqref{eq:rule of sign}, and Figure \ref{fig:sign convention} for the rule of signs. 

We also denote in short $\chi_+,\chi_-$ the effective chemotactic biases on each side of the origin.  
\[\begin{cases} 
\chi_+ = \chi_S - \chi_N \in (-1/2,1/2) \medskip\\
\chi_- = \chi_S + \chi_N \in (0,1)
\end{cases} \]

We denote by $[\cdot]_\theta = |\cdot|_{\mathcal C^{0,\theta}}$ the H\"older  semi-norm.  

\subsection{Existence of travelling waves}

We address the existence of travelling wave solutions for the coupled problem \eqref{eq:meso model}-\eqref{eq:lambda}-\eqref{eq:chemoattractant}. The wave speed is denoted by $c$, and the spatial coordinate in the moving frame is denoted by  $z = x-ct$. We keep the notation $f$ for some particular solution of  \eqref{eq:meso model} of the form $f(x-ct,v)$. Travelling waves are solutions of the following stationary problem in the moving frame, 
\beq\label{eq:TW}
\begin{cases}
  \displaystyle   (v-c) \partial_z f(z,v) =  \int   T(z,v'-c)f(z,v') \, d\nu(v') - T(z,v-c)f(z,v)  
\medskip\\
 - c\partial_z S(z)  -  D_S \partial^2_{z} S(z)  + \alpha S(z) = \beta \rho(z) \medskip\\
- c\partial_z N(z)  - D_N \partial_{z}^2 N(z) = - \gamma \rho(z) N(z)
\end{cases}
\eeq
where the tumbling rate $T$ is given by
\beq \label{eq:T}
T(z,v-c) =   1 - \chi_S \sign((v-c) \partial_z S(z)) - \chi_N\sign((v-c) \partial_z N(z))   \, . 
\eeq
Note that we have reduced to $\lambda_0 = 1$ without loss of generality. Also, we restrict to the case $c\geq 0$ in order to fix the global direction of the wave.

As the measure $\nu$ may vary during the analytical procedure, it is fruitful to write a weak formulation of the first equation in \eqref{eq:TW}, so that the role of the measure $\nu$ clearly appears. 

\begin{definition}[Weak formulation]
A weak solution $f$ of the first equation in \eqref{eq:TW} is such that for all test function $\varphi\in \mathcal{D}(\R\times \R)$, 
\beq\label{eq:weak formulation}  
- \iint (v-c) f(z,v) \partial_z \varphi(z,v)\, d \nu(v)dz = 
\iiint T(z,v'-c) f(z,v')\left(\varphi(z,v) - \varphi(z,v') \right)\, d\nu(v')d \nu(v)dz
\eeq
\end{definition}

We begin with the existence of stationary clusters, in the absence of nutrient. 

\begin{theorem}[Stationary cluster]\label{theo:TW stat}
Assume $\chi_S\in (0,1/2)$, and $\chi_N = 0$. Under assumption $\Ac$, there exist symmetric,  positive functions $(f,S)\in \left( L^1\cap L^\infty (\R\times V)\right)\times \mathcal C^2(\R)$ solution of the  stationary wave problem 
\[ 
\begin{cases}
  \displaystyle   v \partial_x f(x,v) =  \int   T(x,v')f(x,v') \, d\nu(v') - T(x,v)f(x,v)  
\medskip\\
-  D_S \partial^2_{x} S(x)  + \alpha S(x) = \beta \rho(x)\medskip\\
T(x,v) =   \dfrac12 - \chi_S \sign(v \partial_x S(x)) \, , 
\end{cases}
\]
Assume in addition that $\Abc$ is satisfied. Then,  there exists an exponent $\lambda>0$, and a velocity profile $F(v)$ such that $e^{\lambda z}f(z,v)$ (resp. $e^{-\lambda z}f(z,v)$)   converges exponentially fast to $F(v)$ as $z\to +\infty$ (resp. to $F(-v)$ as $z\to -\infty$). 
\end{theorem}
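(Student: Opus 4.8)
The plan is to construct a \emph{symmetric} solution and to break the coupling between the kinetic equation and the chemical equation by positing the monotonicity of the profile, which is then verified a posteriori. Write $a=\tfrac12+\chi_S$ and $b=\tfrac12-\chi_S$, so $a+b=1$ and, since $0<\chi_S<1/2$, also $0<b<a<1$. First I would look for $(f,S)$ with $f(-x,-v)=f(x,v)$ and $S(-x)=S(x)$, and \emph{posit} that $S$ is strictly decreasing on $(0,\infty)$. On $\{x>0\}$ this forces $\sign(v\,\partial_x S(x))=-\sign(v)$, so the tumbling rate collapses to the piecewise constant kernel $T_0(v)=b$ for $v<0$ (cells running toward the cluster) and $T_0(v)=a$ for $v>0$; since $\boldsymbol K\equiv1$, the gain term $j(x):=\int T_0(v')f(x,v')\,d\nu(v')$ depends on $x$ only, and the kinetic equation becomes the \emph{linear, conservative} transport--scattering problem $v\,\partial_x f=j(x)-T_0(v)\,f$, fully decoupled from $S$. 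The strategy is then: (i) solve this linear problem for $f$; (ii) recover $S$ from $\rho=\int f\,d\nu$ by solving the elliptic equation; (iii) check that the $S$ thus obtained is indeed strictly decreasing on $(0,\infty)$, which closes the ansatz.

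For step (i), integrating along characteristics --- using the decay condition as $x\to+\infty$ for velocities pointing into the cluster, and the continuity/symmetry requirement that $f(0,\cdot)$ be even in $v$ for the others --- expresses $f(x,v)$ on $\{x>0\}$ through explicit one-dimensional integral kernels applied to $j$; substituting back into the definition of $j$ turns the problem into a linear fixed-point equation $j=\mathcal K j$ on $(0,\infty)$. The operator $\mathcal K$ has a strictly positive kernel, and the mass-flux identity --- integrate $v\,\partial_x f=j-T_0f$ over $v$ and then over $(0,\infty)$, using that $\int v f(x,v)\,d\nu\to0$ at infinity and $\int v f(0,v)\,d\nu=0$ by evenness --- shows that $\mathcal K$ preserves the $L^1$ mass, hence $\|\mathcal K\|=1$ and $\mathcal K^\ast\mathbf 1=\mathbf 1$. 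The H\"older velocity-averaging lemma, which is precisely where $p>1$ is used, makes $\mathcal K$ power-compact on the relevant space, so Krein--Rutman provides a positive eigenfunction $j^\ast>0$, unique up to scaling, with $\mathcal K j^\ast=j^\ast$ (the eigenvalue being the spectral radius $1$). I would then reconstruct $f\ge0$ from $j^\ast$, normalize the total mass to $1$, and bootstrap: averaging gives $\rho\in\mathcal C^{0,\theta}$, the characteristic formulas give $f,\rho\in L^1\cap L^\infty$ with decay at infinity, and elliptic theory gives $S\in\mathcal C^2$.

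The heart of the argument is step (iii): showing that $\rho$ is even and \emph{strictly decreasing} on $(0,\infty)$ --- the ``cornerstone'' monotonicity announced in the abstract. Differentiating $\rho$ and using the transport equation, the gain term cancels by symmetry of $\nu$, and one is left for $x>0$ with
\beq
\rho'(x)=b\!\int_{\{v<0\}}\frac{f(x,v)}{|v|}\,d\nu(v)-a\!\int_{\{v>0\}}\frac{f(x,v)}{v}\,d\nu(v),
\eeq
so it suffices to prove $W(x,v):=a\,f(x,v)-b\,f(x,-v)\ge0$ for all $x>0$, $v>0$. This holds at $x=0$, where evenness of $f(0,\cdot)$ gives $W(0,v)=2\chi_S f(0,v)>0$, and as $x\to+\infty$, where $W\to0$; the coupled first-order system satisfied by $\bigl(f(\cdot,v),f(\cdot,-v)\bigr)$ should then allow one to propagate the sign of $W$ by a maximum-principle argument. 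I expect this to be the main obstacle: all velocities are coupled through the nonlocal gain $j$, the velocity moments above can be singular when $\supp\nu$ reaches the origin, and controlling $W$ on the whole half-line rather than just near the endpoints is delicate, so this may require a dedicated monotone iteration or a sliding argument rather than a short comparison. Once $\rho$ is even, nonnegative and decreasing on $(0,\infty)$, writing $S=\beta\,G_\alpha*\rho$ with $G_\alpha$ the even, exponentially decaying Green's function of $-D_S\partial_x^2+\alpha$ on $\R$ shows that $S\in\mathcal C^2$ is even and \emph{strictly} decreasing on $(0,\infty)$; hence $\sign(v\,\partial_x S)=-\sign(v)$ there, the posited ansatz is consistent, and $(f,S)$ is a genuine symmetric positive solution of the stationary wave problem.

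For the last assertion (exponential decay, under \Abc), the plan is to identify the decay rate as a positive root of the dispersion relation $\Phi(\lambda)=1$, where
\beq
\Phi(\lambda):=a\!\int_{\{v>0\}}\frac{d\nu(v)}{a-v\lambda}+b\!\int_{\{v<0\}}\frac{d\nu(v)}{b+|v|\lambda},
\eeq
obtained by inserting the ansatz $f(z,v)=F(v)e^{-\lambda z}$ with $F(v)=j_0/(T_0(v)-\lambda v)$. One has $\Phi(0)=1$ and $\Phi'(0)=m\,(a^{-1}-b^{-1})<0$ with $m=\int_{\{v>0\}}v\,d\nu>0$ by symmetry of $\nu$; on the other hand, under \Abc\ the density $\omega$ is bounded below by $\omega_0>0$ near $\v0=\max\supp\nu$, which forces a logarithmic blow-up $\Phi(\lambda)\to+\infty$ as $\lambda\uparrow a/\v0$. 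The intermediate value theorem then gives $\lambda\in(0,a/\v0)$ with $\Phi(\lambda)=1$ and an associated positive profile $F$. Finally, setting $g(z,v):=e^{\lambda z}f(z,v)$, the function $g$ solves the same transport--scattering equation up to a lower-order term that is exponentially small as $z\to+\infty$; a contraction argument (or Gronwall along characteristics combined with averaging) then yields $\|g(z,\cdot)-F\|\lesssim e^{-\delta z}$ for some $\delta>0$, and the statement as $z\to-\infty$ follows from $f(-z,-v)=f(z,v)$. Besides the monotonicity in step (iii), the points needing care are the precise functional setting for the Krein--Rutman/averaging argument --- in particular the behaviour of $f$ and of the velocity moments near $v=0$ and near $v=\pm\v0$ --- and the sharpness of the blow-up of $\Phi$, which is exactly why \Abc\ is indispensable for the last claim.
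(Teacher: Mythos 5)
Your overall architecture matches the paper's: freeze the sign pattern of $\partial_x S$, solve the resulting linear kinetic problem by characteristics plus a Krein--Rutman argument with compactness from velocity averaging, verify the ansatz a posteriori through monotonicity of $\rho$, and obtain the decay rate from the dispersion relation, with \Abc{} ensuring the relation has a positive root (this is exactly the paper's Remark \ref{rem:no boundedness}). The genuine gap is in your step (iii), which is the cornerstone of the whole proof. Your reduction to the pointwise inequality $W(x,v)=a f(x,v)-b f(x,-v)\ge 0$ is a reasonable sufficient condition for $\rho'\le 0$ on $(0,\infty)$, but the ``maximum-principle propagation'' you invoke is circular as stated. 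Along a characteristic one finds $v\,\partial_x W = j-\tfrac12\bigl(af(x,v)+bf(x,-v)\bigr)-\chi_S W$, so at a putative first zero of $W$ the sign of $\partial_x W$ is that of $j(x)-b f(x,-v)$; the only way to get the strict positivity needed to block the crossing is the Duhamel inequality $b f(x,-v)=\int_0^\infty j(x+s|v|)\,b e^{-bs}ds< j(x)$, which already presupposes that the averaged quantity $j=I$ is non-increasing on $(0,\infty)$ --- essentially the conclusion you are trying to reach. The paper faces exactly this chicken-and-egg problem: its Proposition \ref{lem:monotonicity} is only an \emph{enhancement} statement (monotonicity in the large implies strict monotonicity, via the velocity-profile structure of Lemma \ref{lem:qualitative profile} and an integration-by-parts compensation), and the circularity is broken by a separate continuation argument --- initialization at a near two-velocity measure (Proposition \ref{prop:2 vel case}), quantitative control of $dI_+/dz$ near $z=0$ and as $z\to\infty$ (Lemmas \ref{lem:zero monotonicity}, \ref{lem:asymptotic monotonicity}), and an open/closed argument in the deformation parameter (Lemmas \ref{lem:closed}, \ref{lem:open}), all resting on the regularity bootstrap of Sections \ref{sec:regularity alpha0}--\ref{sec:Further regularity}. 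None of this (or any substitute de-circularization such as the ``monotone iteration or sliding argument'' you mention but do not construct) is present in your proposal, so the central step remains unproved. If you made your $W$-comparison \emph{conditional} on $I_+$ non-increasing, it could plausibly replace the paper's compensation lemma inside the same homotopy scheme, which would be a nice variant; but the homotopy (or an equivalent mechanism) is indispensable.

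Two secondary points also need repair. First, your Krein--Rutman step is posed for an operator $\mathcal K$ acting on functions of $x$ on the unbounded half-line; compactness there is not automatic (you need tightness at infinity), which is precisely why the paper rescales by the exponential mode, solves Milne problems at infinity, and applies Krein--Rutman to the boundary-trace map on velocity profiles at $z=0$, a compact operator on a compact velocity set; a mass argument alone does not give you the spectral radius without this structure. Second, your formula for $\rho'(x)$ involves the weights $1/|v|$, which are not integrable against $d\nu$ in general near $v=0$; justifying that $\rho$ is differentiable and that the gain term cancels requires the quantitative H\"older regularity in $v$ (the estimate $I_+-T f_+=\mathcal O(|v|^{1-1/p^{n}})$ of Proposition \ref{prop:rho lip} and the integration by parts in the proof of Proposition \ref{lem:monotonicity}), and similarly the final convergence $e^{\lambda z}f\to F$ needs more than a generic contraction --- the paper uses an extra conservation law and a second-order entropy inequality to produce the spectral-gap estimate.
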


In order to establish our main result, existence of travelling waves driven by consumption of the nutrient, we need to impose some extra conditions on the parameters. As the conditions are not simple to state, we refer to the lines of the proof for the details. 

The first two conditions are linked with the coupling with $S$. The first condition \eqref{eq:condition c_*} writes
\beq \label{eq:cond 1} \dfrac{c_* + \sqrt{(c_*)^2 + 4\alpha D_S}}{c_* + \sqrt{(c_*)^2 + 4\alpha D_S} + 2D_S \lambda_-(c_*)} \leq \text{explicit constant}\,, \eeq
where $c_*$ is defined implicitly such that
\[  \int_{\{v<c_*\}} \dfrac{v-c_*}{T_+^-}\, d\nu(v) + \int_{\{v>c_*\}} \dfrac{v-c_*}{T_+^+}\, d\nu(v) = 0\, , \]
and $\lambda_-(c_*)$ is the smallest positive root of 
\[ \int \dfrac{v-c_*}{T_-(v-c_*) + \lambda (v-c_*)}\, d\nu(v)  = 0 \, . \]
The second condition \eqref{eq:condition c^*} writes,
\beq  \label{eq:cond 2} 
\dfrac{-c^* + \sqrt{(c^*)^2 + 4\alpha D_S}}{-c^* + \sqrt{(c^*)^2 + 4\alpha D_S} + 2D_S \lambda_+(c^*)}
\leq \text{explicit constant}\, , 
\eeq
where $c^*$ is defined implicitly such that
\[  \int_{\{v<c^*\}} \dfrac{v-c^*}{T_-^-}\, d\nu(v) + \int_{\{v>c^*\}} \dfrac{v-c^*}{T_-^+}\, d\nu(v) = 0\, , \]
and $\lambda_+(c^*)$ is the smallest positive root of 
\[ \int \dfrac{v-c^*}{T_+(v-c^*) - \lambda (v-c^*)}\, d\nu(v)  = 0 \, . \]
(see Sections \ref{sec:decoupled} and \ref{sec:matching S} for more details). 

The third condition \eqref{eq:condition c=0} is related to the coupling with $N$. It reads
\beq  \label{eq:cond 3} \text{Either $\chi_N\geq \chi_S$, or}\quad \dfrac{ \sqrt{\alpha/ D_S} +   \lambda_+(0)}{\sqrt{\alpha / D_S} +  \lambda_-(0)} \leq \text{explicit constant}\, , \eeq
where $\lambda_-(0)$ and $\lambda_+(0)$ are the smallest positive roots of, respectively,
\[ \int \dfrac{v}{T_-(v) + \lambda v}\, d\nu(v)  = 0\, , \quad \int \dfrac{v}{T_+(v) - \lambda v}\, d\nu(v)  = 0\, . \]
(see Sections \ref{sec:decoupled} and \ref{sec:matching N} for more details).

\begin{remark}
There exist parameters that fulfil all conditions \eqref{eq:cond 1}-\eqref{eq:cond 2}-\eqref{eq:cond 3}. Condition \eqref{eq:cond 3} is satisfied if $\chi_N\geq \chi_S$ (obviously), or if  $ \sqrt{\alpha/ D_S} \ll \lambda_+(0) \ll \lambda_-(0)$. But, $\lambda_+(0) / \lambda_-(0)\to 0$ as $\chi_N \nearrow \chi_S$. Conditions \eqref{eq:cond 1} and \eqref{eq:cond 2} are both satisfied if  $\sqrt{\alpha/ D_S} \ll \lambda_-(c_*), \lambda_+(c^*)$, together with $D_S \gg c_*/\lambda_-(c_*), c^*/\lambda_+(c^*)$.
\end{remark}

\begin{theorem}[Travelling wave]\label{theo:kin TW}
Assume $(\chi_S, \chi_N)\in (0,1/2)\times[0,1/2)$. Under assumption $\Ac$, and conditions \eqref{eq:cond 1}-\eqref{eq:cond 2}-\eqref{eq:cond 3}, there exist a non-negative $c$, and positive functions $(f,S,N)\in \left( L^1\cap L^\infty (\R\times V)\right)\times \mathcal C^2(\R)\times \mathcal C^2(\R)$ solution of the  travelling wave problem \eqref{eq:TW}-\eqref{eq:T}.\\
Assume in addition that $\Abc$ is satisfied. Then,  there exist two exponents $\lambda_\pm>0$, and two velocity profiles $F_\pm(v)$ such that $e^{\lambda_+ z}f(z,v)$ (resp. $e^{-\lambda_- z}f(z,v)$)   converges exponentially fast to $F_+(v)$ as $z\to +\infty$ (resp. to $F_-(v)$ as $z\to -\infty$). 
\end{theorem}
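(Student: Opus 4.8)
\emph{Proof plan.} The plan is to close a fixed-point loop on the chemical profiles (for a suitable candidate speed $c\ge 0$): given $S$, $N$ and $c$, solve the kinetic transport equation in \eqref{eq:TW} for $f$, recover the macroscopic density $\rho=\int f\,d\nu$, feed it into the two reaction--diffusion equations, and demand self-consistency. The decisive structural simplification --- the one advertised in the abstract --- is the \emph{monotone ansatz}: if $\rho$ is nondecreasing on $(-\infty,0)$ and nonincreasing on $(0,\infty)$, a maximum-principle analysis of the drift--diffusion equations for $S$ and $N$ forces $\partial_z S$ and $\partial_z N$ to keep a fixed sign on each half-line, so by \eqref{eq:T} the tumbling rate collapses to the four constant values $T_\pm^\pm$, and on each of $\{z<0\}$ and $\{z>0\}$ the kinetic equation becomes a transport equation with $z$-independent coefficients, coupled only through continuity of $f$ at $z=0$ and decay at $\pm\infty$. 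Everything then reduces to (i) solving this decoupled kinetic problem and checking that the density it produces is again monotone of the postulated type, and (ii) verifying that the nonlinear coupling through $S$ and $N$ sends this monotone class into itself --- which is precisely what conditions \eqref{eq:cond 1}--\eqref{eq:cond 3} are engineered to buy.

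For step (i), carried out in Section~\ref{sec:decoupled}, I would solve the two half-line transport equations by separation of variables. The ansatz $f(z,v)=e^{\mp\lambda z}F(v)$ gives $F(v)=\frac{T(v-c)\langle F\rangle}{T(v-c)\mp\lambda(v-c)}$ together with the dispersion relation $\int\frac{v-c}{T(v-c)\mp\lambda(v-c)}\,d\nu(v)=0$, whose smallest positive root is the exponent $\lambda_\pm(c)$ of the statement; one keeps the modes that decay at $\pm\infty$, matches the traces of $f$ at $z=0$, and imposes the mass normalization \eqref{eq:f unit mass}. The cornerstone is then to read off from this representation that $\rho$ is positive, continuous across $z=0$, and monotone of the postulated type, which I would establish by writing $\rho$ on each half-line as a positive superposition of decaying exponentials whose derivatives all share one sign, while paying careful attention to the resonant velocities at which the denominator $T(v-c)\mp\lambda(v-c)$ vanishes and $F(v)$ has a pole --- a genuine difficulty, since a priori $f(z,\cdot)$ is merely a measure near the degenerate velocity $v=c$ where the transport coefficient $v-c$ vanishes.

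For step (ii), in Sections~\ref{sec:matching S}--\ref{sec:matching N}, I would represent $S$ through the Green's function of $-cS'-D_SS''+\alpha S=\beta\rho$, whose exponential rates are the two roots of $D_S r^2+cr-\alpha=0$; since $\rho$ is monotone and peaked at the origin, $S$ is automatically monotone on each half-line, but whether $\partial_z S$ takes, just off the origin, the sign postulated in the ansatz depends on comparing those Green's-function rates with the kinetic tail rates $\lambda_\pm(c)$, and that comparison is exactly the ratio bounded in \eqref{eq:cond 1}--\eqref{eq:cond 2}. The nutrient equation $-cN'-D_NN''=-\gamma\rho N$ is analogous: $\rho\ge 0$ makes $N$ monotone increasing, and \eqref{eq:cond 3} --- whose first alternative $\chi_N\ge\chi_S$ merely records that then $\chi_+=\chi_S-\chi_N\le 0$ and the required sign of $\partial_z N$ is automatic --- secures the induced sign. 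The speed $c\ge 0$ is not free: it must lie in the range where both leading exponents $\lambda_+(c)$, $\lambda_-(c)$ stay positive, a range whose endpoints are the degeneracy speeds $c_*$ and $c^*$ of the statement, and I would fix it there by an intermediate-value argument on the flux-balance condition at $z=0$; alternatively one may view the whole wave as a continuation from the stationary cluster of Theorem~\ref{theo:TW stat} as $\chi_N$ is switched on. H\"older regularity of $\rho$ coming from the velocity-averaging lemma --- the reason $\Ac$ excludes $p=1$ --- furnishes the compactness for a Schauder fixed-point argument, while positivity and the $L^1\cap L^\infty$ bound come out of the construction.

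The asymptotic statement under $\Abc$ is a separate, spectral argument, parallel to the corresponding step of Theorem~\ref{theo:TW stat}. On $\{z>0\}$ the constructed $f$ solves the linear transport equation with coefficients $T_+^\pm$, whose leading decaying mode is $e^{-\lambda_+ z}F_+(v)$; subtracting this mode, the remainder satisfies the same equation and is forced to decay strictly faster, so $e^{\lambda_+ z}f(z,\cdot)\to F_+$ exponentially, and symmetrically on $\{z<0\}$ with rate $\lambda_-$. Here $\Abc$ is indispensable near $v=c$: the lower bound $\omega\ge\omega_0$ prevents mass from concentrating at the degenerate velocity, forces the limiting profile $F_\pm$ to be a genuine $L^1$ function carrying no atom, and makes the convergence truly exponential. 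I expect the main obstacle throughout to be exactly the monotonicity of $\rho$ and its persistence under the coupling --- equivalently, that the fixed-point map preserves the monotone class --- which is why the theorem must carry the technical conditions \eqref{eq:cond 1}--\eqref{eq:cond 3}, unlike the diffusive regime obtained in the limit, where no such restriction is needed.
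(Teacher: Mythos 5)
There is a genuine gap at the heart of your step (i): you propose to ``read off'' the monotonicity of $\rho$ by writing it, on each half-line, as a positive superposition of decaying exponentials whose derivatives all share one sign. This is precisely the cornerstone difficulty of the paper (Theorem \ref{theo:mono}), and it cannot be obtained that way. Under \Ac{} there is no finite Case-mode decomposition: the half-space problems have a continuous family of generalized modes, half-range completeness for this non-symmetric kernel is not available off the shelf, and when $\omega$ vanishes at the extremal velocity the asymptotic profile may even carry a Dirac mass (Remark \ref{rem:no boundedness}), so a naive modal expansion of $f$ breaks down. More importantly, positivity of the would-be modal weights is exactly what is unknown: the overshoot phenomenon (Figure \ref{fig:overshoot}) shows that $z\mapsto f(z,v)$ is \emph{not} monotone for large $v$, so monotonicity of $\rho_\pm^{\pm}$ can only come from compensations in the velocity integral. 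The paper proves it by a completely different mechanism: an enhancement-of-monotonicity lemma built on the shape of the velocity profile of $Tf$ (Lemma \ref{lem:qualitative profile}, Proposition \ref{lem:monotonicity}), propagated by a homotopy in the measure $\nu$ from an approximate two-velocity case, with delicate uniform estimates near $z=0$ and as $|z|\to\infty$ to keep the argument closed and open. Even in the discrete case \Ad, where your ``superposition of exponentials'' is literally true, the paper states it knows no direct positivity argument via Case's modes and instead deduces monotonicity by approximation from the continuous case; and the discrete counterexamples of Section \ref{sec:discrete velocity} show how sensitive the modal weights are. So your plan assumes the main theorem's hardest ingredient.

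Two further points. First, your existence construction for the decoupled problem (separation of variables, keep decaying modes, match traces at $z=0$) is the discrete-velocity strategy of Section \ref{sec:spectral}; under \Ac{} the paper instead solves two Milne half-space problems, uses averaging-lemma compactness on the outgoing trace, and applies Krein--Rutman to produce the matching inward profile at $z=0$ (Theorem \ref{theo:cluster}); some substitute of this kind is needed in your write-up. Second, once the sign ansatz \eqref{eq:ansatz} is frozen there is no genuine fixed point on the profiles $(S,N)$, hence no Schauder argument: the self-consistency reduces to the scalar matching $\Upsilon(c)=\partial_z S(0)=0$ together with $c\geq 0$ for the nutrient, and the root is found by the intermediate value theorem on $(c_*,c^*)$ using the extremal sign conditions \eqref{eq:cond 1}--\eqref{eq:cond 2} (and \eqref{eq:cond 3} for nonnegativity of $c$) --- not a ``flux-balance condition at $z=0$'', which holds identically for every $c$. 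Your reading of the role of \eqref{eq:cond 1}--\eqref{eq:cond 3} and of $\Abc$ for the refined asymptotics is otherwise in the right spirit, but the asymptotic convergence also needs a quantitative argument (the paper uses a second-order entropy inequality), not just ``the remainder decays faster''.
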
 

As mentioned above, the discrete case $\Ad$ is not included in Theorem \ref{theo:kin TW}. Nonetheless, it will be discussed with lots of details in Section \ref{sec:discrete velocity}.

Counter-examples for both existence and uniqueness of the travelling waves have been found numerically, based on the analysis performed in Section \ref{sec:discrete velocity}.

%
%

\subsection{Acknowledgements} The author is indebted to B. Perthame who introduced him to kinetic equations with applications in chemotaxis, and  biology in general. This work originated from a fruitful collaboration with J. Saragosti, A. Buguin and P. Silberzan. The author is grateful to N. Bournaveas, C. Emako, and N. Vauchelet for preliminary discussions about this work. He has benefited from stimulating discussions with E. Bouin, N. Caillerie, F. Golse, G. Raoul, Ch. Schmeiser, M. Twarogowska, and J. Vovelle. The author is particularly grateful to L. Gosse for his enlightening explanation of the numerical analysis of \eqref{eq:meso model}.\\ 
This work was supported by the LABEX MILYON (ANR-10-LABX-0070) of Universit\'e de Lyon, within the program "Investissements d'Avenir" (ANR-11-IDEX-0007) operated by the French National Research Agency (ANR). This project has received funding from the European Research Council (ERC) under the European Union’s Horizon 2020 research and innovation programme (grant agreement No 639638).

\section{Sketch of proof}

As the complete proof of Theorem \ref{theo:kin TW} is quite technical, the strategy of proof is sketched below. We begin with the corresponding macroscopic problem, obtained in the diffusion limit. The latter is by far easier to handle with. This will drive the analysis at the mesoscopic scale. Then, we draw two main obstacles arising at the mesoscopic scale. Section \ref{ssec:sketch proof} contains the main argument to overcome the first obstacle in full generality. The second obstacle yields restriction on the parameters, as conditions \eqref{eq:cond 1}-\eqref{eq:cond 2}-\eqref{eq:cond 3}.  

\subsection{Construction of travelling waves at the macroscopic level}\label{sec:diffusion limit}
The formal diffusion limit of \eqref{eq:meso model}, derived when the biases $\chi_S, \chi_N$ are small, is given by the following drift-diffusion/reaction-diffusion coupled system \cite{saragosti_mathematical_2010}, 
\begin{equation}
\label{eq:macro model}
\begin{cases}
\partial_t \rho -  D_\rho \partial^2_{x} \rho + \partial_x \left( \rho \left( \chi_S \sign(\partial_x S) +  \chi_N \sign(\partial_x N) \right)\right) = 0\medskip\\
\partial_t S  -  D_S \partial^2_{x} S + \alpha S = \beta \rho \medskip\\
\partial_t N  -  D_N \partial^2_{x} N  = - \gamma \rho N
\end{cases}
\end{equation}
Let recall the existence and uniqueness of one-dimensional solitary waves. 
\begin{theorem}[\cite{saragosti_mathematical_2010}]\label{theo:TW macro}
There exist a speed $c>0$, and two positive values $N_{-}<N_+$, such that the system \eqref{eq:macro model} admits travelling wave solutions $(\rho(z - ct), S(z - ct), N(z-ct))$ with the following boundary conditions:
\begin{equation*}
\lim_{|z|\to \infty} \rho = \lim_{|z|\to \infty}  S = 0 \, , \quad \lim_{z\to - \infty} N = N_{-} < N_+ = \lim_{z\to + \infty} N\, .
\end{equation*}
Assume in addition that $\partial_z S$ changes sign only once, and that $\partial_z N$ is positive everywhere. Then, the speed $c>0$ is uniquely determined by the following implicit relation,
\beq\label{eq:speed}
\chi_N - c = \chi_S\dfrac{c}{\sqrt{c^2 + 4 \alpha D_S}}\, .
\eeq 
\end{theorem}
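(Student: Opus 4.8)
The plan is to use the prescribed monotonicity structure to decouple the system into a sequence of explicit (or explicitly solvable) one–dimensional problems, and to read off the speed from a single matching identity.

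\textbf{Step 1: the profile of $\rho$.} Work in the moving frame $z=x-ct$. Since $\rho$ and its flux vanish at $\pm\infty$, the travelling–wave form of the first equation in \eqref{eq:macro model} integrates once to the zero–flux relation $D_\rho\rho'(z)=\rho(z)\big(\chi_S\sign(\partial_z S(z))+\chi_N\sign(\partial_z N(z))-c\big)$. Imposing the ansatz $\partial_z N>0$ everywhere and $\partial_z S$ changing sign exactly once — which, by translation invariance, I place at $z=0$ with $\partial_z S>0$ on $\{z<0\}$ and $\partial_z S<0$ on $\{z>0\}$ — the bracket equals the constant $\chi_S+\chi_N-c$ on $\{z<0\}$ and $\chi_N-\chi_S-c$ on $\{z>0\}$. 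Hence $\rho$ is forced to be the two–sided exponential ``tent'' peaked at the origin: $\rho(z)=\rho(0)e^{\mu_- z}$ for $z<0$ and $\rho(z)=\rho(0)e^{-\mu_+ z}$ for $z>0$, with $\mu_-=(\chi_S+\chi_N-c)/D_\rho$ and $\mu_+=(c+\chi_S-\chi_N)/D_\rho$. Integrability at $\pm\infty$ forces $\chi_N-\chi_S<c<\chi_S+\chi_N$, and since $D_\rho(\mu_-+\mu_+)=2\chi_S$ exactly compensates the jump of the drift, the flux is continuous across $z=0$ despite the corner of $\rho$, so this $\rho$ is a genuine (weak) solution.

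\textbf{Step 2: closing the loop through $S$, and speed selection.} Given $\rho$, the equation $-cS'-D_S S''+\alpha S=\beta\rho$ with $S\to0$ at $\pm\infty$ has the unique solution $S=\beta\, G*\rho$, where $G$ is the Green function of $-D_S\partial_z^2-c\partial_z+\alpha$, namely $G(z)=\frac{1}{\sqrt{c^2+4\alpha D_S}}e^{r_\mp z}$ for $\pm z\ge0$ with $r_\pm=\frac{-c\pm\sqrt{c^2+4\alpha D_S}}{2D_S}$. Both $G$ and the tent $\rho$ are log–concave on $\R$ (piecewise log–affine with a concave kink), hence so is $S$ by stability of log–concavity under convolution; therefore $S>0$ and $S$ is unimodal, so $\partial_z S$ changes sign exactly once, at the maximum of $S$. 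Self–consistency with Step 1 requires that maximum to sit at $z=0$, i.e. $S'(0)=0$. A direct computation of $S'(0)=\beta\int G'(-y)\rho(y)\,dy$ with the explicit $G$ and $\rho$ reduces the condition $S'(0)=0$ to $r_+\mu_-=-r_-\mu_+$, which simplifies precisely to the dispersion relation \eqref{eq:speed}, $\chi_N-c=\chi_S\,c/\sqrt{c^2+4\alpha D_S}$. Its left side strictly decreases from $\chi_N>0$ to $-\infty$, its right side strictly increases from $0$ to $\chi_S$, so there is a unique root $c=c^*>0$, and one checks $\chi_N-\chi_S<c^*<\chi_N<\chi_S+\chi_N$, so $\mu_\pm>0$ as required. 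Running this argument backwards gives the uniqueness statement: any travelling wave with $\partial_z S$ changing sign once and $\partial_z N>0$ has $\rho$ of the above form, with the sign change of $\partial_z S$ at the peak of $\rho$, which forces $c$ to solve \eqref{eq:speed}.

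\textbf{Step 3: the nutrient $N$.} It remains to solve $D_N N''+cN'=\gamma\rho N$ with $N(-\infty)=N_-$, $N(+\infty)=N_+$, and to verify $N_-<N_+$ and $\partial_z N>0$. Because $\rho$ decays exponentially at $\pm\infty$, this equation is an $L^1$–perturbation of $D_N N''+cN'=0$ (characteristic roots $0$ and $-c/D_N<0$), so by standard asymptotic ODE theory (Levinson's theorem) there is a solution with $N(z)\to N_-$ as $z\to-\infty$, for any prescribed $N_->0$, and it is automatically bounded as $z\to+\infty$. For positivity and monotonicity I use $M(z):=e^{cz/D_N}N'(z)$, which satisfies $M'(z)=\frac{\gamma}{D_N}e^{cz/D_N}\rho(z)N(z)$ and $M(-\infty)=0$. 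On the maximal interval $(-\infty,z^*)$ where $N>0$ one gets $M'>0$, hence $M>0$, hence $N'>0$, hence $N$ increasing and $\ge N_->0$, contradicting $N(z^*)=0$; so $N>0$ on all of $\R$, and then $M>0$ and $N'>0$ everywhere, so $N$ is strictly increasing with $N_+:=N(+\infty)>N_-$ and $\sign(\partial_z N)=+1$. This closes the ansatz, and collecting Steps 1–3 produces the travelling wave, proving Theorem \ref{theo:TW macro}.

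\textbf{Main obstacle.} The explicit reduction of Steps 1–2 is the core of the argument and is essentially algebraic once the sign structure is fixed. The only genuinely delicate analytic point is Step 3: producing a solution of the nutrient equation with the correct non–vanishing endpoint values and proving it is positive and monotone. The monotonicity of $N$ (together with the unimodality of $S$) is exactly what makes the sign ansatz self–consistent, and it is precisely the property whose mesoscopic analogue will require the substantial work of the later sections.
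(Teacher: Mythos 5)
Your proposal is correct and follows essentially the same route as the paper: freeze the sign ansatz, solve for $\rho$ as a two-sided exponential, convolve with the Green function of the $S$-equation, and select $c$ from the matching condition $\partial_z S(0)=0$, which reduces to \eqref{eq:speed}. The only differences are minor: you obtain unimodality of $S$ from preservation of log-concavity under convolution instead of the paper's maximum-principle argument (Proposition \ref{prop:monotonicity S}), you read uniqueness directly off the monotone sides of \eqref{eq:speed} rather than off the monotonicity of $\Upsilon(c)=\partial_z S(0)$, and you spell out the construction and monotonicity of $N$, which the paper's macroscopic sketch omits and only carries out later in the kinetic setting (Proposition \ref{prop:N}).
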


\begin{remark}
We believe that there cannot exist travelling waves for which $\partial_z S$ (or $\partial_z N$) changes sign several times. Alternatively speaking, we conjecture that  uniqueness holds true in Theorem \ref{theo:TW macro} without extra conditions, up to standard transformations (multiplication by a constant factor, translation, symmetry). 
\end{remark}

It is instructive to recall the proof of Theorem \ref{theo:TW macro}, as it will guide the proof of Theorem \ref{theo:kin TW}. The former is based on explicit computations. However, this will no longer be the case at the mesoscopic level.


\begin{proof}
Firstly, system \eqref{eq:macro model} is written in the moving frame $z = z - ct$. We keep the notations $\rho,S,N$ for the unknown functions. 
\begin{equation}
\label{eq:macro model TW}
\begin{cases}
-c \partial_z \rho -  D_\rho \partial^2_{z} \rho + \partial_z \left( \rho \left( \chi_S \sign(\partial_z S) +  \chi_N \sign(\partial_z N) \right)\right) = 0\medskip\\
-c \partial_z S  -  D_S \partial^2_{z} S + \alpha S = \beta \rho \medskip \\
 -c \partial_z N  -  D_N \partial^2_{z} N  = - \gamma \rho N
\end{cases}
\end{equation}
The starting point consists in assuming that the communication signal $S$ reaches a unique maximum, say at $z = 0$, and that the nutrient $N$ is increasing. The validation of this ansatz {\em a posteriori}, will set the equation for $c$ \eqref{eq:speed}. 

This ansatz determines the sign of the chemical gradients. It enables to decouple the drift-diffusion equation from the reaction-diffusion system in \eqref{eq:macro model TW}. 
Accordingly, the cell density $\rho$ is solution of the following equation,
\[ -c \partial_z \rho - D_\rho \partial^2_{z}\rho + \partial_z \left( \rho \left( \chi_S \sign(-z) + \chi_N \right)\right) = 0\, . \]
The density profile is explicitly given as a combination of two exponential distributions,
\beq\label{eq:cell density profile}
\rho(z) = 
\begin{cases}
\exp\left(\dfrac{ - c + \chi_S + \chi_N }{D_\rho} z \right) & \text{for $z <0$, \ie both signals are attractive,}\medskip\\
\exp\left(\dfrac{ - c - \chi_S + \chi_N }{D_\rho} z \right) & \text{for $z >0$, \ie the signals have opposite contributions.}
\end{cases}
\eeq
We denote $\lambda_-(c) =  (- c + \chi_S + \chi_N)/D_\rho $, and $\lambda_+(c) = - ( - c - \chi_S + \chi_N )/D_\rho$ the exponents on both sides. We impose positivity of both exponents,  $\lambda_-(c), \lambda_+(c)>0$. 
 
In a second step, the concentration $S$ is computed  through the elliptic equation, 
\beq\label{eq:elliptic} -c \partial_z S - D_S \partial^2_{z} S + \alpha S = \beta \rho\, . \eeq
The solution is given by $S = \mathcal S *(\beta \rho)$, where $\mathcal S$ is the Green function associated with the elliptic operator $-c \partial_z - D_S \partial^2_{z} +  \alpha $. We have
\[
\mathcal S(z) = 
\begin{cases}
\mathcal S_0 \exp\left(\dfrac{ - c + \sqrt{c^2 + 4 \alpha D_S}}{2D_S} z \right) & \text{for $z <0$,}\medskip\\
\mathcal S_0\exp\left(\dfrac{ - c - \sqrt{c^2 + 4 \alpha D_S}}{2D_S} z \right) & \text{for $z >0$.}
\end{cases}
\]
The exact value of the  normalization  factor $\mathcal S_0$ does not matter here. 
We introduce $\mu_-(c) = \left (- c + \sqrt{c^2 + 4 \alpha D_S}\right )/(2D_S)$, and $\mu_+(c) = \left (  c + \sqrt{c^2 + 4 \alpha D_S}\right )/(2D_S)$ the exponents on both sides. 

On the other hand, according to the general result stated in Proposition \ref{prop:monotonicity S} below, the monotonicity of $S$ is basic,  \ie first increasing, then decreasing, just because $\rho$ is so. However, the change of monotonicity is not necessarily located at $z=0$, as in the initial ansatz.

The quantity $\partial_z S(0)$ enables to locate the maximum of $S$. If $\partial_z S(0)>0$, then $S$ reaches its maximum for $z>0$, whereas if $\partial_z S(0)<0$, then $S$ reaches its maximum for $z<0$. According to the previous ansatz, the maximum of $S$ must be located at $z = 0$. 

Consequently, we need to compute the value of $\partial_z S(0)$, denoted by $\Upsilon(c)$, in order to validate the initial ansatz. We have,
\begin{align*} 
\Upsilon(c) = \partial_z S(0) & = \beta \int_{\R} \partial_z \mathcal S(-z) \rho(z) \, dz \\
& = \beta \mathcal S_0 \left(  \int_{z<0} - \mu_+(c) e^{\mu_+(c)z}e^{\lambda_-(c)z}\, dz + \int_{z>0}  \mu_-(c) e^{-\mu_-(c)z}e^{-\lambda_+(c)z}\, dz \right)\\
& = \beta \mathcal S_0 \left( - \dfrac{\mu_+(c) }{\mu_+(c) + \lambda_-(c) } + \dfrac{\mu_-(c)}{ \mu_-(c) + \lambda_+(c)} \right)
\\
& = \beta \mathcal S_0 \left( - \dfrac{1 }{1 + \lambda_-(c)/\mu_+(c) } + \dfrac{1}{ 1 + \lambda_+(c)/\mu_-(c)} \right)\, .
\end{align*}
It is immediate to see that the function $\Upsilon$ is decreasing with respect to $c$, as the ratio $\lambda_-/\mu_+$ is decreasing ($\lambda_-$ is decreasing, whereas $\mu_+$ is increasing), and the ratio $\lambda_+/\mu_-$ is increasing ($\lambda_+$ is increasing, whereas $\mu_-$ is decreasing). Analysis of the extremal behaviours reveal that there is indeed a unique root $c$ of $\Upsilon$. Moreover, it belongs to the interval $(0,\chi_N)$. Indeed, we have $\mu_-(0) = \mu_+(0) = 2\sqrt{\alpha D_S}$, but $ \lambda_-(0) = (\chi_S + \chi_N)/D_\rho >\lambda_+(0) = (\chi_S - \chi_N)/D_\rho$.  Therefore, $\Upsilon(0) >0$. On the other hand, we have $\Upsilon(\chi_N) < 0$, since $\lambda_-(\chi_N) = \lambda_+(\chi_N) = \chi_S/D_\rho$, whereas $\mu_-(\chi_N) < \mu_+(\chi_N)$. 

Finally, notice that the dispersion relation $\Upsilon(c) = 0$ simply reads
\[ \lambda_+(c) \mu_+(c) = \lambda_-(c) \mu_-(c)\, ,  \]
which is equivalent to \eqref{eq:speed}. 
\end{proof}

\subsection{Main obstacles at the mesoscopic level.}
When trying to mimic the previous proof at the mesoscopic level, several difficulties arise. Some are technical, and dedicated tools from kinetic theory can overcome them (\emph{e.g.} averaging lemma). However, at least two main obstacles require much more work. We describe them below. The first obstacle can be resolved in full generality. However, the second obstacle requires additional conditions on the parameter. This is the main reason why Theorem \ref{theo:kin TW} is more restrictive than its macroscopic version Theorem \ref{theo:TW macro}. 

\subsubsection{Non monotonicity of the mesoscopic density $f(z,v)$ with respect to $z$.}\label{sec:Non monotonicity}
As emphasized in Section \ref{sec:diffusion limit}, the construction of  travelling waves for the macroscopic problem relies on the fact that $\partial_z S$ changes sign exactly once. 
To summarize, it is consistent to assume that $\partial_z S$ changes sign exactly once, say at $z = 0$ w.l.o.g. Then, explicit computations prove that $\rho$ reaches its maximum value at the same location.
It is an immediate consequence that $S$, being solution of the elliptic problem \eqref{eq:elliptic} with $\rho$ as a source term, has a unique critical value, corresponding to a maximum for $S$. However, this might not coincide with $z = 0$. In fact, there is a unique $c$ such that the maximum of $S$ is located at $z = 0$.


We follow the same approach at the mesoscopic level. We freeze $S$ and $N$, simply assuming the same monotonicity conditions as above. Then, we solve $f$ in the first line of system \eqref{eq:TW}, and we deduce the spatial density $\rho = \int f\, d\nu$. At this stage, however, it is not immediately clear that $S$, being solution of the same elliptic problem \eqref{eq:elliptic} with $\rho$ as a source term, has a unique critical value.  This would hold true provided $\rho$ has basic monotonicity (increasing, then decreasing), as in the macroscopic case. In fact, such monotonicity would be an equivalent requirement,  without any extra condition on $D_S$ and $\alpha$, since the Green function $\mathcal S$ converges to a Dirac mass for $c = 0$  and  $\sqrt{\alpha/D_S}\to +\infty$.
  
The first major problem can be rephrased as follows: 
\begin{quote}
{\em Let $f(z,v)$ be the density obtained from the first equation in system \eqref{eq:TW} under the hypotheses that $\partial_z S$ changes sign at $z  =0$ only, and $\partial_z N$ is positive. Prove that $\rho(z) = \int f(z,v)\, d\nu(v)$ is increasing for $z<0$, and decreasing for $z>0$.}
\end{quote}

\begin{figure}[t]
\begin{center}
\includegraphics[width = 0.48\linewidth]{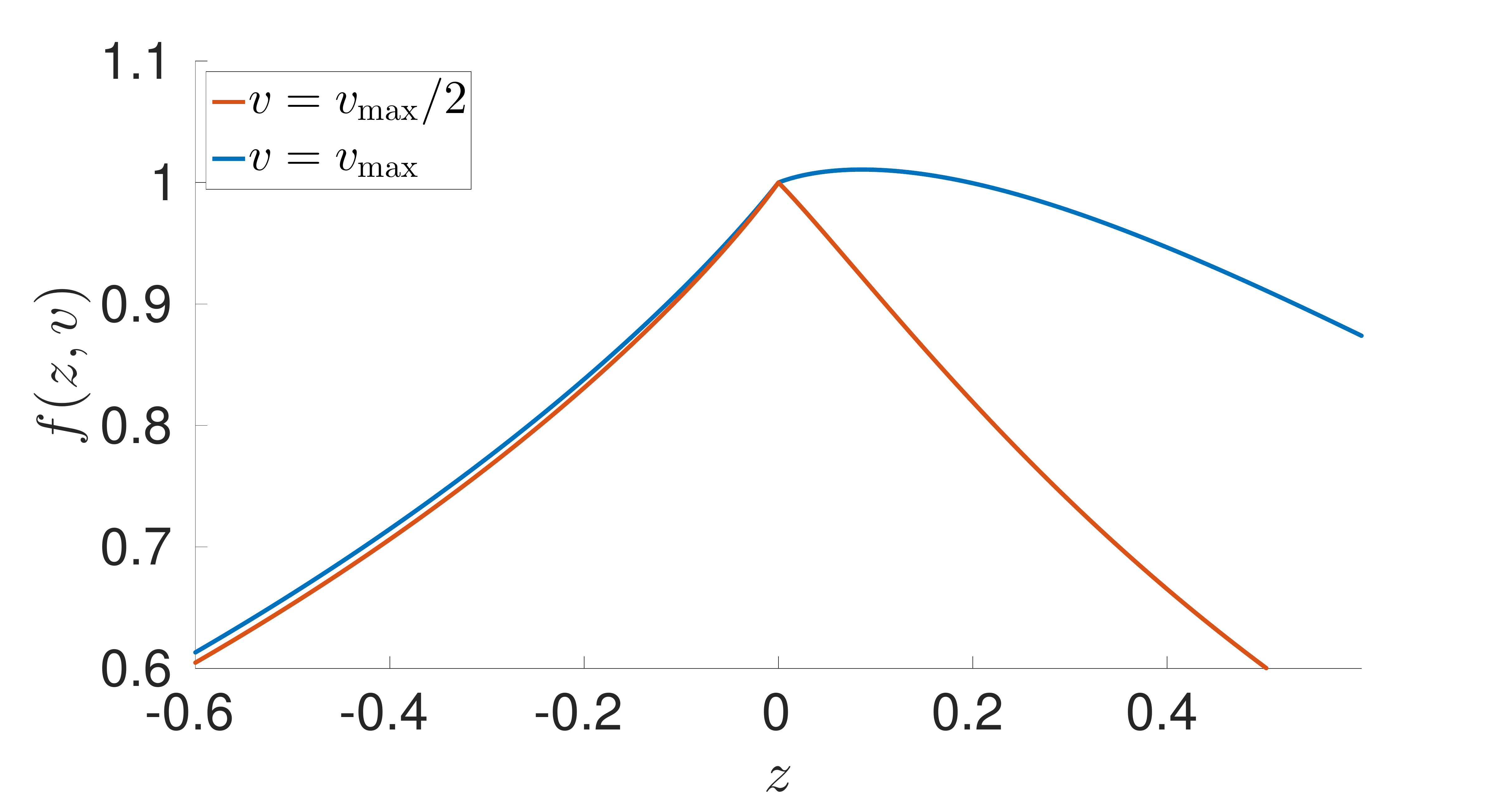}\; 
\includegraphics[width = 0.48\linewidth]{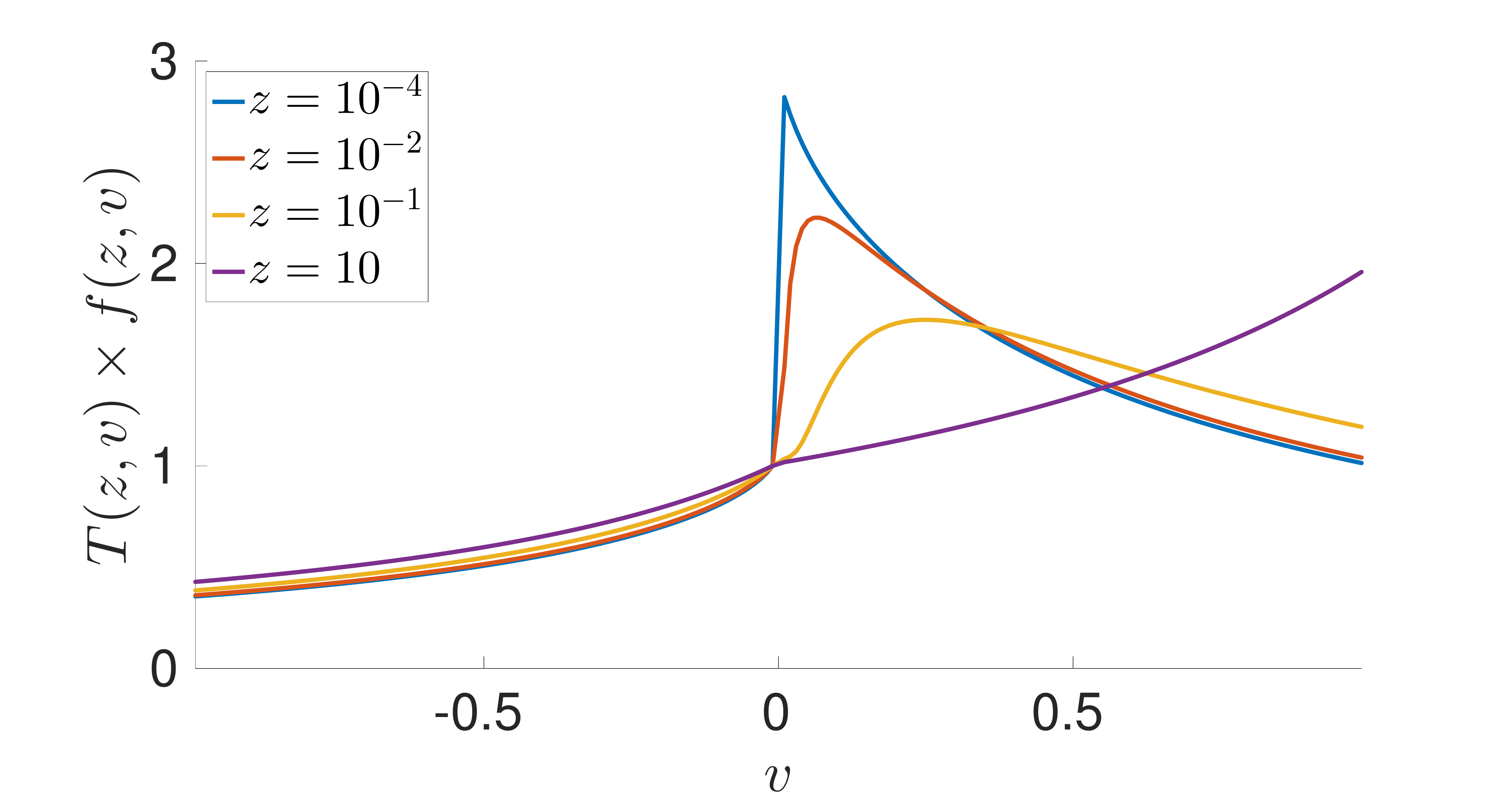}
\caption{\small Overshoot phenomenon. Here, $V = (-1,1)$ with uniform measure $\nu$, $\chi_S = 0.48$ and $\chi_N = 0$. (Left) Shape of the spatial profile $z\mapsto f(z,v)$ for two different values of $v$: large velocity ($v = v_{\rm max}$; blue line) and intermediate velocity  ($v = v_{\rm max/2}$; red line). Clearly, the maximum with respect to $z$ is shifted to the right of the origin, so that $\partial_z f(z,v)$ has no sign for $z>0$. In fact, this can happen for large velocity only, and compensations in the integral are sufficient to give a constant sign to $\partial_z\rho$. (Right) To decipher the compensations, it is necessary to unravel the shape of velocity profiles. Four examples are shown, for increasing values of $z$ (profiles are normalized to have value 1 at $v = 0$). Although the shape is simple for negative velocity, positive velocities exhibit a non monotonic transition for $z$ close to the origin (blue line) to large $z$ (magenta line). It is an analytical challenge to understand this transition. This is the purpose of Section \ref{sec:monotonicity}.
}
\label{fig:overshoot}
\end{center}
\end{figure}

Obviously, the monotonicity of $\rho$, say $\frac {d\rho}{dz} (z) <0$ for all $z>0$, would follow from the monotonicity of $f$, if we were able to show that $\frac {\partial f}{\partial z}(z,v) <0$ for all $z>0$, and all $v\in V$. However, the latter is not expected to hold true in full generality, as can be seen on numerical simulations for $\chi_S$ large, $\chi_N = 0$, and accordingly $c = 0$ (see Figure \ref{fig:overshoot}). 
Therefore, the monotonicity of $\rho(z) = \int f(z,v) \, d\nu(v)$ for $z>0$ can only result from compensations when integrating with respect to velocity.

Our problem can be recast as follows,
\begin{quote}
{\em Let $f(z,v)$ be the density obtained from the first equation in system \eqref{eq:TW} under the hypotheses that $\partial_z S$ changes sign at $z  =0$ only, and $\partial_z N$ is positive. Describe the velocity profiles in order to show that compensations yield the appropriate monotonicity for $\rho$ when integrating with respect to velocity.}
\end{quote}

It turns out that such compensations always occur, with no additional condition on the parameters. We are able to prove that inappropriate monotonicity of $f$ may  arise for  large positive velocities (for $z>0$). However, it is compensated by small positive velocities. Moreover, negative velocities contribute with the appropriate sign. We shall decipher velocity profiles  in order to capture these compensations.

\subsubsection{Non monotonicity of the location of the maximal value of $S$ with respect to $c$.}\label{ssec:sketch proof}
The conclusion of the proof of Theorem \ref{theo:TW macro} is facilitated both by the monotonicity of $\Upsilon$, and by the unambiguous extremal behaviours ($\Upsilon(0)>0$, $\Upsilon(\chi_N)<0$).

It turns out that, none of these two facts is generally true at the mesoscopic level. The corresponding function $\Upsilon(c)$ may vary in a non monotonic way, so that it crosses $\Upsilon = 0$ several times. This clearly prevents uniqueness of the speed $c$. 

On the other hand, there exists a range of parameters for which $\Upsilon(0)<0$, and $\Upsilon(c^*)<0$, where $c^*$ is the maximal admissible value for $c$. Furthermore, $\Upsilon(c)<0$ for all admissible values of $c$. This prevents existence of travelling waves in our framework. 


This justifies {\em a posteriori} the additional conditions on parameters \eqref{eq:cond 1}-\eqref{eq:cond 2}-\eqref{eq:cond 3}. We did our best to make all these conditions quantitative in the course of analysis. However, we believe that existence and uniqueness hold for a much larger class of parameters.

\subsection{Strategy of proof}

Here, we sketch briefly the argument for resolving the first issue raised in Section \ref{sec:Non monotonicity}. 

We use a kind of homotopy argument, by deforming the measure $\nu$. If $\nu$ is a symmetric combination of two Dirac masses, this is the two velocity case, which is very similar to the macroscopic problem, for which  monotonicity is straightforward, see \eqref{eq:cell density profile}. When the support of $\nu$ is concentrated around two symmetric velocities, it is not difficult to prove that monotonicity still holds true. 

As $\nu$ changes continuously, we establish that $\rho$ cannot change monotonicity. There is some subtlety here because $\rho$ does not appear in the kinetic equation in \eqref{eq:TW}. The natural macroscopic quantity to look at is indeed the density of tumbling events per unit time, namely
\beq\label{eq:I} I(z) = \int   T (z,v-c)  f(z,v)\, d\nu(v)\, , \eeq
so that the first line in \eqref{eq:TW}
can be rewritten as
\begin{equation}\label{eq:f I}
(v-c) \partial_z f(z,v) =  I(z) - T(z,v-c)f(z,v)\, . 
\end{equation}
The function $f$ can be reconstructed entirely from $I$ by the Duhamel formulation along characteristics lines, for $z>0$:
\beq\label{eq:duhamel} \begin{cases}
\displaystyle(\forall v<c)\quad  f_+^-(z,v) = \int_0^{+\infty}  I_+(z - s(v-c)) \exp(-T_+^- s)\, ds\medskip\\
\displaystyle(\forall v>c)\quad f_+^+(z,v) = \left( \int_0^{+\infty}  I_-(-s(v-c)) \exp(-T_-^+ s)\, ds \right)\exp\left(-T_+^+\frac z{v-c}\right)\medskip\\ 
\displaystyle \quad\qquad\qquad\qquad\qquad\qquad\qquad\qquad+ \int_0^{\frac z{v-c}}  I_+(z - s(v-c)) \exp(-T_+^+ s)\, ds
\end{cases} \eeq
and similarly for $z<0$.

The integral quantity $I$ \eqref{eq:I} is a combination of the integral over $\{v<c\}$, and the integral over $\{v>c\}$, with different weights ($1 \pm \chi_S \pm \chi_N$, see Figure~\ref{fig:sign convention}). Accordingly, we focus on the quantities 
\[  \rho^+(z) = \int_{\{v<c\}} f(z,v)d\nu(v) \, , \quad \rho^-(z) = \int_{\{v<c\}} f(z,v)d\nu(v)\, , \]
because they are common to both $\rho$ and $I$, according to the following rules
\[
\begin{cases} \rho = \rho^- + \rho^+ \medskip \\
I = T^- \rho^- + T^+ \rho^+\, . 
\end{cases} 
\]
The key point is to establish the following enhancement of monotonicity: 
\begin{quote}
{\em Assume that both $\rho^+$ and $\rho^-$ satisfy 
\[ \begin{cases}  \dfrac{d\rho^+}{dz}(z) \geq 0\quad  (\text{resp. }\leq 0) & \text{for $z<0$} \quad (\text{resp. }\text{for $z>0$}) \medskip\\
 \dfrac{d\rho^-}{dz}(z) \geq 0\quad  (\text{resp. }\leq 0) & \text{for $z<0$} \quad (\text{resp. }\text{for $z>0$})
\end{cases}   \]
Then inequalities in the large are in fact strict inequalities. 
Therefore, $\rho^+$ and $\rho^-$ necessarily keep the same monotonicity as the measure $\nu$ varies continuously. 
}
\end{quote}

As mentioned in Section \ref{sec:Non monotonicity}, it is mandatory to understand compensations through the refined description of velocity profiles. The argument goes as follows, say for $z>0$. Assume that both $\rho^+_+$ and $\rho^-_+$ are non-increasing functions. We deduce from the Duhamel formula \eqref{eq:duhamel} that $T_+^-f_+^-(z,v)<I_+(z)$ for $v<c$. Therefore, 
\[ \dfrac {d\rho^-_+}{dz}(z) = \int_{\{v>c\}} \dfrac1{v-c}\left( I_+(z) - T_+^-f_+^-(z,v)  \right)\, d\nu(v) < 0\, . \]
On the other hand, we have
\beq\label{eq:sketch rho+} \dfrac {d\rho^+_+}{dz}(z) = \int_{\{v<c\}} \dfrac1{v-c}\left( I_+(z) - T_+^+ f_+^+(z,v)  \right)\, d\nu(v) \, . \eeq
The quantity $I_+ - T_+^+f_+^+$ has no sign in general, but we are able to prove that, whenever it has the inappropriate sign (here, positive), this can happen for large velocities only. Due to the decreasing weight $(v-c)^{-1}$ in the integral formula \eqref{eq:sketch rho+}, the following deduction holds true,
\[ \int_{\{v>c\}} \left(  I_+(z) - T_+^+f_+^+(z,v)  \right)\, d\nu(v) < 0 \quad \Rightarrow \quad \dfrac {d\rho^+_+}{dz}(z) < 0 \, , \]
On the other hand, we deduce from the  very definition of $I$ \eqref{eq:I} that
\[  \int_{\{v>c\}} \left(  I_+(z) - T_+^+f_+^+(z,v)  \right)\, d\nu(v) = - \int_{\{v<c\}} \left( I_+(z) - T_+^-f_+^-(z,v)  \right)\, d\nu(v) \, . \]
The latter quantity is negative because $T_+^-f_+^-(z,v)<I_+(z)$ for $v<c$. QED 

Of course, there is some additional work to ensure that monotonicity is indeed the appropriate one for large $z$, in order to get enough compactness in the argument. Also, regularity of the macroscopic quantities is required in order to justify the use of differential calculus.


\section{Preliminary results}

In this section, we make the following {\em a priori} hypotheses which enable to decouple   system \eqref{eq:TW}: 
\beq \label{eq:ansatz}
\begin{cases}
 \text{$c$ is given in a suitable interval $(c_*,c^*)$, defined in \eqref{eq:def c_*}-\eqref{eq:def c^*}}\,, \medskip\\
 \text{$(\forall z<0)\;   \partial_z S(z)>0$, and $(\forall z>0)\;   \partial_z S(z)<0$}\,, \medskip\\
\text{$(\forall z)\;  \partial_z N(z)>0$}
\end{cases}
\eeq
Accordingly, we are reduced to examine the following linear equation, 
\beq (v-c)\partial_z f(z,v) = \int T(z,v'-c) f(z,v')\, d\nu(v') - T(z,v-c) f(z,v)\, ,  
\label{eq:linear c}
\eeq
where $T(z,v-c) = T_\pm^\pm  =   1\pm \chi_S \pm \chi_N$, with the sign convention as follows  
(see also Figure~\ref{fig:sign convention}),
\begin{equation}\label{eq:rule of sign}
\begin{cases}
T_-^- = 1 + \chi_S + \chi_N\, ,  & T_-^+ = 1 - \chi_S - \chi_N\quad \text{(both signals are attractive),} \medskip\\
T_+^- = 1 - \chi_S + \chi_N\, ,  & T_+^+ = 1 + \chi_S - \chi_N\quad \text{(signals have opposite contributions).} 
\end{cases}
\end{equation}

\begin{figure}
\begin{center}
\includegraphics[width = .8\linewidth]{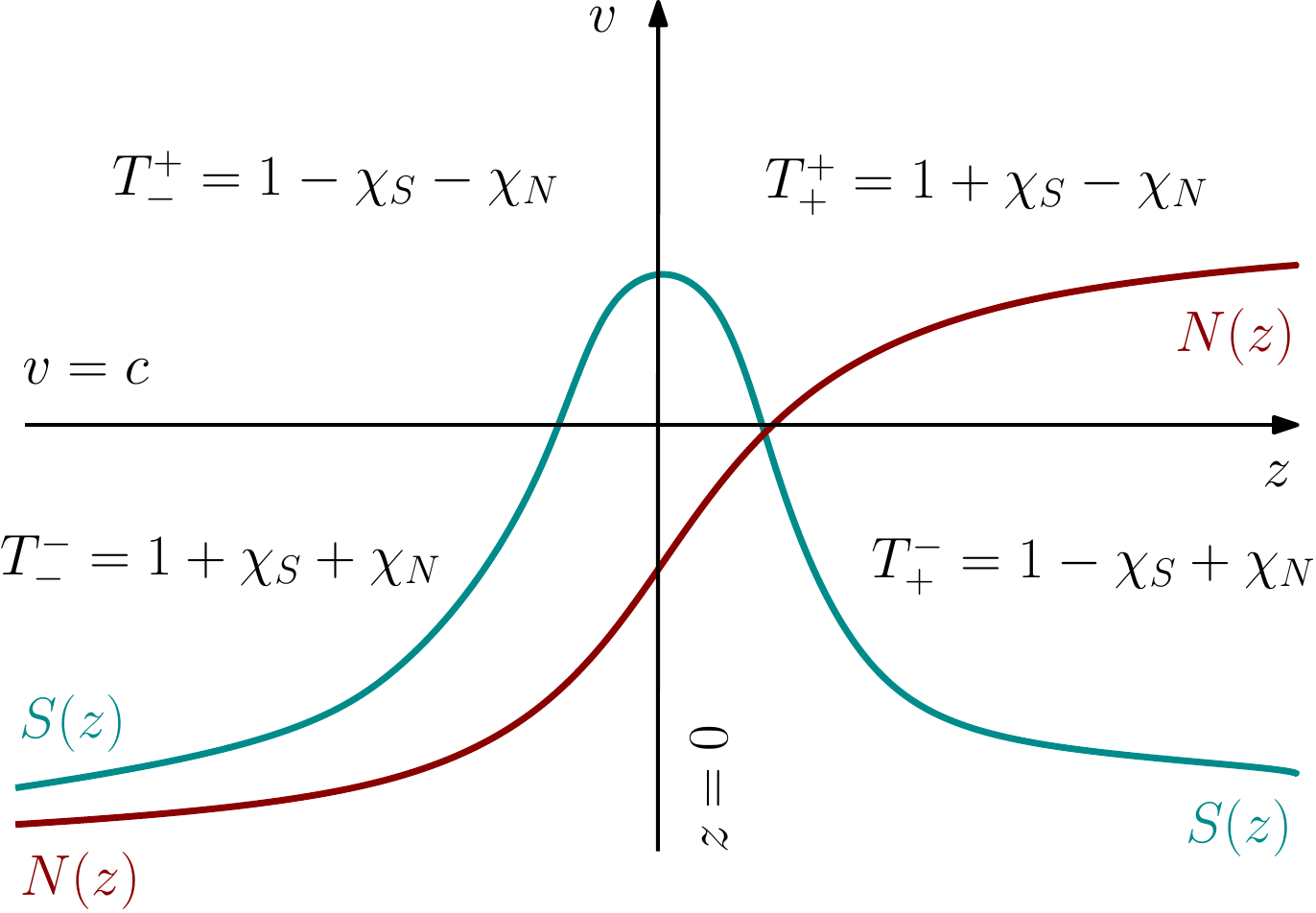}
\caption{\small Expression of the tumbling kernel $T_\pm^\pm$  depending on the signs of $z$ and $v-c$. The expected profiles of $S$ and $N$ are plotted in order to get the correct value of $T_\pm^\pm$ at a glance. Note the dependency with respect to $c$, as some signs change relatively to $v-c$ from bottom to top of the $(z,v)$ plane. }
\label{fig:sign convention}
\end{center}
\end{figure}

\subsection{Solution of the uncoupled problem}
\label{sec:decoupled}

Throughout this section, we assume that  $\Ac+\Abc$ is satisfied. We shall make clear why we need uniform boundedness from below on the support of $\nu$. Assumption $\Abc$ will be removed later on by some approximation procedure, at the expense of missing the refined asymptotic description of the solution as $|z|\to \infty$. 

We proceed as in \cite{calvez_confinement_2015}, where the existence of a stationary density solution to \eqref{eq:linear c} is established in the case $\chi_N = 0$, and accordingly $c = 0$. Here,  this result is extended to the asymmetric case $\chi_N>0$, and $c\neq 0$. We also refer to a more recent work \cite{mischler_linear_2016} that extends this confinement result to the higher dimensional case. 

\begin{theorem}[Exponential confinement by biased velocity-jump processes] Let $c\in (c_*,c^*)$. Assume $(\chi_S, \chi_N)\in (0,1/2)\times[0,1/2)$. Assume that  $\Ac+\Abc$ is satisfied.  There exists a density $f\in L_+^1\cap L^\infty (\R\times V)$ solution of \eqref{eq:linear c}. Moreover, there exist two exponents $\lambda_\pm>0$, and two velocity profiles $F_\pm(v)$ such that $e^{\lambda_+ z}f(z,v)$ (resp. $e^{-\lambda_- z}f(z,v)$)   converges exponentially fast to $F_+(v)$ as $z\to +\infty$ (resp. to $F_-(v)$ as $z\to -\infty$). 
\label{theo:cluster} 
\end{theorem}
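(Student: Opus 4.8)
\medskip

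\noindent\textbf{Proof strategy (proposal).}
The plan is to decouple the construction into three pieces: \emph{(i)} two independent half-line spectral problems that produce the boundary exponents and velocity profiles $(\lambda_\pm,F_\pm)$; \emph{(ii)} a global construction on truncated intervals $[-R,R]$, equipped with $R$-uniform exponential bounds, followed by a compactness argument letting $R\to\infty$; \emph{(iii)} the sharp asymptotics as $|z|\to\infty$, obtained from a spectral gap on each half-line. Throughout one may assume $p<\infty$ in \Ac, the case $p=\infty$ reducing to it since a compactly supported $L^\infty$ density lies in every $L^{p'}$.

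\smallskip

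\noindent\emph{(i) Half-line spectral problems.}
For $z>0$ the tumbling rate is the constant $T_+^\pm$ according to the sign of $v-c$, so the ansatz $f(z,v)=e^{-\lambda z}F(v)$ solves \eqref{eq:linear c} on $\{z>0\}$ exactly when
\[
F(v)=\frac{\text{const}}{T_+(v-c)-\lambda(v-c)}\,,\qquad
g_+(\lambda):=\int\frac{v-c}{T_+(v-c)-\lambda(v-c)}\,d\nu(v)=0\,.
\]
On the interval of $\lambda$ for which the denominator stays positive on $V$ (so that $F\ge0$, $F\in L^1\cap L^\infty(d\nu)$) the function $g_+$ is strictly increasing, tends to $+\infty$ at the right endpoint, and satisfies $g_+(0)=\int_{\{v<c\}}\frac{v-c}{T_+^-}\,d\nu(v)+\int_{\{v>c\}}\frac{v-c}{T_+^+}\,d\nu(v)$; the last quantity is negative precisely because $c>c_*$, since $c\mapsto g_+(0)$ is strictly decreasing and vanishes at $c_*$. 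Hence there is a unique admissible root $\lambda_+>0$ and a corresponding profile $F_+$. The divergence of $g_+$ at the endpoint is where \Abc\ enters: $\omega\ge\omega_0>0$ near $\v0$ forces a logarithmic blow-up of the integral, and without this lower bound the root may disappear — this is exactly the feature lost when \Abc\ is later removed by approximation, leaving only sub-exponential decay. A mirror computation on $\{z<0\}$ gives a strictly decreasing $g_-$, whose unique admissible root $\lambda_->0$ exists precisely because $c<c^*$, together with a profile $F_-$.

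\smallskip

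\noindent\emph{(ii) Global construction.}
Following \cite{calvez_confinement_2015}, I would solve \eqref{eq:linear c} on $[-R,R]\times V$ with inflow data prescribed by the generalized eigenmodes, namely $f(-R,v)=e^{-\lambda_-R}F_-(v)$ for $v>c$ and $f(R,v)=e^{-\lambda_+R}F_+(v)$ for $v<c$. Equivalently, one inserts the Duhamel representation \eqref{eq:duhamel} into the definition \eqref{eq:I} of the tumbling density $I$ and works with the resulting linear integral equation for $I$; on a bounded interval the associated operator is positive and compact — compactness of $\rho_R=\int f_R\,d\nu$ following from velocity averaging, which is the reason $p>1$ is required in \Ac — so a non-negative solution $f_R$ exists by a Perron--Frobenius / maximum-principle argument. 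Using the exact half-line profiles $C\,e^{-\lambda_+z}F_+$ on $z\ge0$ and $C\,e^{\lambda_-z}F_-$ on $z\le0$ as comparison functions, matched across the jump of $T$ at $z=0$, one derives the $R$-uniform bounds $0\le f_R(z,v)\le C\,e^{-\lambda_+z}F_+(v)$ for $z\ge0$ and the mirror bound for $z\le0$. Normalizing $\iint f_R\,d\nu(v)\,dz=1$, these bounds yield tightness, so along a subsequence $f_R$ converges (weakly in $L^1$, and strongly in the velocity averages by the averaging lemma) to a limit $f\in L^1_+\cap L^\infty(\R\times V)$ that solves \eqref{eq:linear c} and carries unit mass; the $L^\infty$ bound is closed by bootstrapping $\|f\|_{L^\infty}\le C\|I\|_{L^\infty}$ and $I\le C\rho$ together with the exponential decay.

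\smallskip

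\noindent\emph{(iii) Asymptotics, and the main obstacle.}
For $z>0$ the rescaled density $e^{\lambda_+z}f(z,\cdot)$ satisfies a transport equation whose slowest mode is exactly $F_+$, so its exponential convergence to $F_+$ follows from the gap separating $\lambda_+$ from the remainder of the relevant spectrum (the edge $T_+^+/(\v0-c)$) — proven by a contraction estimate in a weighted norm, or by a general-relative-entropy functional restricted to the half-line — and symmetrically as $z\to-\infty$. I expect the main obstacle to be the gluing step: controlling $f_R$ across the discontinuity of $T$ at $z=0$ so that the comparison with the exponential barriers closes with constants independent of $R$, and ruling out that the normalized limit is trivial (no mass escaping to $|z|=\infty$); this is precisely where both the barriers built in \emph{(i)} and the averaging-lemma compactness are needed. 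A secondary delicate point is the spectral gap itself, which is what upgrades the mere bound $f\le C\,e^{-\lambda_+z}F_+$ into the claimed asymptotic equivalence.
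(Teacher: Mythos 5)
Your step (i) coincides with the paper's Step \#1 (dispersion relations \eqref{eq:dispersion lambda+} and their mirror, with \Abc{} forcing divergence of $Q_\pm$ at the endpoint and $c\in(c_*,c^*)$ giving the sign at $\lambda=0$), and your step (iii) is in the spirit of the paper's Step \#4 (weighted-$L^2$/entropy argument exploiting the extra conservation law that fixes the amplitude $\kappa_+$). The core existence step, however, is where you diverge from the paper and where your proposal has a genuine gap. The paper does not truncate: it solves two Milne problems on the half-lines for $u_\pm=e^{\pm\lambda_\pm z}F_\pm^{-1}f$, composes them into a map $\mathbf B$ acting on the unknown inward trace $\varphi_+^+$ at $z=0$, gets compactness from the averaging lemma and positivity from Duhamel, applies Krein--Rutman, and then pins the eigenvalue to $\Lambda=1$ using the zero-flux identities \eqref{eq:fluxes}. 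That fixed-point structure is exactly what replaces the knowledge of the trace at $z=0$, which cannot be prescribed a priori.

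In your scheme the whole burden falls on the claimed $R$-uniform barriers ``$0\le f_R\le C e^{-\lambda_+z}F_+$ for $z\ge0$'' obtained by comparison with the concatenation of the two half-line modes, and this comparison does not close. For the concatenated function $A e^{-\lambda_+z}F_+(v)$ ($z>0$), $A e^{\lambda_- z}F_-(v)$ ($z<0$) to be a supersolution across the discontinuity of $T$ at $z=0$, you need the jump to have the favourable sign along outgoing characteristics, i.e.\ $F_+(v)\ge F_-(v)$ for $v>c$ and $F_-(v)\ge F_+(v)$ for $v<c$. For $v>c$ this reads
\begin{equation*}
T_-^+ + \lambda_-(v-c)\;\ge\; T_+^+ - \lambda_+(v-c)
\quad\Longleftrightarrow\quad
(\lambda_++\lambda_-)(v-c)\;\ge\; T_+^+-T_-^+ = 2\chi_S\,,
\end{equation*}
which fails for $v$ close to $c$ (and symmetrically on the other side). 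So the barrier is not a supersolution, the matching at $z=0$ that you yourself flag as ``the main obstacle'' is left unresolved, and with it the $R$-uniform decay, the tightness of the normalized sequence, and the non-triviality of the limit (one must also rule out concentration at $z=0$, which in the paper requires the quantitative lower bound on $\rho(0)$ of Lemma \ref{eq:bound mu rho0}, itself downstream of the global construction). Unless you replace the comparison step by an interface fixed-point argument of the paper's type (or some other mechanism exploiting the conservation of flux), the truncation route as written does not yield the theorem. A minor side error: without \Abc{} the decay does not become sub-exponential; the paper still obtains exponential decay uniformly in $\omega_0$ (Proposition \ref{prop:holder}), and what is lost is only the refined description of the asymptotic profile (Remark \ref{rem:no boundedness}).
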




The extremal values $c_*$ and $c^*$ are defined precisely during the course of the proof  \eqref{eq:def c_*}-\eqref{eq:def c^*}. The condition $c>c_*$ guarantees that $\lambda_+(c)>0$. On the other hand, $c<c^*$ if, and only if, $\lambda_-(c)>0$. For values of $c$ outside $(c_*,c^*)$, the solution does not decay exponentially, either on the left, or on the right side (see Figure~\ref{fig:flatness}).

\begin{figure}
\begin{center}
\includegraphics[width = 0.48\linewidth]{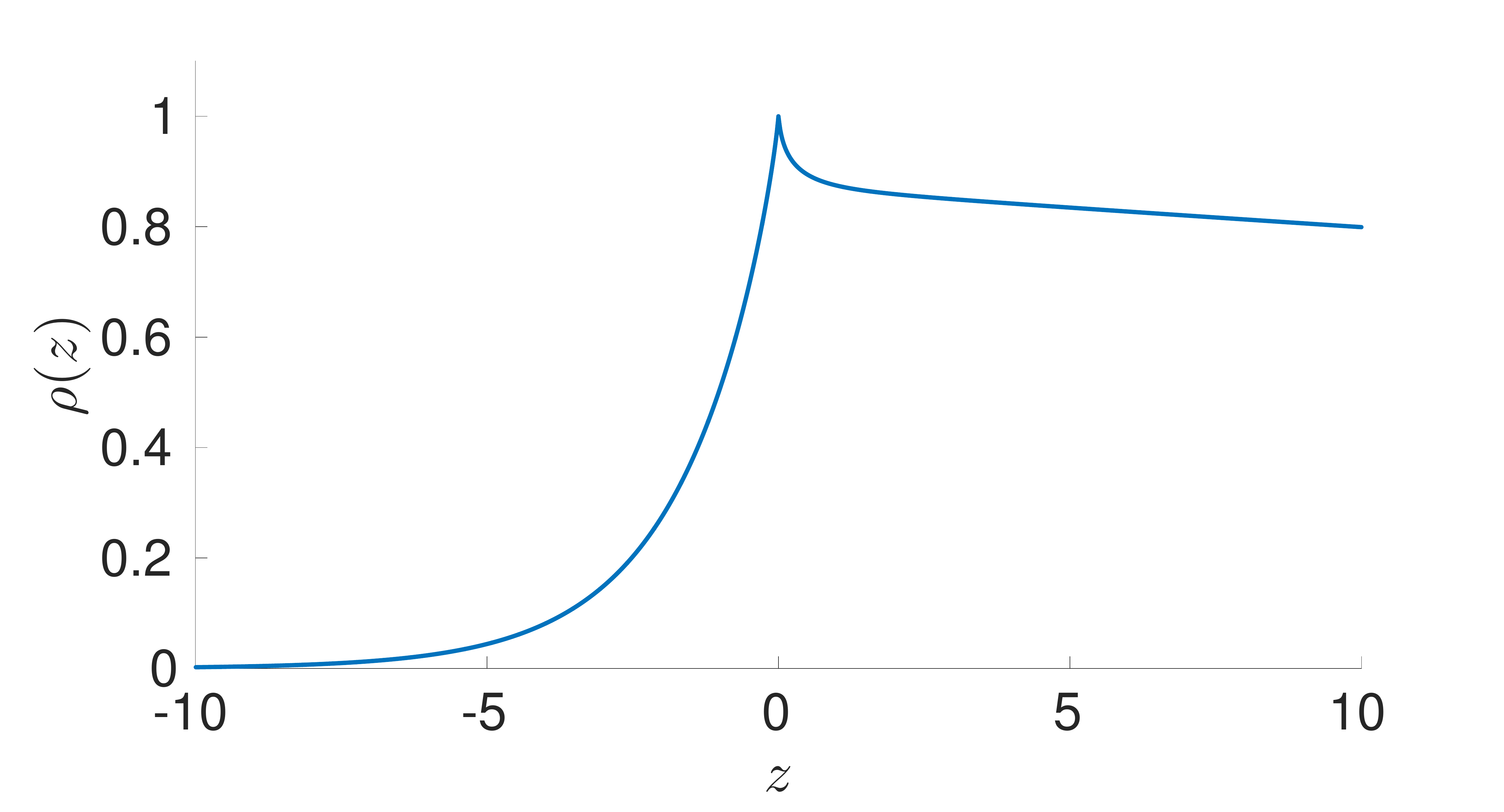}\;
\includegraphics[width = 0.48\linewidth]{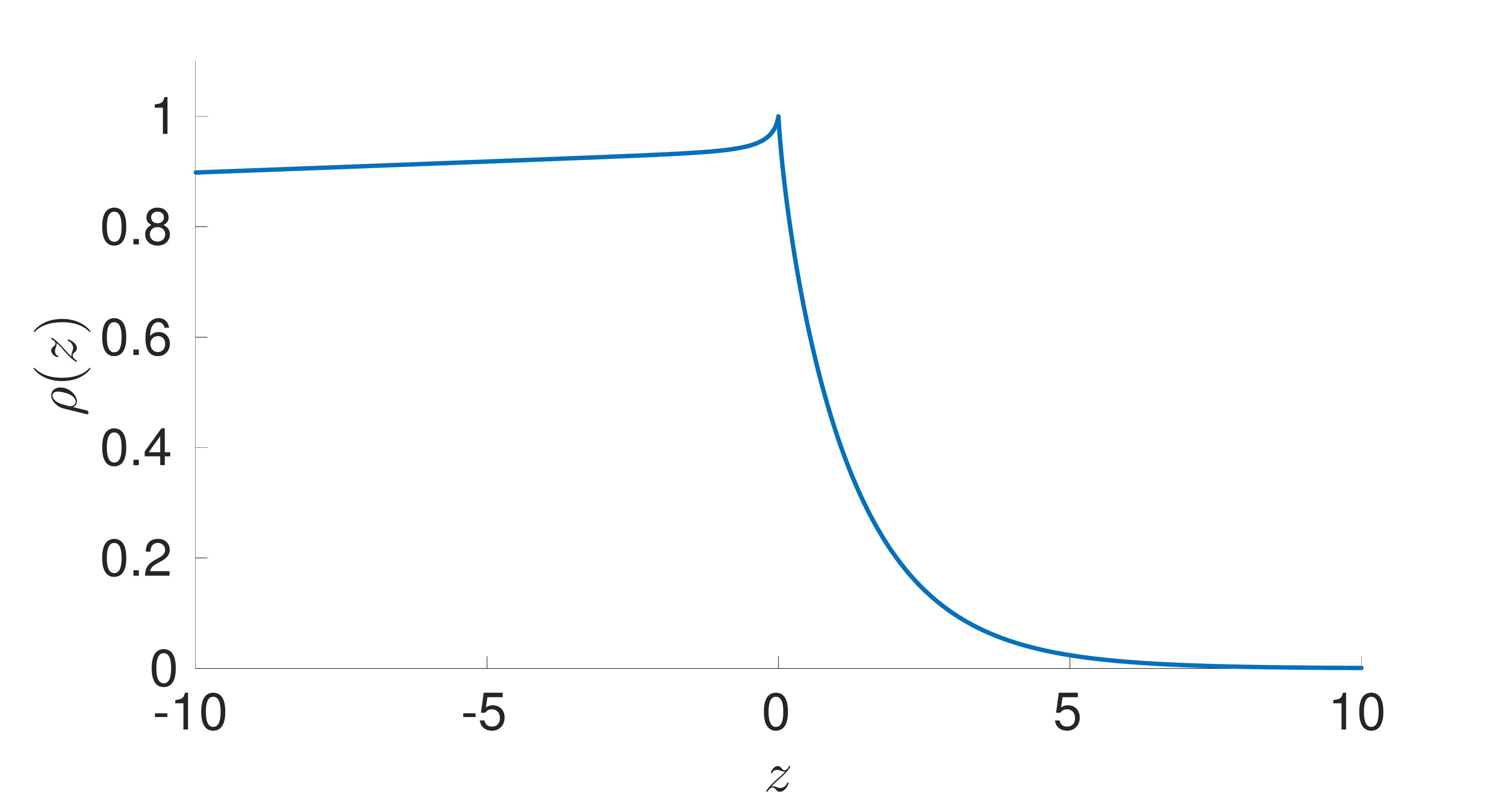}
\caption{\small Flatness. (Left) Macroscopic density profile as $c$ get close to $c_*$: the rate of exponential decay $\lambda_+(c)$ vanishes, so that the profile get flat on the right side. (Right) Idem as $c$ get close to $c^*$, so that $\lambda_-(c)$ vanishes.}
\label{fig:flatness}
\end{center}
\end{figure}

\begin{proof} 
As stated in the Theorem, we restrict here to the continuous setting, under Assumptions $\Ac+\Abc$. The  case of discrete velocities $\Ad$ can be solved directly by a finite dimensional approach based on Case's normal modes, see Section \ref{sec:spectral}. \medskip

\noindent\textbf{Step  \#1. Identification of the asymptotic profiles.}
We make the following ansatz for large (positive) $z$,
\[ f(z,v)\sim e^{- \lambda_+  z}  F_+ (v)\quad \text{as}\quad z\to +\infty\,,\]
where 
\beq \label{eq:F+} F_+(v) = \dfrac{1}{T_+(v-c) - \lambda_+ (v-c)}\, , \eeq
and $\lambda_+>0$ is characterized by the following  relation, \beq\label{eq:dispersion lambda+} \int \dfrac{T_+(v-c)}{T_+(v-c) - \lambda_+ (v-c)}\, d\nu(v)  = 1\quad \Leftrightarrow\quad
\int \dfrac{v-c}{T_+(v-c) - \lambda_+ (v-c)}\, d\nu(v)  = 0
\, .\eeq
The latter relation determines a unique positive $\lambda_+$, under the restriction that $F_+\geq 0$ on the support of $\nu$,
provided that the mean algebraic run length is negative in the moving frame, {\em i.e.}
\beq  \label{eq:condition c*}
\int \dfrac{v-c}{T_+(v-c)}\, d\nu(v) <0 \, .
\eeq
Indeed, the function 
\[ Q_+(\lambda) =   \int \dfrac{v-c}{T_+(v-c) - \lambda  (v-c)}\, d\nu(v) \, , \]
is continuous, and increasing. Moreover, as $\omega$ is bounded below on $V$, it diverges to $+\infty$ when $\lambda$ attains the maximal possible value ensuring $F_+\geq 0$, \ie $\lambda\to \frac{T_+(\vm -c)}{\vm - c} = \frac{T_+^+}{\vm - c}$ (recall that $\vm$ is the maximal velocity). Moreover, the value at $\lambda = 0$  coincides with the mean algebraic run length \eqref{eq:condition c*}. 

\begin{remark}[About the additional condition $\Abc$ on $\nu$]\label{rem:no boundedness}
If $\omega$ vanishes on the boundary of $V$, say at $\vm$, then it may happen that $Q_+$ converges to a finite negative value, as $\lambda$ converges to $\frac{T_+^+}{\vm - c}$. Then, the formal ansatz for the velocity profile \eqref{eq:F+} must be corrected with a Dirac mass, as follows,
\beq\label{eq:asymptot dirac mass} f(z,v)d\nu(v) \approx e^{-\lambda_+z} \left( \dfrac{\omega(v)}{T_+(v-c) - \frac{T_+^+}{\vm - c}(v-c) }dv + \varpi_0 \delta(v - \vm)  \right)\, ,  \eeq
where $\varpi_0>0$ is defined so as to compensate the negativeness of the integral, namely \eqref{eq:dispersion lambda+} rewrites as,
\[ \int   \dfrac{T_+(v-c)}{T_+(v-c) - \frac{T_+^+}{\vm - c}(v-c) }\omega(v)\, dv + \varpi_0 = 1\, . \]
We refer to \cite{caillerie_large_2016}, where a similar issue was identified and resolved, in the large deviations asymptotic of a basic  kinetic model, namely the BGK equation. 
\\
Noticeably, the asymptotic profile proposed in the formal ansatz \eqref{eq:asymptot dirac mass} is not a  bounded function. This is clearly an obstacle in using the approach developed below, which requires uniform boundedness of $F_+$. To overcome this issue, we shall derive uniform estimates for $f$ at further stage, independently of any bound on $F_+$ from above, see Section \ref{sec:regularity alpha0}.
\end{remark}

On the other hand,  condition \eqref{eq:condition c*} determines a minimal admissible value $c_*\in V$ for $c$. More precisely, $c_*$ is defined as the critical speed such that the mean algebraic run length vanishes,
\beq  \label{eq:def c_*} \int \dfrac{v-c_*}{T_+(v-c_*)}\, d\nu(v) = 0\, . \eeq
To see this, let introduce the function $R$, defined as follows,
\beq\label{jmbujvb}
R(c) = \int \dfrac{v-c}{T_+(v-c)}\, d\nu(v) \, .\eeq 
It is continuous and decreasing with respect to $c$. Indeed, in the case \Ac, the derivative writes
\[ \dfrac{dR}{dc}(c) = \int \dfrac{-1}{T_+(v-c)}\, d\nu(v) + 2 \chi_+ \int \dfrac{v-c}{T_+(v-c)^2} \delta(v-c) \, d\nu(v)\, . \]
The last contribution is written formally. However, it can be shown that it vanishes by approximation of the Dirac mass via mollifiers, provided that $\omega \in L^p$ for $p>1$(\footnote{Take $\eta_\eps = \eps^{-1}\eta(\eps^{-1}\cdot)$ an approximation of the Dirac mass, then \[\int (v-c) \eta_\eps(v-c) \omega(v)\, d\nu(v) \leq \|(v-c) \eta_\eps(v-c)\|_{p'} \|\omega\|_p  = \mathcal{O}\left (\eps^{1-1/p}\right )\,.\]}). 


By examining the limiting cases, we deduce that there is a unique $c_*\in V$ such that $R(c_*) = 0$. Notice that the sign of $c_*$ is the same as the sign of $R(0)$, which is the same as the sign of $\chi_N - \chi_S$ by symmetry of $\nu$: 
\beq\label{knùioùoib}
R(0) = \int_{\{v<0\}} \dfrac v{T_+^-} \, d\nu(v) +  \int_{\{v>0\}} \dfrac v{T_+^+} \, d\nu(v) =   \int_{\{v>0\}} v\left(  \dfrac {-\chi_S + \chi_N}{1  - (\chi_S - \chi_N)^2} \right)\, d\nu(v)
\,. \eeq

We make a similar ansatz for large (negative) $z$,
\[f(z,v) \sim e^{\lambda_-z}F_-(v)\quad \text{as}\quad z\to -\infty\,,\] 
where 
\[ F_-(v) = \dfrac{1}{T_-(v) + \lambda_- (v-c)}\, , \]
The rate of exponential decay $\lambda_->0$ is uniquely determined by the following  relation, 
\[ \int \dfrac{T_-(v-c)}{T_-(v) + \lambda_- (v-c)}\, d\nu(v)  = 1 \quad \Leftrightarrow\quad  \int \dfrac{v-c}{T_-(v) + \lambda_- (v-c)}\, d\nu(v)  = 0 \, , \]
provided that the mean algebraic run length is positive in the moving frame, {\em i.e.}
\beq \label{eq:condition c* 2} 
\int \dfrac{v-c}{T_-(v-c)}\, d\nu(v) >0 \, .
\eeq
Again, this determines a maximal admissible value $c^*\in (0,\v0)$, because the mean algebraic run length is positive for $c = 0$, and the dependency with respect to $c$ is Lipschitz continuous and decreasing. More precisely, it is defined as  
\beq \label{eq:def c^*} 
\int \dfrac{v-c^*}{T_-(v-c^*)}\, d\nu(v) =0 \, .
\eeq


Notice that $F_\pm, \lambda_\pm$ both depend on $c$.  For the sake of clarity, we may omit this dependency in the notation. From now on, we assume that the speed $c$ belongs to the  interval $c\in (c_*,c^*)$, and we keep in mind the extremal behaviours,
\[\lim_{c\to c_*} \lambda_+(c) = 0 \, , \quad \lim_{c\to c^*} \lambda_-(c) = 0\, . \]
Alternatively speaking, the spatial density becomes  flat on the far right side as the speed $c$ decreases to $c_*$, whereas it becomes spatially flat on the far left side as it increases to $c^*$  (see Figure~\ref{fig:flatness}). This property will play a crucial role in the last step of the proof, when determining the  speed $c$ for the coupled problem.\medskip

\begin{figure}
\begin{center}
\includegraphics[width = 1\linewidth]{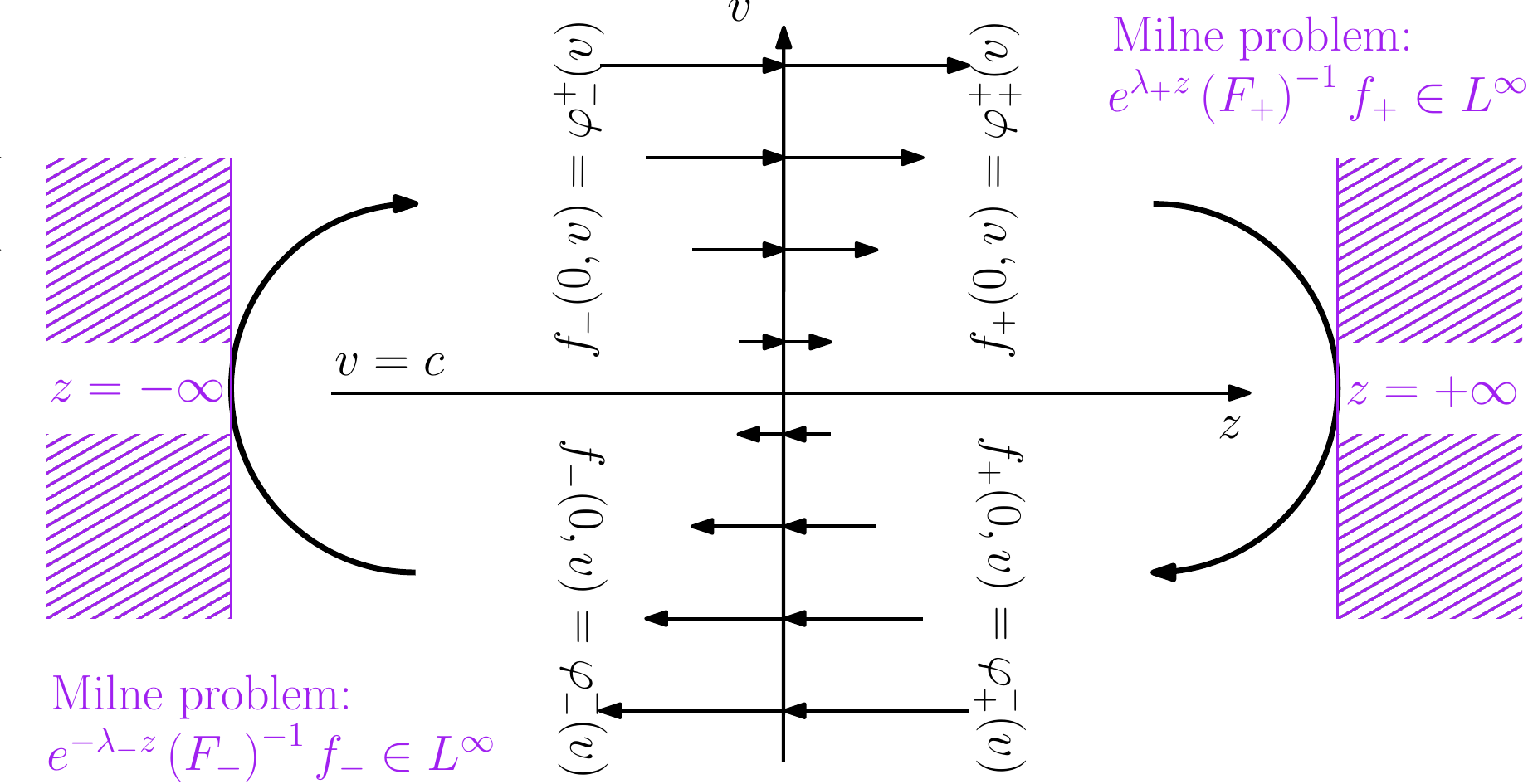}
\caption{\small Cartoon of the map $\mathbf B: \varphi_+^+ \mapsto \varphi_-^+$ after two successive applications of the Milne problem at $z=\pm \infty$.}
\label{fig:Milne twice}
\end{center}
\end{figure}

\noindent\textbf{Step  \#2. Connection with the  Milne problem.}
One key observation is that the function $u_+(z,v) = e^{\lambda_+ z}(F_+(v))^{-1}f(z,v)$ verifies a kinetic equation on the half space $\R_+\times V$, which satisfies the maximum principle, 
\beq\label{eq:u(z,v)}
(v-c)\partial_z u_+(z,v) = \int \dfrac{T_+(v'-c)F_+(v')}{F_+(v)}(u_+(z,v')-u_+(z,v))\, d\nu(v')\, , \quad z>0\, , \; v\in V\, .
\eeq
However, we have no idea yet about the incoming data, {\em i.e.} the inward velocity profile at $z = 0$: $(u_+^+(0,v))_{\{v>c\}}$. For the "boundary value" at $z = +\infty$, we only impose boundedness of $u_+$. This prescribes uniquely $u_+$ as a solution  of the so-called Milne problem in radiative transfer theory \cite{bardos_diffusion_1984}: being given the inward  velocity profile $\varphi_+^+(v) = F_+(v) u_+^+(0,v)$ for $v>c$, there exists  a unique $u_+\in L^\infty(\R_+\times V)$, such that 
\[
\begin{cases}
 \displaystyle (v-c)\partial_z u_+(z,v) = \int \dfrac{T_+(v'-c)F_+(v')}{F_+(v)}(u_+(z,v')-u_+(z,v))\, d\nu(v')\, , \quad z>0\, , \; v\in V\medskip\\
 u_+^+(0,v) = (F_+(v))^{-1}\varphi_+^+(v)\, , \quad v>c\, .
\end{cases}
\] 
Moreover, we deduce the following bound from the maximum principle (see \cite{calvez_confinement_2015} for details), 
\[ \|u_+\|_{L^\infty(\R_+\times V)} \leq \|u_+^+(0,\cdot)\|_{L^\infty(\{v>c\})}\, . \]

We apply the same construction for negative $z$. We define accordingly  $L^\infty \ni u_-(z,v) = e^{-\lambda_- z}(F_-(v))^{-1}f(z,v)$, which solves a similar problem, 
\[
\begin{cases}
 \displaystyle (v-c)\partial_z u_-(z,v) = \int \dfrac{T_-(v'-c)F_-(v')}{F_-(v)}(u_-(z,v')-u_-(z,v))\, d\nu(v')\, , \quad z<0\, , \; v\in V\medskip\\
 u_-^-(0,v) = (F_-(v))^{-1}\varphi_-^-(v)\, , \quad v<c\,  ,
\end{cases}
\] 
together with the matching inward velocity profile, \ie 
\[(\forall v<c)\quad\varphi_-^-(v) = \varphi_+^-(v) =  F_+(v)u_+^-(0,v)\, .\]
We refer to Figure \ref{fig:Milne twice} for a sketch of the construction of both $u_+$ and $u_-$ starting at $z = 0$ with boundedness reflection at $\infty$ on both sides.  
\medskip

\noindent\textbf{Step \#3. A fixed point argument.}
We define the linear map $\mathbf B: {\mathcal C}^0(V\cap \{v>c\}) \to {\mathcal C}^0(V\cap \{v>c\})$ as follows, 
\[ (\forall v>c )\quad \left(\mathbf B \varphi_+^+\right)(v) =  F_-(v)u_-^+(0,v)\, , \]
so as being a fixed point is equivalent to closing the loop $\varphi^+_+ \mapsto \varphi^-_+ = \varphi_-^- \mapsto \varphi^+_- = \varphi_+^+$, see Figure \ref{fig:Milne twice}. 

Compactness is required in order to apply the Krein-Rutman theorem. It can be deduced from classical averaging lemma, since the outward velocity profile is given by
\beq u_+^-(0,v) =  \int_0^{+\infty} e^{- t} J_+( -  tF_+(v) (v-c))\, dt\, ,\quad v<c \,, \label{eq:duhamel u}\eeq
where the macroscopic quantity $J_+$ is defined as,
\[J_+(z) = \int  T_+(v-c)F_+(v) u_+(z,v) \, d\nu(v) = e^{\lambda_+ z}I_+(z)\, , \quad z>0\, .\]
One-dimensional averaging lemma \cite{bardos_diffusion_1984} guarantees that $A_+(z)$ has  H\"older regularity (see also the proof of Proposition \ref{prop:holder}\#2). 


As a consequence, $(u_+(0,v))_{\{v<c\}}$ has H\"older regularity too. Alternatively speaking, negative velocities at $z=0$ necessarily emerge after some tumbling event, for which regularization occurs via velocity averaging.  Similarly, $u_-^+(0,v)$ has H\"older regularity on $\{v>c\}$. Hence $\mathbf B$ is a compact map.  On the other hand, positivity is an immediate consequence of the Duhamel formula \eqref{eq:duhamel u}. 

Applying the Krein-Rutman theorem, there exists a velocity profile $\varphi_+^+$, associated with a dominant eigenvalue $\Lambda$ such that $\mathbf B \varphi^+_+ = \Lambda \varphi^+_+ $. We deduce that  $\Lambda = 1$ from the conservation laws 
\beq\label{eq:fluxes} \begin{cases}\displaystyle\partial_z\left(\int (v-c) f_+(z,v)\, d\nu(v)\right) = 0\quad \Rightarrow \quad \int (v-c) f_+(z,v)\, d\nu(v)  = 0 \medskip\\\displaystyle\partial_z\left(\int (v-c) f_-(z,v)\, d\nu(v)\right) = 0\quad \Rightarrow \quad \int (v-c) f_-(z,v)\, d\nu(v)  = 0\, . \end{cases}  \eeq
Indeed, we have 
\begin{align*} 
\int_{\{v<c\}} (v-c) \varphi_+^+(v)\, d\nu(v) 
& = - \int_{\{v<c\}}(v-c) \varphi_+^-(v)(v)\, d\nu(v) 
 = - \int_{\{v<c\}}(v-c) \varphi_-^-(v)(v)\, d\nu(v)\\ 
& =  \int_{\{v<c\}} (v-c) \varphi_-^+(v)\, d\nu(v) = \int_{\{v<c\}} (v-c) \left(\mathbf B \varphi_+^+\right)(v)\, d\nu(v)  \\ 
& = \Lambda \int_{\{v<c\}} (v-c) \varphi_+^+(v)\, d\nu(v)\, . \end{align*}
\medskip

\noindent\textbf{Step \#4. Refined asymptotic behaviour as $|z|\to \infty$.}
The zero rate of decay $\lambda = 0$ is a trivial solution of the l.h.s. of \eqref{eq:dispersion lambda+}. It corresponds to a flat mode which is excluded to ensure integrability of $f$. It comes with the conservation of flux \eqref{eq:fluxes}. Besides, the non trivial, positive root $\lambda_+$ comes with another conservation law. By multiplying \eqref{eq:u(z,v)} against $(v-c)F_+(v)^2$, we obtain
\begin{align*}
&\partial_z \left( \int (v-c)^2 F_+(v)^2 u_+(z,v)\, d\nu(v)\right)\\
& = \left( \int T_+(v'-c) F_+(v') u_+(z,v')\, d\nu(v')\right) \left( \int v F_+(v)\, d\nu(v)\right) \\
& \quad - \left( \int T_+(v'-c) F_+(v')  \, d\nu(v')\right) \left( \int (v-c) F_+(v) u_+(z,v)\, d\nu(v)\right) \\
& = 0\, ,
\end{align*}
since $\int (v-c) F_+(v)\, d\nu(v)= 0$, and $\int (v-c) F_+(v) u_+(z,v)\, d\nu(v) = e^{\lambda_+ z}\int (v-c) f_+(z,v)\, d\nu(v) = 0 $.
We define accordingly $\kappa_+>0$ such that 
\beq\label{eq:def mu} (\forall z>0)\quad \int (v-c)^2 F_+(v)^2 (u_+(z,v) - \kappa_+)\, d\nu(v) = 0\, . \eeq
Then, it is possible to prove exponential decay towards the asymptotic profile as $z\to+\infty$ using second order entropy method with respect to the space variable. 
In fact, the two auxiliary quantities 
\[ E_0(z) = \int  (v-c)^2 F_+(v) ^2 \left( u_+(z,v) - \kappa_+  \right)^2  \, d\nu(v)\, , \]
and 
\[
E_1(z) =\int (v-c)  T_+(v) F_+(v)^2 \left( u_+(z,v) - \kappa_+  \right)^2  \, d\nu(v) \, ,   \]
satisfy the following system of (in)equalities,
\begin{equation}\label{eq:damped system}
\begin{cases}
\dfrac {d E_0}{dz} (z) = 2 \lambda_+ E_0(z) - 2 E_1(z)\medskip\\
\dfrac {d E_1}{dz} (z) \leq - 2  \kappa E_0(z)\,,  \end{cases} 
\end{equation}
where 
\[
\kappa_+ = \left(\inf_{v\in V} \frac{T_+(v-c)}{(v-c)^2 F_+(v)} \right)^2 \left(\int  (v-c)^2 (F_+(v))^2 \,d\nu(v)\right)  \, .
\]
We refer to \cite{calvez_confinement_2015} for further details.
The system \eqref{eq:damped system} can be turned into the following second-order damped differential inequality,
\[ -\frac12 \frac{d^2E_0}{dz^2} (z) + \lambda_+\dfrac {d E_0}{dz} (z) + 2\kappa_+ E_0(z) \leq 0\, .  \]
Using the "boundary condition" $E_0\in L^\infty$, we  deduce that there exists a constant $C$ depending on $E_0(0), \frac {d E_0}{dz} (0)$, $\lambda_+$ and $\kappa_+$, such that,
\[ E_0(z) \leq C e^{-\vartheta_+ z}\,, \quad \text{where $\vartheta_+$ is the positive root of $\frac12 \vartheta_+^2 + \lambda_+ \vartheta_+ - 2\kappa_+ = 0$}\, .  \]
As a conclusion, we have obtained the following weighted $L^2$ estimate,
\beq \label{eq:spectral gap 1} (\forall z>0)\quad \left\| \dfrac{f_+(z,v)}{e^{-\lambda_+ z}F_+(v)} - \kappa_+ \right\|_{L^2((v-c)^2 F_+(v)^2d\nu(v))} \leq C e^{-\vartheta_+ z}\, . \eeq
Intuitively, $\vartheta_+>0$ is an estimate of the spectral gap in the spatial decay. Loosely speaking, we have,
\begin{equation*}
f_+(z,v) = \kappa_+ e^{-\lambda_+ z}F_+(v) + \mathcal O\left ( e^{-(\lambda_++ \vartheta_+) z}  \right ) \, .
\end{equation*}
A similar estimate holds true for $z<0$. 
\end{proof}

\subsection{Regularity (uniform with respect to $\omega_0$)}
\label{sec:regularity alpha0}
We investigate the regularity of the macroscopic quantity 
\[ I(z) = \int  T(z,v-c) f(z,v)\, d\nu(v)\, , \]
from which the solution can be reconstructed entirely by the Duhamel formulation along characteristics lines \eqref{eq:duhamel}.

In brief, we obtain that all quantities are smooth expect possibly at $z=0$, and at $v=c$. More precisely, assuming that $c\in \mathrm{Int}\, V$, the following properties hold true: 
\begin{itemize}
\item $f$ is continuous with respect to $z\in \R$, for all $v\in V$,
\item $f$ is discontinuous at $v=c$. This is a consequence of the discontinuity of $T$. Indeed, we can deduce from  \eqref{eq:f I} that $T_+^+ f_+^+(z,c^+) = I_+(z)$ for $z>0$, and also that $T_+^- f_+^-(z,c^-) = I_+(z)$  (see Figure \ref{fig:overshoot}). Thus, $f_+$ is discontinuous at $v=c$ for $z>0$, because $T_+^-\neq T_+^+ $, except in the special case $\chi_S = \chi_N$. It is always discontinuous at $v=c$ for $z<0$.  
\item the spatial density  $\rho$ is smooth on both sides of the origin. Noticeably, $\frac{d\rho}{dz}$ may have a logarithmic singularity at the origin. This singularity plays some important role in the analysis. 
\end{itemize}

\begin{proposition}\label{prop:holder}
Assume that $f$ is normalized to have unit mass \eqref{eq:f unit mass}. Then $f$ is uniformly bounded, and exponentially decreasing as $|z|\to \infty$. Moreover, these two estimates are uniform with respect to $\omega_0$, the lower bound on $\omega$. 
\end{proposition}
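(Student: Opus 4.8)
The plan is to bootstrap from the qualitative statement of Theorem~\ref{theo:cluster}, valid under the additional assumption $\Abc$: it produces a solution $f\in L^1\cap L^\infty(\R\times V)$ that decays exponentially at infinity. The goal is to re-derive both facts \emph{quantitatively}, with constants depending only on $\chi_S,\chi_N$, on $p$ and $\|\omega\|_{L^p}$, on the maximal velocity $\v0$, and on the positivity of $c-c_*$ --- equivalently, on the strict negativity of the mean algebraic run length $R(c)=\int\frac{v-c}{T_+(v-c)}\,d\nu(v)$ from \eqref{eq:condition c*}--\eqref{eq:def c_*} --- but \emph{not} on the lower bound $\omega_0$ supplied by $\Abc$. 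This is exactly what will later allow the limit $\omega_0\to 0$, hence the removal of $\Abc$. The only input used from the normalization is the $L^1$ bound on the tumbling density $I$ of \eqref{eq:I}: since $T(z,v-c)\geq T_{\min}:=1-\chi_S-\chi_N>0$ and $f\geq 0$, the unit-mass condition \eqref{eq:f unit mass} gives $T_{\min}\leq \|I\|_{L^1(\R)}=\iint T(z,v-c)f(z,v)\,d\nu(v)\,dz\leq 1+\chi_S+\chi_N=:T_{\max}$.

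\emph{Uniform $L^\infty$ bound.} From the Duhamel formula \eqref{eq:duhamel}: discarding the exponential factors gives $f(z,v)\leq 2\|I\|_{L^1}/|v-c|$ for every $z$ and $v\neq c$; retaining only the exponential factors gives $f(z,v)\leq 2\|I\|_{L^\infty}/T_{\min}$. Splitting the integral defining $I$ at $|v-c|=\delta$, the far part is then $\leq 2T_{\max}\|I\|_{L^1}/\delta$, while the near part is $\leq T_{\max}\|f\|_{L^\infty}\,\nu(\{|v-c|\leq\delta\})\leq T_{\max}\|\omega\|_{L^p}(2\delta)^{1/p'}\|f\|_{L^\infty}$ by H\"older's inequality --- this is where the hypothesis $p>1$ enters, and the resulting factor is free of $\omega_0$. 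Plugging $\|f\|_{L^\infty}\leq 2\|I\|_{L^\infty}/T_{\min}$ into $\|I\|_{L^\infty}\leq 2T_{\max}\|I\|_{L^1}/\delta+T_{\max}\|\omega\|_{L^p}(2\delta)^{1/p'}\|f\|_{L^\infty}$ and taking $\delta$ small enough that the last term is absorbed into the left-hand side leaves $\|f\|_{L^\infty}\leq C\,\|I\|_{L^1}$ with $C=C(\chi_S,\chi_N,p,\|\omega\|_{L^p})$; this is legitimate because $\|f\|_{L^\infty}<\infty$ a priori by Theorem~\ref{theo:cluster}, and $C$ does not depend on $\omega_0$.

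\emph{Uniform exponential decay.} Fix a small $\mu>0$ and set $J^{\pm}_\mu=\int_0^{\infty}e^{\mu z}I(\pm z)\,dz$, finite for $\mu$ small by Theorem~\ref{theo:cluster}. Inserting \eqref{eq:duhamel} into the identity $I_+=T_+^-\rho_+^-+T_+^+\rho_+^+$ and using Fubini, the contributions originating from $I_+$ itself reproduce $J^+_\mu$ multiplied by the velocity integral
\[ A_+(\mu)=T_+^-\!\!\int_{\{v<c\}}\!\frac{d\nu(v)}{T_+^-+\mu(c-v)}+T_+^+\!\!\int_{\{v>c\}}\!\frac{d\nu(v)}{T_+^+-\mu(v-c)}\,, \]
while the part of $f_+^+$ arriving from the left half-line carries the intrinsic factor $e^{-T_+^+z/(v-c)}$ and hence contributes only a \emph{bounded} term $\leq C\|I\|_{L^1}$, as long as $\mu(\v0-c)<T_{\min}$. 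This yields $(1-A_+(\mu))J^+_\mu\leq C\|I\|_{L^1}$. The key observation is that $A_+(0)=\nu(\{v<c\})+\nu(\{v>c\})=1$, whereas $A_+'(0)=\int\frac{v-c}{T_+(v-c)}\,d\nu(v)=R(c)<0$ precisely because $c>c_*$; since $A_+''$ is controlled on a neighbourhood of $0$ by $T_{\min}$ and $\v0-c$ only, there is a threshold $\mu_0>0$, depending on the quantities listed in the first paragraph but \emph{not on $\omega_0$}, such that $A_+(\mu)\leq 1-\tfrac12|R(c)|\mu<1$ for $0<\mu\leq\mu_0$. Hence $\int_z^{\infty}I_+(y)\,dy\leq e^{-\mu z}J^+_\mu\leq Ce^{-\mu z}$, and symmetrically on the left. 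Interpolating this $L^1$-tail decay against the uniform $L^\infty$ bound and the uniform H\"older bound for $I$ provided by the one-dimensional averaging lemma (whose constant depends only on $\|\omega\|_{L^p}$ and $\|f\|_{L^\infty}$, hence is $\omega_0$-free) upgrades it to a pointwise estimate $I(z)\leq Ce^{-\mu'|z|}$ for some $\mu'\in(0,\mu)$. Re-injecting this into \eqref{eq:duhamel} while \emph{keeping} the exponential factors, e.g. $f_+^-(z,v)\leq C\int_0^\infty e^{-\mu'(z+s(c-v))}e^{-T_+^-s}\,ds\leq (C/T_{\min})e^{-\mu' z}$, and likewise for $v>c$ and for $z<0$, finally gives $\sup_{v\in V}f(z,v)\leq Ce^{-\mu'|z|}$ uniformly in $\omega_0$.

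\emph{Main obstacle.} The crux is the renewal/Laplace-transform estimate of the third paragraph, and within it two points. First, one must check that the cross term coming from the opposite half-line is genuinely bounded rather than proportional to $J^{\mp}_\mu$; this relies on the decay factor $e^{-T_+^+z/(v-c)}$ already present in the Duhamel formula for $f_+^+$, whence the restriction $\mu(\v0-c)<T_{\min}$. Second --- the real difficulty --- one must show that the Perron-type quantity $A_+(\mu)$ drops strictly below its boundary value $A_+(0)=1$, at a rate governed by $R(c)=A_+'(0)<0$, with a threshold $\mu_0$ that \emph{does not degenerate as $\omega_0\to 0$}; this rests on the strict negativity of $R(c)$ throughout the admissible window $c\in(c_*,c^*)$, together with elementary continuity estimates for $A_+$ that involve only $T_{\min}>0$ and $\v0-c$. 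One also has to verify that the averaging-lemma H\"older bound for $I$ is itself $\omega_0$-free, which it is since its constant depends only on $\|\omega\|_{L^p}$ and on $\|f\|_{L^\infty}$, both already controlled uniformly by the first step. The remaining ingredients --- the interpolation from $L^1$-tails to a pointwise bound, and the final re-injection into \eqref{eq:duhamel} --- are routine. It should be stressed that the whole scheme is a bootstrap relying on the \emph{existence} part of Theorem~\ref{theo:cluster}; the pathology flagged in Remark~\ref{rem:no boundedness} (unbounded limiting profiles, Dirac masses in the asymptotics) is exactly why one cannot simply quote the $\omega_0$-dependent constants of that theorem, and is the reason this self-contained argument is needed.
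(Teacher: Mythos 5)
Your proposal is correct in substance but follows a genuinely different route from the paper, so a comparison is in order. For the uniform bound, the paper never uses the a priori $L^\infty$ information from Theorem \ref{theo:cluster}: it runs a finite iterative bootstrap $L^1\to L^{r_1}\to\dots\to L^\infty$ on $I$ via Young's inequality with the kernels $G^\pm$ of Lemma \ref{lem:Gpm}. Your absorption argument (split at $|v-c|=\delta$, use $f\le 2\|I\|_{L^1}/|v-c|$ away from $c$ and $\|f\|_\infty\le 2\|I\|_\infty/T_{\min}$ near $c$, absorb for $\delta$ small) is shorter and equally $\omega_0$-free, at the price of invoking the qualitative finiteness of $\|f\|_\infty$ from Theorem \ref{theo:cluster} to legitimise the absorption; that is acceptable here since the proposition concerns precisely the solution constructed under \Abc. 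For the exponential decay, the paper works with $u_+=e^{\lambda_+z}(F_+)^{-1}f_+$, exploiting that the inverse weight $(F_+)^{-1}=T_+(v-c)-\lambda_+(v-c)$ stays bounded as $\omega_0\to0$, together with the extra conservation law \eqref{eq:def mu}; this gives decay at the sharp rate $\lambda_+$. Your Laplace/renewal inequality $(1-A_+(\mu))J^+_\mu\le C\|I\|_{L^1}$, with $A_+(0)=1$ and $A_+'(0)=R(c)<0$, bypasses $F_+$ entirely and produces a possibly suboptimal rate; since the proposition does not claim the rate $\lambda_\pm$ and its later use (Lemma \ref{lem:closed}) only needs some uniform exponential smallness, this loss is harmless. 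The Fubini computation behind $A_+(\mu)$, the boundedness of the cross term carrying $e^{-T_+^+z/(v-c)}$, the H\"older interpolation from tail decay to pointwise decay, and the re-injection into \eqref{eq:duhamel} are all sound.

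One point needs tightening. The renewal inequality only yields $J^+_\mu\le C$ for those $\mu$ at which $J^+_\mu$ is already known to be finite, and your source of finiteness is Theorem \ref{theo:cluster}, i.e.\ $\mu<\lambda_+(c)$ for the fixed measure. Hence the rate you finally obtain is essentially $\min(\mu_0,\lambda_+)$, and for the decay estimate to be uniform with respect to $\omega_0$ — the whole point of the proposition, which feeds the limit $\omega_0\to0$ — you must also check that $\lambda_+$ does not collapse as $\omega_0\to0$. This is true and easy to add: the function $Q_+(\lambda)=\int\frac{v-c}{T_+(v-c)-\lambda(v-c)}\,d\nu(v)$ satisfies $Q_+(0)=R(c)<0$ and $Q_+'(\lambda)\le C$ whenever $\lambda(\v0-c)\le T_+^+/2$, with $C$ depending only on $T_\pm^\pm$ and the velocity bound, so $\lambda_+\ge\min\bigl(T_+^+/(2(\v0-c)),\,|R(c)|/C\bigr)$, an $\omega_0$-free bound. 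Note that the obvious alternative of truncating the Laplace transform at $z=R$ does not close, because the shifted tails $\int_R^{R+s(c-v)}e^{\mu y}I_+(y)\,dy$ are not controlled by the truncated quantity; so this one-line lower bound on $\lambda_+$ (or an equivalent continuation device) should be stated explicitly. With that addendum your argument is complete.
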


\begin{proof}
We develop a bootstrap argument, focusing on the regularity of $I$. The first immediate observation is that $I\in L^1$, more precisely $\|I\|_{L^1}\leq (\max T)\|f\|_{L^1}$. 
\medskip

\noindent\textbf{Step  \#1. $I$ is uniformly bounded.}
We start from the self-convolution structure of $I$. Indeed, we deduce from \eqref{eq:duhamel} that we have for $z>0$,
\beq\label{eq:self convol 1} \rho_+^-(z) = \int_{-\infty}^0 I_+(z-y) \left(  \int_{\{v<c\}} \exp\left( T_+^-\dfrac y{|v-c|}  \right)\dfrac{\omega(v)}{|v-c|}\, dv\right) \, dy\, .  \eeq
On the other hand, we have
\begin{multline}\label{eq:self convol 2} 
\rho_+^+(z) = 
  \int_{\{v>c\}} \left( \int_{-\infty}^0  I_-(y)   \exp\left( T_-^+ \dfrac y{v-c} \right)\, dy \right) \exp\left(-T_+^+\frac z{v-c}\right) \dfrac{\omega(v)}{v-c}\, dv  
\\ + \int_0^{ z }  I_+(z - y) \left(\int_{\{v<c\}} \exp\left(-T_+^+ \dfrac y{v-c}\right)\dfrac{\omega(v)}{v-c}\, dv \right)\, dy\, .
\end{multline}

We define accordingly the two auxiliary functions 
\[ \begin{cases}
(\forall y<0)\quad \displaystyle G^-(y) =   \int_{\{v<c\}} \exp\left( T_+^-\dfrac y{|v-c|}  \right)\dfrac{\omega(v)}{|v-c|}\, dv \medskip\\
(\forall y>0)\quad \displaystyle G^+(y) =   \int_{\{v>c\}} \exp\left( - T_+^+\dfrac y{v-c}  \right)\dfrac{\omega(v)}{v-c}\, dv
\end{cases}\]

\begin{lemma}\label{lem:Gpm}
Let $p\in (1,\infty)$ such that $\omega \in L^p$ $\Ac$. Then there exist positive constants $C$ and $\varsigma$ such that 
\[ \begin{cases}
(\forall y\in (0,1))\quad G^\pm(|y|) \leq C |y|^{-1/p} \medskip\\
(\forall y\in (1,+\infty)) \quad G^\pm(|y|) \leq C \exp(-\varsigma |y|)\, . 
\end{cases} \] 
\end{lemma}

\begin{remark}
In the case $p = \infty$, the first item should be replaced with a logarithmic growth, \ie  $G^\pm(y) \leq C \log(1/|y|)$.
\end{remark}

\begin{proof}
We deduce from the H\"older inequality that,
\begin{align*}
G^-(|y|) &\leq \left( \int_{-\vm}^c \exp\left( - p' T_+^-\dfrac {|y|}{|v-c|}  \right) \dfrac{1}{|v-c|^{p'}}\, dv \right)^{1/p'} \|\omega\|_p \\
& \leq  |y|^{1/p'-1} \left( \int_{y/(c+\v0)}^{+\infty} \exp\left( - p' T_+^- s  \right) s^{p'-2} \, ds \right)^{1/p'} \|\omega\|_p\,.
\end{align*}
In the case $p<\infty$, \ie $p'-2>-1$, we clearly have $G^-(|y|)\leq C |y|^{-1/p}\|\omega\|_p$ everywhere since the first integral is convergent as $y\to 0$. We get immediately exponential decay for some absolute $\varsigma>0$ for large $y$, say $\varsigma = (1/2)T_+^-/(c+\v0)$. In the case $p = \infty$, then the integral is not convergent as $y \to 0$, but is equivalent to $C \log(1/|y|)$ for some constant $C$. 

Similar estimates hold true for $G^+$. 
\end{proof}

Lemma \ref{lem:Gpm} is useful because it enables to reformulate \eqref{eq:self convol 1}--\eqref{eq:self convol 2} as follows
\[ \rho_+^-(z) = \int_{-\infty}^0 I_+(z-y) G^-(y)\, dy\, , \]
with $G^- \in L^q$ for $1\leq q < p$. 
Similarly, we have 
\begin{multline*}
\rho_+^+(z) = 
  \int_{\{v>c\}} \left( \int_{-\infty}^0  I_-(y)   \exp\left( T_-^+ \dfrac y{v-c} \right)\, dy \right) \exp\left(-T_+^+\frac z{v-c}\right) \dfrac{\omega(v)}{v-c}\,dv
 \\ + \int_0^{ z }  I_+(z - y) G^+(y)\, dy\, .
\end{multline*}
Fix $1< q < p$ to be chosen later. The first iteration of the bootstrap argument yields \[\|\rho_+^-\|_{L^q} \leq \|I_+\|_{L^1} \|G^-\|_{L^q} \,,\]
and also
\[ \|\rho_+^+\|_{L^q} \leq \|I_-\|_{L^1} \|G^+\|_{L^q} + \|I_+\|_{L^1} \|G^+\|_{L^q} \, , \]
where we have simply used $\exp\left( T_-^+ \frac y{v-c} \right)\leq 1$ in the first contribution. 

Similar estimates hold true on the negative side, for $\rho_-^\pm$. Thus, we can prove that $I_\pm = T_\pm^- \rho_\pm^- + T_\pm^+ \rho_\pm^+$ belongs to $L^q$. 

Then, we proceed recursively, in order to gain integrability in a finite number of iterations.  We get immediately the following recurrence formula,
\beq\label{eq:recursive exponent}\|\rho_+^-\|_{L^{r_{n+1}}} \leq \|I_+\|_{L^{r_n}} \|G^-\|_{L^q} \,, \quad \frac1{r_{n+1}} = \frac1{r_{n}} - \frac1{q'}\, . \eeq
The analogous one for $\rho_+^+$ requires one  more argument because the transfer term from negative to positive side is not of convolution type. Indeed, we have
\begin{align*} \int_{-\infty}^0  I_-(y)   \exp\left( T_-^+ \dfrac y{v-c} \right)\, dy 
&  \leq \|I_-\|_{L^{r}} \left( \int_{-\infty}^0  \exp\left( r' T_-^+ \dfrac y{v-c} \right)\, dy \right)^{1/r'} \\
& \leq  C \|I_-\|_{L^{r}}  (v-c)^{1/r'}\, .
\end{align*}
Now, back to the proof of Lemma \ref{lem:Gpm}, we realize that 
\begin{multline}\label{eq:568}  \int_{\{v>c\}} (v-c)^{1/{r}'}   \exp\left(-T_+^+\frac z{v-c}\right) \dfrac{\omega(v)}{v-c}\, dv \\ \leq   z^{1/r' -1 + 1/p'} \left( \int_{z/(\v0-c)}^{+\infty} \exp\left( - p' T_+^+ s  \right) s^{(-p'/r' + p'-1)-1} \, ds \right)^{1/p'} \|\omega\|_p \, , 
\end{multline}
where we impose the constraint $1/r' - 1 + 1/p'<0$, to ensure $-p'/r' + p'-1 >0$. 
We deduce, 
\begin{align}
\|\rho_+^+\|_{L^{r_{n+1}}} & \leq  \|I_-\|_{L^{r_n}} \left\| \int_{\{v>c\}} (v-c)^{1/{r_n}'}   \exp\left(-T_+^+\frac z{v-c}\right) \dfrac{\omega(v)}{v-c}\, dv  \right\|_{L^{r_{n+1}}} + \|I_+\|_{L^{r_n}} \|G^+\|_{L^{q}} \label{eq:457} \\
& \leq C \|I_-\|_{L^{r_n}} + \|I_+\|_{L^{r_n}} \|G^+\|_{L^{q}} \, , \nonumber 
\end{align}
provided that the following condition is fulfilled,
\[ r_{n+1}\left( \dfrac1{r_n'} -1 + \frac1{p'} \right) >-1 \quad \Leftrightarrow \quad \frac1{r_{n+1}} - \frac1{r_{n}} + \frac1{p'} >0 \, , \]
to get integrability of the first contribution in  the r.h.s. of \eqref{eq:457}. This last inequality holds true when $(r_n)$ satisfies the recurrence relation \eqref{eq:recursive exponent}, because $q<p$. 

Now, choose $1<q<p$ such that $q' = N$ is the smallest integer larger than $p'$. Then the recurrence relation on $(r_n)$ \eqref{eq:recursive exponent}, with the initial condition $r_0 = 1$,   guarantees that $1/r_N  = 0$, \ie $r_N = \infty$. In fact, the last step of the iteration should be done with caution. At the $N-1$ step, we have obtained 
$ I_{\pm} \in L^{N} $. Young's inequality still holds, namely $\|\rho_+^-\|_\infty \leq \|I_+\|_{L^{N}} \|G^-\|_{L^{N/(N-1)}}$ (it is simply H\"older's inequality here). However, the estimate \eqref{eq:568} becomes
\begin{equation*}
\int_{\{v>c\}} (v-c)^{1-1/N}   \exp\left(-T_+^+\frac z{v-c}\right) \dfrac{\omega(v)}{v-c}\, dv \leq \left( \int_{\{v>c\}} (v-c)^{-p'/N} \, dv \right)^{1/p'} \|\omega\|_p \, .
\end{equation*}
The last contribution is bounded since $N>p'$. 
This procedure enables to bootstrap from $L^1$ to $L^\infty$ in a finite number of iterations, provided $p>1$.

As a corollary, we get that $f$ is uniformly bounded too. This is a direct consequence of the Duhamel formulation \eqref{eq:duhamel},
\[
\begin{cases}
\|f_+^-\|_\infty \leq \dfrac{\|I_+\|_\infty}{T_+^-}\,, \medskip\\
\|f_+^+\|_\infty \leq \dfrac{\|I_-\|_\infty}{T_-^+} + \dfrac{\|I_+\|_\infty}{T_+^+}\, .
\end{cases}
\]
\medskip

\noindent\textbf{Step  \#2. $I$ is H\"older continuous.}
We apply one-dimensional averaging lemma \cite{golse_regularity_1988}. For $z,z'>0$, we have
\begin{align*} 
I_+(z) - I_+(z') &= \left( \int_{|v-c|<\delta/2} + \int_{|v-c|>\delta/2} \right ) T_+(v-c) \left(f_+(z,v) - f_+(z',v)\right)  \omega(v)\, dv\\
& \leq  2 (\max T) \|f_+\|_\infty \delta^{1/p'}  \|\omega\|_p +  (\max T) \left(\int_{|v-c|>\delta/2} \dfrac{\omega(v)}{|v-c|}\, dv\right) \|(v-c) \partial_z f_+\|_\infty |z-z'| \\
& \leq 2 (\max T) \|f_+\|_\infty \delta^{1/p'}  \|\omega\|_p  +  2(\max T)^2 \|f_+\|_\infty C \delta^{1/p'-1} \|\omega\|_p |z-z'|\, . 
\end{align*}
By choosing $\delta = C (\max T) |z - z'|$, we obtain
\beq\label{eq:I_+ holder continuous} I_+(z) - I_+(z')  \leq C \|f_+\|_\infty \|\omega\|_p |z-z'|^{1/p'}\, . \eeq
Note that in the case $p=\infty$ ($p' = 1$), we cannot get any Lipschitz estimate, but there is a logarithmic correction. However, this is not important for the remaining of this work.
\medskip

\noindent\textbf{Step  \#3. $I$ is exponentially decaying.}
Recall that the maximum principle for the normalized solution $u_+(z,v) = e^{\lambda_+ z}(F_+(v))^{-1}f_+(z,v)$ yields the following estimate,
\beq \|u_+\|_{L^\infty(\R_+\times V)} \leq  \|(F_+(v))^{-1} f_+(0,v)\|_{L^\infty(\{v>c\})}\, . \label{eq:5856} \eeq
The key point is that, although $F_+(v)$ may not be uniformly bounded with respect to $\omega_0$ (see Remark \ref{rem:no boundedness}), it happens surely for $(F_+(v))^{-1} = T_+(v-c) - \lambda_+ (v-c)$. Therefore, $u_+$ is uniformly bounded. Of course, this does not control all velocities uniformly with respect to $\omega_0$, but this is yet an important information. 

Next, we use the additional conservation law \eqref{eq:def mu}. It reads 
\begin{align}   
\int (v-c)^2 F_+(v) f_+(z,v)\, d\nu(v) &  = e ^{-\lambda_+z}  \int (v-c)^2 F_+(v) f_+(0,v)\, d\nu(v)\label{eq:567967} \\
& \leq C e ^{-\lambda_+z} \|f_+\|_\infty \int F_+(v) d\nu(v)\, .\nonumber
\end{align} 
The last quantity is uniformly bounded, because $\lambda_+$ is defined such as $\int T_+ F_+ \, d\nu  = 1$ \eqref{eq:dispersion lambda+}. In order to control the exponential decay of $I_+$ from \eqref{eq:567967}, it remains to control small relative velocities, \ie $|v-c|\ll1$. But this is guaranteed by the pointwise estimate \eqref{eq:5856} which reads as follows,
\beq\label{eq:589808} f_+(z,v) \leq  \|u_+\|_{L^\infty(\R_+\times V)} F_+(v) e^{-\lambda_+z} \, . \eeq
The function $F_+$ possibly diverges only at $v = \v0$ in the limit  $\omega_0 \to  0$, see \eqref{eq:asymptot dirac mass}. 
On the other hand, the obvious estimate $c^*<\v0$ ensures that there exists $\delta>0$ such that $F_+$ is uniformly bounded on $(c-\delta,c+\delta)$. 

To conclude, we combine \eqref{eq:567967} and \eqref{eq:589808} as follows,
\begin{align*} 
I_+(z) & = \left (\int_{|v-c|<\delta} + \int_{|v-c|>\delta}\right ) T_+(z,v-c)  f_+(z,v) \, d\nu(v)  \\
& \leq C (\max T) \left(\sup_{v\in (c-\delta,c+\delta)} F_+(v)\right) e^{-\lambda_+z} + C \delta^{-2} \left(\max T \right) \left( \sup F_+(v)^{-1}\right)   e^{-\lambda_+z}\\
& \leq C e^{-\lambda_+z}\, .
\end{align*}
A similar estimate holds true for $I_-$.  
\end{proof}

\subsection{Further regularity}\label{sec:Further regularity}
We can obtain better regularity for $z>0$ (resp. $z<0$) by bootstrap. This will be needed in Section \ref{sec:monotonicity}, as we shall compute derivatives of $\rho^{\pm}$ with respect to $z$.

First, we establish that $f$ is H\"older continuous with respect to both $z,v$. We use the formulation of $f$ along characteristic lines \eqref{eq:duhamel}.
Let $0<z_1<z_2$. We have successively
\begin{equation*}
| f_+^-(z_1,v) - f_+^-(z_2,v) |  \leq \int_0^{+\infty} [I_+]_{1/p'}  |z_1-z_2|^{1/p'}  \exp(-T_+^- s)\, ds = \dfrac{[I_+]_{1/p'}}{T_+^-} |z_1-z_2|^{1/p'} \,,
\end{equation*}
and
\begin{multline*}
| f_+^+(z_1,v) - f_+^+(z_2,v) | \leq \dfrac{\|I_-\|_\infty}{T_-^+} 
\left[\exp\left(-T_+^+\frac \bullet{v-c}\right)\right]_{\mathcal C^{0,1/p'}(z_1,z_2)}|z_1 - z_2|^{1/p'}
\\
+  \dfrac{\|I_+\|_\infty}{T_+^+} \left| \exp\left(-T_+^+\frac{ z_1}{v-c}\right) - \exp\left(-T_+^+\frac {z_2}{v-c}\right) \right|   + \dfrac{[I_+]_{1/p'}}{T_+^+}|z_1 - z_2|^{1/p'}  \,.
\end{multline*}
The H\"older regularity of the exponential function with respect to $z$ is evaluated as follows,
\begin{align*}
\left| \exp\left(-T_+^+\frac{ z_1}{v-c}\right) - \exp\left(-T_+^+\frac {z_2}{v-c}\right) \right|
& \leq \left( \int_{z_1}^{z_2} \left(\dfrac{T_+^+}{v-c}\right )^p \exp\left ( - p \frac{y}{v-c}\right )\, dy \right )^{1/p} |z_1 - z_2|^{1/p'}\\
& \leq C \left( \int_{z_1}^{z_2} y^{-p}\, dy \right )^{1/p} |z_1 - z_2|^{1/p'}
\\
&\leq C z_1^{-1/p'}|z_1 - z_2|^{1/p'}\, .
\end{align*}
We deduce the following H\"older regularity for $f_+^+$,
\begin{equation*}
| f_+^+(z_1,v) - f_+^+(z_2,v) | \leq C \left( z_1^{-1/p'} \vee 1  \right) |z_1 - z_2|^{1/p'}\,.
\end{equation*}
We notice that regularity of $f$ degenerates as $z\to 0$. We will see a more quantitative feature of this degeneracy in Lemma \ref{lem:zero monotonicity} below. Now, we are in position to improve slightly the regularity of $I_+$, locally uniformly for $z\in (0,+\infty)$. We perform the same computation as \eqref{eq:I_+ holder continuous}:
\begin{align} 
\left |I_+(z_1) - I_+(z_2) \right |&\leq\left ( \int_{|v-c|<\delta} + \int_{|v-c|>\delta} \right ) T_+(v-c) \left|f_+(z_1,v) - f_+(z_2,v)\right|  \omega(v)\,dv\label{jbhlmiugbi}\\
& \leq   C \left( z_1^{-1/p'}\vee 1  \right)  \delta^{1/p'}  \|\omega\|_p |z_1 - z_2|^{1/p'}  +  C \delta^{1/p'-1} \|\omega\|_p |z_1-z_2|\nonumber\\
& \leq C  \left( z_1^{-1/(pp')}\vee 1  \right)|z_1-z_2|^{1-1/p^2}\nonumber\, .
\end{align}
%
%
Therefore, we have gained regularity, since $1-1/p^2> 1-1/p$. However, this  is clearly not uniform up to $z = 0$ due to the pre-factor. 

It is possible to iterate the whole argument, in order to prove that $I_+$ belongs to the family of H\"older spaces $\mathcal C^{0,\theta_n}$, with  $\theta_n = 1-1/p^n$, for all $n\in \N^*$. 
Indeed, at the next step, the same lines of calculation yields
\begin{align*}
| f_+^-(z_1,v) - f_+^-(z_2,v) | & \leq    \int_0^{+\infty} C  \left( z_1^{-1/(pp')}\vee 1  \right)  |z_1-z_2|^{1-1/p^2}  \exp(-T_+^- s)\, ds \\
& \leq C  \left( z_1^{-1/(pp')}\vee 1  \right) \dfrac{1}{T_+^-} |z_1-z_2|^{1-1/p^2} \,,
\end{align*}
and
\begin{align}
| f_+^+(z_1,v) - f_+^+(z_2,v) | &\leq \left (\dfrac{\|I_-\|_\infty}{T_-^+} +  \dfrac{\|I_+\|_\infty}{T_+^+} \right )
\left[\exp\left(-T_+^+\frac \bullet{v-c}\right)\right]_{\mathcal C^{0,1-1/p^2}(z_1,z_2)}|z_1 - z_2|^{1-1/p^2}
\nonumber\\
& \quad + \int_{0}^{\frac{z_1}{v-c}} \left| I_+(z_1 - s(v-c)) - I_+(z_2 - s(v-c))\right | \exp(-T_+^+ s)\, ds \nonumber \\
&\leq C z_1^{1/p^2-1} |z_1 - z_2|^{1-1/p^2}\nonumber \\
& \quad + C \int_{0}^{\frac{z_1}{v-c}} \left (\left( z_1 - s(v-c) \right )^{-1/(pp')}\vee 1\right ) |z_1-z_2|^{1-1/p^2}   \exp(-T_+^+ s)\, ds\, .\label{eqhbglygvbl}
\end{align}
The last integral contribution can be evaluated as follows,
\begin{align}
& \int_{0}^{\frac{z_1}{v-c}} \left (\left( z_1 - s(v-c) \right )^{-1/(pp')}\vee 1\right )    \exp(-T_+^+ s)\, ds \nonumber\\
& =  \exp\left(-T_+^+ \frac{z_1}{v-c}\right ) \int_{0}^{\frac{z_1}{v-c}} \left ((s'(v-c))^{-1/(pp')}\vee 1\right )    \exp(-T_+^+ s')\, ds'\nonumber\\
& \leq C (v-c)^{-1/(pp')} \exp\left(-T_+^+ \frac{z_1}{v-c}\right ) \int_{0}^{+\infty} \left (s^{-1/(pp')}\vee 1\right )    \exp(-T_+^+ s)\, ds\nonumber\\
& \leq C z_1 ^{-1/(pp')}\,.\label{eqhbglygvbl2}
\end{align}
Combining \eqref{eqhbglygvbl} and \eqref{eqhbglygvbl2}, we deduce
\begin{align*}
| f_+^\pm(z_1,v) - f_+^\pm(z_2,v) |& \leq C \left( z_1^{1/p^2-1}\vee z_1^{-1/(pp')}\vee 1\right ) |z_1-z_2|^{1-1/p^2} \\
\leq &  C \left( z_1^{1/p^2-1}\vee 1\right ) |z_1-z_2|^{1-1/p^2}\, .
\end{align*}
The last inequality is a consequence of $1-1/p ^2 > 1/(pp')$. 
Then, we update the H\"older estimate \eqref{jbhlmiugbi} to
\begin{equation*}
\left |I_+(z_1) - I_+(z_2) \right | \leq C  \left( z_1^{1/p'+1/p^3-1}\vee 1  \right)|z_1-z_2|^{1-1/p^3}\, .
\end{equation*}
Iteratively, for all $n\in \N$,  we obtain the existence of $C_n$ such that  
\begin{equation}\label{eq:macro holder I}
\left |I_+(z_1) - I_+(z_2) \right | \leq C_n  \left( z_1^{1/p'+1/p^n-1}\vee 1  \right)|z_1-z_2|^{1-1/p^n}\, .
\end{equation}
A similar result holds true on the negative side, \ie for $z<0$. 

We can establish the following result as a by-product of this recursive H\"older regularity. 

\begin{proposition}\label{prop:rho lip}
The macroscopic quantities $\rho_+^\pm$ (resp. $\rho_-^\pm$)  are Lipschitz continuous, locally uniformly on $(0,+\infty)$ (resp. on $(-\infty,0)$).
\end{proposition}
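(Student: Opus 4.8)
The plan is to upgrade the recursive Hölder estimate \eqref{eq:macro holder I} for $I_\pm$ to a Lipschitz bound on $\rho_+^\pm$ via the Duhamel representation \eqref{eq:self convol 1}--\eqref{eq:self convol 2}, working on a compact subinterval $[a,b]\subset(0,+\infty)$. The key observation is that $\rho_+^-$ is a genuine convolution of $I_+$ with $G^-$ (and similarly the local part of $\rho_+^+$), so differentiating in $z$ transfers the derivative onto the smooth factor. Concretely, I would first fix $n$ large enough that $\theta_n = 1-1/p^n > 1/p$; then for $z_1,z_2\in[a,b]$ write
\[
\rho_+^-(z_1) - \rho_+^-(z_2) = \int_{-\infty}^0 \bigl(I_+(z_1-y) - I_+(z_2-y)\bigr) G^-(y)\, dy\,,
\]
and split the $y$-integral at $y = -\eta$. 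On $\{y<-\eta\}$ the arguments $z_i - y$ stay in a region where \eqref{eq:macro holder I} gives a clean $|z_1-z_2|^{\theta_n}$ bound with a constant uniform in $y$ (since $z_i-y\geq a$), and $G^-\in L^1$ controls the integral; on $\{-\eta<y<0\}$ one uses $\|G^-\|_{L^1(-\eta,0)}\to 0$ together with the uniform bound $\|I_+\|_\infty$ from Proposition~\ref{prop:holder}. Optimizing $\eta$ in terms of $|z_1-z_2|$ produces a Hölder exponent $\min(\theta_n,\cdot)$ strictly above any prescribed value $<1$; iterating $n$ this pushes the exponent to $1$, i.e. Lipschitz regularity.

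For $\rho_+^+$ the argument has two pieces. The local convolution term $\int_0^z I_+(z-y)G^+(y)\,dy$ is handled exactly as above, with the extra subtlety that the upper limit $z$ now also depends on the base point; the boundary contribution is $I_+(0^+)G^+(z)$-type and is controlled because $G^+$ is smooth away from $0$ and $z\geq a>0$. The transfer term from the negative side,
\[
\int_{\{v>c\}}\left(\int_{-\infty}^0 I_-(y)\exp\!\left(T_-^+\tfrac{y}{v-c}\right)dy\right)\exp\!\left(-T_+^+\tfrac{z}{v-c}\right)\dfrac{\omega(v)}{v-c}\,dv\,,
\]
is in fact \emph{smooth} in $z$ on $(0,+\infty)$: differentiating brings down a factor $(v-c)^{-1}$, and the estimate \eqref{eq:568}-type bound (with the exponent shifted) shows the resulting velocity integral converges and is continuous in $z$, uniformly on $[a,b]$, since the exponential $\exp(-T_+^+ z/(v-c))$ kills the small-$(v-c)$ singularity once $z\geq a>0$. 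This gives $C^1$, hence locally Lipschitz, for that term. Summing the three contributions and using $I_+ = T_+^-\rho_+^- + T_+^+\rho_+^+$ (and the analogous decomposition on the negative side), one concludes that all of $\rho_+^\pm$, $\rho_-^\pm$ are locally Lipschitz away from the origin.

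The main obstacle is bookkeeping the non-uniformity as $z\to 0$: the Hölder constants $C_n$ in \eqref{eq:macro holder I} blow up like $z_1^{1/p'+1/p^n-1}$, so one must genuinely restrict to $[a,b]$ with $a>0$ and keep all constants depending on $a$ — this is why the statement is only \emph{locally} uniform on $(0,+\infty)$, matching the degeneracy of regularity at $z=0$ already flagged before Lemma~\ref{lem:zero monotonicity}. A secondary technical point is that the iteration in $n$ must be arranged so that each step gains a fixed amount toward exponent $1$ without the $a$-dependent prefactor degenerating further with $n$; since $\theta_n\to 1$ and the prefactor exponent $1/p'+1/p^n-1 \to 1/p'-1$ stays bounded, finitely many iterations suffice on any fixed $[a,b]$. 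I would also double-check that the $v$-integral defining the negative-to-positive transfer term can indeed be differentiated under the integral sign on $[a,b]$, which follows from dominated convergence using the $\exp(-T_+^+ a/(v-c))$ factor as the integrable dominating weight.
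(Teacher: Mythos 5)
There is a genuine gap at the heart of your plan: you deduce Lipschitz continuity from the fact that the H\"older exponent $\theta_n = 1-1/p^n$ can be pushed arbitrarily close to $1$. This inference is not valid. Finitely many iterations only ever give $\mathcal C^{0,\theta_n}$ with $\theta_n<1$ strictly, and membership in $\mathcal C^{0,\theta}$ for \emph{every} $\theta<1$ still does not imply Lipschitz unless the constants are uniform in $n$ (think of a modulus of continuity $|z_1-z_2|\log(1/|z_1-z_2|)$); the constants $C_n$ in \eqref{eq:macro holder I} are produced by a bootstrap and nothing controls them as $n\to\infty$. Moreover, the convolution step you rely on cannot create the missing gain: convolving $I_+$ with the kernels $G^\pm$, which are merely $L^q$ ($q<p$) with a $|y|^{-1/p}$ singularity at the origin, reproduces the modulus of continuity of $I_+$ but does not improve it, and transferring the $z$-derivative onto $G^\pm$ fails because $(G^\pm)'$ behaves like $|y|^{-1-1/p}$ near $y=0$, which is not integrable. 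So the splitting at $y=-\eta$ and optimization in $\eta$ yields at best the same exponent $\theta_n$ you started from, never a bounded difference quotient. (Your treatment of the negative-to-positive transfer term in \eqref{eq:self convol 2} is fine — that piece is indeed smooth for $z\geq a>0$ — but it is not where the difficulty lies.)

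The missing idea is a cancellation in the velocity variable, not an improvement of the spatial exponent. The paper writes the derivative exactly from the kinetic equation \eqref{eq:f I},
\begin{equation*}
\dfrac{d\rho_+^\pm}{dz}(z) = \int_{\{\pm(v-c)>0\}} \dfrac1{v-c}\left( I_+(z) - T_+^\pm f_+^\pm(z,v)\right)\, d\nu(v)\,,
\end{equation*}
and observes that for $z>0$ one has $I_+(z) = \lim_{v\to c} T_+^\pm f_+^\pm(z,v)$, read off from the Duhamel formula \eqref{eq:duhamel}. The integrand is therefore $\frac{1}{v-c}$ times an increment of $f$ in velocity, and the spatial H\"older estimate \eqref{eq:macro holder I} is first converted, again through Duhamel, into H\"older regularity of $f_+^\pm$ with respect to $v$ of exponent $1-1/p^n$ (estimate \eqref{eq:jbmgiufvghl}); choosing $n$ with $p^n>p'$ makes $|v-c|^{-1/p^n}$ integrable against $\omega\in L^p$, so the derivative is bounded locally uniformly on $(0,+\infty)$. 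Without this $v$-regularity step and the boundary cancellation at $v=c$, the convolution bookkeeping you propose cannot reach a Lipschitz bound.
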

\begin{proof}
Firstly, we transfer the macroscopic H\"older regularity \eqref{eq:macro holder I} to partial H\"older regularity with respect to velocity. We have successively, for $-\v0<v_1<v_2<c$,
\begin{align}
| f_+^-(z,v_1) - f_+^-(z,v_2) | & \leq C_n  \int_0^{+\infty}  \left( z_1^{1/p'+1/p^n-1}\vee 1  \right) s^{1-1/p^n} |v_1-v_2|^{1-1/p^n}   \exp(-T_+^- s)\, ds \nonumber \\
& \leq C_n \left( z_1^{1/p'+1/p^n-1}\vee 1  \right) |v_1-v_2|^{1-1/p^n}\,, \label{eq:byofftu}
\end{align}
and for  $c<v_1<v_2<\v0$,
\begin{align*}
& | f_+^+(z,v_1) - f_+^+(z,v_2) |
\\
& \leq C_n  \left(\int_0^{+\infty} \left( (s(v_1-c))^{1/p'+1/p^n-1}\vee 1  \right)s^{1-1/p^n} |v_1-v_2|^{1-1/p^n}  \exp(-T_-^+ s)\, ds\right)
\exp\left(-T_+^+\frac z{v_1-c}\right)\\
&\quad + \left (\dfrac{\|I_-\|_\infty}{T_-^+} +  \dfrac{\|I_+\|_\infty}{T_+^+} \right ) \left[\exp\left(-T_+^+\frac z{\bullet-c}\right)\right]_{\mathcal C^{0,1-1/p^n}(v_1,v_2)}|v_1 - v_2|^{1-1/p^n}\\
&\quad + C_n \int_0^{\frac{z}{v_2-c}} \left( (z-s(v_2-c))^{1/p'+1/p^n-1}\vee 1  \right)s^{1-1/p^n} |v_1-v_2|^{1-1/p^n}  \exp(-T_+^+ s)\, ds
\\
& \leq C_n  \left(\int_0^{+\infty} \left( s^{1/p'}\vee s^{1-1/p^n}  \right)  \exp(-T_-^+ s)\, ds\right)
\left( (v_1-c)^{1/p'+1/p^n-1}\exp\left(-T_+^+\frac z{v_1-c}\right)\right)|v_1-v_2|^{1-1/p^n} \\
&\quad + \left (\dfrac{\|I_-\|_\infty}{T_-^+} +  \dfrac{\|I_+\|_\infty}{T_+^+} \right ) \left[\exp\left(-T_+^+\frac z{\bullet-c}\right)\right]_{\mathcal C^{0,1-1/p^n}(v_1,v_2)}|v_1 - v_2|^{1-1/p^n}\\
&\quad + C_n \exp\left (-T_+^+ \frac{z}{v_2-c}\right ) \left(\int_0^{\frac{z}{v_2-c}} \left( (s'(v_2-c))^{1/p'+1/p^n-1}\vee 1  \right)\left(\frac{z}{v_2-c}\right)^{1-1/p^n}  \exp(-T_+^+ s')\, ds'\right)  |v_1-v_2|^{1-1/p^n}\,.
\end{align*}
Let $q = p^n$ be the exponent such that $1/q' = 1-1/p^n$, and $\epsilon>0$ to be chosen below. The H\"older regularity of the exponential function with respect to $v$ is evaluated as follows,
\begin{align*}
& \left| \exp\left(-T_+^+\frac{ z}{v_1-c}\right) - \exp\left(-T_+^+\frac {z}{v_2-c}\right) \right| \\
& \leq \left( \int_{v_1}^{v_2} \left(\dfrac{T_+^+ z}{(w-c)^2}\right )^q \exp\left ( - q \frac{z}{w-c}\right )\, dw \right )^{1/q} |v_1 - v_2|^{1/q'}\\
& \leq C z  \left( \int_{v_1}^{v_2} \left(\dfrac{1}{w-c}\right )^{2q-1+\epsilon q} \exp\left ( - q \frac{z}{w-c}\right ) (w-c)^{-1+\epsilon q}  \, dw \right )^{1/q} |v_1 - v_2|^{1/q'}
\\
&\leq C z^{1 -2  + 1/q-\epsilon} \left( \int_{v_1}^{v_2}  (w-c)^{-1+\epsilon q} \, dw \right )^{1/q} |v_1 - v_2|^{1/q'}  \\
& \leq C(\epsilon) z^{1/p^n-1-\epsilon} |v_1 - v_2|^{1-1/p^n} \, .
\end{align*}
Consequently, we deduce:
\begin{align}
| f_+^+(z,v_1) - f_+^+(z,v_2) | \nonumber
& \leq C_n  
z^{1/p'+1/p^n-1}|v_1-v_2|^{1-1/p^n} \\
&\quad + C(\epsilon) z^{1/p^n-1-\epsilon}  |v_1 - v_2|^{1-1/p^n} + C_n  
z^{1/p'+1/p^n-1}  |v_1 - v_2|^{1-1/p^n}\nonumber\\
& \leq C_n(\epsilon)  \left(z^{1/p^n-1-\epsilon}\vee z^{1/p'+1/p^n-1}   \right ) |v_1 - v_2|^{1-1/p^n}  \label{eq:juhoiuphgl}\,.
\end{align}
Combining \eqref{eq:byofftu} and \eqref{eq:juhoiuphgl}, we obtain eventually,
\begin{equation}
| f_+^\pm(z,v_1) - f_+^\pm(z,v_2) | \leq C_n  (\epsilon) \left(z^{1/p^n-1-\epsilon}\vee 1\right ) |v_1 - v_2|^{1-1/p^n}\,.
\label{eq:jbmgiufvghl}
\end{equation}

Secondly, we compute directly the derivatives of $\rho_+^-$ and $\rho_+^+$, based on \eqref{eq:f I}: 
\begin{align*}
\dfrac {d \rho_+^-}{dz}(z) & = \int_{\{v<c\}}\dfrac1{v-c}\left( I_+(z) - T_+^- f_+^-(z,v)\right)\, d\nu(v) 
\\
\dfrac {d \rho_+^+}{dz}(z) & = \int_{\{v>c\}}\dfrac1{v-c}\left( I_+(z) - T_+^+ f_+^+(z,v)\right)\, d\nu(v)\, . 
\end{align*}
We observe that for $z>0$, $I_+(z) = \lim_{v\nearrow c} T_+^- f_+^-(z,v) $, as can be seen directly on \eqref{eq:duhamel}. We have $I_-(z) = \lim_{v\searrow c} T_+^+ f_+^+(z,v) $ as well. In the latter case, the condition $z>0$ is crucial. 
Consequently, we can use the H\"older regularity of $f$, with respect to velocity \eqref{eq:jbmgiufvghl}, so as to get
\begin{align*}
\left|\dfrac {d \rho_+^+}{dz}(z)\right| & \leq   T_+^+ \int_{\{v>c\}}\dfrac1{v-c}\left| f_+^+(z,c) -  f_+^+(z,v)\right|\, d\nu(v)\\
& \leq   T_+^+ \left( \int_{\{v>c\}}  \left(\dfrac1{v-c}\left| f_+^+(z,c) -  f_+^+(z,v)\right|\right)^{p'}\, dv\right)^{1/p'} \|\omega\|_p\\
& \leq C_n(\epsilon) \left(z^{1/p^n-1-\epsilon}\vee 1\right ) \left( \int_{\{v>c\}}\left (  (v-c)^{-1/p^n}\right )^{p'}\, dv\right)^{1/p'}\,. 
\end{align*}
Therefore, it is sufficient to choose $n$ large enough such that $p^n>p'$, in order to ensure integrability in the r.h.s., and thus boundedness of the derivative of $\rho_+^+$. The same estimate holds true for $\rho_+^-$, and also on the negative side $z<0$. 

Note that $\epsilon>0$ can be chosen almost arbitrarily at this stage. The next result will impose some condition on it.  
\end{proof}

\begin{remark}
There is some subtlety in the last argument, as it may not be legitimate to take the limit $v\to c$ if $c\notin \mathrm{Int}\,V$. To circumvent this small issue, it is possible to extend the formula along characteristic lines \eqref{eq:duhamel} for all $v$. This leads to a non optimal result, as the density is certainly smooth if $c$ is outside $V$. 
\end{remark}

The next corollary is the last step of our bootstrap argument. Such regularity will be required in the proof of Lemma \ref{lem:open}. 

\begin{corollary}\label{cor:vyotflu}
The derivative of the macroscopic quantity $\frac{dI_+}{dz}$ (resp. $\frac{dI_-}{dz}$) is H\"older continuous, locally uniformly on $(0,+\infty)$ (resp. on $(-\infty,0)$).
\end{corollary}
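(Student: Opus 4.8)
The plan is to run one further step of the bootstrap, starting from the explicit expressions for the derivatives of $\rho_+^\pm$ exhibited in the proof of Proposition~\ref{prop:rho lip}. Since $I_+ = T_+^-\rho_+^- + T_+^+\rho_+^+$, it is enough to prove that each of $\frac{d\rho_+^-}{dz}$ and $\frac{d\rho_+^+}{dz}$ is H\"older continuous, locally uniformly on $(0,+\infty)$; the continuity of these derivatives, hence $I_+\in\mathcal C^1((0,+\infty))$, follows by dominated convergence using~\eqref{eq:jbmgiufvghl}. I would work from the identities valid for $z>0$,
\[
\frac{d\rho_+^-}{dz}(z) = T_+^- \int_{\{v<c\}} \frac{f_+^-(z,c)-f_+^-(z,v)}{v-c}\, d\nu(v)\, ,\qquad
\frac{d\rho_+^+}{dz}(z) = T_+^+ \int_{\{v>c\}} \frac{f_+^+(z,c)-f_+^+(z,v)}{v-c}\, d\nu(v)\, ,
\]
where $f_+^\pm(z,c)$ denotes the one-sided limit $\lim_{v\to c^\pm}f_+^\pm(z,v)$, which satisfies $T_+^\pm f_+^\pm(z,c)=I_+(z)$ for $z>0$. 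Here $z>0$ is essential: in the $f_+^+$ case, the contribution carrying $I_-$ in the Duhamel formula~\eqref{eq:duhamel} is damped by the factor $\exp(-T_+^+ z/(v-c))$, which vanishes as $v\searrow c$. This is exactly why the conclusion is confined to $(0,+\infty)$.

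Fix a compact interval $K\subset(0,+\infty)$ and $z_1,z_2\in K$. I would estimate $\frac{d\rho_+^+}{dz}(z_1)-\frac{d\rho_+^+}{dz}(z_2)$ by splitting the velocity integral at $|v-c|=\delta$, with $\delta\in(0,1)$ to be optimised at the end. On $\{|v-c|<\delta\}$ the two integrals at $z_1$ and at $z_2$ are bounded separately: the partial H\"older regularity in velocity~\eqref{eq:jbmgiufvghl}, used with one endpoint at $v=c$, yields $|f_+^+(z,c)-f_+^+(z,v)|\leq C_n(\epsilon)\,(z^{1/p^n-1-\epsilon}\vee 1)\,|v-c|^{1-1/p^n}$, so that by H\"older's inequality
\[
\int_{\{c<v<c+\delta\}} \frac{|f_+^+(z,c)-f_+^+(z,v)|}{v-c}\, d\nu(v) \leq C_n(\epsilon)\,(z^{1/p^n-1-\epsilon}\vee 1)\,\|\omega\|_p\Big(\int_0^\delta s^{-p'/p^n}\, ds\Big)^{1/p'} \leq C\,\delta^{1/p'-1/p^n}\, ,
\]
uniformly for $z\in K$, as soon as $n$ is chosen large enough that $p^n>p'$, which also makes $1/p'-1/p^n>0$; the prefactor is bounded on $K$ for any fixed $\epsilon>0$. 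On $\{|v-c|>\delta\}$ the weight $(v-c)^{-1}$ is bounded by $\delta^{-1}$, and I would invoke the H\"older regularity in $z$ that holds \emph{uniformly in $v$}: the estimates of Section~\ref{sec:Further regularity} give $|f_+^+(z_1,v)-f_+^+(z_2,v)|\leq C_n\,(z_1^{1/p^n-1}\vee 1)\,|z_1-z_2|^{1-1/p^n}$, and~\eqref{eq:macro holder I} gives $|I_+(z_1)-I_+(z_2)|\leq C_n\,(z_1^{1/p'+1/p^n-1}\vee 1)\,|z_1-z_2|^{1-1/p^n}$; since all prefactors are bounded on $K$, this region contributes at most $C\,\delta^{-1}|z_1-z_2|^{1-1/p^n}$.

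Collecting the two regions I would obtain
\[
\Big|\frac{d\rho_+^+}{dz}(z_1)-\frac{d\rho_+^+}{dz}(z_2)\Big| \leq C\big(\delta^{1/p'-1/p^n} + \delta^{-1}|z_1-z_2|^{1-1/p^n}\big)\, ,
\]
and optimising in $\delta$, for instance $\delta=|z_1-z_2|^{(1-1/p^n)/(1+1/p'-1/p^n)}$, produces a bound $C|z_1-z_2|^\theta$ with $\theta=\frac{(1-1/p^n)(1/p'-1/p^n)}{1+1/p'-1/p^n}>0$. The same computation applies verbatim to $\frac{d\rho_+^-}{dz}$, hence to $\frac{dI_+}{dz}$, and the mirror argument on $(-\infty,0)$ settles $\frac{dI_-}{dz}$.

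I expect the only genuinely delicate point --- and the reason the recursive bootstrap of Section~\ref{sec:Further regularity} was set up --- to be the competition near $v=c$ between the singular weight $(v-c)^{-1}$ and the cancellation $I_+(z)-T_+^\pm f_+^\pm(z,v)=T_+^\pm(f_+^\pm(z,c)-f_+^\pm(z,v))$: the integral is absolutely convergent, with an $\mathcal O(\delta^{1/p'-1/p^n})$ tail, only once the H\"older-in-$v$ exponent $1-1/p^n$ has been pushed past $1/p'$, i.e., for $n$ large. A secondary point to keep track of is that every estimate used degenerates as $z\to0^+$ --- both the prefactors $z^{1/p^n-1}$, $z^{1/p^n-1-\epsilon}$, etc., and the identification $T_+^+f_+^+(z,c)=I_+(z)$ --- so the conclusion is only locally uniform on $(0,+\infty)$, not up to the origin.
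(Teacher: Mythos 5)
Your proof is correct, but it takes a genuinely different route from the paper's. The paper pushes the bootstrap one derivative further: it differentiates the Duhamel formula \eqref{eq:duhamel} to get the pointwise bound $|\partial_z f_+^\pm(z,v)|\leq C\left(z^{-1}\vee 1\right)$, then differentiates the kinetic equation \eqref{eq:f I} so that $(v-c)\partial_z^2 f_+$ is bounded by $C\left(z^{-1}\vee 1\right)$, and finally applies the usual averaging split to $\partial_z f_+$ (the region $|v-c|<\delta$ controlled by the uniform bound on $\partial_z f_+$ and the $L^p$ bound on $\omega$, the region $|v-c|>\delta$ by the resulting Lipschitz-in-$z$ bound on $\partial_z f_+$), which yields the sharper exponent $1/p'$ together with the explicit prefactor $\left(z_1^{-1}\vee 1\right)$. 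You never touch $\partial_z^2 f$ nor pointwise bounds on $\partial_z f$: you work directly on the formula for $\frac{d\rho_+^\pm}{dz}$ from Proposition \ref{prop:rho lip} and interpolate between the H\"older-in-$v$ cancellation \eqref{eq:jbmgiufvghl}, used with one endpoint at $v=c$ to tame the $(v-c)^{-1}$ weight near $v=c$, and the H\"older-in-$z$ regularity of $f_+$ and $I_+$ (\eqref{eq:macro holder I} and Section \ref{sec:Further regularity}) away from $v=c$, before optimizing in $\delta$. This buys a proof that recycles only already-established estimates, at the price of a smaller and less explicit H\"older exponent $\theta<1/p'$ and of prefactors that are merely bounded on compact subsets of $(0,+\infty)$ rather than quantified as $z\to 0^+$. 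Since the corollary does not prescribe the exponent, and its role in Lemma \ref{lem:open} only requires (equi)continuity of $\frac{dI_{\tau+}}{dz}$ on compact sets away from the origin, your weaker exponent suffices; note also that the identity $T_+^\pm f_+^\pm(z,c)=I_+(z)$ for $z>0$ that you invoke is the same one used in Proposition \ref{prop:rho lip}, and it carries the same caveat about $c\in\mathrm{Int}\,V$ addressed in the remark following that proposition.
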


\begin{proof}
Recall that $n$ has been chosen such that $p^n>p'$. Let $\epsilon$ belong to $(0, 1/p^n)$. By differentiating \eqref{eq:duhamel} with respect to $z$, we obtain successively
\begin{equation*}
\left |\partial_z f_+^-(z,v) \right |\leq  \int_0^{+\infty} \left |\frac{dI_+}{dz}(z - s(v-c))\right | \exp(-T_+^- s)\, ds \leq C  \left(z^{1/p^n-1-\epsilon}\vee 1\right ) \, ,
\end{equation*}
and
\begin{align*}
\left |\partial_z f_+^+(z,v)\right | &\leq  - \left( \int_0^{+\infty}  I_-(-s(v-c)) \exp(-T_-^+ s)\, ds \right)  \frac {T_+^+}{v-c}\exp\left(-T_+^+\frac z{v-c}\right)\medskip\\ 
& \quad +   \frac {I_+(0)}{v-c}\exp\left(-T_+^+\frac z{v-c}\right) + \int_0^{\frac z{v-c}}\left |  \frac{dI_+}{dz}(z - s(v-c))\right | \exp(-T_+^+ s) \, ds  \\
& \leq C z^{-1} + C \int_0^{\frac z{v-c}}  \left((z - s(v-c))^{1/p^n-1-\epsilon}\vee 1\right ) \exp(-T_+^+ s) \, ds \\
& \leq C z^{-1} + C \exp\left(-T_+^+\frac z{v-c}\right) \int_0^{\frac z{v-c}}  \left((s'(v-c))^{1/p^n-1-\epsilon}\vee 1\right ) \exp(-T_+^+ s') \, ds' \\
& \leq C z^{-1} + C (v-c)^{1/p^n-1-\epsilon}\exp\left(-T_+^+\frac z{v-c}\right) \int_0^{+\infty}  \left (s^{1/p^n-1-\epsilon}\vee 1\right ) \exp(-T_+^+ s) \, ds\\
& \leq  C \left(  z^{-1}\vee z^{1/p^n-1-\epsilon}\right )  \, . 
\end{align*}
Note that the condition $\epsilon< 1/p^n$ is compulsory  for the integrability of $ s^{1/p^n-1-\epsilon} $ at $s = 0$. 
All in all, we get eventually,
\begin{equation}\label{jgdlpg	yvl}
\left |\partial_z f_+^\pm(z,v)\right |\leq   C \left(  z^{-1}\vee 1\right )\, .
\end{equation}

We conclude by using classical averaging lemma. By differentiating \eqref{eq:f I} with respect to $z$, we obtain
\begin{equation}\label{jelgbcf	uzhve}
(v-c) \partial^2_z f_+(z,v) = \frac{dI_+}{dz}(z) - T_+(v-c)\partial_z f_+(z,v)\, . 
\end{equation}
We deduce from \eqref{jgdlpg	yvl} and Proposition \ref{prop:rho lip} that the r.h.s. in \eqref{jelgbcf	uzhve} is bounded by $C \left(  z^{-1}\vee 1\right )$. Therefore, we have for $0<z_1<z_2$,
\begin{align*}
\left |\frac{dI_+}{dz}(z_1) - \frac{dI_+}{dz}(z_2) \right |&\leq\left ( \int_{|v-c|<\delta} + \int_{|v-c|>\delta} \right ) T_+(v-c) \left|\partial_z f_+(z_1,v) - \partial_z f_+(z_2,v)\right|  \omega(v)\,dv\\
&\leq C \left(  z_1^{-1}\vee 1\right ) \left( \delta^{1/p'}  + \delta^{1/p' - 1} |z_1 - z_2| \right) \|\omega\|_p \\
&\leq C \left(  z_1^{-1}\vee 1\right ) |z_1 - z_2|^{1/p'}  \, .
\end{align*}
This closes the series of bootstrap estimates. 
\end{proof}

\subsection{Regularity within exponential tails}

We derive regularity estimates for the  normalized density $g$, defined as follows,
\begin{equation}\label{eq:renormalized g}
g(z,v) = 
\begin{cases} e^{\lambda_+ z}f_{+}(z,v) & (z>0) \medskip\\
e^{-\lambda_- z}f_{-}(z,v) & (z<0)
\end{cases}
\end{equation}
For the sake of simplicity, we assume in this section that the p.d.f. $\omega$ is bounded below and above: there exists $\omega_0$ such that 
\begin{equation}\label{jlzgumlzvlvl}
\omega_0 \leq \omega\leq \omega_0^{-1} \text{ on $\supp\nu_\tau$ }\,.
\end{equation}
In fact, the results derived afterwards will be used under this restrictive condition (Lemma \ref{lem:asymptotic monotonicity}). 
The key observation is that $g$ satisfies the following equation for $z>0$,
\begin{equation}\label{eq:ugmfiugf}
(v-c)\partial_z g_+(z,v)  = J_+(z) - (T_+(v-c) - \lambda_+(v-c)) g_+(z,v) \,, \quad J_+(z) = e^{\lambda_+ z}I_{+}(z)\, .
\end{equation}
and similarly for $z<0$. We deduce from \eqref{eq:5856}-\eqref{eq:589808} that $g_+$ is uniformly bounded (recall that $F_+$ is bounded under the condition \eqref{jlzgumlzvlvl}).  Hence, the r.h.s. of \eqref{eq:ugmfiugf} is uniformly bounded. We deduce from  the one-dimensional averaging lemma that $J_+ = \int T_+ g_+\, d\nu$ is almost Lipschitz continuous, namely,
\begin{equation}\label{zefhglzyfgl}
\left | J_+(z_1)- J_+(z_2) \right | \leq 
C \left( \log\left (\left |z_1 - z_2\right |^{-1}\right )\vee 1 \right) |z_1 - z_2|\, . 
\end{equation}
With this estimate at hand, it is possible to reproduce the estimates of Section \ref{sec:Further regularity}, with the tumbling rate $R_+(v-c) = T_+(v-c) - \lambda_+(v-c) = (F_+(v))^{-1}$, which is bounded below and above. Notice that it is not even necessary to iterate the argument in order to obtain Lipschitz regularity as in Proposition \ref{prop:rho lip}. Indeed, we have obviously $p>p'$, since we assume $\omega \in L^\infty$. We cannot readily choose $p = \infty$ in the estimates because of the logarithmic correction in \eqref{zefhglzyfgl}, but taking any $p>2$ is sufficient for our purpose. Note also that the calculation leading to H\"older regularity with respect to velocity, as in \eqref{eq:jbmgiufvghl}, requires some adaptation because the new tumbling rate  $R_+$ has a linear dependency with respect to $v$. But this additional contribution is partially Lipschitz continuous, so that it does not affect the conclusion. 

\begin{corollary}\label{cor:exp lip}
Under the additional condition \eqref{jlzgumlzvlvl}, the macroscopic quantities $J_\pm$ are Lipschitz continuous, locally uniformly on $\R^*_\pm$, and  the normalized densities $g_\pm$ are H\"older continuous with respect to velocity locally uniformly on $\R^*_\pm$. More precisely, for all $p>2$, and $\eps <1/p$, there exists a constant $C$ such that for all $z>0$,
\begin{equation*}
| g_+^\pm(z,v_1) - g_+^\pm(z,v_2) | \leq C  \left(z^{1/p-1-\epsilon}\vee 1\right ) |v_1 - v_2|^{1-1/p}\,,
\end{equation*}
and similarly for $z<0$.  
\end{corollary}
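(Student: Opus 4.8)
The plan is to rerun the bootstrap of Section~\ref{sec:Further regularity} with the \emph{effective} tumbling rate $R_\pm(v-c) = T_\pm(v-c) - \lambda_\pm(v-c) = (F_\pm(v))^{-1}$ replacing $T_\pm$, the macroscopic source $J_\pm$ replacing $I_\pm$, and $g_\pm$ replacing $f_\pm$ in equation~\eqref{eq:ugmfiugf}. The gain provided by \eqref{jlzgumlzvlvl} is twofold. First, $F_\pm = 1/R_\pm$ is now bounded above \emph{and} below by positive constants, since $R_\pm$ obviously is; hence $g_\pm$ is uniformly bounded by \eqref{eq:5856}--\eqref{eq:589808}, and so is the right hand side of \eqref{eq:ugmfiugf}. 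Second, $\omega\in L^\infty$, so every finite exponent $p$ is admissible, and I fix once and for all some $p>2$, i.e.\ $p>p'$; this is exactly what makes a \emph{single} pass of the argument sufficient, with no iteration, contrary to Section~\ref{sec:Further regularity}. The three steps are then: (i) the one-dimensional averaging lemma applied to \eqref{eq:ugmfiugf} yields the almost-Lipschitz bound \eqref{zefhglzyfgl} on $J_+$, and symmetrically on $J_-$; (ii) inserting \eqref{zefhglzyfgl} into the Duhamel representation of $g_\pm$ along characteristics produces the claimed velocity-H\"older estimate; (iii) differentiating \eqref{eq:ugmfiugf} in $z$ and arguing as in Proposition~\ref{prop:rho lip} upgrades \eqref{zefhglzyfgl} to genuine local Lipschitz regularity of $J_\pm$.

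For step (ii), write the analogue of \eqref{eq:duhamel} for $g_+$ with $z>0$: for $v<c$, $g_+^-(z,v) = \int_0^{+\infty} J_+(z-s(v-c))\exp(-R_+(v-c)s)\,ds$; for $v>c$, $g_+^+(z,v) = g_+^+(0,v)\,\exp\!\big(-T_+^+\tfrac{z}{v-c}\big)e^{\lambda_+ z} + \int_0^{z/(v-c)} J_+(z-s(v-c))\exp(-R_+(v-c)s)\,ds$, where the trace $g_+^+(0,v)$ for $v>c$ is reconstructed from the negative side by the same gluing as in \eqref{eq:duhamel} and is (almost) Lipschitz in $v$ because $J_-$ is. Differencing in $v$, three contributions appear: the shift in the argument of $J_+$, controlled by the modulus \eqref{zefhglzyfgl} times an extra $s\,|v_1-v_2|$, integrable against the (uniformly bounded below) exponential weight; the difference of exponential rates, which is merely Lipschitz since $|R_+(v_1-c)-R_+(v_2-c)| = \lambda_+|v_1-v_2|$ and $|e^{-as}-e^{-bs}|\le s\,|a-b|\,e^{-\min(a,b)s}$, hence harmless; and, for $v>c$, the boundary factor $\exp\!\big(-T_+^+\tfrac{z}{v-c}\big)$, whose $v$-modulus of continuity is precisely the one computed in the proof of Proposition~\ref{prop:rho lip} with $q=p$ and $1/q'=1-1/p$, producing the prefactor $z^{1/p-1-\epsilon}$ for any $\epsilon<1/p$. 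Collecting the three pieces, and observing that everything is $O(1)$ once $z$ stays away from the origin, gives $|g_+^\pm(z,v_1)-g_+^\pm(z,v_2)|\le C(z^{1/p-1-\epsilon}\vee 1)\,|v_1-v_2|^{1-1/p}$, with the symmetric statement on $z<0$; as in Section~\ref{sec:Further regularity}, the constraint $\epsilon<1/p$ is what makes $s^{1/p-1-\epsilon}$ integrable at $s=0$.

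For step (iii), differentiating \eqref{eq:ugmfiugf} gives, for $z>0$, $\tfrac{dJ_+}{dz}(z) = \int \tfrac{T_+(v-c)}{v-c}\big(J_+(z)-R_+(v-c)g_+(z,v)\big)\,d\nu(v)$, and one reads off \eqref{eq:duhamel} the interface identities $J_+(z) = \lim_{v\to c^\mp} R_+(v-c)\,g_+^\mp(z,v)$ valid for $z>0$ (note $R_+(0^\mp)=T_+^\mp$, whence $g_+(z,c^\mp)=J_+(z)/T_+^\mp$). Splitting $J_+(z)-R_+(v-c)g_+(z,v) = R_+(v-c)\big(g_+(z,c^\mp)-g_+(z,v)\big) + \big(J_+(z)-R_+(v-c)g_+(z,c^\mp)\big)$, the last bracket equals $\lambda_+(v-c)J_+(z)/T_+^\mp$ by the linear-in-$v$ structure of $R_+$, so it contributes a bounded integrand after division by $v-c$; the main term is bounded, by H\"older's inequality and the estimate of step (ii), by $C(z^{1/p-1-\epsilon}\vee 1)\big(\int_{\{v\ne c\}}|v-c|^{-p'/p}\,dv\big)^{1/p'}$, and the integral is finite precisely because $p>2$, i.e.\ $p'/p<1$. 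Hence $\tfrac{dJ_\pm}{dz}$ is bounded locally uniformly on $\R_\pm^*$, which is the asserted Lipschitz regularity, and the proof is complete.

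The only genuinely new point, compared with Section~\ref{sec:Further regularity} where the tumbling rate was $z$- and $v$-piecewise-constant, is the linear dependence of the effective rate $R_\pm$ on $v$: one must check, as done above, that it only ever produces Lipschitz — hence subordinate — corrections, both inside the exponential weights of the Duhamel formula (step (ii)) and in the interface identities (step (iii)). Everything else is a line-by-line repetition of the estimates of Section~\ref{sec:Further regularity} and of the proof of Proposition~\ref{prop:rho lip}, now requiring no iteration since $p>2$; in particular the local-in-$z$ character of all estimates, and the $z^{1/p-1-\epsilon}$ singularity at the origin, are tracked exactly as there.
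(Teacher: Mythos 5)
Your proof is correct and follows essentially the same route as the paper, whose own argument is precisely the paragraph preceding the corollary: uniform boundedness of $g_\pm$ under \eqref{jlzgumlzvlvl}, the averaging lemma giving \eqref{zefhglzyfgl}, and a single pass (no iteration, since $p>2>p'$) of the bootstrap of Section \ref{sec:Further regularity} and Proposition \ref{prop:rho lip} with the effective rate $R_\pm$, whose linear dependence on $v$ only produces Lipschitz, hence subordinate, corrections. One cosmetic point: the boundary factor in the Duhamel formula for $g_+^+$ is $\exp\left(-z\left(T_+^+/(v-c)-\lambda_+\right)\right)$, so to keep the velocity-H\"older bound uniform for large $z$ you should run the modulus-of-continuity computation of Proposition \ref{prop:rho lip} with $T_+^+$ replaced by the positive lower bound $T_+^+-\lambda_+(\v0-c)$ of $R_+^+$, rather than quoting it for $\exp(-T_+^+ z/(v-c))$ and multiplying by $e^{\lambda_+ z}$ afterwards.
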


\section{Monotonicity of the macroscopic quantities $\rho$, $I$}
\label{sec:monotonicity}

\subsection{Monotonicity of $\rho_\pm^\pm$.}

\label{ssec:monotonicity}

Let $c\in (c_*,c^*)$, and define $f$ as in Theorem \ref{theo:cluster}. The aim of this section is to establish monotonicity of some macroscopic quantities, despite the lack of monotonicity of $f$ with respect to space variable (see Figure \ref{fig:overshoot}). 

We recall the notation $\rho^+(x) = \int_{\{v>c\}} f^+(x,v)\, d\nu(v)$, resp. $\rho^-(x) = \int_{\{v<c\}} f^-(x,v)\, d\nu(v)$. Accordingly, we have
\beq\begin{cases} 
\displaystyle(\forall z>0)\quad I_+(z) = \int T_+(v-c) f(z,v)\, d\nu(v) =  T^+_+ \rho_+^+(z) + T^-_+\rho_+^-(z) \medskip\\
\displaystyle(\forall z<0)\quad I_-(z) = \int T_-(v-c) f(z,v)\, d\nu(v) =  T^+_- \rho_-^+(z) + T^-_-\rho_-^-(z)
\end{cases}\label{bumivùozibipb}
\eeq

The following theorem is the cornerstone of the present study. 
\begin{theorem}\label{theo:mono}
Under the assumptions of Theorem \ref{theo:cluster}, 
both $\rho_+^+$ and $\rho_+^-$ are decreasing for $z>0$. Reversely,   $\rho_-^+$ and $\rho_-^-$ are increasing for $z<0$.
\end{theorem}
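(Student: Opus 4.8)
The strategy is a homotopy (continuity) argument over the space of velocity measures $\nu$, as sketched in Section~\ref{ssec:sketch proof}. I would parametrize a continuous path $\tau\mapsto \nu_\tau$, $\tau\in[0,1]$, of symmetric probability measures on a fixed compact set, starting from the two-velocity measure $\nu_0 = \frac12(\delta_{-v_0}+\delta_{v_0})$ (for which $\rho_\pm^\pm$ has the explicit monotonicity coming from the exponential profiles, cf.~\eqref{eq:cell density profile}) and ending at the target measure $\nu_1 = \nu$. Along the way one must keep $c = c(\tau)$ inside $(c_*(\tau),c^*(\tau))$ so that Theorem~\ref{theo:cluster} applies and produces $f = f_\tau$; a short argument using the definitions \eqref{eq:def c_*}--\eqref{eq:def c^*} shows $c_*,c^*$ vary continuously, so this can be arranged. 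The set of $\tau$ for which the conclusion (all four monotonicities, in the large) holds is then shown to be nonempty ($\tau=0$), closed, and open, hence all of $[0,1]$.

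Closedness is the easy direction: by the uniform bounds and exponential decay of Proposition~\ref{prop:holder} (uniform in $\omega_0$) together with the regularity estimates of Section~\ref{sec:Further regularity} (Proposition~\ref{prop:rho lip}, Corollary~\ref{cor:vyotflu}), the family $\{f_\tau\}$ and the macroscopic quantities $\rho^\pm_{\pm,\tau}$, $I_{\pm,\tau}$ are precompact in suitable local topologies, and monotonicity in the large passes to the limit. The heart of the matter — and the hard step — is openness, and this is exactly the ``enhancement of monotonicity'' claimed in Section~\ref{ssec:sketch proof}: if at some $\tau$ all four inequalities $\frac{d\rho^\pm_\pm}{dz}\lessgtr 0$ hold weakly, they in fact hold strictly (and with quantitative margin locally uniformly away from $z=0$), so a small perturbation of $\nu$ cannot destroy them. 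I would prove this for $z>0$ (the case $z<0$ being symmetric) along the lines indicated: from the Duhamel formula \eqref{eq:duhamel} and $\frac{d\rho_+^-}{dz}\le 0$ one gets the pointwise inequality $T_+^- f_+^-(z,v) < I_+(z)$ for $v<c$ (strictness from the strict positivity of the integrand in the Duhamel representation of $I_+ - T_+^- f_+^-$, using that $I_+>0$ is not identically constant on the relevant half-line); this immediately gives $\frac{d\rho_+^-}{dz}(z)<0$. For $\rho_+^+$ one uses the flux identity $\int(v-c)f_+(z,v)\,d\nu(v)=0$ from \eqref{eq:fluxes} to rewrite $\int_{\{v>c\}}(I_+ - T_+^+ f_+^+)\,d\nu = -\int_{\{v<c\}}(I_+ - T_+^- f_+^-)\,d\nu < 0$, and then the monotonicity of the weight $(v-c)^{-1}$ on $\{v>c\}$ together with the structural fact that $I_+ - T_+^+f_+^+$ can only have the ``wrong'' (positive) sign for large velocities (to be extracted from the shape of the velocity profiles, i.e.\ from \eqref{eq:duhamel} and the monotone behaviour of $v\mapsto f_+^+(z,v)$ that the Milne-problem construction and Corollary~\ref{cor:exp lip} provide) upgrades the sign of the integral to the sign of $\frac{d\rho_+^+}{dz}(z)$.

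The two genuinely delicate points inside the openness step, which I would isolate as lemmas, are: (i) a \emph{zero-monotonicity} lemma quantifying the behaviour of $\frac{d\rho}{dz}$ and of $f$ near $z=0$ (the logarithmic singularity of $\frac{d\rho}{dz}$ mentioned after Proposition~\ref{prop:holder}), needed because all the differential-calculus manipulations above are only valid away from the origin and one must separately control a neighbourhood of $z=0$; and (ii) the precise statement that ``$I_+ - T_+^+ f_+^+$ has the inappropriate sign only for large $v$'', which requires deciphering the velocity profile $v\mapsto f_+^+(z,v)$ — showing it is, say, first below and then above the threshold $I_+(z)/T_+^+$ as $v$ increases past $c$, using the Duhamel formula and the fact that $I_+$ is decreasing along the backward characteristics that feed large velocities. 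I expect this profile analysis (item (ii)) to be the main obstacle: it is the ``analytical challenge'' referred to in Figure~\ref{fig:overshoot}, and it is where the compensations are actually made rigorous. Finally, to run the homotopy one also needs that the monotonicity one propagates is the \emph{correct} one at $+\infty$ (resp.\ $-\infty$): this is guaranteed by the asymptotic profile $f_+(z,v)\sim \kappa_+ e^{-\lambda_+ z}F_+(v)$ from Step~\#4 of the proof of Theorem~\ref{theo:cluster}, which is manifestly decreasing in $z$, pinning down the branch of the weak inequality and giving the compactness needed to close the continuity argument. Assembling the open–closed–nonempty trichotomy then yields Theorem~\ref{theo:mono}.
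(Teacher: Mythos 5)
Your skeleton coincides with the paper's: initialization for a measure concentrated near two symmetric velocities, an ``enhancement of monotonicity'' step, and propagation along a continuous deformation of $\nu$ via an open--closed argument (the paper propagates the monotonicity of $I_\pm$ rather than of the four $\rho_\pm^\pm$, but since $I_\pm = T_\pm^+\rho_\pm^+ + T_\pm^-\rho_\pm^-$ the weak hypotheses are interchangeable). The genuine gap is that the decisive ingredient of the enhancement step is never proved. In the paper this is Lemma \ref{lem:qualitative profile}: writing $\bbf = Tf$, the profile $\bbf_+^+(z,\cdot)$ is decreasing in $v$ on the sub-level set $\{\bbf_+^+\le I_+\}\cap\{v>c\}$, whence a single threshold $v_*(z)$ with $\bbf_+^+>I_+$ on $(c,v_*(z))$ and $\bbf_+^+<I_+$ beyond. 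Its proof is an ODE/characteristics comparison: $\bbf_+^+-I_+$ solves a damped transport equation with non-negative source, so $\{\bbf_+^+>I_+\}$ is forward invariant in $z$, and the divided difference $h=(\bbf_+^+(\cdot,v_2)-\bbf_+^+(\cdot,v_1))/(v_2-v_1)$ solves \eqref{eq:transport h} with $h(0)<0$, the latter coming from the monotonicity in $v$ of $f_-^+$ on the other side of the origin. You instead invoke ``the monotone behaviour of $v\mapsto f_+^+(z,v)$ that the Milne-problem construction and Corollary \ref{cor:exp lip} provide'' --- but neither provides monotonicity (the corollary is only H\"older regularity, and the paper insists, cf.\ Figure \ref{fig:overshoot}, that $f_+^+$ is in general \emph{not} monotone in $v$); moreover you describe the threshold structure backwards (``first below and then above $I_+(z)/T_+^+$''), which contradicts your own correct statement that the inappropriate positive sign of $I_+-T_+^+f_+^+$ can occur only at large velocities. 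You also leave unexplained how the decreasing weight $(v-c)^{-1}$ converts the sign of the unweighted integral into the sign of $d\rho_+^+/dz$; the paper does this through the integration by parts \eqref{eq:dz rho++} and the fact that the cumulative function $\int_c^v(I_+-\bbf_+^+)\,d\nu$ is everywhere non-positive (it decreases then increases, vanishes at $v=c$, and is negative at $v=\v0$ by the flux identity). Since you yourself flag this profile analysis as ``the main obstacle'', the cornerstone of the theorem is left open in your proposal.

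Two further points. Closedness is not as free as you suggest: after extracting a limit of $f_{\tau_n}$ you must identify it with the stationary profile $f_\tau$ attached to $\nu_\tau$, which is exactly the uniqueness (relative-entropy) argument of Lemma \ref{lem:closed}; without it, monotonicity transfers only to \emph{some} limit, not to $f_\tau$. And the inequality $T_+^-f_+^-<I_+$ does not follow from $d\rho_+^-/dz\le 0$ alone; it follows from the Duhamel formula \eqref{eq:duhamel} once $I_+$ is known to be non-increasing, which under your hypotheses one gets by combining the weak monotonicity of both $\rho_+^+$ and $\rho_+^-$ --- a harmless reordering, but the logic should be stated that way. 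The remaining ingredients you list (quantitative asymptotic monotonicity, which also needs the lower bound on $\kappa_+$ of Lemma \ref{eq:bound mu rho0}, and the near-origin control distinguishing whether $c$ lies in $\supp\nu_\tau$) are indeed the right ones and correspond to Lemmas \ref{lem:asymptotic monotonicity} and \ref{lem:zero monotonicity}.
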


We split the proof into several steps. We begin with an easy case, when $\nu$ is close to the symmetric combination of two Dirac masses. Then, we deform continuously the measure $\nu$, and check carefully that none of the macroscopic quantities $\rho_\pm^\pm$ can change monotonicity. This requires many technical estimates in order to control the continuous deformation of~$f$.\medskip

\noindent\textbf{Step  \#1. Initialization (the easy case).}
We first state a proposition establishing monotonicity under the very restrictive condition that the measure $\nu$ is concentrated around two symmetric velocities $\{\pm \v0\}$. 
In the extreme case where
\beq\label{eq:2vel} \nu = \frac12 \delta_{-\v0} + \frac12 \delta_{\v0}\, , \eeq
the conditions \eqref{eq:condition c*} and \eqref{eq:condition c* 2} which are required for exponential decay on both sides, read as follows,
\beq \label{eq:condition v0} \frac{1+\chi_+}{\v0 - c} - \frac{1-\chi_+}{\v0+c} >0 \quad \text{and}\quad \frac{1+\chi_-}{\v0+c} - \frac{1-\chi_-}{\v0 - c}   >0\, . \eeq
They are equivalent to the following conditions, provided that $\v0>|c|$,
\[ \v0 \chi_+ + c >0 \quad \text{and}\quad  \v0 \chi_- - c >0\,. \]
Hence, it is required that $c$ belongs to the interval $\left(- \v0\chi_+,\v0\chi_-\right)$. Note that the latter is not empty since $\chi_- + \chi_+ = 2 \chi_S>0$. 

\begin{proposition}[The approximate two velocity case]\label{prop:2 vel case}
Let $\v0 >|c| $ be such that both conditions in \eqref{eq:condition v0} are satisfied.   Assume that the support of $\nu$ is contained in $V_0\cup(-V_0)$, where $V_0 = [\v0-\delta_0,\v0]$, for $\delta_0$ small enough. Then $I$ is decreasing for $z>0$ (resp. increasing for $z<0$). 
\end{proposition}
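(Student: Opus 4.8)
The plan is to solve the two-velocity problem explicitly and then treat a measure concentrated near $\{\pm\v0\}$ as a regular perturbation of it. I would first do the limiting case $\delta_0=0$, i.e. $\nu=\tfrac12\delta_{-\v0}+\tfrac12\delta_{\v0}$ (this is the discrete setting $\Ad$, for which Theorem~\ref{theo:cluster} was obtained by Case's normal modes — equivalently, by the computation below). Writing $f^\pm(z)=f(z,\pm\v0)$, equation \eqref{eq:linear c} becomes, for $z>0$, the $2\times2$ linear system
\beq\label{eq:sk2v}
(\v0-c)\,\partial_z f^+ = \tfrac12\bigl(T_+^- f^- - T_+^+ f^+\bigr)\,,\qquad (\v0+c)\,\partial_z f^- = \tfrac12\bigl(T_+^- f^- - T_+^+ f^+\bigr)\,.
\eeq
Conservation of flux \eqref{eq:fluxes} together with decay at $+\infty$ forces $(\v0-c)f^+(z)=(\v0+c)f^-(z)$ for all $z>0$; feeding this back into \eqref{eq:sk2v} gives $\partial_z f^\pm=-\lambda_+^{(0)}f^\pm$ with $\lambda_+^{(0)}=\tfrac1{2(\v0+c)}\bigl(T_+^+\tfrac{\v0+c}{\v0-c}-T_+^-\bigr)$, hence $f^\pm(z)=f^\pm(0)e^{-\lambda_+^{(0)}z}$. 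A one-line computation shows $\lambda_+^{(0)}>0$ exactly when $\v0\chi_+ +c>0$, the first condition in \eqref{eq:condition v0}; then $I_+=\tfrac12(T_+^+ f^+ +T_+^- f^-)$ is a positive multiple of $e^{-\lambda_+^{(0)}z}$, hence strictly decreasing. The computation on $z<0$ is the mirror image and uses the second condition in \eqref{eq:condition v0}.

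For $\delta_0>0$ the decisive structural fact is that $\mathrm{dist}(c,\supp\nu)\geq\v0-|c|-\delta_0>0$, so every weight $(v-c)^{-1}$ in the Duhamel formula \eqref{eq:duhamel} is bounded: $f$ is $\mathcal C^1$ in $z$ up to $z=0^+$, and the regularity and decay estimates of Sections \ref{sec:regularity alpha0}--\ref{sec:Further regularity} and of Theorem~\ref{theo:cluster} hold with constants that stay bounded as $\delta_0\to0$. In particular $f_+$, $I_+$ and $\partial_z f_+=(v-c)^{-1}(I_+-T_+f_+)$ all decay like $e^{-\lambda_+ z}$ uniformly in $\delta_0$. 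Integrating \eqref{eq:f I} over $\{v>c\}$ and over $\{v<c\}$, using $\nu(\{v>c\})=\nu(\{v<c\})=\tfrac12$ and the flux identities \eqref{eq:fluxes}, and replacing $v-c$ by $\pm\v0-c$ up to a relative error $\delta_0$, one obtains
\beq\label{eq:skmacro}
\begin{cases}
(\v0-c)\,\dfrac{d\rho_+^+}{dz}(z)=\tfrac12 I_+(z)-T_+^+\rho_+^+(z)+\mathcal O\!\left(\delta_0 e^{-\lambda_+ z}\right)\,,\medskip\\
(\v0+c)\,\dfrac{d\rho_+^-}{dz}(z)=T_+^-\rho_+^-(z)-\tfrac12 I_+(z)+\mathcal O\!\left(\delta_0 e^{-\lambda_+ z}\right)\,,
\end{cases}
\eeq
together with $(\v0-c)\rho_+^+(z)=(\v0+c)\rho_+^-(z)+\mathcal O(\delta_0 e^{-\lambda_+ z})$. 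Substituting this approximate proportionality and $I_+=T_+^+\rho_+^++T_+^-\rho_+^-$ into both right-hand sides of \eqref{eq:skmacro} collapses each into $-\theta\,\rho_+^-(z)+\mathcal O(\delta_0 e^{-\lambda_+ z})$ with $\theta=(\v0+c)\lambda_+^{(0)}>0$. Since one also has a uniform lower bound $\rho_+^-(z)\geq c_1 e^{-\lambda_+ z}$, both derivatives are negative once $\delta_0$ is small (depending only on $\v0,c$); as $I_+=T_+^+\rho_+^++T_+^-\rho_+^-$, it follows that $I_+$ is decreasing on $\R_+$. The argument on $\R_-$ is symmetric, with $T_\pm^\pm$ replaced by $T_-^\pm$ and the second condition in \eqref{eq:condition v0}.

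I expect the main obstacle to be the uniformity, over all $z>0$ and as $\delta_0\to0$, of two ingredients. The error terms $\mathcal O(\delta_0 e^{-\lambda_+ z})$ come from quantities of the form $\int(v\mp\v0)\partial_z f_+\,d\nu$, so they require $|\partial_z f_+(z,v)|\leq Ce^{-\lambda_+ z}$ with $C$ independent of $\delta_0$; this follows from Proposition~\ref{prop:holder} and the fact that $\mathrm{dist}(c,\supp\nu)$ is bounded below. More delicate is the lower bound $\rho_+^-(z)\geq c_1 e^{-\lambda_+ z}$ with $c_1>0$ uniform: it asks that the asymptotic profile and the normalisation constant $\kappa_+$ of Theorem~\ref{theo:cluster} stay away from degeneracy as $\nu$ concentrates, i.e. that the construction is stable under this concentration. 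That stability rests on the continuity of the dispersion relation \eqref{eq:dispersion lambda+} ($\lambda_+(c)\to\lambda_+^{(0)}(c)>0$), and checking it carefully is the real content of the proof; the remainder is the explicit two-velocity computation plus bookkeeping of $\mathcal O(\delta_0)$ terms. (This proposition serves as the base case for the homotopy in $\nu$ described in Section~\ref{ssec:sketch proof}.)
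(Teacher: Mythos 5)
Your explicit solution of the exact two-velocity problem is correct and matches the paper's starting point, but the perturbation step for $\delta_0>0$ has a genuine gap exactly where you yourself locate "the real content of the proof". Your scheme needs two quantitative inputs that are uniform in $z$ and as the measure concentrates: an upper bound $|\partial_z f_+(z,v)|\leq C e^{-\lambda_+ z}$ and, crucially, a lower bound $\rho_+^-(z)\geq c_1 e^{-\lambda_+ z}$ with $c_1$ comparable to $C$, so that the $\mathcal O(\delta_0 e^{-\lambda_+ z})$ errors are beaten by the main term $-\theta\rho_+^-(z)$. Neither is established. The constants produced by Proposition \ref{prop:holder} and by the refined asymptotics of Theorem \ref{theo:cluster} involve $\|\omega\|_p$, $\omega_0^{-1}$ and the normalization of $\kappa_+$ (Lemma \ref{eq:bound mu rho0}), and these degenerate precisely when a normalized density concentrates on $V_0\cup(-V_0)$; "continuity of the dispersion relation" gives $\lambda_+\to\lambda_+^{(0)}>0$ but says nothing about a $z$-uniform lower bound on $\rho_+^-$ at the rate $e^{-\lambda_+ z}$, which is essentially the content of the (later, and harder) Lemmas \ref{lem:asymptotic monotonicity}--\ref{eq:bound mu rho0}. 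As written, the proposal therefore defers, rather than proves, the decisive estimate. A repair within your own framework is possible and worth noting: since $\mathrm{dist}(c,\supp\nu)\geq\v0-|c|-\delta_0$, one has $|\partial_z f_+|\leq C(I_++f_+)$ pointwise from \eqref{eq:f I}, and the zero-flux identity \eqref{eq:fluxes} gives $(\v0-c)\rho_+^+=(\v0+c)\rho_+^-+\mathcal O(\delta_0\rho_+)$, hence $\rho_+\leq C\rho_+^-$ for $\delta_0$ small; all your error terms can then be bounded by $C\delta_0\,\rho_+^-(z)$ \emph{relative} to the main term, and no lower bound or uniform decay constant is needed.

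The paper's proof avoids the issue altogether by a different and much lighter device: it writes, using \eqref{eq:f I} and the vanishing flux to add a free multiple $\zeta_+(v-c)$,
\begin{equation*}
\dfrac{dI_+}{dz}(z)=\int \left( \la \tfrac{T_+(v-c)}{v-c}\ra T_+(v-c) - \dfrac{\left(T_+(v-c)\right)^2}{v-c} + \zeta_+(v-c)\right) f_+(z,v)\, d\nu(v)\,,
\end{equation*}
and then chooses $\zeta_+$ so that the bracket is strictly negative at $v=\pm\v0$ (possible exactly under the first condition in \eqref{eq:condition v0}, see \eqref{eq:bmupigmli}); by continuity the bracket stays negative on $\supp\nu$ for $\delta_0$ small, and since $f_+\geq0$ the sign of $dI_+/dz$ follows pointwise, with $\delta_0$ depending only on $\v0,c,\chi_S,\chi_N$ and no estimate on the density itself. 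Compared with your macroscopic-ODE reduction, this buys uniformity for free and directly yields the monotonicity of $I$ (the quantity actually used to initialize the homotopy), whereas your route, once repaired as above, yields the stronger monotonicity of $\rho_+^\pm$ but only after the relative-error bookkeeping.
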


\begin{proof}
The key observation is that the equilibrium of the model with only two velocities \eqref{eq:2vel} can be computed explicitly. In this case, $I$ has the expected monotonicity. 
The computations are facilitated by the fact that $(\v0-c)\rho^+ = (\v0+c)\rho^-$, since the flux \eqref{eq:fluxes}  vanish at equilibrium. 

If $V$ is not too far from $\{\pm \v0\}$, then the same result should hold, using again the fact that the flux $ \int (v-c) f(z,v)\, d\nu(v)$ is identically zero at equilibrium. This is indeed the case,
\begin{align}
 \dfrac {d I_+}{dz}(z)  & = \int T_+(v-c)\partial_z f_+(z,v)\, d\nu(v) = \int  \dfrac{T_+(v-c)}{(v-c)}\left( I_+(z) - T_+(v-c) f_+(z,v-c) \right)\, d\nu(v) \nonumber\\ 
&  = \left(\int   \dfrac{T_+(v-c)}{(v-c)} d\nu(v)\right)\left(\int   T_+(v-c) f_+(z,v-c)  d\nu(v)\right)\nonumber \\
&\quad - \int \dfrac{\left (T_+(v-c)\right )^2}{(v-c)} f_+(z,v-c)d\nu(v)  +   \zeta_+  \int (v-c) f_+(z,v)\, d\nu(v) \nonumber\\
& = \int \left(\la \frac {T_+(v-c)}{v-c}\ra T_+(v-c) - \dfrac{\left (T_+(v-c)\right )^2}{v-c} + \zeta_+ (v-c)\right) f_+(z,v)\, d\nu(v)\, , \label{eq:I+ decreasing}
\end{align}
where $\zeta_+\in \R$ is arbitrary, and $\la \cdot\ra$ denote the average with respect to measure $\nu$. On the support of $\nu$, we find as $\delta_0 \to 0$,
\begin{multline}\label{eq:2 vel}\la \frac {T_+(v-c)}{v-c}\ra T_+(v-c) - \dfrac{\left (T_+(v-c)\right )^2}{v-c} +  \zeta_+  (v-c) \\ \sim  \frac12\left(\dfrac{T_+^+}{\v0 - c} -  \frac{T_+^-}{\v0+c}  \right) T_+^{\pm} - \dfrac{(T_+^\pm) ^2}{\pm \v0 - c} + \zeta_+(\pm \v0 - c)\, ,\end{multline}
where the sign $\pm$ is determined by the sign of $v$. We seek a condition on $\zeta_+$ such that the r.h.s. of \eqref{eq:2 vel} is always negative. The following two inequalities should hold, corresponding to the two possible choices of sign,
\beq\label{eq:bmupigmli}
\begin{cases}
\displaystyle -\frac12 \frac{(T_+^+)^2}{\v0 - c}-\frac12 \frac{T_+^- T_+^+}{\v0 + c} + \zeta_+(\v0-c) < 0 \medskip\\
\displaystyle \frac12 \frac{T_+^+ T_+^-}{\v0 - c}+\frac12 \frac{(T_+^-) ^2}{\v0 + c} - \zeta_+(\v0+c) < 0
\end{cases}
\eeq
Clearly, such a real number $\zeta_+ >0$ exists if, and only if,
\begin{align*} 
\frac12 \frac{T_+^+ T_+^-}{(\v0 - c)(\v0+c)}+\frac12 \frac{(T_+^-) ^2}{(\v0 + c) ^2} & < \frac12 \frac{(T_+^+)^2}{(\v0 - c)^2}+\frac12 \frac{T_+^- T_+^+}{(\v0 + c)(\v0 - c)} \\
\Leftrightarrow\quad \frac{(T_+^-) ^2}{(\v0 + c) ^2} & <  \frac{(T_+^+)^2}{(\v0 - c)^2} \, . \end{align*} 
We find that this is equivalent to the first condition in \eqref{eq:condition v0}. Within this choice of $\zeta_+$, we deduce easily from \eqref{eq:I+ decreasing}-\eqref{eq:2 vel} that $I_+$ is decreasing, provided that $\delta_0$ is small enough.

The same holds true for $I_-$, with possibly another choice of constant $\zeta_-$. 
\end{proof}

\noindent\textbf{Step  \#2. Enhancement of monotonicity.}
The next proposition is the crucial step in the proof. It includes the cancellation property that accounts for the compensations in the velocity profile.   

\begin{proposition}[Enhancement of monotonicity]
\label{lem:monotonicity}
Assume that $I_+$ is non-increasing (resp. $I_-$ is non-decreasing). 
Then monotonicity turns out to be strict. Moreover we have the following stronger set of inequalities,
\[ \begin{cases}  \dfrac{d\rho_+^+}{dz}(z) < 0\quad \text{and}\quad \dfrac{d\rho_+^-}{dz}(z) < 0 & \text{for $z\geq 0$}  \medskip\\
\dfrac{d\rho_-^+}{dz}(z) > 0 \quad \text{and}\quad\dfrac{d\rho_-^-}{dz}(z) > 0 & \text{for $z\leq0$}
\end{cases}   
\] 
\end{proposition}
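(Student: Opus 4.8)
The plan is to split the four macroscopic quantities into two groups. The densities $\rho_\pm^-$ of particles moving \emph{towards} the origin can be handled by direct inspection of the Duhamel formula~\eqref{eq:duhamel}: for $z>0$ and $v<c$ one has $f_+^-(z,v)=\int_0^{+\infty} I_+\bigl(z+s|v-c|\bigr)e^{-T_+^- s}\,ds$, so the hypothesis that $I_+$ is non-increasing gives $T_+^- f_+^-(z,v)\leq I_+(z)$, and in fact $<$, because $I_+$ decays to $0$ by Proposition~\ref{prop:holder} and is not identically zero, hence not constant on $[z,+\infty)$. Inserting this into the formula $\frac{d\rho_+^-}{dz}(z)=\int_{\{v<c\}}(v-c)^{-1}\bigl(I_+(z)-T_+^- f_+^-(z,v)\bigr)\,d\nu(v)$ established in the proof of Proposition~\ref{prop:rho lip} (where $\rho_+^-$ is shown to be locally Lipschitz on $(0,+\infty)$), the integrand is the quotient of a positive quantity by the negative factor $(v-c)$, so $\frac{d\rho_+^-}{dz}(z)<0$; the mirror computation on the negative half-line, using that $I_-$ is non-decreasing, gives $\frac{d\rho_-^-}{dz}(z)>0$.

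For the ``incoming'' densities $\rho_\pm^+$ I would follow the compensation scheme sketched in Section~\ref{ssec:sketch proof}. Fix $z>0$ and set $g(v):=I_+(z)-T_+^+ f_+^+(z,v)$ for $v>c$, so that $\frac{d\rho_+^+}{dz}(z)=\int_{\{v>c\}}(v-c)^{-1}g(v)\,d\nu(v)$. Combining the relations $I_+=T_+^+\rho_+^+ +T_+^-\rho_+^-$ and $\rho=\rho_+^+ +\rho_+^-$ with $\nu(\{v=c\})=0$ and the first step, one obtains $\int_{\{v>c\}}g\,d\nu=-\int_{\{v<c\}}\bigl(I_+(z)-T_+^- f_+^-(z,v)\bigr)\,d\nu<0$. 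The remaining — and crucial — point is the structural claim that \emph{for each fixed $z>0$ the ``bad'' set $\{v\in(c,\v0):g(v)>0\}$ is an upper interval $(v^\ast,\v0)$}. Granting it, the decreasing weight $(v-c)^{-1}$ does the rest: on $(c,v^\ast)$ one has $g\leq0$ and $(v-c)^{-1}\geq(v^\ast-c)^{-1}$, while on $(v^\ast,\v0)$ one has $g\geq0$ and $(v-c)^{-1}\leq(v^\ast-c)^{-1}$, so $\frac{g(v)}{v-c}\leq\frac{g(v)}{v^\ast-c}$ throughout, and integrating, $\frac{d\rho_+^+}{dz}(z)\leq(v^\ast-c)^{-1}\int_{\{v>c\}}g\,d\nu<0$ (with $v^\ast>c$ forced by $\int_{\{v>c\}}g\,d\nu<0$, and an empty bad set being trivial).

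The structural claim is the main obstacle; it is the analytic counterpart of the assertion that the overshoot of Figure~\ref{fig:overshoot} occurs only at large velocities. I would prove it with the integrating factor $\psi(z,v):=e^{T_+^+ z/(v-c)}\bigl(T_+^+ f_+^+(z,v)-I_+(z)\bigr)=-e^{T_+^+ z/(v-c)}g(v)$. Writing~\eqref{eq:f I} as $(v-c)\partial_z f_+^+ +T_+^+ f_+^+=I_+$ one finds $\partial_z\psi(z,v)=e^{T_+^+ z/(v-c)}\bigl(-I_+'(z)\bigr)$ — legitimate thanks to the $\mathcal C^1$ regularity of $I_+$ and the bound on $\partial_z f_+^+$ from Corollary~\ref{cor:vyotflu} and Proposition~\ref{prop:rho lip} — hence $\psi(z,v)=\bigl(T_+^+ f_+^+(0,v)-I_+(0)\bigr)+\int_0^z e^{T_+^+ s/(v-c)}\bigl(-I_+'(s)\bigr)\,ds$. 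Both contributions are non-increasing in $v$ on $(c,\v0)$: the integral because $-I_+'\geq0$ and $s\mapsto e^{T_+^+ s/(v-c)}$ decreases as $v$ grows, and the boundary term because~\eqref{eq:duhamel} at $z=0$ reads $f_+^+(0,v)=\int_0^{+\infty} I_-\bigl(-s(v-c)\bigr)e^{-T_-^+ s}\,ds$, which is non-increasing in $v$ precisely because $I_-$ is non-decreasing (the argument $-s(v-c)<0$ decreases as $v$ grows). Therefore $\psi(z,\cdot)$ is continuous and non-increasing, so $\{v:g(v)>0\}=\{v:\psi(z,v)<0\}$ is an open upper interval — this is the one place where \emph{both} monotonicity hypotheses on $I_\pm$ enter. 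The reversed inequalities for $z<0$ follow by the symmetric argument, and the endpoint $z=0$ from the same identities (or by passing to the one-sided limit).
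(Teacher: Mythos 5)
Your proof is correct, and in outline it follows the paper's own strategy: treat $\rho_+^-$ (and $\rho_-^+$) directly from the Duhamel formula, establish that the ``bad'' set $\{v>c:\,I_+(z)-\bbf_+^+(z,v)>0\}$ is an upper interval in $v$ (this is exactly the content of the paper's Lemma \ref{lem:qualitative profile}), and then exploit the decreasing weight $(v-c)^{-1}$ together with the identity \eqref{eq:barycenter}. Where you deviate is in the two technical implementations, and both variants are sound. For the structural claim, the paper argues via a comparison-principle computation on the divided difference $h(z)=\bigl(\bbf_+^+(z,v_2)-\bbf_+^+(z,v_1)\bigr)/(v_2-v_1)$, which solves a damped transport equation whose source has a sign on $\{\bbf_+^+\leq I_+\}$, with $h(0)<0$ coming from the monotonicity in $v$ of $f_-^+(0,\cdot)$; you instead integrate the ODE in $z$ with the explicit integrating factor $e^{T_+^+z/(v-c)}$ and observe that $\psi(z,\cdot)$ is a sum of two terms each non-increasing in $v$ --- the same two inputs ($I_+$ non-increasing for $z>0$, $I_-$ non-decreasing through the boundary data at $z=0$) enter in the same places, so this is a streamlined equivalent rather than a new idea. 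For the compensation step, the paper integrates by parts and uses the sign of the cumulative function $\int_c^v\bigl(I_+-\bbf_+^+\bigr)\,d\nu$, which forces it to check that the boundary term at $v=c$ vanishes via the H\"older-in-$v$ estimates with $n$ large; your pointwise comparison $\frac{g(v)}{v-c}\leq\frac{g(v)}{v^\ast-c}$ followed by integration avoids that boundary term entirely and is genuinely cleaner, at the mild cost of still needing the integrability of $g(v)/(v-c)$ near $v=c$, which you correctly source from Proposition \ref{prop:rho lip}. The only points where you are as terse as (but not more careful than) the paper are the strict positivity of $I_+$ needed for strictness of $T_+^-f_+^-<I_+$, and the endpoint $z=0$, where the one-sided derivative may in fact diverge (Lemma \ref{lem:zero monotonicity}) but with the correct sign.
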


\begin{proof}
We prove this statement by deciphering the shape of the velocity profile $f(z,\cdot)$. 


We consider first $z>0$. 
The easiest part of the statement concerns negative relative velocities, $v<c$: by integration along the characteristics \eqref{eq:duhamel}, we obtain
\beq\label{eq:f-charac}(\forall v<c)\quad  f_+^-(z,v) = \int_0^{+\infty}  I_+(z - s(v-c)) \exp(-T_+^- s)\, ds\, . \eeq
This quantity is clearly increasing with respect to $v$, provided that $I_+$ is non-increasing, and non constant on $(z,+\infty)$ for all $z>0$. This is indeed the case by assumption. 

Similarly, $f_-^+$ is decreasing with respect to $v$, for $z<0$, and $v>c$. 

Let introduce the notation $\bbf = Tf$. 
Interestingly, the stationary equation \eqref{eq:f I} is rewritten as \begin{equation}\label{eq:stat-cluster}
(v-c) \partial_z f(z,v) =  I(z) - \bbf(z,v)\, . 
\end{equation}
From \eqref{eq:f-charac} we also get the following inequality:
\beq(\forall v<c)\quad \bbf_+^-(z,v) = \int_0^{+\infty}  I_+(z - s(v-c)) T_+^-\exp(-T_+^- s)\, ds < I_+(z)\, .
\label{eq:g- I}\eeq
We deduce immediately that the density $\rho_+^-(z)$ is decreasing, since the combination of \eqref{eq:stat-cluster} and \eqref{eq:g- I} implies that  
\[ (\forall v<c) \quad \partial_z f_+^-(z,v) <0\; \Rightarrow \quad \dfrac{d\rho_+^-}{dz}(z) < 0\, .\] 

\begin{figure}
\begin{center}
\includegraphics[width = 0.8\linewidth]{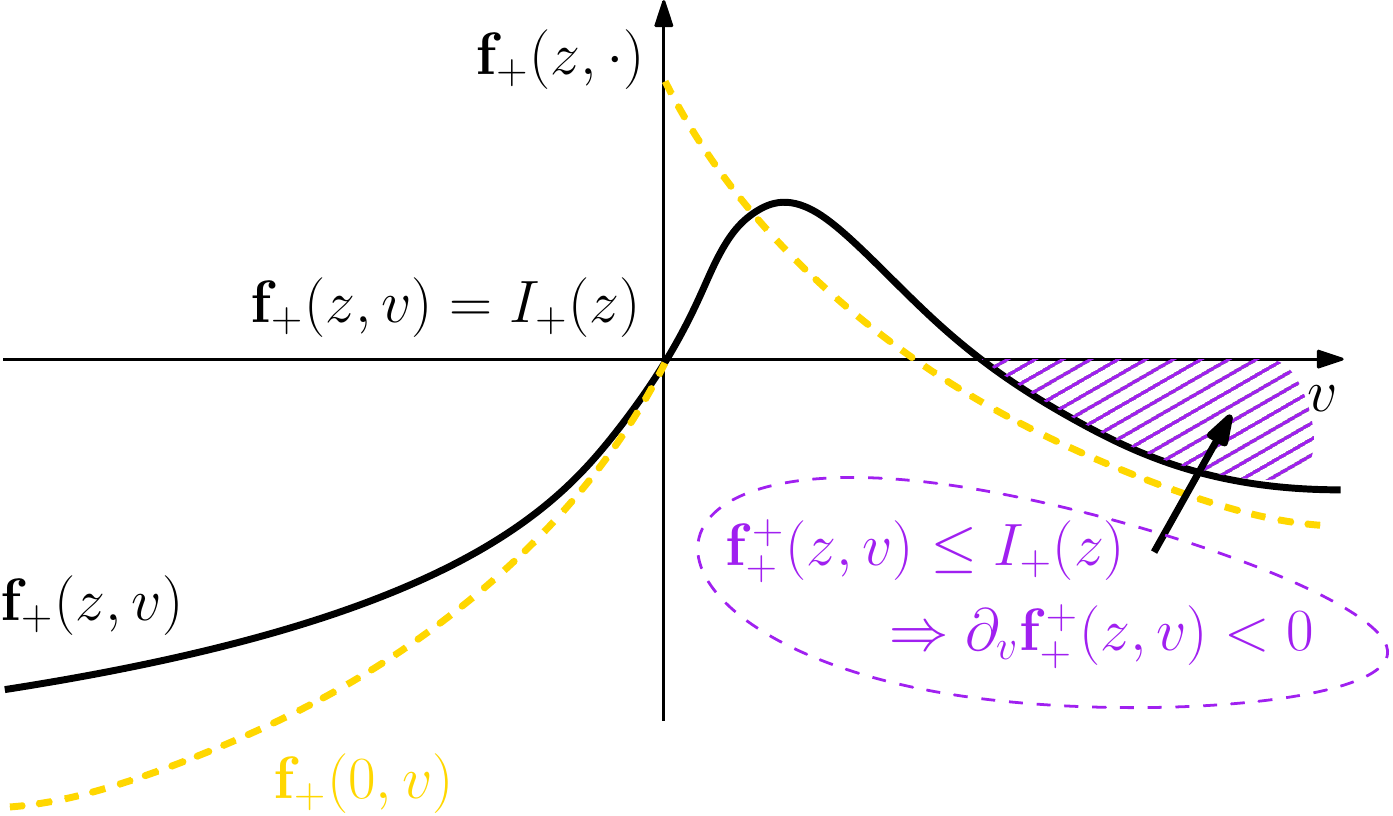}
\caption{\small Typical velocity profile of $\bbf= Tf$ for $z>0$, close to the transition at $z = 0$ (plain line). 
For the sake of comparison, $\bbf(0^+,v)$ is depicted in dashed line (see also Figure \ref{fig:overshoot} for numerical simulations). According to this cartoon, the inappropriate  sign $\partial_z \bbf_+^+(z,v)>0$ may  arise for large velocity only. One key feature is that the velocity profile is decreasing  in the zone where  $\partial_z \bbf_+^+(z,v)>0$. 
}
\label{fig:velocityprofile}
\end{center}
\end{figure}

The case of positive relative velocities $v>c$ requires more work. Indeed, the velocity profile $v\mapsto f_+^+(z,v)$ is not necessarily monotonic for $v>c$, on the contrary to $v<c$ (see Figures \ref{fig:overshoot} and \ref{fig:velocityprofile}). However, we are able to establish the following properties, which are sufficient for our purpose. 

\begin{lemma}\label{lem:qualitative profile}
The function $\bbf_+^+$ is decreasing with respect to $v$ on the set $\{ \bbf_+^+ \leq I_+ \}\cap\{v>c\}$. As a consequence, for all $z>0$ there exists $v_*(z)>c$ such that 
$\bbf_+^+(z,v)> I_+(z)$ for $z\in (0,v_*(z))\cap V$, and $\bbf_+^+(z,v)< I_+(z)$  for $z\in (v_*(z),\v0)\cap V$. 
\end{lemma}

Note that we allow for $v_*(z) \geq \v0$. In the latter case, $\bbf_+^+(z,v)\geq I_+(z)$ for all $v>c$.

\begin{proof}
The function $\bbf_+^+ - I_+$ satisfies the following damped transport equation with a non-negative source term,
\[    (v-c) \partial_z \left( \bbf_+^+(z,v) - I_+(z)\right)  + T_+^+\left( \bbf_+^+(z,v) - I_+(z) \right) = -(v-c)\dfrac{dI_+}{dz}(z) \geq 0 \,  . \]
By solving this transport equation along characteristics lines, it is immediate to establish that, if $\bbf_+^+(z_0,v)>  I_+(z_0)$, then $\bbf_+^+(z,v)> I_+(z)$ for all $z\geq z_0$. On the contrary, if $\bbf_+^+(z_0,v)\leq  I_+(z_0)$, then $\bbf_+^+(z,v) \leq  I_+(z)$ for all $0\leq z\leq z_0$. 

Take $c<v_1<v_2$. We have 
\[ (v_2-c)\partial_z \left( \bbf_+^+(z,v_2) - \bbf_+^+(z,v_1) \right) +  (v_2-v_1) \partial_z  \bbf_+^+(z,v_1)    =  - T_+^+ \left(  \bbf^+_+(z,v_2) - \bbf^+_+(z,v_1)\right)  \,  . \]
Introduce $h(z) = ( \bbf_+^+(z,v_2) - \bbf_+^+(z,v_1) )/(v_2-v_1)$. It satisfies the following damped transport equation with source term,
\beq \label{eq:transport h}  (v_2-c) \partial_z h(z) + T_+^+ h(z) =  - \partial_z  \bbf_+^+(z,v_1) = \dfrac{T_+^+}{v_1-c}\left( \bbf_+^+(z,v_1) - I_+(z) \right) \, . \eeq

If $z_0>0 $ is such that $\bbf_+^+(z_0,v_1) - I(z_0)\leq 0$, then $\bbf_+^+(z,v_1) - I_+(z)\leq 0$ for all $0\leq z\leq z_0$. On the other hand, we clearly have $h(0)<0$ since 
\beq\label{eq:h(0)} h(0) = \dfrac{\bbf_+^+(0,v_2) - \bbf_+^+(0,v_1)}{v_2 - v_1} = T_+^+\dfrac{f_+^+(0,v_2) - f_+^+(0,v_1)}{v_2 - v_1} = T_+^+\dfrac{f_-^+(0,v_2) - f_-^+(0,v_1)}{v_2 - v_1} < 0\, , \eeq
(recall that $f_-^+$ is decreasing with respect to $v$, for $z<0$, and $v>c$).

We deduce from \eqref{eq:transport h} and \eqref{eq:h(0)} that $h(z)< 0$ for all $0\leq z\leq z_0$ as soon as $\bbf_+^+(z_0,v_1) - I(z_0)\leq 0$. Therefore, for all $v_2>v_1$, $\bbf_+^+(z_0,v_2)< \bbf_+^+(z_0,v_1)$. As a consequence, the function $\bbf_+^+$ is decreasing with respect to $v$ on $\{ \bbf_+^+ \leq I \}\cap\{v>c\}$. 

The velocity threshold $v_*(z)$ is constructed by choosing the smallest velocity such that $\bbf_+^+(z,v)<I_+(z)$. 
\end{proof}

To conclude the proof of Proposition \ref{lem:monotonicity}, we establish that $\rho_+^+$ is decreasing as well, using compensations between lower and higher velocities. The derivative writes
\beq \dfrac {d \rho_+^+}{dz}(z) = \int_{\{v>c\}} \dfrac1{v-c}\left( I_+(z) - \bbf_+^+(z,v) \right)\, d\nu(v)\, . \label{eq:ddx rho} \eeq
The key observation is that, were we ignoring the factor $1/(v-c)$ inside the integral, the r.h.s. in \eqref{eq:ddx rho} would be
\beq \int_{\{v>c\}}  \left( I_+(z) - \bbf_+^+(z,v) \right)\, d\nu(v) =  \int_{\{v<c\}}  \left(  \bbf_+^-(z,v) - I_+(z)\right)\, d\nu(v) < 0 \, , \label{eq:barycenter} \eeq
by the very definition of $I = \int \bbf\, d\nu$. Indeed, it is negative since $\bbf_+^-(z,v) < I_+(z)$ for all $v<c$ \eqref{eq:g- I}.

It is sufficient to show that the r.h.s. of \eqref{eq:ddx rho} can be handled in the same way, because lower velocities contribute more to the integral, and because the inappropriate sign is contributed by higher velocities according to Lemma \ref{lem:qualitative profile}. After integration by parts, we obtain
\begin{align} 
\dfrac {d \rho_+^+}{dz}(z)  & = 
\left[\dfrac{1}{v - c} \int_c^{v} \left( I_+(z) - \bbf_+^+(z,v') \right)\, d\nu(v')\right]_{c}^{\v0} \nonumber\\
& \quad +   \int_{\{v<c\}} \dfrac1{(v-c)^2} \left( \int_c^{v} \left( I_+(z) - \bbf_+^+(z,v') \right)\, d\nu(v')\right)\, dv \label{eq:v = c non singulier}  \\
& = \dfrac{1}{\vm - c} \int_c^{\vm} \left( I_+(z) - \bbf_+^+(z,v') \right)\, d\nu(v') \nonumber\\
& \quad  +   \int_{\{v<c\}} \dfrac1{(v-c)^2} \left( \int_c^{v} \left( I_+(z) - \bbf_+^+(z,v') \right)\, d\nu(v')\right)\, dv\, .\label{eq:dz rho++}
\end{align}
Notice that integration by parts makes sense here since the possibly singular term at $v = c$ in \eqref{eq:v = c non singulier} vanishes because $ I_+(z) - \bbf_+^+(z,v) = \mathcal O(|v-c|^{1-1/p^n})$ pointwise for $z>0$. Hence, 
\begin{align*}
\left|\dfrac1{v-c}\int_c^{v} \left( I_+(z) - \bbf_+^+(z,v') \right)\, d\nu(v')\right| & \leq  C   
\dfrac1{v-c} \left( \int_c^{v} |v'-c|^{(1 - 1/p^n)p'}\, d v'\right)^{1/p'} \|\omega\|_p   \\
& \leq  C   
|v-c|^{ 1/p' - 1/p^n}   \|\omega\|_p \, .
\end{align*} 
As in the proof of Proposition \ref{prop:rho lip}, we find that taking $n$ large enough, the latter contribution converges to 0 as $v\to c$.  

We deduce from \eqref{eq:barycenter} that the first contribution in \eqref{eq:dz rho++} is negative. On the other hand, the cumulative function 
\[\int_c^{v} \left( I_+(z) - \bbf_+^+(z,v') \right)\, d\nu(v')\, ,\]
is everywhere negative. Indeed, according to Lemma \ref{lem:qualitative profile}, this cumulative function is decreasing, then increasing (and possibly only decreasing) on $\{v>c\}$. Therefore, it reaches its maximum value at $v = c$ or at $v = \v0$. 
Since it vanishes at $v = c$ and it is negative at $v = \vm$ \eqref{eq:barycenter}, it is everywhere non positive. 
%

As a conclusion, both contributions in \eqref{eq:dz rho++} are negative, so $\rho_+^+$ is decreasing. 
\end{proof}

\begin{remark}
As can be observed on Figure \ref{fig:overshoot}, the threshold velocity $v_*(z)$ is increasing with respect to $z$. 
We present briefly a formal proof of this secondary statement.
For the sake of simplicity, let assume that all quantities, including $v_*(z)$ have regular variations with respect to $z,v$. We differentiate the relation $I_+(z) = \bbf_+^+(z,v_*(z))$  with respect to $z$, 
\begin{equation*}
 \dfrac{d  I_+}{dz}(z)  = \partial_z \bbf_+^+(z,v_*(z)) +  \partial_v \bbf_+^+(z,v_*(z)) \dfrac{d v_*}{dz}(z)\, .  
\end{equation*}
However, $\partial_z \bbf^+(z,v_*(z)) = 0$ by definition of $v_*(z)$, and equation \eqref{eq:stat-cluster}. Therefore, 
\[ \dfrac{dv_*}{dz}(z) = \left(\partial_v \bbf_+^+(z,v_*(z))\right)^{-1} \dfrac{d I_+}{dz} (z) > 0\, , \]
since $\bbf_+^+$ is decreasing at $v = v_*(z)$.

This illustrates the fact that the monotonicity of $f$ is getting better and better as $z$ increases, since the zone with inappropriate monotonicity shrinks (and may eventually disappears as in Figure \ref{fig:overshoot}). Hence, the lack of monotonicity of $f$ is clearly a consequence of overshoot phenomena around the origin where all signs are changing in the tumbling rate $T$. 
\end{remark}

\noindent\textbf{Step  \#3. Propagation of monotonicity.} 
In the last step, we propagate the monotonicity of $\rho_\pm^\pm$ along a continuous deformation of the measure $\nu$. 
Let $(\nu_\tau)_{\tau\geq 0}$ be a path of probability measures satisfying Assumption $\Ac$, continuous for the strong topology of $L^p$, and  satisfying the following properties:
\[ \begin{cases}
\text{$\nu_0$ is the uniform measure on $V_0\cup(-V_0)$ , where $V_0$ is defined in Proposition \ref{prop:2 vel case}}\medskip\\
(\forall \tau)\quad \supp\nu_\tau = [\vt, \v0]\cup [-\v0,-\vt]
\medskip\\
(\forall T)\;(\exists \omega_0(T)>0)\; (\forall \tau \leq T)\quad  \omega_0(T) \leq \omega_\tau\leq \omega_0(T)^{-1} \text{ on $\supp\nu_\tau$ }  \medskip\\
\displaystyle\lim_{\tau\to +\infty}\nu_\tau =  \nu \quad L^{p}\, .
\end{cases}\]
Here, $\tau\mapsto \vt$ is a non-increasing, continuous function such that $\vt|_{\tau=0} = \v0-\delta_0$, where $\delta_0$ is chosen according to Proposition \ref{prop:2 vel case}. Note that $c\notin \supp \nu_0$ by definition of $V_0$. However, it may happen that $\vt$ crosses $c$. At this point, the regularity of $f$ degenerates. We shall analyse  carefully  this point (see Lemma \ref{lem:zero monotonicity}). Besides, this careful analysis brings some useful quantitative information. 

We shall prove that for all $\tau\geq 0$, $I_\tau$ is decreasing on $\R_+^*$, and increasing on $\R_-^*$. First, we can deduce from the refined asymptotic of $f$ obtained in Theorem \ref{theo:cluster} that this is true for large $z$, with some quantitative estimates which are locally uniform with respect to $\tau$. 


\begin{lemma}[Asymptotic monotonicity]\label{lem:asymptotic monotonicity}
Let $T>0$. 
There exists $C>0$, such that 
\[ (\forall \tau\leq T)\quad 
\begin{cases} 
 (\forall z>1) & \dfrac{d I_{\tau+}}{dz}(z) \leq -\dfrac1C e^{-\lambda_+ z} + C e^{-\lambda_+ z-\vartheta_+' z } \medskip\\ 
 (\forall z<-1) & \dfrac{d I_{\tau-}}{dz}(z) \geq \dfrac1C e^{\lambda_- z} - Ce^{\lambda_- z+\vartheta_-' z } \end{cases}
\]
where $\vartheta_\pm' = \frac2{3}\vartheta_+$.
\end{lemma}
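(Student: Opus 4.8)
I would deduce the lemma from the refined asymptotics of Theorem~\ref{theo:cluster}. Working on $\{z>0\}$ (the case $z<-1$ is symmetric under $z\mapsto-z$, $v\mapsto-v$), recall $g_+(z,v)=e^{\lambda_+z}f_+(z,v)$ and $J_+(z)=e^{\lambda_+z}I_+(z)=\int T_+(v-c)g_+(z,v)\,d\nu(v)$, so that
\[ \dfrac{dI_+}{dz}(z)=e^{-\lambda_+z}\Big(\dfrac{dJ_+}{dz}(z)-\lambda_+J_+(z)\Big)\,. \]
Since $\int T_+(v-c)F_+(v)\,d\nu(v)=1$ by \eqref{eq:dispersion lambda+}, the constant $\kappa_+$ of \eqref{eq:def mu} obeys $\int T_+(v-c)\,\kappa_+F_+(v)\,d\nu(v)=\kappa_+$. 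Hence it suffices to establish the one-sided estimates
\[ |J_+(z)-\kappa_+|\le Ce^{-\vartheta_+'z}\,,\qquad \Big|\dfrac{dJ_+}{dz}(z)\Big|\le Ce^{-\vartheta_+'z}\qquad(z>1)\,, \]
for some $\vartheta_+'\in(0,\vartheta_+)$: combined with $\lambda_+>0$ (which holds because $c>c_*$) and $\kappa_+>0$ (Theorem~\ref{theo:cluster}), these give $\frac{dJ_+}{dz}-\lambda_+J_+\le-\lambda_+\kappa_++Ce^{-\vartheta_+'z}$, and multiplying by $e^{-\lambda_+z}$ yields exactly the claimed bound with $1/C:=\lambda_+\kappa_+$ (after enlarging $C$). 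All constants are uniform over $\tau\le T$: under condition \eqref{jlzgumlzvlvl} the quantities $\lambda_+,\kappa_+,\vartheta_+$, the $L^\infty$ bound on $g_+$ from \eqref{eq:5856}--\eqref{eq:589808}, and the H\"older moduli of Corollaries~\ref{cor:exp lip} and \ref{cor:vyotflu} depend continuously on $\nu_\tau$, hence stay controlled on the compact interval $[0,T]$.

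\textbf{Quantitative profile estimate.} The heart of the argument is to make quantitative the convergence $g_+(z,\cdot)\to\kappa_+F_+$. Set $h(z,v)=g_+(z,v)-\kappa_+F_+(v)$; since $e^{-\lambda_+z}F_+(v)$ is an exact solution of the kinetic equation, $h$ solves the damped transport equation $(v-c)\partial_zh=\tilde J-F_+(v)^{-1}h$ with $\tilde J(z)=J_+(z)-\kappa_+=\int T_+(v-c)h\,d\nu$. Under \eqref{jlzgumlzvlvl}, $F_+$ is bounded above and below, so the weighted $L^2$ estimate \eqref{eq:spectral gap 1} reads $\|(v-c)h(z,\cdot)\|_{L^2(d\nu)}\le Ce^{-\vartheta_+z}$, whence $\|h(z,\cdot)\|_{L^2(\{|v-c|>\delta\},d\nu)}\le C\delta^{-1}e^{-\vartheta_+z}$ for $\delta\in(0,1)$. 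Interpolating this $L^2$ smallness with the fixed (for $z>1$) H\"older-in-$v$ modulus of $g_+$ from Corollary~\ref{cor:exp lip}, and using that $F_+$ is smooth away from $v=c$, yields a pointwise decay $\sup_{|v-c|>\delta}|h(z,v)|\le C\delta^{-\gamma}e^{-\gamma'\vartheta_+z}$ for explicit $\gamma,\gamma'>0$ depending only on $p$. Now $g_+^{+}$ is continuous on $(c,\v0]$ with trace $g_+^{+}(z,c^{+})=J_+(z)/T_+^{+}$ (read off \eqref{eq:f I}), while $F_+(c^{+})=1/T_+^{+}$; letting $v\searrow c$ from $\{v>c+\delta\}$ up to a H\"older correction of size $C\delta^{\alpha}$ ($\alpha=1-1/p$), and arguing likewise on $\{v<c\}$, gives $|\tilde J(z)|\le C\big(\delta^{-\gamma}e^{-\gamma'\vartheta_+z}+\delta^{\alpha}\big)$; optimising $\delta=\delta(z)$ produces $|J_+(z)-\kappa_+|\le Ce^{-\vartheta_+'z}$ for a suitable $\vartheta_+'\in(0,\vartheta_+)$ — choosing $p>2$ large and the exponents accordingly, $\vartheta_+'=\tfrac23\vartheta_+$ is admissible (any strictly smaller rate works as well). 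For the derivative, rewrite $\partial_zg_+(z,v)=\big(\tilde J(z)-F_+(v)^{-1}h(z,v)\big)/(v-c)$; for $|v-c|>\delta$ this is $\le C\delta^{-1}e^{-\vartheta_+'z}$ by the previous bounds, whereas for $|v-c|<\delta$ one has $|\tilde J(z)-F_+(v)^{-1}h(z,v)|\le C|v-c|^{\alpha}$ near the traces (again by Corollary~\ref{cor:exp lip}, since that expression vanishes at $v=c^{\pm}$), so that $\int_{|v-c|<\delta}|T_+\partial_zg_+|\,d\nu\le C\delta^{\alpha}$; optimising $\delta$ once more gives $|\tfrac{dJ_+}{dz}(z)|\le Ce^{-\vartheta_+'z}$.

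\textbf{Main obstacle.} The one genuine difficulty is that $g_+$ (equivalently $f$) is discontinuous at $v=c$ and the entropy method of Theorem~\ref{theo:cluster} only controls norms weighted by $(v-c)^2$, which degenerate precisely where one needs information. Transferring the weighted decay into an honest bound on the \emph{unweighted} moment $J_+=\int T_+g_+\,d\nu$ therefore forces the interpolation of the weighted $L^2$ estimate with the $L^\infty$ bound and the velocity-H\"older regularity, anchored at the one-sided traces $g_+^{\pm}(z,c^{\pm})=J_+(z)/T_+^{\pm}$; this is also what degrades the sharp rate $\vartheta_+$ to the slightly worse $\vartheta_+'=\tfrac23\vartheta_+$. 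Everything else — the sign $-\lambda_+\kappa_+<0$ of the leading coefficient, the conclusion, and the uniformity in $\tau$ — follows directly from results already in hand.
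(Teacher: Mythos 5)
Your argument follows essentially the same mechanism as the paper's proof: exploit the refined spectral-gap asymptotics of Theorem \ref{theo:cluster}, split velocities according to $|v-c|\lessgtr\delta$, use the velocity-H\"older regularity of the normalized density (Corollary \ref{cor:exp lip}) together with the trace identities at $v=c$, and identify the leading negative term $-\lambda_+\kappa_+e^{-\lambda_+z}$. The genuine gap is the uniformity in $\tau\leq T$, and specifically the strictly positive lower bound on $\kappa_+$ (and on $\rho(0)$) that makes $1/C:=\lambda_+\kappa_+$ a legitimate uniform constant. You dismiss this with ``$\kappa_+$ depends continuously on $\nu_\tau$, hence stays controlled on $[0,T]$'', but continuity of $\tau\mapsto f_\tau$ (hence of $\kappa_+$) is itself a substantive fact: in the paper it is only obtained later, in Lemma \ref{lem:closed}, via uniqueness of the stationary state plus a compactness/diagonal extraction argument, and nothing in your proposal supplies it. The paper instead proves a quantitative lower bound (Lemma \ref{eq:bound mu rho0}) directly from the conservation law \eqref{eq:def mu}, the uniform exponential decay, and the unit-mass normalization \eqref{eq:f unit mass}; this is a key ingredient of the proof, not a routine continuity remark, and without it (or a fully justified continuity-plus-compactness substitute) your constant $C$ is not uniform in $\tau$, which is exactly what the lemma asserts.

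A secondary point concerns the rate. The paper never needs pointwise decay of the corrector $h=g_+-\kappa_+F_+$: it only uses the $L^1$-in-velocity bound \eqref{eq:L1 bound h} (obtained by balancing $\delta=e^{-\frac23\vartheta_+z}$ against the pointwise bound \eqref{eq:589808} and the weighted $L^2$ estimate), while the near-$c$ contribution to $\frac{dI_+}{dz}$ is estimated by the H\"older-in-$v$ modulus of $g_+$ itself and left as a term of size $\delta^{1/p'}$ with $\delta$ a \emph{fixed} small constant, absorbed into $-\lambda_+\kappa_+$ once $\kappa_+$ is bounded below. Your route, which interpolates the degenerate weighted $L^2$ bound with the H\"older modulus to get pointwise decay of $h$ and then optimizes $\delta=\delta(z)$ at every step, necessarily degrades the exponent: the chain $L^2\to L^\infty\to\tilde J\to dJ_+/dz$ yields a rate strictly below $\frac23\vartheta_+$ (roughly $\frac12\vartheta_+$ at best), so the statement with $\vartheta_\pm'=\frac23\vartheta_+$ as written is not reached. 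This is harmless for the way the lemma is used in Lemma \ref{lem:open}, but you should either track the exponents honestly or adopt the paper's split, which both simplifies the estimate and gives the stated rate.
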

\begin{proof}
We restrict to $z>0$. The procedure for $z<0$ is the same.  We omit the dependency upon $\tau$ in the notation for the sake of clarity.
 
We control explicitly the behaviour of $f$ as $z\to +\infty$ \eqref{eq:spectral gap 1}. 
Recall the definitions of $\kappa_\pm$ \eqref{eq:def mu}, 
\beq \label{eq:mu+-}
\begin{cases}
\kappa_+ = \dfrac{\int f(0,v) (v-c)^2 F_+(v)\, d\nu(v)}{\int (v-c)^2 F_+(v)^2\, d\nu(v)} = e^{\lambda_+ z}\dfrac{\int f_+(z,v) (v-c)^2 F_+(v)\, d\nu(v)}{\int (v-c)^2 F_+(v)^2\, d\nu(v)}\,,  \medskip\\
\kappa_- = \dfrac{\int f(0,v) (v-c)^2 F_-(v)\, d\nu(v)}{\int (v-c)^2 F_-(v)^2\, d\nu(v)} = e^{-\lambda_- z}\dfrac{\int f_-(z,v) (v-c)^2 F_-(v)\, d\nu(v)}{\int (v-c)^2 F_-(v)^2\, d\nu(v)} \, .
\end{cases}
\eeq
Let introduce the corrector $h$ such that 
\beq\label{eq:taylor} f(z,v) - \kappa_+ F_+(v) e^{-\lambda_+ z} = h(z,v) \, , \quad  \left(\int (v-c)^2 h(z,v)^2 d\nu(v)\right)^{1/2} \leq C e^{-\lambda_+ z-\vartheta_+ z }\, .  \eeq
The latter bound requires some improvement, as it controls the $L^2$ norm of $h$ with a weight, whereas some unweighted $L^1$ bound would be more appropriate for our purpose. This is done as follows (recall the pointwise decay bound \eqref{eq:589808}),
\begin{align}
\int |h|\, d\nu & = \int_{|v-c|<\delta} |h(z,v)|\, d\nu(v) + \int_{|v-c|>\delta} |h(z,v)|\, d\nu(v) \nonumber \\
& \leq C e^{-\lambda_+ z} \delta \omega_0(T)^{-1} + \left( \int_{|v-c|>\delta} (v-c)^2|h(z,v)|^2\, d\nu(v)\right)^{1/2}\left( \int_{|v-c|>\delta} \dfrac1{(v-c)^2} \, d\nu(v)\right)^{1/2} \nonumber \\
& \leq C e^{-\lambda_+ z} \delta   + C e^{-\lambda_+ z - \vartheta_+ z} \delta^{- 1/2}\omega_0(T)^{-1/2}  \nonumber \\
& \leq C e^{-\lambda_+ z} e^{-\vartheta_+' z }\, , \label{eq:L1 bound h}
\end{align}
where $\vartheta_+' = \frac2{3}\vartheta_+$.

Recall the calculation of the derivative of $I_+$,
\begin{equation}
\dfrac {d I_+}{dz} (z) = \int \dfrac {T_+(v-c)}{v-c} \left( I_+(z)  - T_+(v-c) f(z,v)  \right)\, d\nu(v)\,. \label{eq:I decreasing}
\end{equation}
Integrating \eqref{eq:taylor} against $T\nu$ with respect to velocity, we obtain
\[
\left| I_+(z) - \kappa_+ e^{-\lambda_+z}\right| \leq C e^{-\lambda_+ z} e^{-\vartheta_+' z }\, ,  
\]
(recall that $\int T_+ F_+\, d\nu = 1$ by definition). On the other hand, we have the pointwise identity,
\[T_+(v-c) f(z,v) - \kappa_+ T_+(v-c) F_+(v)  e^{-\lambda_+z} = T_+(v-c) h(z,v)\, .\] 
By construction, we have 
\[T_+(v-c) F_+(v) - 1 = \frac{\lambda_+(v-c)}{T_+(v-c) - \lambda_+(v-c)} = \lambda_+ (v-c) F_+(v)\, .\]
The latter has the sign of $v-c$. Thus, we would be in position to conclude from \eqref{eq:I decreasing}, if we could abusively neglect the corrector terms, as follows, 
\begin{align*}
\nonumber\dfrac {d I_+}{dz} (z) &\approx \int \dfrac {T_+(v-c)}{v-c} \left( \kappa_+ e^{-\lambda_+z} - \kappa_+ T_+(v-c) F_+(v)  e^{-\lambda_+z}  \right)\, d\nu(v)\\
& \approx \kappa_+ e^{-\lambda_+z} \int  T_+(v-c)  F_+(v) \, d\nu(v) = \kappa_+ e^{-\lambda_+z} \, .
\end{align*}
However, the pre-factor $\frac1{v-c}$ in \eqref{eq:I decreasing} makes the estimate involving the corrector $h$ more difficult to handle with.

We proceed as follows to circumvent this issue. For $z>1$, we have 
\begin{align*}
\dfrac {d I_+}{dz} (z) 
&=  \left (  \int_{|v-c|<\delta} +  \int_{|v-c|>\delta}\right ) \dfrac {T_+(v-c)}{v-c} \left( I_+(z)  - T_+(v-c) f_+(z,v)  \right)\, d\nu(v) \\
& =  \int_{|v-c|<\delta} \dfrac {T_+(v-c)}{v-c} T_+(v-c) \left( f_+(z,c)  -  f_+(z,v)  \right)\, d\nu(v)  \\
&\quad+  \int_{|v-c|>\delta} \dfrac {T_+(v-c)}{v-c}\left( \kappa_+ e^{-\lambda_+ z}  - \kappa_+  T_+(v-c) F_+(v) e^{-\lambda_+ z}    \right)\, d\nu(v) \\ 
&\quad +  \int_{|v-c|>\delta} \dfrac {T_+(v-c)}{v-c} \left(\int  T_+(v'-c) h(z,v') \,d\nu(v') - T_+(v-c) h(z,v)\right)  \, d\nu(v) \\
& = \mathcal{I} +\mathcal{II}+\mathcal{III} \, .
\end{align*}
The three contributions are estimated as follows. From Corollary \ref{cor:exp lip}, we deduce 
\begin{align*}
 \mathcal{I}  
 &\leq
C e^{-\lambda_+ z}  \int_{|v-c|<\delta} \dfrac{T_+(v-c)^2}{|v-c|}|v-c|^{1/p'} \, d\nu(v) \\
& \leq  C e^{-\lambda_+ z} \omega_0(T)^{-1} \delta^{1/p'}
\end{align*}
On the other hand, we have
\begin{align*}
\mathcal{II} & =  \kappa_+ e^{-\lambda_+ z} \int_{|v-c|>\delta} \dfrac {T_+(v-c)}{v-c}  \frac{-\lambda_+(v-c)}{T_+(v-c) - \lambda_+(v-c)}\, d\nu(v) \\
& = - \lambda_+ \kappa_+ e^{-\lambda_+ z} \int_{|v-c|>\delta}     T_+(v-c)F_+(v)\, d\nu(v) \\
& =  - \lambda_+ \kappa_+ e^{-\lambda_+ z} \left(1 -  \int_{|v-c|<\delta}   T_+(v-c)F_+(v)\, d\nu(v)\right) \\
& \leq - \lambda_+ \kappa_+ e^{-\lambda_+ z} \left(1 -  \sup_{v\in V}\left(   T_+(v-c) F_+(v)\right) \delta \omega_0(T)^{-1} \right)
\end{align*}
Finally, we deduce from the $L^1$ bound \eqref{eq:L1 bound h} that 
\begin{equation*}
\mathcal{III}  \leq   C\delta^{-1} \int |h|\, d\nu  \leq  C\delta^{-1}e^{-\lambda_+ z -\vartheta_+' z } \,.
\end{equation*}
Collecting $\mathcal{I},\mathcal{II},\mathcal{III}$, we get eventually an estimate of the form ,
\beq\label{jguzgimv}
\dfrac {d I_+}{dz} (z) \leq  \left( - \lambda_+ \kappa_+ +  C \delta^{1/p'}   + C  \delta^{-1} e^{-\vartheta_+' z } \right)e^{-\lambda_+ z}\, .
\eeq
It remains to bound $\kappa_+$ uniformly from below in order to conclude. This is the purpose of the next auxiliary lemma. 

\begin{lemma}\label{eq:bound mu rho0}
Under the unit mass normalization  \eqref{eq:f unit mass}, the positive numbers $\kappa_\pm$ and  $\rho(0)$ are bounded below by some positive constant.  
\end{lemma}

\begin{proof}
We begin with the bound on $\rho(0)$, by using the conservation law \eqref{eq:def mu}, 
\begin{align*} (\forall z>0)\quad \int f_+(z,v) (v-c)^2 F_+(v)\, d\nu(v) &= e^{-\lambda_+ z}\int f(0,v) (v-c)^2 F_+(v)\, d\nu(v) \\ 
& \leq e^{-\lambda_+ z} \sup_{v\in V} \left((v-c)^2 F_+(v) \right) \rho(0)  
\, .
\end{align*}
Integrating this inequality with respect to $z>0$, we obtain
\begin{align}
\int_0^{+\infty}\int_{|v-c|>\delta} f_+(z,v) (v-c)^2 F_+(v)\, d\nu(v) dz & \leq \dfrac C{\lambda_+} \rho(0) \nonumber \\
\delta^2 \left( \inf_{v\in V} F_+(v)\right) \int_0^{+\infty}\int_{|v-c|>\delta} f_+(z,v)  \, d\nu(v)dz & \leq \dfrac C{\lambda_+} \rho(0)\nonumber \\
\delta^2 \left( \inf_{v\in V} F_+(v)\right) \left( \int_0^{+\infty}  \int  f_+(z,v)  \, d\nu(v)dz -  \int_0^{+\infty}  \int_{|v-c|<\delta} f_+(z,v)  \, d\nu(v)dz\right) & \leq \dfrac C{\lambda_+} \rho(0)\nonumber \\ 
\delta^2 \left( \inf_{v\in V} F_+(v)\right) \left( M_+ -  C \delta\omega_0(T)^{-1} \dfrac1{\lambda_+}  \right) & \leq \dfrac C{\lambda_+} \rho(0)\, ,\label{eq:bound below M+}\end{align}
where $M_+$ denotes the mass which is contained on the positive half space, namely
\[ M_+ = \int_0^{+\infty} \!\!\!\! \int  f_+(z,v)  \, d\nu(v)dz\, . \]
Note that in the last estimate, we have used the uniform exponential decay of $f$ \eqref{eq:589808}. The same estimate holds true on the negative half space,
\beq\label{eq:bound below M-}
\delta^2 \left( \inf_{v\in V} F_-(v)\right) \left( M_- -  C \delta\omega_0(T)^{-1}\dfrac1{\lambda_-}  \right) \leq \dfrac C{\lambda_-} \rho(0)\, .
\eeq
Combining \eqref{eq:bound below M+} and  \eqref{eq:bound below M-}, we obtain using the notation $M = M_+ + M_-$,
\begin{align}
\delta^2   M - C \delta^{3} &\leq C\rho(0) \nonumber\\
M^{3}  &\leq C\rho(0)\, . \label{eq:bound below rho}
\end{align}
Eventually, we get that $\rho(0)$ is bounded from below under the normalization  $ M = 1$ \eqref{eq:f unit mass}, and the conditions $\omega_0>0$, $c\in (c_*,c^*)$.

The estimate on $\kappa_+$ from below is obtained with the same lines of proof,
\begin{align*}
\kappa_+  & = \left(\int (v-c)^2 F_+(v)^2\, d\nu(v) \right)^{-1}  \int  f(0,v) (v-c)^2 F_+(v) \, d\nu(v)\\
\kappa_+  & \geq \left(\int (v-c)^2 F_+(v)^2\, d\nu(v) \right)^{-1}  \int_{|v-c|>\delta} f(0,v) (v-c)^2 F_+(v) \, d\nu(v)   \\
& \geq \frac1C  \delta^2 \left( \inf_{v\in V} F_+(v)\right)\left( \rho(0) - \int_{|v-c|<\delta} f(0,v)\, d\nu(v) \right) \\
& \geq \frac1C  \delta^2 \left( \inf F_+(v)\right)\left( \rho(0) -  C \rho(0) \delta\omega_0(T)^{-1}    \right) \\
& \geq \dfrac1C \rho(0)\, .
\end{align*}
This concludes the proof of Lemma \ref{eq:bound mu rho0}. 
\end{proof}
Back to \eqref{jguzgimv}, we can choose $\delta$ small enough, as compared to $\lambda_+\kappa_+$, so as to obtain
\begin{equation*}
\dfrac {d I_+}{dz} (z) \leq  \left( - \frac12\lambda_+ \kappa_+ + C   e^{-\vartheta_+' z } \right)e^{-\lambda_+ z}\, .
\end{equation*}
This concludes the proof of Lemma \ref{lem:asymptotic monotonicity}.
\end{proof}

Lemma \ref{lem:asymptotic monotonicity} has a counterpart for small $z$. Indeed, we expect from Section \ref{sec:Further regularity} that regularity of $I$ cannot be guaranteed up to $z = 0$ (see Proposition \ref{prop:rho lip}). The case of small $z$ requires particular attention. In the next lemma, we investigate quantitatively the behaviour of macroscopic quantities for small $z$. In fact, we prove that it may happen that the derivative of $\rho_+^+$ diverge as $z\searrow 0$, but with the appropriate sign. 

We face the following trivial alternative: either the derivative of  $\rho_+^+$ diverges, or it has a finite value, and we can push regularity up to the origin. This depends whether $c\in \supp \nu_\tau$ ($c\geq \vt$), or not. In order to make it quantitative, so as to manage the transition between the two alternatives, we distinguish between the three following cases: 
\begin{enumerate}
\item[(i)] $c\in \supp\nu_\tau$, \ie $\vt\leq c$,
\item[(ii)] $c\in (\vt - \delta_1,\vt)$, 
\item[(iii)] None of (i)-(ii), \ie $c\leq \vt -\delta_1$.
\end{enumerate}
Here, $\delta_1$ is a positive real number to be defined below.

\begin{lemma}\label{lem:zero monotonicity}
Let $T>0$. Assume (i) or (ii). There exist $C>0$ and $\eta_0>0$ such that 
\[ (\forall \tau\leq T)\quad 
\begin{cases} 
 (\forall z\in (0,\eta_0)) & \dfrac{d I_{\tau+}}{dz}(z) \leq -\dfrac1C \medskip\\ 
 (\forall z\in (-\eta_0,0)) & \dfrac{d I_{\tau-}}{dz}(z) \geq  \dfrac1C \end{cases}
\]
On the contrary, assume (iii). Then, $\frac{d I_{\tau+}}{dz}$ (resp.   $\frac{d I_{\tau-}}{dz}$) is Lipschitz continuous on $\R_+$ (resp. on $\R_-$), with quantitative bounds depending on $\delta_1$. 
\end{lemma}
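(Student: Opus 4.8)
The plan is to isolate, via the Duhamel formula \eqref{eq:duhamel}, the part of $\frac{dI_{\tau+}}{dz}(z)$ that may blow up as $z\searrow 0$, and to show it carries the announced sign precisely when $c$ lies close to $\supp\nu_\tau=[\vt,\v0]\cup[-\v0,-\vt]$. I start from the identity established in the proof of Proposition \ref{prop:rho lip},
\[ \frac{dI_{\tau+}}{dz}(z) = T_+^+\!\int_{\{v>c\}}\!\frac{I_{\tau+}(z)-\bbf_{\tau+}^+(z,v)}{v-c}\,d\nu_\tau(v)\;+\;T_+^-\!\int_{\{v<c\}}\!\frac{I_{\tau+}(z)-\bbf_{\tau+}^-(z,v)}{v-c}\,d\nu_\tau(v)\,. \]
In \eqref{eq:duhamel} the only source of degeneracy at the origin is the factor $e^{-T_+^+ z/(v-c)}$, whose $z$--derivative produces a $(v-c)^{-1}$; every other contribution is controlled uniformly up to $z=0$ by the bootstrap of Section \ref{sec:Further regularity} and, under the standing hypothesis \eqref{jlzgumlzvlvl}, by Corollary \ref{cor:exp lip}. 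Thus the trichotomy is dictated by $\vt-c$ compared to $\delta_1$.

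\medskip
\noindent\textbf{Case (iii): $c\leq\vt-\delta_1$.} Then $|v-c|$ is bounded below on $\supp\nu_\tau$, uniformly for $\tau\leq T$, by some $\delta_1'=\delta_1'(\delta_1,T)>0$ (for $v<0$ this uses $\vt\geq\vt|_{\tau=T}>0$). I would rerun the estimates of Section \ref{sec:Further regularity}: every occurrence of $(v-c)^{-1}$ in the differentiated Duhamel formula is now $\leq(\delta_1')^{-1}$, so the prefactors $z^{-1/p'}\vee 1$, $z^{-1}\vee 1$ collapse to constants, $\partial_z f_{\tau+}$ is bounded up to $z=0$, and then the two $\tfrac{d\rho_{\tau+}^\pm}{dz}$ formulas show that $\frac{dI_{\tau+}}{dz}$ is Lipschitz on all of $\R_+$, with constants depending only on $\delta_1$ and $T$. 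The case $z<0$ is identical.

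\medskip
\noindent\textbf{Cases (i)--(ii): $\vt-c\in[0,\delta_1)$.} Insert \eqref{eq:duhamel} into $\tfrac{d\rho_{\tau+}^+}{dz}$ and differentiate; collecting terms one finds, for $v>c$,
\[ \partial_z f_{\tau+}^+(z,v) = \frac{I_{\tau+}(0)-\bbf_{\tau+}^+(0,v)}{v-c}\,e^{-T_+^+ z/(v-c)} + \int_0^{z/(v-c)}\!\tfrac{dI_{\tau+}}{dz}(z-s(v-c))\,e^{-T_+^+ s}\,ds\,. \]
The second (remainder) term, integrated against $d\nu_\tau$ over $\{v>c\}$, stays bounded as $z\searrow 0$ uniformly in $\tau\leq T$: this follows by Fubini in $s$, the elementary estimate $\int_{\{v>c\}}(v-c)^{-1}e^{-T_+^+ y/(v-c)}\,d\nu_\tau(v)\leq C(1+|\log y|)$, and the almost-Lipschitz bound on $I_{\tau+}$ on $\R_+^*$ from Corollary \ref{cor:exp lip}. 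The term $\tfrac{d\rho_{\tau+}^-}{dz}$ is also bounded near $z=0$: the velocities $v<0$ of $\supp\nu_\tau$ are bounded away from $c$, and for $v\in[\vt,c)$ (which occurs only in case (i)) the almost-Lipschitz bound gives $I_{\tau+}(z)-\bbf_{\tau+}^-(z,v)=O(|v-c|)$, absorbing the $(v-c)^{-1}$. Hence the only possibly unbounded piece of $\frac{dI_{\tau+}}{dz}(z)$ is
\[ \mathcal S_\tau(z) = T_+^+\!\int_{\{v>c\}}\!\frac{I_{\tau+}(0)-\bbf_{\tau+}^+(0,v)}{v-c}\,e^{-T_+^+ z/(v-c)}\,d\nu_\tau(v)\,, \]
whose coefficient I compute at $v=c^+$. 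Letting $v\searrow c$ in \eqref{eq:duhamel} at $z=0$ gives $f_\tau(0,c^+)=I_{\tau-}(0)/T_-^+$, so $\bbf_{\tau+}^+(0,c^+)=\tfrac{T_+^+}{T_-^+}I_{\tau-}(0)$, and using $I_{\tau\pm}(0)=T_\pm^+\rho_\tau^+(0)+T_\pm^-\rho_\tau^-(0)$ together with the identity $T_+^+ T_-^- - T_+^- T_-^+ = 4\chi_S$,
\[ I_{\tau+}(0)-\bbf_{\tau+}^+(0,c^+) = \Bigl(T_+^- - \tfrac{T_+^+ T_-^-}{T_-^+}\Bigr)\rho_\tau^-(0) = -\,\frac{4\chi_S}{T_-^+}\,\rho_\tau^-(0) < 0\,. \]
Now $\omega_\tau\geq\omega_0(T)$ on $\supp\nu_\tau$, and $\rho_\tau^-(0)\geq 1/C$ uniformly in $\tau\leq T$ — a lower bound obtained from Lemma \ref{eq:bound mu rho0} together with the flux identity $\int(v-c)f_\tau(0,v)\,d\nu_\tau(v)=0$ inherited from \eqref{eq:fluxes}. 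Choosing $\delta_1$ small enough that the $O(\delta_1)$ variation of $v\mapsto I_{\tau+}(0)-\bbf_{\tau+}^+(0,v)$ near $c^+$ keeps this coefficient $\leq -\tfrac1C$, one gets $\mathcal S_\tau(z)\leq -\tfrac1C\log\!\bigl(\delta/\max(z,\vt-c)\bigr)+C$; shrinking $\delta_1$ further so that $\tfrac1C\log(\delta/\delta_1)$ exceeds the uniform bound on the remaining terms, and taking $\eta_0$ small enough for those estimates to hold, yields $\frac{dI_{\tau+}}{dz}(z)\leq-1/C$ on $(0,\eta_0)$ for all $\tau\leq T$. The bound for $z<0$ is symmetric, with $\{v>c\}$ and $\{v<c\}$ exchanged and $T_+^+$, $\rho_\tau^-(0)$ replaced by $T_-^-$, $\rho_\tau^+(0)$.

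\medskip
\noindent\textbf{Main obstacle.} The crux is the uniform-in-$\tau$ boundedness of the remainder term as $z\searrow 0$: the crude prefactors $z^{-1}\vee 1$ of Proposition \ref{prop:rho lip} and Corollary \ref{cor:vyotflu} are useless here, since a power $z^{-1}$ would overwhelm the logarithm, so one genuinely needs the sharpened almost-Lipschitz regularity of $I_\tau$ on $\R_+^*$ valid under \eqref{jlzgumlzvlvl}, together with a careful convolution estimate showing that differentiating against the near-singular kernel $e^{-T_+^+ z/(v-c)}$ costs only a $\log$. The subsidiary difficulties are the uniform lower bound on $\rho_\tau^-(0)$ (a mild strengthening of Lemma \ref{eq:bound mu rho0}), and the bookkeeping needed to fix $\delta_1$, $\eta_0$ and $C$ coherently across the three regimes and across the two sides of the origin; the sign computation itself (the emergence of $4\chi_S>0$) is the conceptual point but is elementary once $\bbf_{\tau+}^+(0,c^+)$ has been identified.
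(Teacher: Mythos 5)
Your architecture is the paper's: isolate the boundary term $\frac{I_+(0)-\bbf_+^+(0,v)}{v-c}e^{-T_+^+z/(v-c)}$ from the Duhamel formula, compute its sign through $T_-^+I_+(0)-T_+^+I_-(0)=-4\chi_S\rho^-(0)$, bound $\rho^-(0)$ below uniformly in $\tau$, let the kernel produce the logarithm that either diverges (case (i)) or is calibrated by the choice of $\delta_1$ (case (ii)), and observe that case (iii) restores Lipschitz regularity up to the origin. All of that matches Lemma \ref{ugpmig} and the surrounding computation.

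There is, however, a genuine gap exactly where you located "the crux". You write the remainder as $\int_0^{z/(v-c)}\frac{dI_{\tau+}}{dz}(z-s(v-c))\,e^{-T_+^+s}\,ds$, integrate over $v$, and claim boundedness from the almost-Lipschitz modulus of $I_{\tau+}$. A modulus of continuity of the form $h(\log(1/h)\vee 1)$ does not give a pointwise bound on the derivative (consider $x\log(1/x)$), and the only pointwise bounds available near the origin are of order $z^{-1}$ (Corollary \ref{cor:vyotflu}) or $z^{1/p-1-\eps}$, neither of which is integrable against the logarithmic kernel $\int_{\{v>c\}}(v-c)^{-1}e^{-T_+^+(z-y)/(v-c)}\,d\nu_\tau(v)$ after Fubini. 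Worse, in case (i) the conclusion of the very lemma you are proving is that $\frac{dI_{\tau+}}{dz}$ diverges at the origin, so any argument that presupposes its integrability there is circular. The repair is to never differentiate $I_+$ under the Duhamel integral: either integrate by parts in $s$, or, as the paper does, start from $\frac{d\rho_{+}^{\pm}}{dz}(z)=\int(v-c)^{-1}\bigl(I_+(z)-\bbf_+^{\pm}(z,v)\bigr)\,d\nu(v)$ with $\bbf_+^{\pm}$ expressed by the undifferentiated Duhamel formula, so that only differences $I_+(z)-I_+(z-s(v-c))$ appear. The global $1/p'$--H\"older modulus of $I_\pm$ then bounds the remainder by $C\int_{\{v>c\}}|v-c|^{1/p'-1}\,d\nu_\tau(v)\leq C\,\omega_0(T)^{-1}$, uniformly up to $z=0$, which is the bound your argument needs. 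With that substitution the rest of your proof (the sign identity, the lower bound on $\rho_\tau^-(0)$, the calibration of $\delta_1$ and $\eta_0$, and the case (iii) discussion) goes through as in the paper.
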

\begin{proof}
The computation of the derivative of $\rho_+^+$ goes as follows,
\begin{align*}
\dfrac{d\rho_+^+}{dz}(z)
&= \int_{\{v>c\}} \dfrac 1{v-c}\left( I_+(z) - g_+^+(z,v)\right)\, d\nu(v) \nonumber \\
& = \int_{\{v>c\}}\dfrac 1{v-c} \left[ I_+(z) - T_+^+ \left( \int_0^{+\infty} I_-(-s(v-c)) \exp(-T_-^+ s)\, ds\right)\exp\left(-T_+^+\frac z{v-c}\right) \right. \nonumber\\
& \qquad \qquad\qquad\qquad\qquad\qquad \quad \left. - \int_0^{\frac z{v-c}}I_+(z - s(v-c)) T_+^+\exp(-T_+^+ s)\, ds\right]\, d\nu(v) 
\end{align*}
We deduce from global H\"older regularity of both $I_+$ and $I_-$ that, 
\begin{align*}
\dfrac{d\rho_+^+}{dz}(z)
& \leq  \int_{\{v>c\}}\dfrac 1{v-c} \left[ I_+(z)\exp\left(-T_+^+\frac z{v-c}\right)  - \dfrac{T_+^+}{T_-^+} I_-(0)\exp\left(-T_+^+\frac z{v-c}\right) \right.\\
& \quad \left . + C \left( \int_0^{+\infty} s^{1/p'}|v-c|^{1/p'} \exp(-T_-^+ s)\, ds\right)\exp\left(-T_+^+\frac z{v-c}\right) \right.\\
& \quad \left.  + C \left( \int_0^{\frac{z}{v-c}} s^{1/p'}|v-c|^{1/p'}  \exp(-T_+^+ s)\, ds\right)  \right]\, d\nu(v)\\
& \leq \int_{\{v>c\}}\dfrac 1{v-c} \left( I_+(0) - \dfrac{T_+^+}{T_-^+} I_-(0) + C|z|^{1/p'} \right)\exp\left(-T_+^+\frac z{v-c}\right)\, d\nu(v)  \\
& \quad  + C \int_{\{v>c\}}|v-c|^{1/p'-1}  \, d\nu(v)
\end{align*}
The constant that pops up in the leading order contribution has a sign. Interestingly, this sign is the signature of the confinement effect at the origin $z = 0$. 

\begin{lemma}\label{ugpmig}
The quantity $T_+^+I_-(0) - T_-^+ I_+(0)$ is  bounded from below by a positive constant, uniformly for $\tau \leq T$. 
\end{lemma}

\begin{proof}
First, we compute explicitly the value of this quantity, by means of $\rho$. From \eqref{bumivùozibipb} evaluated at $z = 0$, we deduce 
\begin{equation}
T_+^+I_-(0) - T_-^+ I_+(0) = \left( T_+^+ T_-^- - T_-^+T_+^- \right) \rho^- (0)
 =  4\chi_S \rho^- (0)\, .\label{eq:Delta I}
\end{equation}
Then, we seek some quantitative estimate of $\rho^- (0)$ from below. Recall the H\"older regularity of $I_+$. By definition, we have
\begin{align*}
\rho^-(0) &= \int_{\{v<c\}} \int_{0}^{+\infty} I_+(-s(v-c)) e^{-T_+^- s}\, ds d\nu(v) \nonumber\\
& \geq \int_{\{v<c\}} \int_{0}^{+\infty} \left( \left( I_+(0) - C s^{1/p'}|v-c|^{1/p'}\right)\vee 0\right)   e^{-T_+^- s}\, ds d\nu(v) \, , \quad I_+(0) \geq (T_+^- \wedge T_+^+)\rho(0)\nonumber\\
& \geq C \int_{\{v<c\}} \int_{0}^{+\infty} \left( \left( \rho(0) - C \dfrac{s^{1/p'}|v-c|^{1/p'}}{T_+^- \wedge T_+^+}\right)\vee 0\right)   e^{-T_+^- s}\, ds d\nu(v)\, . 
\end{align*}
Using the bound from below on $\rho(0)$ obtained via other arguments \eqref{eq:bound below rho}, we deduce the statement of Lemma \ref{ugpmig}. 
\end{proof}

We deduce that for $z$ small enough, 
we have 
\begin{align*}
\dfrac{d\rho_+^+}{dz}(z)& \leq -\frac{1}{C} \int_{\{v>c\}}\dfrac 1{v-c}  \exp\left(-T_+^+\frac z{v-c}\right)\, d\nu(v) + C \omega_0(T)^{-1}\\
&  \leq -\frac{\omega_0(T)}{C} \int_{v=c\vee \vt}^{\v0}\dfrac 1{v-c}  \exp\left(-T_+^+\frac z{v-c}\right)\, dv + C   \\
& \leq  -\frac{1}{C} \int_{T_+^+ \frac{z}{\v0-c}}^{T_+^+\frac{z}{(\vt-c)\vee 0}}\dfrac {e^{-u}}{u} \, du + C  
\end{align*}
Assume condition (i) \ie $(\vt-c)\vee 0 = 0$. Then, the last integral is equivalent to $C^{-1}\log z$ as $z\to 0$. Therefore, there exists $\eta_0$, and $C>0$, such that $\frac{d\rho_+^+}{dz}(z) \leq -C^{-1}$ for $z<\eta_0$.

Assume condition (ii) \ie  $(\vt-c)\vee 0 \leq \delta_1$. Then, for $z \leq (T^+_ +)^{-1}\delta_1$, we have
\begin{equation*}
\dfrac{d\rho_+^+}{dz}(z)  \leq  \frac{e^{-1}}C \log\left (\dfrac{\delta_1}{\v0-c} \right )  + C 
\end{equation*}
The latter inequality determines the choice of $\delta_1$, so that the contribution on the r.h.s. is controlled by some negative constant $-C^{-1}$. 

Assume condition (iii). Then, the {\em a priori} control $c\leq \vt -\delta_1$ enables to revisit regularity of $f$. Indeed, we get immediately from \eqref{eq:f I} that $f$ is Lipschitz continuous with respect to $z$, uniformly with respect to $\delta_1$. Differentiating with respect to $z$, for $z>0$ only (since $T$ is discontinuous with respect to $z$), we deduce that $\partial_z f_+$ is also Lipschitz continuous with respect to $z$. So is $\frac{dI_+}{dz}$. 

This concludes the proof of Lemma \ref{lem:zero monotonicity}.
\end{proof}

After the quantitative control of $I_+$ at both ends $0$ and $+\infty$, we can initiate the homotopy like argument.
Let introduce the set  
\begin{equation}\label{eq:T set}
\T = \left\{ \tau\geq 0 \;|\; (\forall z\neq0)\;  (\sign z)\dfrac{dI_\tau}{dz}(z) \leq 0  \right\} \, .
\end{equation} 
We shall prove that $\T$ is both open and closed in $\R_+$. As it is non-empty by Proposition \ref{prop:2 vel case}, it coincides with $\R_+$.

\begin{lemma}\label{lem:closed}
$\T$ is closed in $\R_+$. 
\end{lemma}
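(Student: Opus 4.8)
The plan is to prove that $\T$ is sequentially closed. Take $(\tau_n)_{n\in\N}\subset\T$ with $\tau_n\to\tau_\infty\geq 0$, and set $T=1+\sup_n\tau_n<\infty$. The first step is to collect the uniform estimates already available for $\tau\leq T$ (with $c$ kept in $(c_*(\nu_\tau),c^*(\nu_\tau))$ along the path): by Proposition~\ref{prop:holder} the densities $f_{\tau_n}$ are uniformly bounded in $L^1\cap L^\infty(\R\times V)$ and uniformly exponentially decaying as $|z|\to\infty$; by Corollary~\ref{cor:vyotflu} the derivatives $\frac{dI_{\tau_n}}{dz}$ are H\"older continuous, locally uniformly on $\R_+^*$ and on $\R_-^*$, with constants independent of $n$. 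The crossing event $\vt=c$ is harmless here, because the inequality defining $\T$ only constrains $z\neq 0$ and the estimates above are uniform across that transition.

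Second, I would extract a convergent subsequence. By Arzel\`a--Ascoli on compact subsets of $\R\setminus\{0\}$ together with a diagonal argument, one gets, along a subsequence (not relabelled), $I_{\tau_n}\to\bar I$ and $\frac{dI_{\tau_n}}{dz}\to\frac{d\bar I}{dz}$ locally uniformly on $\R\setminus\{0\}$, and $f_{\tau_n}\rightharpoonup\bar f$ weakly-$*$ in $L^\infty(\R\times V)$.

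The main step, and the one requiring care, is to identify the limit as $\bar f=f_{\tau_\infty}$, the solution of \eqref{eq:linear c} attached to $\nu_{\tau_\infty}$ by Theorem~\ref{theo:cluster}. I would pass to the limit in the weak formulation \eqref{eq:weak formulation}: the uniform $L^\infty$ bound together with the uniform exponential tails rule out concentration or loss of mass at infinity, while $\nu_{\tau_n}\to\nu_{\tau_\infty}$ strongly in $L^p$ handles the explicit dependence of the equation on the measure (the integrands $\omega_{\tau_n}(v)\,\omega_{\tau_n}(v')$ converge strongly against bounded test-function integrands, and $T(z,v-c)$ does not depend on $\tau$). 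Hence $\bar f$ is a nonnegative weak solution of \eqref{eq:linear c} for $\nu_{\tau_\infty}$; moreover the exponents $\lambda_\pm$ depend continuously on $\tau$ through the dispersion relations \eqref{eq:dispersion lambda+} (the maps $Q_\pm$ being continuous and strictly increasing in $\lambda$, and continuous in $\nu$ for the $L^p$ topology), so $\bar f$ has the decay rates $\lambda_\pm(\tau_\infty)$ and carries unit mass by \eqref{eq:f unit mass}. Uniqueness of such a normalized solution with prescribed decay --- which follows from simplicity of the dominant eigenvalue in Step~\#3 of the proof of Theorem~\ref{theo:cluster} --- then forces $\bar f=f_{\tau_\infty}$, hence $\bar I=I_{\tau_\infty}$; since the limit is independent of the extracted subsequence, the whole sequences converge.

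Finally, I would pass to the limit in the defining inequality: for fixed $z\neq 0$ and every $n$ we have $(\sign z)\frac{dI_{\tau_n}}{dz}(z)\leq 0$, and $\frac{dI_{\tau_n}}{dz}(z)\to\frac{dI_{\tau_\infty}}{dz}(z)$ by the locally uniform convergence obtained above, whence $(\sign z)\frac{dI_{\tau_\infty}}{dz}(z)\leq 0$. Since $z\neq 0$ is arbitrary, $\tau_\infty\in\T$, so $\T$ is closed in $\R_+$. The main obstacle is the identification step: it rests on having enough uniform compactness (Sections~\ref{sec:decoupled}--\ref{sec:Further regularity}) and enough stability of the whole construction --- dispersion relations, Milne problems, Krein--Rutman eigenvalue --- under the deformation $\nu_\tau$ of the velocity measure.
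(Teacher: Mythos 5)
Your overall skeleton is the same as the paper's: uniform estimates, compactness, passage to the limit in the weak formulation \eqref{eq:weak formulation} using the strong $L^p$ convergence of $\nu_{\tau_n}$, identification of the limit with $f_{\tau_\infty}$ by uniqueness, and then passage to the limit in the monotonicity constraint. The one place where your argument leans on something you have not actually secured is the identification step. You assert that uniqueness of the normalized stationary profile ``follows from simplicity of the dominant eigenvalue in Step~\#3 of the proof of Theorem~\ref{theo:cluster}''; but that step only invokes Krein--Rutman to produce \emph{a} positive fixed point of the map $\mathbf B$ and to show its eigenvalue is $1$ --- it does not establish simplicity, nor that every bounded solution with the prescribed decay and unit mass \eqref{eq:f unit mass} arises from such an eigenfunction. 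Making your route rigorous would require the strong form of Krein--Rutman (hence strong positivity/irreducibility of $\mathbf B$) plus that correspondence. The paper's proof of Lemma~\ref{lem:closed} devotes most of its length to precisely this missing piece: a self-contained uniqueness argument in the weighted space $L^2(a_\tau(z)\,d\nu_\tau(v)\,dz)$, obtained by testing the equation for the difference of two solutions against $h/f_1$ and showing the resulting quadratic tumbling form vanishes, so that $f_2 = b(z)f_1$ with $b'\equiv 0$. So your plan is correct in outline, but the crux of the lemma is exactly the step you defer.

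Two further, more minor, comparisons. First, for the final passage to the limit you invoke locally uniform convergence of $\frac{dI_{\tau_n}}{dz}$ on compact subsets of $\R^*$, which requires the H\"older bounds of Corollary~\ref{cor:vyotflu} to be uniform along the path (plausible, since the constants are controlled by $\|f_\tau\|_\infty$ and $\|\omega_\tau\|_p$, but you should say so and track it). The paper avoids this entirely with a lighter observation: each $I_{\tau_n+}$ is a non-increasing function (by Proposition~\ref{lem:monotonicity}), and a pointwise limit of non-increasing functions is non-increasing, so no control of derivatives is needed to conclude $\tau_\infty\in\T$. Second, the paper extracts \emph{strong} (locally uniform on $\R^*\times V$) convergence of $f_{\tau_n}$ from the equicontinuity estimates of Proposition~\ref{prop:holder} rather than only weak-$*$ convergence; your weak-$*$ limit combined with strong $L^p$ convergence of $\omega_{\tau_n}$ does suffice for the weak formulation, so this difference is a matter of taste.
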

\begin{proof}
The stationary distribution $f_\tau(z,v)$ associated with the probability measure $\nu_\tau$  is unique in the space $L^2\left (a_\tau(z)d\nu_\tau(v)dz\right )$ (up to a constant multiplicative factor), where the weight $a_\tau(z)$ is defined by
\[\begin{cases}
a_\tau(z) = e^{\lambda_{\tau+} z} & \text{for $z>0$}\medskip\\
a_\tau(z) = e^{-\lambda_{\tau-} z} & \text{for $z<0$}
\end{cases}\]
This is a consequence of the linear and irreducible structure of the kinetic equation(\footnote{A much more refined analysis based on hypocoercive estimates after \cite{dolbeault_hypocoercivity_2015} is developed in \cite{calvez_confinement_2015} for the Cauchy problem, in the symmetric case $\chi_N = 0$ and $c = 0$. There, the goal is to capture exponential relaxation in time towards the stationary state. Of course, this ensures uniqueness as a  weak corollary.}). Let $(f_1,f_2)$ be two stationary distributions belonging to $L^2(a_\tau(z)d\nu_\tau(v)dz)$, and denote $h = f_2 - f_1$. By integration of the difference equation against $h/f_1$, we obtain successively,
\begin{multline*}
\iint (v-c) \partial_z h(z,v) \frac{h(z,v)}{f_1(z,v)}\,d\nu(v) dz  
\\ = \iiint \left (T(v'-c) h(z,v') - T(v-c) h(z,v)\right ) \frac{h(z,v)}{f_1(z,v)}\,d\nu(v')d\nu(v) dz \, ,
\end{multline*}
\begin{multline*}
- \frac12 \iint \left (\frac{h(z,v)}{f_1(z,v)}\right )^2 (v-c) \partial_z f_1(z,v) \\ = \iiint  T(v'-c) \frac{h(z,v')}{f_1(z,v')}\frac{h(z,v)}{f_1(z,v)}f_1(z,v') \,d\nu(v')d\nu(v) dz \\
- \iiint T(v-c)  \left(\frac{h(z,v)}{f_1(z,v)}\right)^2 f_1(z,v)\,d\nu(v')d\nu(v) dz \, .
\end{multline*}
Using equation \eqref{eq:f I} satisfied by the function $f_1$, we find,
\[ \frac12\iiint T(v'-c) \left( \dfrac{h(z,v')}{f_1(z,v')} - \dfrac{h(z,v)}{f_1(z,v)} \right)^2  f_1(z,v')\, d\nu(v')d\nu(v) dz = 0\, . \]

Hence, there exists a function of the variable $z$ only, say $b(z)$, such that $(\forall z\in \R)\; f_2(z,v) = b(z)f_1(z,v)\; \nu \; \as$. Plugging this identity into the stationary equation, we find $b'(z) = 0$. The unit mass normalization \eqref{eq:f unit mass} enables to fix this multiplicative factor to 1.

Continuity of $I_\tau$ with respect to $\tau$ is a consequence of this uniqueness property. Let $\tau_n\to \tau$. The sequence $f_{\tau_n}$ is uniformly bounded, uniformly H\"older continuous with respect to $z$, and H\"older continuous with respect to $v$, locally uniformly with respect to $z$. Moreover, it is uniformly exponentially small for large $|z|$ (see Proposition \ref{prop:holder}). By a diagonal extraction argument, there exists a subsequence $f_{\tau_{n'}}$ that converges towards some  function $f'$. Convergence is uniform  on compact sets of $\R^*\times V$. 

On the other hand, $\nu_{\tau_n}\to \nu_{\tau}$  in $L^p$ by the continuity assumption. Therefore, we can pass to the limit in the weak formulation \eqref{eq:weak formulation} 

We deduce from the uniqueness of the stationary distribution $f_\tau$ that $f' = f_\tau$. 

To conclude, it is sufficient to notice that $I_{\tau_{n'}+}$ (resp. $I_{\tau_{n'}-}$) is a sequence of non-increasing  functions (resp. non-decreasing functions) which converges pointwise towards $I_{\tau+}$ (resp. $I_{\tau-}$). Therefore, $I_{\tau+}$ is non-decreasing (resp. $I_{\tau-}$ is non-increasing).
\end{proof}

\begin{lemma}\label{lem:open}
$\T$ is open in $\R_+$. 
\end{lemma}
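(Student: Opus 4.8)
The plan is to show that if $\tau_0\in\T$, then all nearby $\tau$ also belong to $\T$. The key will be to combine the quantitative control of $\frac{dI_\tau}{dz}$ near $z=0$ and near $z=\pm\infty$ (Lemmas \ref{lem:asymptotic monotonicity} and \ref{lem:zero monotonicity}) with the continuous dependence of $I_\tau$ on $\tau$ (established inside the proof of Lemma \ref{lem:closed}), so that the only place where monotonicity could be lost under perturbation is on a fixed compact set $[\eta_0,R]$ of the half-line, and there the derivative enjoys uniform regularity allowing a non-strict-to-strict upgrade.

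First I would fix $\tau_0\in\T$ and $T>\tau_0$. By Lemma \ref{lem:asymptotic monotonicity}, there exists $R>1$ such that $\frac{dI_{\tau+}}{dz}(z)\le -\frac1{2C}e^{-\lambda_+z}<0$ for $z>R$ and $\frac{dI_{\tau-}}{dz}(z)>0$ for $z<-R$, \emph{uniformly} for $\tau\le T$ (after shrinking $\vartheta_\pm'$ if necessary, the exponential correction is dominated beyond some $R$ independent of $\tau$). By Lemma \ref{lem:zero monotonicity}, in cases (i)--(ii) we similarly get $\frac{dI_{\tau+}}{dz}(z)\le -\frac1C<0$ on $(0,\eta_0)$ uniformly for $\tau\le T$; in case (iii) the derivative is Lipschitz up to the origin, so this endpoint behaviour is even more robust. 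Hence the only interval to control is the compact set $K=[\eta_0,R]$ (and its mirror image). On $K$, Proposition \ref{lem:monotonicity} gives the crucial \emph{enhancement}: since $\tau_0\in\T$ means $I_{\tau_0+}$ is non-increasing on $\R_+^*$, the proposition upgrades this to $\frac{d\rho_{\tau_0,+}^\pm}{dz}(z)<0$, hence $\frac{dI_{\tau_0+}}{dz}(z)<0$, \emph{strictly}, for every $z\ge0$. In particular $\frac{dI_{\tau_0+}}{dz}$ attains a strictly negative maximum $-\delta<0$ on the compact set $K$.

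Next I would invoke Corollary \ref{cor:vyotflu}: $\frac{dI_{\tau+}}{dz}$ is H\"older continuous, locally uniformly on $(0,+\infty)$, with bounds that are uniform in $\tau\le T$ (the bounds in Corollary \ref{cor:vyotflu} depend only on $\|\omega_\tau\|_p$ and $\omega_0(T)$, which are controlled along the path). Combined with the continuity $I_{\tau_n}\to I_{\tau_0}$ established in the proof of Lemma \ref{lem:closed} — which one readily upgrades from pointwise to uniform-on-$K$ convergence of the derivatives via the uniform H\"older bound and an Arzel\`a--Ascoli argument applied to $\frac{dI_{\tau+}}{dz}$ — one obtains that for $\tau$ close enough to $\tau_0$, $\sup_K \frac{dI_{\tau+}}{dz}<-\delta/2<0$. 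Pasting the three regions together, $\frac{dI_{\tau+}}{dz}<0$ on all of $\R_+^*$, and symmetrically $\frac{dI_{\tau-}}{dz}>0$ on $\R_-^*$, so $\tau\in\T$. This shows $\T$ is open.

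The main obstacle I anticipate is the passage from convergence of $I_\tau$ (which is what Lemma \ref{lem:closed} directly provides) to convergence of its \emph{derivative} uniformly on $K$: one must be careful that the averaging-lemma regularity of $\frac{dI_{\tau+}}{dz}$ is genuinely uniform in $\tau$ over $[0,T]$, which requires tracking that all the constants in Section \ref{sec:Further regularity} (the bootstrap exponents $\theta_n$, the prefactors $C_n$, and the degeneracy rate as $z\to0$) depend on $\nu_\tau$ only through $\|\omega_\tau\|_p$ and $\omega_0(T)$ — both uniformly bounded along the homotopy by hypothesis. A secondary subtlety is the case distinction (i)--(iii) near $z=0$: when $\vt$ crosses $c$ the regularity of $f_\tau$ genuinely degenerates, but Lemma \ref{lem:zero monotonicity} was designed precisely to give a \emph{sign} (not regularity) in cases (i)--(ii) and Lipschitz control in case (iii), so in all three regimes the endpoint $z=0^+$ contributes a strictly negative derivative or a uniformly controlled one; one just needs to check that the thresholds $\eta_0,\delta_1$ can be taken uniform in $\tau\le T$, which they can since they depend only on $\omega_0(T)$, the gap $c^*-c_*$, and $\chi_S$.
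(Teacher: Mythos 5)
Your proposal is correct and follows essentially the same route as the paper: control at infinity via Lemma \ref{lem:asymptotic monotonicity}, control near the origin via Lemma \ref{lem:zero monotonicity} (with the case distinction (i)--(iii)), strict negativity on the intermediate compact set from Proposition \ref{lem:monotonicity}, and the uniform regularity of $\frac{dI_{\tau+}}{dz}$ from Corollary \ref{cor:vyotflu} together with the continuity in $\tau$ from the proof of Lemma \ref{lem:closed}. The only difference is presentational — you argue directly with uniform convergence of the derivatives on the compact set, while the paper runs the equivalent compactness/equicontinuity argument by contradiction along a sequence of maximum points.
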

\begin{proof}
Some stability result in Lipschitz regularity is required to ensure that $\frac{dI_+}{dz}$ certainly remains uniformly negative, on compact sets of $\R_+^*$ (recall that we control the asymptotic behaviour as $z\to + \infty$).
 
Let $\tau_0\in \T$. Proposition \ref{lem:monotonicity} ensures that $\frac{dI_{\tau_0+}}{dz}<0$ for all $z> 0$, and $\frac{dI_{\tau_0-}}{dz}>0$ for all $z< 0$. Let $L>0$ large enough so that the quantities arising in Lemma \ref{lem:asymptotic monotonicity} have the appropriate sign for $\tau$ close to $\tau_0$ and $|z|>L$, \ie both 
\[-\dfrac1C e^{-\lambda_+ z} + C e^{-\lambda_+ z-\vartheta_+' z } < 0\quad \text{and}\quad   \dfrac1C e^{\lambda_- z} - Ce^{\lambda_- z+\vartheta_-' z} >0 \, . \]
Our claim is the following: there exists  $\eps_0>0$ and a neighbourhood $\V_0$ of $\tau_0$ such that 
\beq\label{eq:uniform}\begin{cases} 
(\forall \tau\in \V_0)\; (\forall z\in [0,L]) \quad &  \dfrac{dI_{\tau+}}{dz}(z)<-\eps_0\medskip\\
(\forall \tau\in \V_0)\; (\forall z\in [-L,0]) \quad   & \dfrac{dI_{\tau-}}{dz}(z)>\eps_0
\end{cases}\eeq
Otherwise, there would exist a sequence $\tau_n$ converging towards $\tau_0$ such that the maximum value  of $\frac{dI_{\tau_n+}}{dz}$ (resp. minimum value of $\frac{dI_{\tau_n-}}{dz}$) converges to a non-negative value. We restrict to $z>0$ for the sake of clarity. Let $z_n$ be a maximum point. 

To control the possible singularity at the origin, we distinguish between the two alternatives (i) or (ii), and (iii), as discussed in Lemma \ref{lem:zero monotonicity}. 

In cases (i) or (ii), we can separate $(z_n)$ from the origin, say $z_n>\eta_0/2$, as the derivative $\frac{dI_{\tau_n+}}{dz}$ is uniformly negative close to the origin. Note that the continuity of the support of $\nu_\tau$ is implicitly used in this argument. 

On the other hand, we learn from Corollary~\ref{cor:vyotflu} that $\frac{dI_{\tau_n+}}{dz}$ is uniformly Lipschitz continuous on $(\eta_0/2,L)$. By compactness of $(z_n)$, and equi-continuity of $\left (\frac{dI_{\tau_n+}}{dz}\right )$, we can pass to the limit along some subsequence, so as to obtain 
\begin{equation*}
\frac{dI_{\tau_0+}}{dz}(z^*) \geq  0\, , 
\end{equation*}
for some $z^*\in [\eta_0/2,L]$ which is a limit point of $(z_n)$. This is a contradiction. The same holds for $z<0$. 

In case (iii), regularity is fine up to $z=0$, so we can apply the same argument with $\eta_0 = 0$, including the origin.

Therefore, \eqref{eq:uniform} holds true, and $\T$ is  open.  \end{proof}

We are in position to conclude the proof of Theorem \ref{theo:mono}. Indeed, for all $\tau\geq 0$, the macroscopic  quantities $I_{\tau+}$ and $I_{\tau-}$ satisfy the appropriate monotonicity condition \eqref{eq:T set}. This monotonicity condition remains true in the limit $\tau\to +\infty$, as in the proof of Lemma \ref{lem:closed}. By approximation, it is true for all measure $\nu$ satisfying hypothesis $\Ac$.

\subsection{Improved monotonicity and quantitative regularity}\label{sec:improved monotonicity}

Several corollaries can be deduced from the previous monotonicity analysis. A first result improves monotonicity properties of the macroscopic densities $\rho_\pm^\pm$.

\begin{corollary}\label{cor:monotonicity++}
Let $g$ be the normalized density defined as in \eqref{eq:renormalized g},
\beq\label{eq:renormalized g bis}
\begin{cases}
(\forall z<0) & g_-(z,v) = f_-(z,v) e^{-\lambda_-z} \medskip\\
(\forall z>0) & g_+(z,v) = f_+(z,v) e^{\lambda_+z} \end{cases}
\eeq
Let $\varrho_\pm^-(z) = \int_{\{v<c\}} g_\pm(z,v)\, d\nu(v)$ and $\varrho_\pm^+(z) = \int_{\{v>c\}} g_\pm(z,v)\, d\nu(v)$. Then $\varrho_\pm^\pm$ has the same monotonicity as $\rho_\pm^\pm$:
\[ \begin{cases}  
 \dfrac{d\varrho_+^+}{dz}(z) < 0\quad \text{and}\quad \dfrac{d\varrho_+^-}{dz}(z) < 0 & \text{for $z\geq 0$}  \medskip\\
\dfrac{d\varrho_-^+}{dz}(z) > 0 \quad \text{and}\quad\dfrac{d\varrho_-^-}{dz}(z) > 0 & \text{for $z\leq0$} 
\end{cases}   \] 
\end{corollary}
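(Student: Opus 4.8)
The plan is to reduce the monotonicity of the exponentially reweighted densities $\varrho_\pm^\pm$ to the monotonicity of $\rho_\pm^\pm$ already proved in Theorem \ref{theo:mono}, by exploiting the fact that $g$ satisfies a kinetic equation almost identical to \eqref{eq:f I} but with the modified tumbling rate $R_\pm(v-c) = T_\pm(v-c) - (\pm\lambda_\pm)(v-c)$. Concretely, I would first record (as in \eqref{eq:ugmfiugf}) that for $z>0$
\[
(v-c)\partial_z g_+(z,v) = J_+(z) - R_+(v-c) g_+(z,v)\,, \qquad J_+(z) = \int R_+(v'-c) g_+(z,v')\, d\nu(v') = e^{\lambda_+ z} I_+(z)\,,
\]
and similarly for $z<0$, with $J_-(z) = e^{-\lambda_- z} I_-(z)$. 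The crucial structural point is that $R_+(v-c)$ is bounded below and above under \eqref{jlzgumlzvlvl} (it equals $(F_+(v))^{-1}$), and, more importantly, it is still a \emph{decreasing} function of $v$ on $\{v>c\}$ (because $\lambda_+>0$), just like $T_+^+$ was constant; this is exactly the property that drove the compensation argument in Proposition \ref{lem:monotonicity}.

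The key input I would need is the monotonicity of $J_\pm$, playing the role that monotonicity of $I_\pm$ played before. Since $J_+(z) = e^{\lambda_+ z} I_+(z)$ and $I_+$ is positive, one has
\[
\frac{dJ_+}{dz}(z) = e^{\lambda_+ z}\left( \frac{dI_+}{dz}(z) + \lambda_+ I_+(z) \right)\,,
\]
which does \emph{not} obviously have a sign. So the first genuine step is to prove that $J_+$ is non-increasing on $\R_+^*$ (resp. $J_-$ non-decreasing on $\R_-^*$). I expect this to follow from the refined asymptotics of Theorem \ref{theo:cluster}: the weighted $L^2$ estimate \eqref{eq:spectral gap 1} together with the pointwise bound \eqref{eq:589808} show that $g_+(z,v) = \kappa_+ F_+(v) + \mathcal{O}(e^{-\vartheta_+ z})$ in a suitable sense, hence $J_+(z) \to \kappa_+$ with exponential rate, and a differential argument analogous to Lemma \ref{lem:asymptotic monotonicity} — but now applied directly to $g$ — gives $\frac{dJ_+}{dz} \leq -\frac1C e^{-\vartheta_+' z} + \dots$, in particular $\frac{dJ_+}{dz} < 0$ for large $z$. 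Near the origin one reuses Lemma \ref{lem:zero monotonicity} type estimates, or alternatively one observes that $g$ and $f$ differ by the smooth factor $e^{\pm\lambda_\pm z}$ so the singular behaviour of $\frac{d\varrho_+^+}{dz}$ at $z=0^+$ is inherited (with the same sign) from that of $\frac{d\rho_+^+}{dz}$. Actually the cleanest route may be to bypass $J_\pm$ altogether: run the \emph{same} homotopy in $\nu_\tau$ as in the proof of Theorem \ref{theo:mono}, but track $\varrho_{\tau\pm}^\pm$ instead of $\rho_{\tau\pm}^\pm$; the two-velocity initialization (Proposition \ref{prop:2 vel case}) gives explicit exponentials for which the reweighted densities are manifestly monotone, and the enhancement/open/closed steps go through verbatim once one has verified that $R_\pm$ has the required monotonicity in $v$.

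Granting the monotonicity of $J_\pm$, the rest is a line-by-line transcription of the proof of Proposition \ref{lem:monotonicity}. For $v<c$ and $z>0$, Duhamel for $g$ gives $g_+^-(z,v) = \int_0^\infty J_+(z-s(v-c)) e^{-R_+^- s}\, ds$, which is increasing in $v$ and satisfies $R_+^- g_+^-(z,v) < J_+(z)$, whence $\partial_z g_+^-(z,v) < 0$ and $\frac{d\varrho_+^-}{dz} < 0$. For $v>c$, set $\mathbf{g}(z,v) = R_+(v-c) g_+(z,v)$; then $\mathbf{g} - J_+$ solves $(v-c)\partial_z(\mathbf{g}-J_+) + R_+^+(v-c)\,\text{(analog)}\cdots$ — more precisely one repeats Lemma \ref{lem:qualitative profile}: $\mathbf{g}$ is decreasing in $v$ on $\{\mathbf{g} \leq J_+\}\cap\{v>c\}$, the boundary value at $z=0$ is decreasing in $v$ because $g_+^+(0,v) = f_-^+(0,v) = g_-^+(0,v)$ inherits the monotonicity of $f_-^+$ (and $R_+$ is decreasing in $v$), and the cumulative function $\int_c^v (J_+(z) - \mathbf{g}(z,v'))\, d\nu(v')$ is negative at $v=\v0$ by the barycenter identity $\int_{\{v>c\}}(J_+ - \mathbf{g})\, d\nu = -\int_{\{v<c\}}(\mathbf{g} - J_+)\, d\nu < 0$, hence negative everywhere; an integration by parts as in \eqref{eq:v = c non singulier}--\eqref{eq:dz rho++} then yields $\frac{d\varrho_+^+}{dz} < 0$. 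The case $z<0$ is symmetric. The main obstacle is genuinely the monotonicity of $J_\pm$: unlike $I_\pm$, it is not a quantity on which the flux conservation laws \eqref{eq:fluxes} act directly, so one must either re-run the full homotopy machinery of Section \ref{sec:monotonicity} in the reweighted variables (using Corollary \ref{cor:exp lip} for the regularity inputs) or carefully combine the asymptotic bound $\frac{dI_+}{dz} = -\frac1C e^{-\lambda_+ z} + \mathcal{O}(e^{-(\lambda_++\vartheta_+')z})$ from Lemma \ref{lem:asymptotic monotonicity} with $\lambda_+ I_+ \leq \lambda_+ C e^{-\lambda_+ z}$ — which only controls $\frac{dJ_+}{dz}$ at infinity and still requires the near-origin analysis of Lemma \ref{lem:zero monotonicity} together with the observation that the $e^{\lambda_+ z}$ prefactor is harmless on compact sets; I would present the homotopy route as the safe one.
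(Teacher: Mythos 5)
Your overall architecture is the right one — reduce everything to the monotonicity of $J_\pm$ and then rerun the compensation argument of Proposition \ref{lem:monotonicity} with the tilted rates $R_\pm$ — and your Step-2 transcription (Duhamel bound $R_+^-g_+^-<J_+$, the analogue of Lemma \ref{lem:qualitative profile}, the barycenter identity and the integration by parts) is essentially what the paper does. The genuine gap is in how you propose to obtain the monotonicity of $J_\pm$, and both of your routes fail for the same reason: you work at the \emph{critical} exponent $\lambda_r=\lambda_+$ (resp.\ $\lambda_l=\lambda_-$), where the asymptotic sign degenerates. With the full exponent, $g_+(z,v)\to\kappa_+F_+(v)$ and $J_+(z)\to\kappa_+>0$, so the quantity $J_+-R_+g_+$ driving $\frac{dJ_+}{dz}$ vanishes at leading order (since $R_+=F_+^{-1}$), and what remains is the corrector $h=g_+-\kappa_+F_+$, which is $\mathcal O(e^{-\vartheta_+'z})$ but has no sign. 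Hence your claimed bound $\frac{dJ_+}{dz}\leq-\frac1C e^{-\vartheta_+'z}+\dots$ does not follow from the machinery of Lemma \ref{lem:asymptotic monotonicity}: knowing that $J_+$ converges exponentially fast to $\kappa_+$ says nothing about the sign of its derivative at infinity. The same degeneracy kills your "safe" homotopy route: in the (approximate) two-velocity initialization the reweighted density at the critical exponent is (asymptotically) \emph{constant} in $z$, not manifestly decreasing, so Proposition \ref{prop:2 vel case} has no strict analogue, and the open/closed argument has no uniform negative bound at $z=\pm\infty$ to lean on.

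The paper circumvents exactly this obstruction: it introduces strictly sub-critical exponents $\lambda_l\in(0,\lambda_-)$, $\lambda_r\in(0,\lambda_+)$, proves the monotonicity of $\varrho_\pm^\pm$ for every such pair by rerunning the three steps with the rates $R_-=T_-+\lambda_l(v-c)$, $R_+=T_+-\lambda_r(v-c)$, and only then lets $(\lambda_l,\lambda_r)\nearrow(\lambda_-,\lambda_+)$. At sub-critical tilt the two ends of the homotopy are under control precisely because $\lambda_r<\lambda_+$: the two-velocity initialization reduces, via the explicit dispersion relation $2\lambda_+=\frac{T_+^+}{\v0-c}-\frac{T_+^-}{\v0+c}$, to an inequality that holds iff $\lambda_r<\lambda_+$; and at infinity $J_+(z)\sim\kappa_+e^{-(\lambda_+-\lambda_r)z}$ still decays, so the leading term of $(v-c)\partial_z g_+=J_+-R_+g_+$ is $\kappa_+e^{-(\lambda_+-\lambda_r)z}\frac{(\lambda_r-\lambda_+)(v-c)}{F_+(v)}$, which has a definite sign and yields the quantitative analogue of Lemma \ref{lem:asymptotic monotonicity}. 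The near-origin analysis you sketch (Lemma \ref{lem:zero monotonicity} being unaffected by a smooth exponential factor) is correct and is used verbatim in the paper. So the fix to your proposal is structural rather than technical: do not try to prove monotonicity of $e^{\lambda_+z}I_+$ directly; prove it for $e^{\lambda_r z}I_+$ with $\lambda_r<\lambda_+$ and pass to the limit.
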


\begin{proof}
We introduce a pair of exponents $\lambda_l\in (0,\lambda_-)$, and $\lambda_r\in (0,\lambda_+)$. We shall prove that the monotonicity property holds true for all such pair  $(\lambda_l,\lambda_r)$
By increasing $(\lambda_l,\lambda_r)$ up  to $(\lambda_-,\lambda_+)$ we will obtain Corollary \ref{cor:monotonicity++} as a limiting result.

With some abuse of notation, we still denote by $g$ the density normalized with exponents $\lambda_l,\lambda_r$ as in \eqref{eq:renormalized g bis}. It satisfies the following equation,
\[
\begin{cases}
(\forall z<0) & (v-c) \partial_z g_-(z,v) = J_-(z) - (T_-(v-c) + \lambda_l(v-c))g_-(z,v) \medskip\\
(\forall z>0) & (v-c) \partial_z g_+(z,v) = J_+(z) - (T_+(v-c) - \lambda_r(v-c))g_+(z,v)  
\end{cases}
\]
where $J_-$ satisfies the following identity,
\begin{align*} 
J_-(z) & = \int T_-(v-c) g_-(z,v)\, d\nu(v) \\
& = \int \left( T_-(v-c) + \lambda_l (v-c)\right) g_-(z,v)\, d\nu(v)\\
& = \int \bg_-(z,v)\, ,\quad \text{where}\quad \bg_-(z,v) = \left( T_-(v-c) + \lambda_l (v-c)\right) g_-(z,v)\,,
\end{align*}
There, we have used the zero-flux property of the stationary state \eqref{eq:fluxes}. The same holds for $J_+$, and $\bg_+(z,v) = \left( T_+(v-c) - \lambda_r (v-c)\right) g_+(z,v)$. 
We introduce the new tumbling rates
\[ R_-(v-c) = T_-(v-c) + \lambda_l (v-c)\, , \quad
R_+(v-c) = T_+(v-c) - \lambda_r (v-c)\, .  \]

We sketch briefly how to adapt the three steps in the proof of Theorem \ref{theo:mono}. 
\medskip

\noindent\textbf{Step  \#1. Initialization (the easy case).} Notice that $J_+(z) = I_+(z) e^{\lambda_rz}$.  We repeat computation \eqref{eq:I+ decreasing}:
\begin{align*}
 \dfrac {d J_+}{dz}(z)  & = \dfrac {d I_+}{dz}(z) e^{\lambda_rz} + \lambda_r I_+(z) e^{\lambda_rz} \\
& = e^{\lambda_rz} \int \left(\la \frac {T_+(v-c)}{v-c}\ra T_+(v-c) - \dfrac{\left (T_+(v-c)\right )^2}{v-c} + \zeta_+ (v-c) + \lambda_r T_+(v-c) \right) f_+(z,v)\, d\nu(v)\, , 
\end{align*}
To conclude as in Section \ref{ssec:monotonicity}, we seek $\zeta_+$ satisfying the following pair of conditions, instead of \eqref{eq:bmupigmli}:
\[
\begin{cases}
\displaystyle -\frac12 \frac{(T_+^+)^2}{\v0 - c}-\frac12 \frac{T_+^- T_+^+}{\v0 + c} + \zeta_+(\v0-c) + \lambda_r T_+^+ < 0 \medskip\\
\displaystyle \frac12 \frac{T_+^+ T_+^-}{\v0 - c}+\frac12 \frac{(T_+^-) ^2}{\v0 + c} - \zeta_+(\v0+c) + \lambda_r T_+^-  < 0
\end{cases}
\]
Existence of $\zeta_+\in \R$  is equivalent to the following inequality,
\begin{align} 
\frac12 \frac{T_+^+ T_+^-}{(\v0 - c)(\v0+c)}+\frac12 \frac{(T_+^-) ^2}{(\v0 + c) ^2} + \lambda_r \dfrac{T_+^-}{\v0+c} & < \frac12 \frac{(T_+^+)^2}{(\v0 - c)^2}+\frac12 \frac{T_+^- T_+^+}{(\v0 + c)(\v0 - c)} - \lambda_r \dfrac{T_+^+}{\v0-c} \nonumber\\
\Leftrightarrow\quad  \frac12 \frac{T_+^-}{\v0 + c}\left( \frac{T_+^-}{\v0 + c} + 2\lambda_r \right ) 
&<  
 \frac12 \frac{T_+^+}{\v0 - c} \left ( \frac{T_+^+}{\v0 - c} - 2\lambda_r \right )
\, . \label{uigmiuvmiug} 
\end{align} 
Recall the equation for the right side exponent $\lambda_+$ \eqref{eq:dispersion lambda+}, when the measure $\nu$ is a symmetric combination of two Dirac masses \eqref{eq:2vel}: 
\begin{equation*}
\frac12 \dfrac{\v0-c}{T_+^+ - \lambda_+ (\v0 -c)} + \frac12 \dfrac{-\v0-c}{T_+^- - \lambda_+ (-\v0 -c)} = 0\, .
\end{equation*}
We deduce that 
\begin{equation*}
2\lambda_+ = \dfrac{T_+^+}{\v0 - c} - \dfrac{T_+^-}{\v0 + c}\, .
\end{equation*} 
Plugging this identity into \eqref{uigmiuvmiug}, we obtain the equivalent formulation,
\begin{align*} 
\frac12 \frac{T_+^-}{\v0 + c}\left( \dfrac{T_+^+}{\v0 - c} + 2(\lambda_r- \lambda_+) \right ) 
&<  
 \frac12 \frac{T_+^+}{\v0 - c} \left ( \frac{T_+^-}{\v0 + c} + 2(\lambda_+ - \lambda_r) \right )
\nonumber\\
\Leftrightarrow\quad  \frac{T_+^-}{\v0 + c}\left( \lambda_r- \lambda_+ \right ) 
&<  
   \frac{T_+^+}{\v0 - c} \left ( \lambda_+ - \lambda_r \right )
\, . 
\end{align*}
The last condition clearly holds true, as $\lambda_r < \lambda_+$ by definition. \medskip 

\noindent\textbf{Step  \#2. Enhancement of monotonicity.}
Now, assume {\em a priori} that $J_+$ is non-increasing, and that $J_-$ is non-decreasing.  

The shapes of the velocity profiles of $\bg$ are the same as for $\bbf$.
\begin{enumerate}[(i)]
\item For $z>0$ and $v<c$, $\bg_+^-$ is non-decreasing with respect to $v$. Moreover, we have $\bg_+^-(z,v) \leq J_+(z)$. Indeed, we have
\beq\label{eq:duhamel bg+} \bg_+^-(z,v) = \int_0^\infty J_+(z - s(v-c)) R_+^-(v-c) \exp\left( - R_+^-(v-c) s\right) \leq J_+(z)\, .  \eeq
Thus, $\varrho_+^-$ is decreasing. 
A similar statement holds true for $\bg_-^+$ ($z<0,v>c$), and $\varrho_-^+$. 
\item For $z>0$ and $v>c$, $\bg^+_+$ is decreasing with respect to $v$ on the set $\{ \bg_+^+ \leq J_+ \}\cap\{v>c\}$. Indeed we adapt the two ingredients of the proof of Lemma \ref{lem:qualitative profile}. Firstly, $\bg_+^+ - J_+$ satisfies a damped transport equation with a non-negative source term,
\[  (v-c) \partial_z \left( \bg_+^+ - J_+\right)  + R_+^+\left( \bg_+^+ - J_+ \right) = -(v-c)\dfrac{dJ_+}{dz} \geq 0 \,  . \]
Secondly, introduce the auxiliary function $h(z) = \partial_v \bg_+^+(z,v)$. It satisfies the following damped transport equation with source term,
\begin{align*} 
(v-c) \partial_z h(z) + R_+^+(v-c) h(z) 
&=  - \partial_z  \bg_+^+(z,v) - \lambda_r \left( J_+(z) - \bg_+^+(z,v)\right) \\
&= - \left(  \dfrac1{v - c} R_+^+ \left( J_+(z) - \bg_+^+(z,v)\right) + \lambda_r \left( J_+(z) - \bg_+^+(z,v)\right) \right)\\
& = \dfrac{T_+^+}{v - c} \left( \bg_+^+(z,v) - J_+(z) \right)\, .
\end{align*}
The same conclusion as in Lemma \ref{lem:qualitative profile} holds true. Therefore, the same compensations as in the proof of Proposition \ref{lem:monotonicity} are working to make $\varrho_+^+$ decreasing too.  
\end{enumerate}
\medskip

\noindent\textbf{Step  \#3. Propagation of monotonicity.} 
In order to conclude, it is sufficient to check that $J_+$ is uniformly decreasing as for large $z$, as in Lemma \ref{lem:asymptotic monotonicity}. Again, the asymptotic behaviour of $g_+$ is involved. Roughly speaking, we have for $z>0$,
\begin{align*}
(v-c)\partial_z g_+(z,v) = J_+(z) - g_+(z,v) &\sim \kappa_+ e^{-(\lambda_+ - \lambda_r)z} \left( 1 - F_+(v)^{-1} \left(T_+(v-c) - \lambda_r (v-c)\right)\right)\\
& \sim \kappa_+ e^{-(\lambda_+ - \lambda_r)z} \dfrac{(\lambda_r - \lambda_+)(v-c)}{F_+(v)}\, .
\end{align*}
So, $\partial_z g_+$ is decreasing with respect to $z$ for large $z$, provided $\lambda_r < \lambda_+$. This asymptotic result can be made quantitative and rigorous, as in Lemma \ref{lem:asymptotic monotonicity}.

On the other hand, the possible singular behaviour close to the origin, as in Lemma \ref{lem:zero monotonicity}, is unaffected by the exponential normalization.

With similar results as Proposition \ref{lem:monotonicity}, Lemma \ref{lem:asymptotic monotonicity} and \ref{lem:zero monotonicity} at hand, we can propagate monotonicity as in Lemma \ref{lem:closed} and \ref{lem:open}. 
\end{proof}

The following statement is an immediate consequence of Corollary \ref{cor:monotonicity++}. 

\begin{corollary}\label{cor:bounds rho++}
For $z>0$, we have
\[
\begin{cases} 
& \kappa_+ e^{-\lambda_+ z} \leq I_+(z) \leq I_+(0)e^{-\lambda_+ z}\medskip\\
&\displaystyle \kappa_+\left(\int F_+(v)\, d\nu(v)\right)e^{-\lambda_+ z} \leq \rho_+(z) \leq \rho(0) e^{-\lambda_+ z} 
\end{cases}
\]
A similar statement holds true for $z<0$.
\end{corollary}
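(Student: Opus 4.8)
The plan is to deduce all four inequalities directly from the monotonicity of the renormalized macroscopic densities $\varrho_+^\pm$ provided by Corollary \ref{cor:monotonicity++}, together with the two endpoint evaluations (at $z=0$ and as $z\to+\infty$) of the quantities involved. I restrict to $z>0$; the case $z<0$ is handled symmetrically, with $\lambda_+,\kappa_+,F_+$ replaced by $\lambda_-,\kappa_-,F_-$. First I would note that $e^{\lambda_+ z}\rho_+(z) = \varrho_+^+(z)+\varrho_+^-(z)$ and $J_+(z):=e^{\lambda_+ z}I_+(z) = T_+^+\varrho_+^+(z)+T_+^-\varrho_+^-(z)$ are both non-increasing on $[0,+\infty)$, being positive linear combinations (the weights $T_+^+,T_+^-$ are positive under $\Ac$) of the two decreasing functions $\varrho_+^\pm$ from Corollary \ref{cor:monotonicity++}.

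Evaluating at $z=0$, and using the continuity of $f$ in $z$ (Proposition \ref{prop:holder}), so that $\lim_{z\to 0^+}\rho_+(z)=\rho(0)$ and $\lim_{z\to 0^+}I_+(z)=I_+(0)$, this gives at once the upper bounds $\rho_+(z)\le\rho(0)e^{-\lambda_+ z}$ and $I_+(z)\le I_+(0)e^{-\lambda_+ z}$. For the lower bounds I would identify the limits of the same two monotone quantities as $z\to+\infty$: using the refined asymptotics of Theorem \ref{theo:cluster} in the sharp weighted-$L^1$ form recorded in \eqref{eq:L1 bound h}, namely $\int e^{\lambda_+ z}\bigl|f_+(z,v)-\kappa_+ F_+(v)e^{-\lambda_+ z}\bigr|\,d\nu(v)\to 0$, I obtain $e^{\lambda_+ z}\rho_+(z)\to\kappa_+\int F_+(v)\,d\nu(v)$ and, invoking the dispersion relation $\int T_+(v-c)F_+(v)\,d\nu(v)=1$ from \eqref{eq:dispersion lambda+}, $J_+(z)\to\kappa_+$. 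Since a non-increasing function always stays above its limit at $+\infty$, these limits are lower bounds for $e^{\lambda_+ z}\rho_+(z)$ and $e^{\lambda_+ z}I_+(z)$ respectively, which is exactly the claim.

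There is essentially no obstacle: all the substance is already contained in Corollary \ref{cor:monotonicity++} (the hard monotonicity result) and in Theorem \ref{theo:cluster}. The only point to watch is that one needs convergence of the \emph{unweighted} velocity averages $\int e^{\lambda_+ z}f_+\,d\nu(v)$ and $\int T_+(v-c)e^{\lambda_+ z}f_+\,d\nu(v)$, not merely the weighted $L^2$ estimate \eqref{eq:spectral gap 1}; this is precisely why the $L^1$ improvement \eqref{eq:L1 bound h} (established inside the proof of Lemma \ref{lem:asymptotic monotonicity}) is the right tool here.
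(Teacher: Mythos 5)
Your argument is correct and is essentially the paper's intended one: the paper treats Corollary \ref{cor:bounds rho++} as an immediate consequence of Corollary \ref{cor:monotonicity++}, namely that $e^{\lambda_+ z}\rho_+$ and $J_+=e^{\lambda_+ z}I_+$ are positive combinations of the decreasing quantities $\varrho_+^\pm$, bounded above by their values at $z=0$ and below by their limits $\kappa_+\int F_+\,d\nu$ and $\kappa_+$ at $z=+\infty$. Your remark that the unweighted limits require the $L^1$ refinement \eqref{eq:L1 bound h} rather than the weighted $L^2$ estimate \eqref{eq:spectral gap 1} is exactly the right point of care and matches the tools already established in the proof of Lemma \ref{lem:asymptotic monotonicity}.
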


Corollary \ref{cor:monotonicity++} also enables to derive quantitative regularity estimates on macroscopic quantities. 
\[ J_\pm(z) = I_\pm(z) e^{\pm \lambda_\pm z}\, . \]

\begin{corollary}\label{cor:improved holder regularity}
The averaged quantity $J_+$ is $1/p'-$H\"older continuous with some explicit constant $L(R,\omega,p)$ which is linear with respect to $\rho$ (see \eqref{eq:holder constant} below). 
\end{corollary}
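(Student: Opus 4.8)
The plan is to revisit the one-dimensional averaging lemma argument of Section~\ref{sec:regularity alpha0} (Step~\#2 of Proposition~\ref{prop:holder}) for the renormalized density $g_+$, this time keeping track of every constant. Recall from \eqref{eq:ugmfiugf}, under the standing hypothesis \eqref{jlzgumlzvlvl} which makes the renormalized tumbling rate $R_+(v-c) = T_+(v-c) - \lambda_+(v-c) = (F_+(v))^{-1}$ bounded below and above, that $g_+$ solves
\[ (v-c)\,\partial_z g_+(z,v) = J_+(z) - R_+(v-c)\, g_+(z,v)\,, \qquad J_+(z) = \int T_+(v-c)\, g_+(z,v)\, d\nu(v) = e^{\lambda_+ z} I_+(z)\,. \]
Two uniform bounds are needed, both \emph{linear in the density}. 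First, Corollary~\ref{cor:bounds rho++} gives $\rho_+(z)\le\rho(0)e^{-\lambda_+z}$ for $z>0$, hence $\int g_+(z,v)\,d\nu(v)=e^{\lambda_+z}\rho_+(z)\le\rho(0)$ and $\|J_+\|_{L^\infty(\R_+)}\le (\max T_+)\,\rho(0)$. Second, the Duhamel formula along characteristics (as in \eqref{eq:duhamel bg+}), together with $R_+^\pm$ bounded below, yields $\|g_+\|_{L^\infty(\R_+\times V)}\le C(R)\,\|J_+\|_{L^\infty}$; equivalently one may invoke the pointwise bound \eqref{eq:589808} and the boundedness of $F_+$. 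From the equation it then follows that $|\partial_z g_+(z,v)|\le |v-c|^{-1}\big(\|J_+\|_\infty + (\max R_+)\|g_+\|_\infty\big) =: |v-c|^{-1}\,K(R)\,\rho(0)$.

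Next I would split the velocity average at a free scale $\delta>0$: for $z,z'>0$,
\[ |J_+(z)-J_+(z')| \le \Big(\int_{\{|v-c|<\delta\}} + \int_{\{|v-c|>\delta\}}\Big) T_+(v-c)\,\big|g_+(z,v)-g_+(z',v)\big|\,\omega(v)\,dv\,. \]
On the inner region, bounding the difference by $2\|g_+\|_\infty$ and using H\"older's inequality $\int_{\{|v-c|<\delta\}}\omega\le \|\omega\|_p(2\delta)^{1/p'}$ produces a contribution $\le 2^{1+1/p'}(\max T_+)\,\|g_+\|_\infty\,\|\omega\|_p\,\delta^{1/p'}$. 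On the outer region, the pointwise bound on $\partial_z g_+$ gives $|g_+(z,v)-g_+(z',v)|\le K(R)\rho(0)|v-c|^{-1}|z-z'|$, and $\int_{\{|v-c|>\delta\}}\frac{\omega(v)}{|v-c|}\,dv\le \|\omega\|_p\big(\tfrac{2}{p'-1}\big)^{1/p'}\delta^{1/p'-1}$ (finite since $V$ is compact), producing a contribution $\le (\max T_+)\,K(R)\rho(0)\,\|\omega\|_p\big(\tfrac{2}{p'-1}\big)^{1/p'}\delta^{1/p'-1}|z-z'|$. Choosing $\delta=|z-z'|$ balances the two terms and gives the announced estimate, for $z,z'>0$,
\begin{equation}\label{eq:holder constant}
|J_+(z)-J_+(z')| \le L(R,\omega,p)\,\rho(0)\,|z-z'|^{1/p'}\,, \qquad L(R,\omega,p) = \|\omega\|_p\Big(2^{1+1/p'}(\max T_+)C(R) + (\max T_+)K(R)\big(\tfrac{2}{p'-1}\big)^{1/p'}\Big)\,,
\end{equation}
and symmetrically for $J_-$ on $\R_-$. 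Since the whole problem is linear in $f$, this is exactly the claimed H\"older estimate with a constant that is linear in $\rho$ (equivalently, proportional to the total mass).

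The argument is essentially bookkeeping rather than new analysis, and the only delicate points are: (a) tracing both $L^\infty$ inputs back to $\rho(0)$ with constants depending only on the data $R_+$ (hence on $\chi_S,\chi_N,c,\lambda_+$) and on $\omega$, which is precisely what Corollary~\ref{cor:bounds rho++} and the boundedness of $F_+$ under \eqref{jlzgumlzvlvl} provide; and (b) observing that the $\delta^{1/p'-1}$ blow-up of the outer integral is exactly absorbed by the extra factor $|z-z'|$, which forces the optimal choice $\delta\sim|z-z'|$ and pins the exponent to $1/p'$. The borderline case $p=\infty$ ($p'=1$), where the outer integral diverges logarithmically, would have to be read with a logarithmic correction exactly as in \eqref{zefhglzyfgl}; but since the corollary is applied only for finite $p$ this is immaterial.
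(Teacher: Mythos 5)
Your argument is correct and follows essentially the same route as the paper: the same velocity splitting at scale $\delta\sim|z_1-z_2|$, with the kinetic equation for $g_+$ controlling the region $|v-c|>\delta$ and Duhamel-type $L^\infty$ bounds tracing everything back to $\rho(0)$, so that the optimized constant is linear in the density (the paper merely sharpens the inner region using the monotonicity of $J_+$ from Corollary \ref{cor:monotonicity++} instead of $2\|g_+\|_\infty$, which is bookkeeping). One small repair: for $v>c$ the Duhamel formula for $g_+^+$ picks up the incoming trace from the negative side, so your intermediate bound should read $\|g_+\|_{L^\infty(\R_+\times V)}\le C(R)\left(\|J_+\|_{L^\infty(\R_+)}\vee J_-(0)\right)$ rather than $C(R)\|J_+\|_{L^\infty}$; since $J_-(0)=I_-(0)\le(\max T)\,\rho(0)$, this leaves the final estimate and its linearity in $\rho(0)$ unchanged, and indeed the paper's explicit constant \eqref{eq:holder constant} carries exactly this $J_-(0)$ contribution.
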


\begin{proof}
We restrict to the case $z>0$. We argue as in the proof of Proposition \ref{prop:holder} \eqref{eq:I_+ holder continuous}. Let $0<z_1<z_2$. We have
\begin{align*} 
0\leq J_+(z_1) - J_+(z_2) 
&= \int R_+(v-c)\left( g_+(z_1,v) -g_+(z_2,v)\right)  \, d\nu(v) \\
& \leq \int_{\{|v-c|<\delta\}} R_+(v-c)\left( g_+(z_1,v) -g_+(z_2,v)\right) \, d\nu(v) \\
& \quad  + \int_{\{|v-c|>\delta\}} \dfrac{R_+(v-c)}{|v-c|} \sup_{z'\in(z_1,z_2)} \left| J_+(z') - R_+(v-c)g_+(z',v)\right||z_1-z_2|\, d\nu(v) \\
&\leq \int_{\{0<c-v<\delta\}} R^-_+(v-c)   g_+^-(z_1,v)  \, d\nu(v) + \int_{\{0<v-c<\delta\}} R^+_+(v-c)   g_+^+(z_1,v)  \, d\nu(v) 
\\& \quad +\int_{\{c-v>\delta\}} \dfrac{R_+^-(v-c) }{|v-c|}  \left(\sup_{z'\in(z_1,z_2)}  J_+(z') \right)|z_1-z_2|\, d\nu(v) \\
& \quad + \int_{\{v-c>\delta\}} \dfrac{R_+^+(v-c) }{|v-c|} \left( \sup_{z'\in(z_1,z_2)} J_+(z')\vee R^+_+(v-c)  g_+(z',v)\right) |z_1-z_2| \, d\nu(v)\,.
\end{align*}
We deduce from the Duhamel representation formula \eqref{eq:duhamel bg+} that
\[ (\forall v<c)\quad R_+^-(v-c) g_+^-(z,v) \leq J_+(z) \leq J_+(0) \, .\]
On the other hand, for $v>c$, we have 
\begin{align*}  
R_+^+(v-c) g_+^+(z,v) & = R_+^+(v-c) \left( \int_0^\infty J_-(-s(v-c)) \exp\left( - R_-^+(v-c) s \right)\, ds \right)\exp\left( - \dfrac {R_+^+(v-c)z}{v-c} \right) \\
& \quad + R_+^+(v-c) \int_0^{\frac z{v-c}} J_+(z-s(v-c))\exp\left( - R_+^+(v-c) s \right)\, ds \\
& \leq \dfrac{R_+^+(v-c)}{R_-^+(v-c)}J_-(0)\exp\left( - \dfrac{R_+^+(v-c) z}{v-c} \right) + J_+(0) \left( 1 - \exp\left( - \dfrac{R_+^+(v-c) z}{v-c} \right)\right)\\
& \leq  \left(\dfrac{R_+^+(v-c)}{R_-^+(v-c)}J_-(0)\right)\vee J_+(0) \, .
\end{align*}
We deduce:
\begin{align*} 
0\leq J_+(z_1) - J_+(z_2)&\leq \left(  J_+(0)   +   \left(\left(\sup\dfrac{R_+^+}{R_-^+}\right)J_-(0)\right)\vee J_+(0)\right) \delta^{1/p'}\|\omega\|_p \\ 
& \quad + \left(\sup R_+^-  \right) J_+(0)  A(p) \delta^{-1/p}\|\omega\|_p  \\ 
& \quad +  \left(\sup R_+^+\right)  \left(\left(\sup \dfrac{ R_+^+ }{R_-^+}\right)J_-(0)\vee  J_+(0)  \right) A(p) \delta^{-1/p}\|\omega\|_p |z_1-z_2| \, .
\end{align*}
where the constant $A$ is defined as $A(p) = (p-1)^{-1/p'}$. Finally, we use the trivial relations $J_+(0)   \leq (\sup R_+) \rho(0)$ and $J_-(0) \leq (\sup R_-) \rho(0)$, in order to obtain the following estimate which is linear with respect to $\rho(0)$,
\begin{multline*} 
0\leq J_+(z_1) - J_+(z_2) \leq    \left(  (\sup R_+)   +   \left(\left(\sup\dfrac{R_+^+}{R_-^+}\right)(\sup R_-)\right)\vee (\sup R_+)\right) \delta^{1/p'}\|\omega\|_p\rho(0) \\ 
\quad + \left( \left(\sup R_+^-  \right)(\sup R_+)   +   \left(\sup R_+^+\right)  \left(\left(\sup \dfrac{ R_+^+ }{R_-^+}\right)(\sup R_-)\vee (\sup R_+)  \right)\right) A(p) \delta^{-1/p}\|\omega\|_p |z_1-z_2|\rho(0)  \, .
\end{multline*}
By optimizing with respect to $\delta$, we obtain the following quantitative H\"older estimate,
\beq\label{eq:improved reg}
|J_+(z_1) - J_+(z_2)| \leq L(R,\omega,p)\rho(0) |z_1 - z_2|^{1/p'}\, ,
\eeq 
where
\begin{multline}\label{eq:holder constant}
L(R,\omega,p) = p (p-1)^{-1/{(p'p)}}\left(  (\sup R_+)   +   \left(\left(\sup\dfrac{R_+^+}{R_-^+}\right)(\sup R_-)\right)\vee (\sup R_+)\right)^{1/p}\times \\
\left( \left(\sup R_+^-  \right)(\sup R_+)   +   \left(\sup R_+^+\right)  \left(\left(\sup \dfrac{ R_+^+ }{R_-^+}\right)(\sup R_-)\vee (\sup R_+)  \right)\right)^{1/p'}
 \|\omega\|_p \, .
\end{multline}
Note that the pre-factor is increasing, and satisfies $p (p-1)^{-1/{(p'p)}} \sim p$ for large $p$. This rules out the Lipschitz case $p = \infty$ ($p'=1$), as expected from \cite{golse_regularity_1988}.
\end{proof}

\section{The case without nutrient $\chi_N=0$ (stationary cluster)}

The existence of a stationary state $(\rho,S)$ is an immediate consequence of the monotonicity of $\rho$ established in Section \ref{sec:monotonicity}, and the following general statement about solutions of the elliptic problem on $S$. The purpose of this section is to deal with $c = 0$ only. However, we formulate our statement for any $c$ to anticipate the coupling with the nutrient $N$ in the forthcoming section.

Here, we set $\beta =  1$, without loss of generality.

\begin{proposition}\label{prop:monotonicity S}
Assume that the function $\rho\in L^1$ is locally Lipschitz continuous on $\R^*$, and that it is increasing for $z<0$, and decreasing for $z>0$. Let $S$ be the unique solution of the following elliptic problem
\[ (\forall z\in \R) \quad -c\partial_z S(z) - D_S \partial^2_z S(z) + \alpha S(z) = \rho(z)\, . \]
Then $\partial_z S$ vanishes once, and only once.
\end{proposition}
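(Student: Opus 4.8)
The plan is to pass to the differentiated equation and run a one‑dimensional maximum principle. First I would use the Green function $\mathcal S$ of $-c\partial_z-D_S\partial_z^2+\alpha$ already exhibited in Section~\ref{sec:diffusion limit} — a strictly positive, exponentially decaying, two‑sided exponential with $\mathcal S,\partial_z\mathcal S\in L^1\cap L^\infty$ — to write $S=\mathcal S*\rho$. Then $S\in C^1(\R)$, $S>0$, and dominated convergence gives $S(z),\partial_z S(z)\to0$ as $|z|\to\infty$; since $S,\partial_z S\in L^1$ (Young's inequality), $\int_\R\partial_z S=\lim_{R\to\infty}\bigl(S(R)-S(-R)\bigr)=0$, so $w:=\partial_z S$ must change sign unless it is identically zero. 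Because $\rho$ is strictly monotone on each half‑line and integrable, $\inf_{(-\infty,0)}\rho=\lim_{z\to-\infty}\rho=0$ and likewise at $+\infty$, hence $\rho>0$ on $\R^*$; in particular $\rho\not\equiv0$, so $w\not\equiv0$ and $w$ genuinely changes sign. Finally, differentiating the elliptic equation on $\R^*$ (where $\rho$ is locally Lipschitz, so that $w=\partial_z S$ is a $C^{1,1}_{\mathrm{loc}}$ strong solution on each half‑line),
\[
-c\,\partial_z w-D_S\,\partial_z^2 w+\alpha w=\partial_z\rho,\qquad \text{with }\partial_z\rho\geq0\text{ on }(-\infty,0)\text{ and }\partial_z\rho\leq0\text{ on }(0,\infty).
\]
Equivalently, writing $\widetilde L:=D_S\partial_z^2+c\partial_z-\alpha$ (whose zeroth‑order coefficient $-\alpha$ is strictly negative), $w$ is a subsolution $\widetilde Lw\geq0$ on $(0,\infty)$ and a supersolution $\widetilde Lw\leq0$ on $(-\infty,0)$.

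I would then split on the sign of $w(0)$. Suppose $w(0)\leq0$. On $(0,\infty)$, $w$ is a subsolution of $\widetilde L$ with $w(0)\leq0$ and $w\to0$ at $+\infty$; if $w$ took a positive value, its supremum $M>0$ would be attained at an interior point $z^\star$ (it exceeds $w$ near both ends), where $\partial_z w(z^\star)=0$ and $\partial_z^2 w(z^\star)\leq0$, forcing $\widetilde Lw(z^\star)\leq-\alpha M<0$, a contradiction; hence $w\leq0$ on $(0,\infty)$. If moreover $w(z_1)=0$ for some $z_1>0$, then $z_1$ is an interior maximum with value $0$, and the strong maximum principle forces $w\equiv0$ on $(0,\infty)$, whence $S$ is constant — thus identically zero, since $S\to0$ — on $(0,\infty)$, so $\rho\equiv0$ there, contradicting $\rho>0$. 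Therefore $w<0$ on $(0,\infty)$, and every zero of $w$ lies in $(-\infty,0]$. Let $z_0\leq0$ be such a zero (one exists because $w$ changes sign). On $(-\infty,z_0)\subset(-\infty,0)$, $w$ is a supersolution with $w(z_0)=0$ and $w\to0$ at $-\infty$; the same argument gives $w\geq0$ there, strictly ($w>0$ on $(-\infty,z_0)$) unless $w\equiv0$ on $(-\infty,z_0)$, again impossible as it would give $\rho\equiv0$ there. Applying this with any two zeros $z_0<z_0'\leq0$, taking $z_0'$ as the right endpoint, would force $w\equiv0$ on $(-\infty,z_0')$ and hence $\rho\equiv0$ there: impossible. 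So $w$ has exactly one zero. If instead $w(0)\geq0$, the mirror‑image argument on $(-\infty,0)$ gives $w>0$ there, and the same analysis on $(0,\infty)$ yields a unique zero in $[0,\infty)$. In all cases $\partial_z S$ vanishes once and only once.

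The routine ingredients are the Green‑function estimates and the regularity bootstrap, which are already available through Section~\ref{sec:Further regularity}. The genuinely delicate point — the expected main obstacle — is the behaviour at $z=0$, where $\partial_z\rho$ may have only a logarithmic modulus of continuity and is a priori merely a signed measure; this is circumvented because all comparisons are carried out on open half‑lines bounded away from the origin (or, if preferred, after a one‑sided mollification of $\rho$ that preserves the sign of $\partial_z\rho$ on each side of its mode). The structural fact doing the real work is the strict monotonicity of $\rho$ on each half‑line: it both forces $\rho>0$ on $\R^*$ (hence $w\not\equiv0$) and excludes the only alternative to a single zero, namely $\partial_z S\equiv0$ on a nondegenerate interval.
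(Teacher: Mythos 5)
Your argument is correct and is essentially the paper's proof: both differentiate the elliptic equation and run a maximum-principle argument on $P=\partial_z S$, using that $\partial_z\rho\geq 0$ for $z<0$ and $\partial_z\rho\leq 0$ for $z>0$ to rule out any interior extremum of $\partial_z S$ with the wrong sign. The only difference is bookkeeping at the ends: the paper invokes the one-sided limits $\partial_z S\to 0^{\pm}$ as $z\to\mp\infty$ and derives a contradiction from a hypothetical extra zero producing such an extremum, whereas you obtain the sign change from $\int_{\R}\partial_z S\,dz=0$ and exclude extra or double zeros via weak and strong maximum principles on half-lines — both arguments are at the same level of rigor.
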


\begin{proof}
Let denote by $P$ the derivative of $S$: $P = \partial_z S$. It belongs to $W^{2,\infty}$, locally uniformly on $\R^*$. It satisfies the following equation,
\[ -c\partial_z P(z) - D_S\partial^2_z P(z) + \alpha P(z) = \partial_z \rho(z)\, , \]
together with the following asymptotic limits,
\beq \lim_{z\to -\infty} P(z) = 0^+ \, , \quad  \lim_{z\to +\infty} P(z) = 0^-\, , \label{eq:P asymptotic} \eeq
where the notation $0^+$ means that we approach $0$ from above, and $0^-$ means that we approach $0$ from below.  

Assume by contradiction that $P$ vanishes at least twice. Then, we deduce from \eqref{eq:P asymptotic} that it must vanish at least three times, including a possible double root, where $\partial_z P$ vanishes also. Two vanishing points are necessarily on the same side, say for $z\leq 0$. Accordingly, there must exist a locally minimal point $z_0\leq 0$ with non positive value, $P(z_0) \leq 0$,  $\partial_z P(z_0) = 0$, and $\partial^2_z P(z_0)\geq 0$. If $z_0<0$, or $P(z_0)<0$, we have
\[ \alpha P(z_0) = c \partial_z P(z_0) + D_S \partial^2_z P(z_0) + \partial_z \rho(z_0) > c \partial_z P(z_0) + D_S \partial^2_z P(z_0) \geq 0\, . \]
This is a contradiction. If $z_0 = 0$, and $P(z_0) = 0$ then $0 $ is necessary a double root. We can repeat the same reasoning on the right side: there must exist $z_0'>0$ such that $P(z_0') \geq 0$,  $\partial_z P(z_0') = 0$, and $\partial^2_z P(z_0')\leq 0$. At this point, we have 
\[ \alpha P(z_0') = c \partial_z P(z_0') + D_S \partial^2_z P(z_0') + \partial_z \rho(z_0') < c \partial_z P(z_0') + D_S \partial^2_z P(z_0') \leq 0\, . \]
Again, this is a contradiction.
We argue similarly when we are to choose $z_0$ initially on the right side. 
\end{proof}

In the case $\chi_N = 0$, and $c = 0$, the derivative $\partial_z S(z)$ can only vanish at $z = 0$ for symmetry reason. We deduce that the {\em a priori} hypothesis \eqref{eq:ansatz} is indeed correct {\em a posteriori}. Therefore there exists a stationary state $(f,S)$. This concludes the proof of Theorem \ref{theo:TW stat}.

\section{Coupling with chemical concentrations $S,N$ (travelling wave)}


\subsection{Matching the condition on $S$}
\label{sec:matching S}

As in the case without nutrient, we deduce from Proposition \ref{prop:monotonicity S} that $\partial_zS$ changes sign only once. However it might not happen at $z=0$, as required in \eqref{eq:ansatz}. 

The objective of this section consists in varying $c$ such as to satisfy $\partial_z S(0) = 0$ (similarly as in the proof of Theorem \ref{theo:TW macro} in the macroscopic case). 

This requires two intermediate results: continuity of $\partial_z S(0)$ as a function of $c$, and analysis of the extremal behaviours as $c\to c_*$, resp. as $c\to c^*$. 

We introduce the following notation for  $\partial_z S(0)$, as a function of parameter $c$,
\[ \Upsilon(c) =  \partial_z S(0)\, . \]
Recall that $\Upsilon$ is given by the following integral representation formula, 
\beq \Upsilon(c)  =  - \int_{-\infty}^0 \mu_+  e^{\mu_+  z} \rho_-(z)\, dz + \int_0^{+\infty} \mu_-  e^{-\mu_-  z} \rho_+(z)\, dz\, , \label{eq:dxS0 pre}\eeq
where the reaction-diffusion exponents $\mu_{\pm}(c)$ are defined as
\beq\label{eq:exposant mu}\begin{cases} \mu_-(c) = \dfrac{-c + \sqrt{c^2 + 4\alpha D_S}}{2D_S} >0 \medskip\\
\mu_+(c) = \dfrac{c + \sqrt{c^2 + 4\alpha D_S}}{2D_S} >0\end{cases}  \eeq

\begin{proposition}
Under assumption $\Ac$, the function $\Upsilon$ is continuous on $(c_*,c^*)$. 
\end{proposition}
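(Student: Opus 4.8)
The plan is to prove continuity of $\Upsilon$ directly from the integral representation \eqref{eq:dxS0 pre}, by establishing that each ingredient depends continuously on $c$ with enough uniformity to pass to the limit under the integral sign. The reaction--diffusion exponents $\mu_\pm(c)$ of \eqref{eq:exposant mu} are explicit and manifestly continuous, so everything reduces to the continuous dependence of the spatial density $\rho=\rho_c$ on the speed $c$, together with a uniform exponential tail bound. I would argue sequentially. Fix $c_n\to c$ in $(c_*,c^*)$; since the interval is open, all $c_n$ and $c$ eventually lie in a compact subinterval $[c_1,c_2]\subset(c_*,c^*)$, on which the decay exponents $\lambda_\pm(\cdot)$ are continuous and bounded below by some $\underline\lambda>0$ (they vanish only at the endpoints $c_*$, $c^*$), and likewise $\mu_\pm(\cdot)\geq\underline\mu>0$.

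The first step is to collect uniform a priori estimates. Writing $f_n$ for the unit-mass stationary solution of \eqref{eq:linear c} associated with the speed $c_n$ and $\rho_n=\int f_n\,d\nu$, Proposition~\ref{prop:holder} gives $\|f_n\|_\infty\leq C$ and $|f_n(z,v)|\leq Ce^{-\underline\lambda|z|}$ uniformly in $n$: the constants in that proof depend only on $\max T$, $\|\omega\|_p$, the values $T_\pm^\pm$, on $\v0$, and on the lower bound $\underline\lambda$, all controlled uniformly over $[c_1,c_2]$. The regularity bootstrap of Section~\ref{sec:Further regularity} shows in addition that $(f_n)$ is equi-H\"older in $z$ on compact subsets of $\R^*\times V$ (and in $v$, locally uniformly in $z\in\R^*$). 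By Arzel\`a--Ascoli and a diagonal extraction over an exhaustion of $\R^*\times V$ by compacts, a subsequence converges locally uniformly on $\R^*\times V$ to some $f'$; the uniform exponential tail upgrades this to convergence in $L^1(\R\times V)$, so $\rho_n\to\rho':=\int f'\,d\nu$ pointwise on $\R^*$.

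The crucial step, and the one I expect to require the most care, is the identification $f'=f_c$, which I would obtain by passing to the limit in the weak formulation \eqref{eq:weak formulation} and invoking the uniqueness of the stationary distribution established in the proof of Lemma~\ref{lem:closed}. Within the decoupled problem of Section~\ref{sec:decoupled}, the only $c_n$-dependence of the tumbling rate $T(z,\cdot-c_n)$ of \eqref{eq:rule of sign} is through the sign $\sign(v-c_n)$, so $T(z,v-c_n)\to T(z,v-c)$ for every $z\neq0$ and every $v\neq c$; since $\nu$ is absolutely continuous under $\Ac$, one has $\nu(\{c\})=0$, hence this convergence holds $\nu$-almost everywhere. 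Combined with the $L^1$ convergence of $f_n$ and the uniform bounds, dominated convergence lets one pass to the limit in both sides of \eqref{eq:weak formulation} (the transport term $(v-c_n)f_n\partial_z\varphi$ converging against the compactly supported test function), so $f'$ is a unit-mass weak solution of \eqref{eq:linear c} for the speed $c$; by uniqueness $f'=f_c$. As the limit is independent of the subsequence, the whole sequence converges, and in particular $\rho_n\to\rho_c$ pointwise on $\R^*$ with the uniform bound $|\rho_n|,|\rho_c|\leq Ce^{-\underline\lambda|\cdot|}$.

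It then remains to pass to the limit in \eqref{eq:dxS0 pre}. For $z<0$ and all $n$ one has $0\leq e^{\mu_+(c_n)z}\leq e^{\underline\mu z}$ and $|\rho_{n,-}(z)|\leq Ce^{\underline\lambda z}$, so the first integrand is dominated by $C e^{(\underline\mu+\underline\lambda)z}$, integrable on $(-\infty,0)$, and symmetrically the second integrand is dominated by an integrable function on $(0,+\infty)$. The pointwise convergences $\mu_\pm(c_n)\to\mu_\pm(c)$ and $\rho_n\to\rho_c$, together with dominated convergence, give $\Upsilon(c_n)\to\Upsilon(c)$; since the sequence was arbitrary, $\Upsilon$ is continuous on $(c_*,c^*)$. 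Apart from the identification step, the only mild subtlety is verifying that the constants of Proposition~\ref{prop:holder} are genuinely uniform over the compact $c$-interval, which is immediate once one traces their dependence through that proof.
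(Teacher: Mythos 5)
Your proposal is correct and follows essentially the same route as the paper: take $c_n\to c$, extract a subsequence of the stationary densities converging on compact sets of $\R^*\times V$ as in Lemma \ref{lem:closed}, pass to the limit in the weak formulation \eqref{eq:weak formulation}, identify the limit with $f_c$ by uniqueness, and then pass to the limit in the integral representation \eqref{eq:dxS0 pre} of $\Upsilon$ using the uniform exponential tails. The only cosmetic difference is in handling the discontinuity of $T$ with respect to $c$: you invoke $\nu$-a.e. convergence of $T(z,\cdot-c_n)$ (since $\nu(\{c\})=0$ under $\Ac$) plus domination, while the paper splits the velocity integral near $v=c$ and uses the $L^p$ bound on $\omega$ — two equivalent ways of exploiting the same fact.
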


\begin{proof}
Let $c_n\to c\in (c_*,c^*)$. As in the proof of Lemma \ref{lem:closed}, we can extract a subsequence $(f_{n'})_{n'}$ converging towards $f$ uniformly on compact sets of $\R^* \times V$. First, we pass to the limit in the weak formulation
\begin{multline}\label{eq:weak limit n'} - \iint (v - c_{n'}) f_{n'}(z,v) \partial_z \varphi\, d\nu(v) dx \\ = \iiint T(z,v'-c_{n'}) f_{n'}(z,v') \left( \varphi(z,v) - \varphi(z,v')\right)\, d\nu(v')d\nu(v)dz \, .\end{multline}
The l.h.s. can be handled easily as it can be written
\begin{multline*} - \iint (v - c_{n'}) f_{n'}(z,v) \partial_z \varphi\, d\nu(v) dx \\ = - \iint (v - c) f_{n'}(z,v) \partial_z \varphi\, d\nu(v) dx + (c_{n'}-c) \iint  f_{n'}(z,v) \partial_z \varphi\, d\nu(v) dx\, . \end{multline*}
The last contribution is bounded by $|c-c_{n'}|\|f_{n'}\|_{L^1}\|\partial_x \varphi\|_\infty$. The r.h.s. in \eqref{eq:weak limit n'} must be treated with caution because $T$ is not continuous with respect to $c$. However, we can split the integral into $\{ |v'-c|<\delta \}$, and $\{ |v'-c|>\delta \}$. The former is controlled by $\delta^{1/p'}$, the $L^\infty$ bound of $f_{n'}$, and the $L^p$ bound of $\nu$. We can pass to the limit in the latter since $T(z,v'-c_{n'}) = T(z,v'-c)$ provided $|c - c_{n'}|<\delta$.

By uniqueness of the limit problem, we have $f_{n'} \to f$, where $f$ is the density profile associated with $c$. 

Finally, we can pass to the weak limit in the integral formula for $\Upsilon$ \eqref{eq:dxS0 pre}. 
\end{proof}

\begin{proposition}
Under assumption $\Ac$, there are explicit conditions on the reaction diffusion parameters $(\alpha, D_S)$ such that the following extremal behaviours hold true,
\[
\begin{cases}
&\displaystyle\lim_{c\searrow c_*} \Upsilon(c) >0\medskip\\
&\displaystyle\lim_{c\nearrow c^*} \Upsilon(c) <0
\end{cases}
\]
\end{proposition}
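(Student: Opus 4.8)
The plan is to estimate the two terms of the integral representation \eqref{eq:dxS0 pre} separately by means of the two-sided exponential bounds of Corollary \ref{cor:bounds rho++}, and then to let $c$ approach the endpoints $c_*,c^*$, where one of the decay rates $\lambda_\pm(c)$ degenerates to zero. Write $\Upsilon(c)=R(c)-L(c)$ with $L(c)=\int_{-\infty}^0\mu_+e^{\mu_+z}\rho_-(z)\,dz$ and $R(c)=\int_0^{+\infty}\mu_-e^{-\mu_-z}\rho_+(z)\,dz$, the reaction-diffusion exponents $\mu_\pm(c)>0$ being given by \eqref{eq:exposant mu}. The key structural fact is that as $c\searrow c_*$ the right-hand tail of $\rho$ flattens ($\lambda_+(c)\to0$ by \eqref{eq:def c_*}), so $R(c)$ stays bounded below by a positive constant while $L(c)$ remains controlled by the always-exponentially-decaying left tail; symmetrically as $c\nearrow c^*$.

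First I would treat $c\searrow c_*$. From Corollary \ref{cor:bounds rho++}, $\rho_-(z)\le\rho(0)e^{\lambda_-z}$ for $z<0$ and $\rho_+(z)\ge\kappa_+\bigl(\int F_+\,d\nu\bigr)e^{-\lambda_+z}$ for $z>0$; inserting these into the elementary geometric integrals gives $L(c)\le\rho(0)\,\frac{\mu_+}{\mu_++\lambda_-}$ and $R(c)\ge\kappa_+\bigl(\int F_+\,d\nu\bigr)\frac{\mu_-}{\mu_-+\lambda_+}$, hence
\[
\Upsilon(c)\ \ge\ \kappa_+\Bigl(\int F_+\,d\nu\Bigr)\frac{\mu_-}{\mu_-+\lambda_+}\ -\ \rho(0)\,\frac{\mu_+}{\mu_++\lambda_-}\,.
\]
Now $\lambda_+(c)\to0$, $\mu_\pm(c)\to\mu_\pm(c_*)>0$, $\lambda_-(c)\to\lambda_-(c_*)>0$ (recall $c_*<c^*$), the explicit profiles $F_+(\cdot;c)$ converge uniformly to $1/T_+(\cdot-c_*)$, and $\kappa_+(c),\rho(0;c)$ converge to finite positive limits by the compactness-plus-uniqueness argument already used for the continuity of $\Upsilon$ (uniform boundedness, uniform Hölder regularity and uniform exponential tightness from Proposition \ref{prop:holder}, together with uniqueness of the stationary profile). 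Passing to the limit and using $2D_S\mu_+(c_*)=c_*+\sqrt{c_*^2+4\alpha D_S}$, one gets $\liminf_{c\searrow c_*}\Upsilon(c)>0$ precisely when
\[
\frac{c_*+\sqrt{c_*^2+4\alpha D_S}}{c_*+\sqrt{c_*^2+4\alpha D_S}+2D_S\lambda_-(c_*)}\ <\ \frac{\kappa_+(c_*)\int F_+(v;c_*)\,d\nu}{\rho(0;c_*)}\,,
\]
which is exactly condition \eqref{eq:cond 1}, the right-hand side being the "explicit constant" (or any a priori lower bound on this ratio supplied by the quantitative estimates of Lemma \ref{eq:bound mu rho0}).

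The case $c\nearrow c^*$ is entirely symmetric: now $\lambda_-(c)\to0$ by \eqref{eq:def c^*}, so the \emph{left} density flattens. Using $\rho_-(z)\ge\kappa_-\bigl(\int F_-\,d\nu\bigr)e^{\lambda_-z}$ and $\rho_+(z)\le\rho(0)e^{-\lambda_+z}$ yields $L(c)\ge\kappa_-\bigl(\int F_-\,d\nu\bigr)\frac{\mu_+}{\mu_++\lambda_-}$ and $R(c)\le\rho(0)\frac{\mu_-}{\mu_-+\lambda_+}$, whence $\Upsilon(c)\le\rho(0)\frac{\mu_-}{\mu_-+\lambda_+}-\kappa_-\bigl(\int F_-\,d\nu\bigr)\frac{\mu_+}{\mu_++\lambda_-}$; letting $\lambda_-\to0$ and using $2D_S\mu_-(c^*)=-c^*+\sqrt{(c^*)^2+4\alpha D_S}$, the limit is strictly negative precisely under condition \eqref{eq:cond 2}.

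The hard part will be the justification of the convergence and, above all, the \emph{non-degeneracy} of the prefactors $\kappa_\pm(c)$ and $\rho(0;c)$ as $c$ reaches the endpoints: at $c=c_*$ (resp. $c^*$) the profile is no longer in $L^1$, since its right (resp. left) tail is flat, so one cannot simply keep the unit-mass normalization in the limit. The clean way out is to exploit that $\mathrm{sign}\,\Upsilon$ is normalization-free and to renormalize at the endpoint — e.g. by $\lim_{z\to+\infty}e^{\lambda_+z}\rho_+(z)=\kappa_+\int F_+\,d\nu=1$ near $c_*$, which by Corollary \ref{cor:monotonicity++} forces $\rho_+(z)\ge e^{-\lambda_+z}$ — and then to bound $\rho(0)$ from above against that quantity using the monotonicity of Corollary \ref{cor:monotonicity++} and the pointwise tail bound \eqref{eq:589808}; alternatively, extract from the proof of Lemma \ref{eq:bound mu rho0} a lower bound on $\kappa_\pm\int F_\pm\,d\nu/\rho(0)$ that is uniform on a one-sided neighbourhood of the endpoint. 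Once this is in place, the remaining ingredients — the geometric integrals, the continuity of $\Upsilon$, and the reduction to \eqref{eq:cond 1}--\eqref{eq:cond 2} — are routine.
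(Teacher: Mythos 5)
Your first half is exactly the paper's argument: the same lower/upper bounds on $\Upsilon(c)$ obtained by inserting the two-sided exponential estimates of Corollary \ref{cor:bounds rho++} into the integral representation, and the same identification of the limiting inequality as $\lambda_+(c)\to 0$ (resp. $\lambda_-(c)\to 0$). The genuine gap is the step you yourself flag as ``the hard part'': a lower bound on the ratio $\kappa_+\la F_+\ra/\rho(0)$ (resp. $\kappa_-\la F_-\ra/\rho(0)$) that is \emph{explicit} and uniform as $c$ approaches the endpoint. Without it the statement is empty, since the right-hand side of your inequality is not an explicit constant but a quantity attached to a limiting profile that need not exist in $L^1$; so the claimed identification with \eqref{eq:cond 1} is premature --- condition \eqref{eq:condition c_*} has an explicit right-hand side, and producing it is precisely the content of the proof. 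Moreover, both remedies you sketch fail as stated. Lemma \ref{eq:bound mu rho0} cannot be made uniform on a one-sided neighbourhood of $c_*$: its proof carries factors $1/\lambda_+$, and indeed under the unit-mass normalization both $\kappa_+$ and $\rho(0)$ tend to $0$ as $c\searrow c_*$, so only their ratio survives and that lemma says nothing uniform about it. And the combination of Corollary \ref{cor:monotonicity++} with the pointwise tail bound \eqref{eq:589808} only yields $\kappa_+\la F_+\ra\leq\rho(0)$ and $\rho(0)\leq\|u_+\|_\infty\la F_+\ra$ with $\kappa_+\leq\|u_+\|_\infty$, i.e.\ inequalities pointing in the wrong direction for a lower bound on $\kappa_+/\rho(0)$.

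What the paper actually uses is the quantitative regularity of Corollary \ref{cor:improved holder regularity}: the $1/p'$-H\"older seminorm of the normalized averages $J_\pm$ is bounded by $L(R,\omega,p)\,\rho(0)$, i.e.\ it is \emph{linear} in $\rho(0)$ \eqref{eq:improved reg}. Plugging this into the Duhamel representation of $f(0,v)$ along characteristics, $J_\pm$ cannot drop below a fixed fraction of $J_\pm(0)\gtrsim\rho(0)$ before an explicit stopping time, which yields a pointwise bound of the form $f(0,v)\geq\rho(0)\,A(v)$ with $A$ explicit; averaging against $(v-c)^2F_+$ gives $\kappa_+\geq A_+\rho(0)$ with $A_+$ explicit, independent of $(\alpha,D_S)$ and stable as $c\to c_*$. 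That constant is the ``explicit constant'' appearing in \eqref{eq:cond 1}--\eqref{eq:cond 2}. If you want to complete your proof, this (or some equivalent quantitative Harnack-type estimate at $z=0$) is the missing ingredient; the geometric integrals, the continuity of $\Upsilon$, and the passage to the limit are indeed routine once it is available.
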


The idea is quite simple: as $c\to c_*$, the macroscopic profile $\rho$ becomes flat on the right side (see Figure \ref{fig:flatness}). As a result, we expect that the maximum of $S$ is pushed to the right of the origin, \ie $\Upsilon(c) >0$. Similar behaviour can be expected as $c\to c^*$, but on the right side. This is made as  quantitative  as possible in the following proof. 

\begin{proof}
We will use crucially the improved monotonicity obtained in Section \ref{sec:improved monotonicity}, and particularly Corollary \ref{cor:bounds rho++}. 

Firstly, we consider the case $c\to c_*$. We have 
\begin{align}
\Upsilon(c) &\geq - \rho(0) \int_{-\infty}^0 \mu_+(c) e^{\mu_+(c) z} e^{\lambda_-(c)}\, dz + \kappa_+ \la F_+\ra \int_0^{+\infty} \mu_-(c) e^{-\mu_-(c) z} e^{-\lambda_+(c)z}\, dz \nonumber\\
& \geq - \rho(0) \dfrac{\mu_+(c)}{\mu_+(c) + \lambda_-(c)} + \kappa_+ \la F_+\ra\dfrac{\mu_-(c)}{\mu_-(c) + \lambda_+(c)}\, , \label{eq:bound below upsilon}
\end{align}
where $\kappa_+$ is defined as in \eqref{eq:mu+-}. In order to estimate $\kappa_+$ from below, we use the improved regularity obtained in Corollary \ref{cor:improved holder regularity} \eqref{eq:improved reg}. For all $v>c$, we have
\begin{align*}
f(0,v) &= \int_0^{+\infty} I_-(-s(v-c)) e^{-T_-^+s} \, ds\\
&  = \int_0^{+\infty} J_-(-s(v-c)) e^{-(T_-^+ + \lambda_-(v-c))s} \, ds \\
& \geq \int_0^{+\infty} \left(\left(J_-(0) - \rho(0) \left[J_-\right ]_{1/p'} s^{1/p'} |v-c|^{1/p'}  \right)\vee 0\right) e^{-(T_-^+ + \lambda_-(v-c))s} \, ds \\
& \geq \rho(0) \left[J_-\right ]_{1/p'} |v-c|^{1/p'} \int_0^{\tau_-}\left( \tau_-(v-c)^{1/p'} - s^{1/p'}\right) e^{-(T_-^+ + \lambda_-(v-c))s} \, ds \, , 
\end{align*}
where the stopping time $\tau_-$ is given by
\begin{equation*}
\tau_-(v-c) = \dfrac{\left( \inf R_-\right)^{p'}}{\left[J_-\right ]_{1/p'}^{p'}|v-c|}\, .
\end{equation*}
Similarly, for $(v<c)$ we find
\begin{align*}
f(0,v) &= \int_0^{+\infty} I_+(-s(v-c)) e^{-T_+^-s} \, ds\\
&  = \int_0^{+\infty} J_+(-s(v-c)) e^{-(T_+^- - \lambda_+(v-c))s} \, ds \\
& \geq \int_0^{+\infty} \left(\left(J_+(0) - \rho(0)\left[J_+\right ]_{1/p'} s^{1/p'} |v-c|^{1/p'}  \right)\vee 0\right) e^{-(T_+^- - \lambda_+(v-c))s} \, ds \\
& \geq \rho(0) \left[J_+\right ]_{1/p'} |v-c|^{1/p'} \int_0^{\tau_+(v-c)}\left( \tau_+(v-c)^{1/p'} - s^{1/p'}\right) e^{-(T_+^- - \lambda_+(v-c))s} \, ds \, , 
\end{align*}
where the stopping time $\tau_+$ is given by
\begin{equation*}
\tau_+(v-c) = \dfrac{\left( \inf R_+\right)^{p'}}{\left[J_+\right ]_{1/p'}^{p'}|v-c|}\, .
\end{equation*}
Combining these two estimates, we get the following bound from below,
\begin{align*}
\dfrac{\kappa_+}{\rho(0)} & = \dfrac1{\rho(0)}\dfrac{\int f(0,v) (v-c)^2 F_+(v)\, d\nu(v)}{\int (v-c)^2 F_+(v)^2\, d\nu(v)} \geq  \left( \int (v-c)^2 F_+(v)^2\, d\nu(v)\right)^{-1} \times \\ 
& \quad \left[  \int_{\{v>c\}} \left[J_-\right]_{1/p'} \left(\int_0^{\tau_-}\left( \tau_-^{1/p'} - s^{1/p'}\right) e^{-(T_-^+ + \lambda_-(v-c))s} \, ds\right)  |v-c|^{2+1/p'} F_+(v)\, d\nu(v) \right. \\
& \quad \left. + \int_{\{v<c\}} \left[J_+\right ]_{1/p'} \left(\int_0^{\tau_+}\left( \tau_+^{1/p'} - s^{1/p'}\right) e^{-(T_+^- - \lambda_+(v-c))s} \, ds\right)  |v-c|^{2+1/p'} F_+(v)\, d\nu(v)    \right]
\end{align*}
Plugging this estimate into \eqref{eq:bound below upsilon}, we deduce that there is an explicit constant $A_+$ such that 
\[ \Upsilon(c) \geq \rho(0) \left( -  \dfrac{\mu_+(c)}{\mu_+(c) + \lambda_-(c)} +  A_+ \la F_+\ra \dfrac{\mu_-(c)}{\mu_-(c) + \lambda_+(c)}  \right)\, . \]
As $c\to c_*$, we have $\lambda_+(c)\to 0$, and $F_+\to 1/T_+$, therefore, the condition $\Upsilon(c^*)>0$ is a consequence of the following condition:
\begin{multline}\label{eq:condition c_*}
\dfrac{c_* + \sqrt{c_*^2 + 4\alpha D_S}}{c_* + \sqrt{c_*^2 + 4\alpha D_S} + 2D_S \lambda_-(c_*)} \leq \la\left(\dfrac{v-c_*}{T_+(v-c_*)}\right)^2\ra^{-1} \la \dfrac1{T_+(v-c_*)}\ra \times\\ \left[  \int_{\{v>c_*\}} \left[J_-\right]_{1/p'} \left(\int_0^{\tau_-}\left( \tau_-^{1/p'} - s^{1/p'}\right) e^{-(T_-^+ + \lambda_-(c_*)(v-c_*))s} \, ds\right)  |v-c_*|^{2+1/p'} \dfrac1{T_+^+ }\, d\nu(v) \right. \\
  \left. + \int_{\{v<c_*\}} \left[J_+\right ]_{1/p'} \left(\int_0^{\tau_+}\left( \tau_+^{1/p'} - s^{1/p'}\right) e^{-T_+^- s} \, ds\right)  |v-c_*|^{2+1/p'} \dfrac1{T_+^- }\, d\nu(v)    \right] \, . 
\end{multline}
As the r.h.s. does not depend on the reaction-diffusion parameters $(\alpha,D_S)$, whereas the l.h.s. vanishes as $D_S\to +\infty$, condition \eqref{eq:condition c_*} is clearly not empty. 

Secondly, we consider the case $c\to c^*$. There, we perform similar estimates, but the other way around,
\[ \Upsilon(c) \leq - \kappa_- \la F_-\ra \dfrac{\mu_+(c)}{\mu_+(c) + \lambda_-(c)} +  \rho(0) \dfrac{\mu_-(c)}{\mu_-(c) + \lambda_+(c)}\, , \]
where $\kappa_-$ is defined as in \eqref{eq:mu+-}. As previously, the ratio $\kappa_-/\rho(0)$ can be estimated from below by using the regularity of $f(0,v)$ with respect to velocity, through the regularity of $J_\pm$ with respect to space. As a consequence, we get for $c\to c^*$,
\[ \Upsilon(c^*) \leq \rho(0) \left( - A_- \la F_-\ra  +   \dfrac{\mu_-(c^*)}{\mu_-(c^*) + \lambda_+(c^*)}\right)\, . \]
The latter is negative under some condition which reads,
\begin{multline}\label{eq:condition c^*}
\dfrac{-c^* + \sqrt{(c^*)^2 + 4\alpha D_S}}{-c^* + \sqrt{(c^*)^2 + 4\alpha D_S} + 2D_S \lambda_+(c^*)}
\leq 
\la   \left(\dfrac{v-c^*}{T_-(v-c^*)}\right)^2\ra^{-1} \la \dfrac1{T_-(v-c^*)}\ra \times\\ \left[  \int_{\{v>c^*\}} \left[J_-\right]_{1/p'} \left(\int_0^{\tau_-}\left( \tau_-^{1/p'} - s^{1/p'}\right) e^{-T_-^+ s} \, ds\right)  |v-c^*|^{2+1/p'} \dfrac1{T_-^+}\, d\nu(v) \right. \\
  \left. + \int_{\{v<c^*\}} \left[J_+\right ]_{1/p'} \left(\int_0^{\tau_+}\left( \tau_+^{1/p'} - s^{1/p'}\right) e^{-( T_+^- - \lambda_+(c^*)(v-c^*)) s} \, ds\right)  |v-c^*|^{2+1/p'} \dfrac1{T_-^-}\, d\nu(v)    \right] \, . 
\end{multline}
We notice that the l.h.s. vanishes when $\alpha\to 0$, or $D_S\to +\infty$ (independently). This guarantees that the condition is not empty as well. 
\end{proof}

\subsection{Matching the condition on $N$}
\label{sec:matching N}

The monotonicity condition on $N$ to be satisfied \eqref{eq:ansatz} is not very restrictive. It only requires $c>0$ as established in Proposition \ref{prop:N} below. This is automatically satisfied when $c_* \geq 0$, which is equivalent to $\chi_N\geq \chi_S$ \eqref{knùioùoib}, conditioned on \eqref{eq:condition c_*}-\eqref{eq:condition c^*}. 

On the contrary, if $\chi_N< \chi_S$, then we should impose that $\Upsilon(0)>0$ in order to guarantee the existence of $c\in (0,c^*)$ such that $\Upsilon(c) = 0$, conditioned on \eqref{eq:condition c^*}. Arguing as previously, a sufficient condition writes
\[ 
A_+ \la F_+\ra \dfrac{\mu_-(0)}{\mu_-(0) + \lambda_+(0)} -  \dfrac{\mu_+(0)}{\mu_+(0) + \lambda_-(0)} \geq 0\, , 
\] 
which is equivalent to
\begin{multline}
\label{eq:condition c=0}
\dfrac{ \sqrt{\alpha/ D_S} +   \lambda_+(0)}{\sqrt{\alpha / D_S} +  \lambda_-(0)} \leq \la v^2  F_+(v)^2\ra^{-1} \la  F_+(v) \ra \times\\ \left[  \int_{\{v>0\}} \left[J_-\right]_{1/p'} \left(\int_0^{\tau_-}\left( \tau_-^{1/p'} - s^{1/p'}\right) e^{-(T_-^+ + \lambda_-(0)v)s} \, ds\right)  |v|^{2+1/p'} F_+(v) \, d\nu(v) \right. \\
  \left. + \int_{\{v<0\}} \left[J_+\right ]_{1/p'} \left(\int_0^{\tau_+}\left( \tau_+^{1/p'} - s^{1/p'}\right) e^{-(T_+^- - \lambda_+(0)v) s} \, ds\right)  |v|^{2+1/p'} F_+(v)\, d\nu(v)    \right] \, . 
\end{multline}
To see that this condition is not empty, it is enough to consider that $\lambda_+(0)\to 0$ as $\chi_N \nearrow \chi_S $. In a second step, the limit $\alpha/D_S\to 0$ enables to realize the condition. However, this is not entirely satisfactory, as it imposes some strong condition on the parameters of the run-and-tumble process, $(\chi_S,\chi_N)$. 

We conclude this section by checking that, indeed, if $c> 0$, then $N$ is increasing. 

\begin{proposition}\label{prop:N}
Assume that $c> 0$, and that $\rho$ is exponentially bounded on both sides, namely
\begin{equation*}
\begin{cases}
(\forall z<0) \quad & \rho_-(z) \leq C e^{\lambda_- z}\medskip\\
(\forall z>0) \quad & \rho_+(z) \leq C e^{-\lambda_+ z}
\end{cases}
\end{equation*}
Then, there exist two constants $N_-<N_+$, and a solution $N$ of the elliptic problem in \eqref{eq:TW}, such that 
\[
\begin{cases}
\displaystyle\lim_{z\to -\infty} N(z) = N_-\medskip\\
\displaystyle\lim_{z\to +\infty} N(z) = N_+
\end{cases}
\]
Moreover, we have $\partial_z N>0$.  
\end{proposition}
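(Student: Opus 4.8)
The plan is to use that the elliptic equation for $N$, written out, is the \emph{linear} second-order ordinary differential equation $D_N N'' + cN' - \gamma \rho N = 0$ on $\R$, with $\rho\geq 0$ and, by the assumed exponential bounds (recall $\lambda_\pm>0$), $\rho\in L^1(\R)$. Setting $a = c/D_N>0$ and dividing by $D_N$, I would recast the equation as the integral equation obtained by integrating twice with the integrating factor $e^{az}$ and prescribing the boundary behaviour $N(-\infty)=1$, $N'(-\infty)=0$ (normalising $N_-=1$ w.l.o.g., the equation being linear homogeneous in $N$):
\[ N(z) = 1 + \frac{\gamma}{c}\int_{-\infty}^{z}\bigl(1-e^{-a(z-y)}\bigr)\rho(y)N(y)\,dy =: \Phi[N](z)\, . \]
Prescribing the data at $-\infty$ this way is precisely what singles out the bounded (``non-growing'') solution, so that no shooting argument is needed.

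Next I would construct a solution by monotone iteration. Since $0\leq 1-e^{-a(z-y)}\leq 1$ for $y\leq z$ and $\rho\geq 0$, the map $\Phi$ is well defined and order preserving on non-negative bounded functions, with $\Phi[u]\geq 1$ whenever $u\geq 0$. Starting from $N_0\equiv 1$ and iterating $N_{k+1}=\Phi[N_k]$ yields a non-decreasing sequence; and, assuming the bound for $N_k$, one gets $N_{k+1}(z)\leq 1+\tfrac{\gamma}{c}\int_{-\infty}^{z}\rho(y)\exp\!\bigl(\tfrac{\gamma}{c}\int_{-\infty}^{y}\rho\bigr)\,dy = \exp\!\bigl(\tfrac{\gamma}{c}\int_{-\infty}^{z}\rho\bigr)\leq \exp\!\bigl(\tfrac{\gamma}{c}\|\rho\|_{L^1}\bigr)$, a uniform bound. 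Monotone convergence then produces a fixed point $N$ with $1\leq N\leq \exp(\tfrac{\gamma}{c}\|\rho\|_{L^1})$. Differentiating the integral equation once gives $N'(z)=\tfrac{\gamma}{D_N}e^{-az}\int_{-\infty}^{z}e^{ay}\rho(y)N(y)\,dy$, which is continuous; since $\rho$ is continuous across the origin, $N''=\tfrac{1}{D_N}(\gamma\rho N-cN')$ is continuous as well, so $N\in\mathcal C^2(\R)$ and it solves the elliptic equation in \eqref{eq:TW}.

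The qualitative properties then follow at once. The first-order representation of $N'$, together with $N\geq 1>0$ and the positivity of $\rho$ on $\R$ (Corollary~\ref{cor:bounds rho++} and Lemma~\ref{eq:bound mu rho0}), yields $N'(z)>0$ for every $z$, hence $N$ is strictly increasing. From the integral equation, $0\leq N(z)-1\leq \tfrac{\gamma}{c}e^{\gamma\|\rho\|_{L^1}/c}\int_{-\infty}^{z}\rho\to 0$ as $z\to-\infty$, so $\lim_{z\to-\infty}N=1=:N_-$; being increasing and bounded, $N$ admits a finite limit $N_+:=\lim_{z\to+\infty}N$, and $N_+>N(0)>N_-$ by strict monotonicity.

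I do not expect a serious obstacle here: the problem is linear and the monotone scheme circumvents the only delicate point, namely the selection of the solution that stays bounded at $-\infty$. The sole external inputs are the positivity of $\rho$ (from the earlier lower bounds) and its exponential integrability (which is exactly the hypothesis). One should merely keep in mind that $\rho$ is only Lipschitz on $\R^*$ --- its derivative has a logarithmic singularity at the origin --- but since only $\rho$ itself, continuous and integrable, enters the argument, this does not affect the $\mathcal C^2$ regularity of $N$ across $z=0$.
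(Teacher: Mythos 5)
Your proof is correct, but it follows a genuinely different route from the paper. The paper passes to the logarithmic derivative $u=\partial_z \log N$, which turns the problem into the Riccati-type first-order equation \eqref{eq:ODE u}; the homoclinic solution with $u(\pm\infty)=0$ is then built as the limit $a\to-\infty$ of the Cauchy problems \eqref{eq:cauchy ua} with $u_a(a)=0$, positivity of $u_a$ coming from the structure of the equation and Gronwall giving explicit exponential bounds (of order $e^{\lambda_- z}$ on the left and $\max(e^{-cz/D_N},e^{-\lambda_+ z})$ on the right) that ensure integrability of $u$, hence finite limits $N_\pm$ after exponentiation; a compactness/extraction step is needed to pass to the limit in $a$. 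You instead exploit directly the linearity of the equation in $N$: integrating twice with the factor $e^{cz/D_N}$ (this is where $c>0$ enters, exactly as in the paper) yields a Volterra-type fixed-point equation with non-negative kernel, which you solve by monotone iteration with the clean a priori bound $1\leq N\leq \exp\left(\tfrac{\gamma}{c}\|\rho\|_{L^1}\right)$, the exponential hypothesis on $\rho$ serving only to guarantee $\rho\in L^1$; strict monotonicity $\partial_z N>0$ then follows from the explicit representation of $N'$ together with the strict positivity of $\rho$ (Corollary \ref{cor:bounds rho++}), and the limits $N_-<N_+$ from monotone boundedness. Your argument is more elementary and self-contained (no shooting, no compactness, no Gronwall), at the price of not exhibiting the explicit exponential rates at which $\partial_z N$ decays on either side, which the paper's construction via $u$ provides for free; neither piece of extra information is needed for the statement as given.
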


\begin{proof}
We introduce $u(z)=  \partial_z \log N(z)$. Elliptic equation for $N$ is equivalent to  the following first order equation for $u$,
\[-  c u(z) = D_N \left( \partial_z  u(z) + | u(z)|^2\right) - \gamma \rho(z)\, .\]
This rewrites as a non autonomous, non linear ODE in the $z$ variable, 
\beq \label{eq:ODE u} \partial_z u(z) = -\dfrac c{D_N}u(z) - | u(z)|^2 + \dfrac\gamma{D_N}\rho(z)\, , \eeq
together with the boundary conditions $\lim_{z\to \pm\infty} u(z) = 0$. This means that $u$ is the unique homoclinic orbit that leaves the origin $u = 0$ as $z\to -\infty$, and gets back to the origin $u=0$ as $z\to +\infty$. A way to construct this solution is to consider the family of Cauchy problems on $(a,+\infty)$,
\beq\label{eq:cauchy ua} \begin{cases}
&\partial_z u_a(z) = -\dfrac c{D_N}u_a(z) - | u_a(z)|^2 + \dfrac\gamma{D_N}\rho(z) \medskip\\
&u_a(a) = 0\, ,
\end{cases}
\eeq
and to take the limit as $a\to -\infty$. Indeed, we deduce immediately from the structure of \eqref{eq:cauchy ua} that $(\forall z>a)\;  u_a(z)>0$. In addition, we deduce from the Gronwall lemma that 
\[ u_a(z) \leq \dfrac\gamma{D_N}\int_{a}^z e^{c/D_N (y-z)} \rho(y)\, dy\, . \]
As a consequence, for $z<0$ we deduce from Corollary \ref{cor:bounds rho++} that 
\[ u_a(z) \leq  \dfrac\gamma{D_N\lambda_- + c}C e^{\lambda_- z}\, .\]
On the other hand, for $z>0$, we deduce similarly that
\begin{align*} 
u_a(z) & \leq \dfrac\gamma{D_N\lambda_- + c}C e^{-c/D_N z} + \dfrac\gamma{D_N\lambda_+ - c} C\left(  e^{-c/D_N z} - e^{-\lambda_+ z} \right)  \\
& \leq C e^{-c/D_N z} + \begin{cases}
C \max \left(e^{-c/D_N z},e^{-\lambda_+ z} \right) & \text{if}\quad \lambda_+\neq c/D_N\medskip \\
C z e^{-c/D_N z} & \text{if}\quad \lambda_+= c/D_N
\end{cases} 
\end{align*}
In any case, we can extract a subsequence $u_{a_n}$ such that $u_{a_n}$ converges uniformly towards $u$ solution of \eqref{eq:ODE u} with the boundary conditions $\lim_{z\to \pm\infty} u(z) = 0$. 

Previous estimates guarantee that $u$ is integrable on both sides, so it enables to define properly $N$ as in Proposition \ref{prop:N}. 
\end{proof}

This concludes the proof of Theorem \ref{theo:kin TW}.

\section{Focus on the discrete velocity case}
\label{sec:discrete velocity}

We revisit the travelling wave problem \eqref{eq:TW} in the discrete velocity case, under Assumption $\Ad$. In this case, the auxiliary function $\Upsilon(c)$ is not continuous with respect to $c$, but it has jumps each time $c$ crosses some discrete velocity $v_i$. By analysing carefully the sign of the jumps, we are able to exhibit some set of parameters for which there is no uniqueness of the travelling wave. Interestingly, we can also find some set of parameters for which there is no travelling wave satisfying the natural ansatz \eqref{eq:ansatz}. 

The careful analysis of the discrete velocity case is based on the spectral decomposition of kinetic operators in Case's elementary modes (see \cite{kaper_spectral_1982,gosse_computing_2013} and references therein). 

The extremal speeds $c_*, c^*$ are defined as in Section \ref{sec:decoupled}, see \eqref{eq:def c_*}-\eqref{eq:def c^*}. There, the $L^p$ boundedness of the velocity measure $\nu$ is used. However, the auxiliary function $R$ is still decreasing, and Lipschitz continuous in the discrete velocity case \Ad. The following computation resolves the case when $c$ crosses one of the discrete velocities $v_j$. There, we have for $\eps,\eps'>0$ sufficiently small,
\begin{equation*} 
R(v_j + \eps) - R(v_j-\eps') = - \sum_{i\neq j}  \omega_i \dfrac{\eps + \eps'}{T_+(v_i - v_j)}  + \omega_j \left( \dfrac{-\eps}{T_+^-} - \dfrac{\eps'}{T_+^+}\right) <0 \\
\end{equation*}
Notice that the second contribution does not vanish here, on the contrary to the continuous velocity case \Ac \eqref{jmbujvb}, but it has the good sign to come up with the same conclusion since $T_+>0$. 
 
From now on, we suppose that $c\in (c_*,c^*)$, and that $c\notin V = \{v_i\}_{1\leq i\leq N}$. 

\subsection{Spectral decomposition of kinetic transport operators}\label{sec:spectral}

We develop in this section quantitative spectral analysis of the discrete velocity case \Ad. Recall that we are seeking solutions of the linear equation
\beq\label{eq:kin discrete}
(\forall z)\quad (\forall i)\quad (v_i-c) \partial_z f^i(z) =  \sum_{j = 1}^N\omega_j  T(z,v_j-c)f^j(z)  - T(z,v_i-c)f^i(z)\, .
\eeq

We decompose the solution on each side (resp. $z<0$ and $z>0$) along special Case's solutions, which have the form 
\[ (z<0)\quad \mathfrak{f}^k_-(z,v) = e^{\lambda^k_{-} z}F^k_{-}(v)\,, \quad \text{or}\quad (z>0) \quad \mathfrak{f}^k_+(z,v) = e^{-\lambda^k_{+} z}F^k_{+}(v)\, .\] 
The total number of modes is $N$. They are distributed on each side according to the position of $c$ relatively to the discrete velocities $(v_i)_{1\leq i\leq N}$. 
\begin{definition}
Assume that $c$ is such that 
\beq \label{eq:c ordonne} v_1 < \dots < v_K < c < v_{K+1} < \dots < v_N\, . \eeq
\begin{enumerate}[(i)]
\item There are $K$ negative modes defined for $z<0$, $\mathfrak{f}^k_-(z,v)$, where $F^k_{-}$, and $\lambda^k_{-}$ are given by the following expressions,
\[
(\forall 1\leq k\leq K)\quad (\forall i)\quad F^k_-(v_i) = \dfrac{1}{T_-(v_i - c) + \lambda^k_-(v_i - c)}\, , 
\]
Each exponent $\lambda^k_->0$ is a root of the following dispersion relation, 
\beq
\sum_{i = 1}^N \omega_i \dfrac{ (v_i - c)}{T_-(v_i - c) + \lambda^k_-(v_i - c)} = 0\, . \label{eq:dispersion k-}
\eeq
\item Similarly, there are $N-K$ negative modes defined for $z>0$, $\mathfrak{f}^k_+(z,v)$, where $F^k_{+}$, and $\lambda^k_{+}$ are given by the following expressions,
\[ 
(\forall K+1\leq k\leq N)\quad (\forall i)\quad F^k_+(v_i) = \dfrac{1}{T_+(v_i - c) - \lambda^k_+(v_i - c)}\, . 
\]
Each exponent $\lambda^k_+>0$ is a root of the following dispersion relation, 
\beq
\sum_{i = 1}^N \omega_i \dfrac{(v_i - c)}{T_+(v_i - c) - \lambda^k_+(v_i - c)} = 0\, . \label{eq:dispersion k+}
\eeq
\end{enumerate}
\end{definition}

The following couple of  propositions investigates the relation between the number of roots of the dispersion relations \eqref{eq:dispersion k-}-\eqref{eq:dispersion k+}, and the degrees of freedom on the kinetic transport problems to be solved on each side. This discussion justifies {\em a posteriori} notations in the previous definition. 

\begin{proposition}\label{prop:number roots}
\begin{enumerate}[(i)]
\item There are exactly $K$ distinct positive roots $(\lambda_-^k)_{1\leq k\leq K}$ of the dispersion relation \eqref{eq:dispersion k-}. They are localized as follows,
\beq \label{eq:asymptot -}
0 < \lambda^{1}_- < \dfrac{T_-^-}{c-v_1} < \lambda^{2}_- <  \dfrac{T_-^-}{c-v_{2}} < \dots < \lambda^{K}_- < \dfrac{T_-^-}{c-v_K} \, .
\eeq
Any root $\lambda_- ^k$ $(1\leq k\leq K)$ is decreasing with respect to $c$ under condition \eqref{eq:c ordonne}.
\item There are exactly $N-K$ distinct positive roots $(\lambda_+^k)_{K+1\leq k\leq N}$ of the dispersion relation \eqref{eq:dispersion k+}. They are localized as follows,
\beq \label{eq:asymptot +}
0 < \lambda^{N}_+ < \dfrac{T_+^+}{v_{N}-c} < \lambda^{N-1}_+ <  \dfrac{T_+^+}{v_{N-1}-c} < \dots < \lambda^{K+1}_+ < \dfrac{T_+^+}{v_{K+1}-c} \, .
\eeq
Any root $\lambda_+ ^k$ $(K+1\leq k\leq N-K)$ is increasing with respect to $c$ under condition \eqref{eq:c ordonne}.
\end{enumerate}
\end{proposition}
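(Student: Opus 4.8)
The plan is to analyse each dispersion relation as a scalar rational function of $\lambda$ on the half-line $\lambda>0$, locate its poles, and count its zeros using strict monotonicity between consecutive poles together with the sign of the function at the endpoints. For \eqref{eq:dispersion k-}, set $\Phi_-(\lambda)=\sum_{i=1}^N \omega_i (v_i-c)/(T_-(v_i-c)+\lambda(v_i-c))$. Under \eqref{eq:c ordonne} the sign of $v_i-c$ is fixed, so $T_-(v_i-c)=T_-^-$ for $i\le K$ and $T_-(v_i-c)=T_-^+$ for $i\ge K+1$; consequently the only poles of $\Phi_-$ on $(0,+\infty)$ are at $\lambda=T_-^-/(c-v_i)$, $i=1,\dots,K$, and since $c-v_1>\dots>c-v_K>0$ these are increasing exactly in the order appearing in \eqref{eq:asymptot -}. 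Each summand of $\Phi_-$ has derivative $-\omega_i(v_i-c)^2/(T_-(v_i-c)+\lambda(v_i-c))^2<0$, so $\Phi_-$ is strictly decreasing on each interval of definition.

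The sign data then pins the count down. At $\lambda=0$ one has $\Phi_-(0)=\int (v-c)/T_-(v-c)\,d\nu(v)>0$ because $c<c^*$ \eqref{eq:def c^*}; as $\lambda$ reaches the first pole from below, $\Phi_-(\lambda)\to-\infty$; on each interval strictly between two consecutive poles $\Phi_-$ runs continuously from $+\infty$ to $-\infty$; and to the right of the last pole $\Phi_-$ decreases from $+\infty$ and, since $\Phi_-(\lambda)\sim\lambda^{-1}\sum_i\omega_i=\lambda^{-1}$, tends to $0^+$, hence stays positive there. Strict monotonicity therefore produces exactly one root in each of the $K$ intervals $(0,T_-^-/(c-v_1))$ and $(T_-^-/(c-v_{k-1}),T_-^-/(c-v_k))$, $k=2,\dots,K$, and none beyond, which is precisely \eqref{eq:asymptot -}. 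The proof of (ii) is the mirror image: with $\Psi_+(\lambda)=\sum_i\omega_i(v_i-c)/(T_+(v_i-c)-\lambda(v_i-c))$, the positive poles are $\lambda=T_+^+/(v_i-c)$ for $i\ge K+1$, ordered as in \eqref{eq:asymptot +}; each summand is now strictly increasing in $\lambda$; $\Psi_+(0)=R(c)<0$ since $c>c_*$ \eqref{jmbujvb}; and $\Psi_+(\lambda)\sim-\lambda^{-1}\to 0^-$, so no root lies beyond the last pole. This gives exactly $N-K$ roots, with the stated localisation.

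For monotonicity with respect to $c$, I would invoke the implicit function theorem. Writing $G(\lambda,c)$ for the left-hand side of \eqref{eq:dispersion k-}, note that under \eqref{eq:c ordonne} the coefficients $T_-(v_i-c)$ are locally constant in $c$, so each summand depends on $c$ only through $u=v_i-c$ via $u/(T+\lambda u)$, whose $c$-derivative equals $-T/(T+\lambda u)^2<0$ as $T>0$ (this uses $\chi_S,\chi_N<1/2$, hence $T_-^\pm>0$); thus $\partial_c G<0$, while $\partial_\lambda G<0$ by the monotonicity computed above, so $d\lambda_-^k/dc=-\partial_c G/\partial_\lambda G<0$. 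The same computation for the left-hand side $H(\lambda,c)$ of \eqref{eq:dispersion k+} gives $\partial_c H<0$ and $\partial_\lambda H>0$, whence $d\lambda_+^k/dc>0$.

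I expect the only delicate part to be the bookkeeping: checking positivity of all $T_\pm^\pm$, identifying the endpoint signs $\Phi_-(0)>0$ (from $c<c^*$) and $\Psi_+(0)<0$ (from $c>c_*$), and, most importantly, ruling out a spurious root in the unbounded interval beyond the last positive pole, which is exactly why the asymptotics $\Phi_-(\lambda)\sim\lambda^{-1}$ and $\Psi_+(\lambda)\sim-\lambda^{-1}$ are needed. No substantial obstacle is anticipated beyond this careful case analysis.
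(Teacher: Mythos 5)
Your proof is correct and follows essentially the same route as the paper's (terse) argument: strict monotonicity of the dispersion function between its positive poles, endpoint signs furnished by $c\in(c_*,c^*)$ via \eqref{eq:def c_*}--\eqref{eq:def c^*}, the decay to $0$ at $\lambda\to+\infty$ to exclude a root beyond the last pole, and implicit differentiation (with $T_\pm(v_i-c)$ locally constant since $c\notin\{v_i\}$) for the monotonicity in $c$, exactly as in \eqref{eq:monotonicity lambda}. You merely spell out the pole bookkeeping and endpoint analysis more explicitly than the paper does.
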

\begin{proof}
Let $Q_-(\lambda)$ be defined as in \eqref{eq:dispersion k-} for $\lambda>0$,
\[
Q_-(\lambda) = \sum_{i = 1}^N \omega_i \dfrac{ 1}{\dfrac{T_-(v_i - c)}{(v_i - c)} + \lambda} = 0\, .
\]
It is decreasing with respect to $\lambda$ on each interval of definition. Moreover, we have 
$Q_-(0) > 0$ since $c<c^*$ and $Q_-(\lambda)\to 0$ as $\lambda\to+\infty$. We argue analogously for $z>0$, by introducing $Q_+$ as follows
\[
Q_+(\lambda) = \sum_{i = 1}^N \omega_i \dfrac{ 1}{\dfrac{T_+(v_i - c)}{(v_i - c)} - \lambda} = 0\, .
\]

Monotonicity of $\lambda_\pm$ is a simple consequence of the implicit dispersion relations \eqref{eq:dispersion k-}-\eqref{eq:dispersion k+}, namely we have
\beq\label{eq:monotonicity lambda} \begin{cases} 
\displaystyle \dfrac{d\lambda_-}{dc}(c) = \left( \sum_{i = 1}^N \omega_i \dfrac{- (v_i-c)^2}{(T_-(v_i-c) + \lambda_-(c)(v_i-c))^2}  \right)^{-1}\sum_{i = 1}^N \omega_i \dfrac{T_-(v_i-c)}{(T_-(v_i-c) + \lambda_-(c)(v_i-c))^2}  < 0 \, , \medskip\\
\displaystyle \dfrac{d\lambda_+}{dc}(c) = \left( \sum_{i = 1}^N \omega_i  \dfrac{(v_i-c)^2}{(T_+(v_i-c) - \lambda_+(c)(v_i-c))^2}  \right)^{-1}\sum_{i = 1}^N \omega_i \dfrac{T_+(v_i-c)}{(T_+(v_i-c) - \lambda_+(c)(v_i-c))^2}   >0 \, .
\end{cases}
\eeq
\end{proof}

\begin{proposition}
\begin{enumerate}[(i)]
\item Bounded solutions of the kinetic stationary problem \eqref{eq:kin discrete} restricted to $z<0$ have exactly  $K$ degrees of freedom. Any such solution can be decomposed uniquely as
\beq\label{eq:Case1}
(\forall z<0) \quad f_-(z,v) = \sum_{k = 1}^K a_k  \mathfrak{f}^k_-(z,v)\, .
\eeq
\item Bounded solutions of the kinetic stationary problem \eqref{eq:kin discrete} restricted to $z>0$ have exactly  $N-K$ degrees of freedom. Any such solution can be decomposed uniquely as
\beq\label{eq:Case2}
(\forall z>0) \quad f_+(z,v) = \sum_{k = K+1}^{N} b_k  \mathfrak{f}^k_+(z,v)\, .
\eeq
\end{enumerate}
\end{proposition}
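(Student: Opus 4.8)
The plan is to observe that, once $c\in(c_*,c^*)\setminus V$ is fixed, the restriction of \eqref{eq:kin discrete} to a half-line is an autonomous linear ODE system whose spectrum can be computed explicitly, and then to read off the dimension of the space of admissible (bounded, hence exponentially localized) solutions from that spectrum. First I would work on $\{z<0\}$: under the ansatz \eqref{eq:ansatz} the tumbling rate is piecewise constant, equal to $T_-^-$ when $v_i<c$ and $T_-^+$ when $v_i>c$, so writing $T_i:=T_-(v_i-c)>0$ and $D:=\mathrm{diag}(v_i-c)$, which is invertible since $c\notin V$, equation \eqref{eq:kin discrete} becomes $\partial_z f=A_-f$ with $A_-:=D^{-1}M_-$, $(M_-)_{ij}=\omega_jT_j-T_i\delta_{ij}$. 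Looking for modes $e^{\mu z}F$ yields $(T_i+\mu(v_i-c))F_i=\sigma$ for all $i$, where $\sigma:=\sum_j\omega_jT_jF_j$. I would then check that the $N$ numbers $-T_i/(v_i-c)$ are pairwise distinct (the $T_i$ take only two values, the $v_i$ are distinct, and the positive ones, coming from $v_i<c$, cannot equal the negative ones, coming from $v_i>c$), which forces $\sigma\neq0$ for every nonzero mode. Hence each eigenvalue of $A_-$ is either $\mu=0$, with eigenvector $F_i=T_i^{-1}$ — and this eigenvalue is \emph{simple}, since a generalized eigenvector would exist only if $\int\frac{v-c}{T_-(v-c)}\,d\nu=0$, which fails for $c<c^*$ by \eqref{eq:condition c* 2} — or a root of the dispersion relation \eqref{eq:dispersion k-}.

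Next I would carry out the root count for $g_-(\mu):=\sum_i\omega_i\frac{v_i-c}{T_i+\mu(v_i-c)}$. It has exactly $N$ simple poles, located at $T_-^-/(c-v_i)>0$ for $i\le K$ and at $-T_-^+/(v_i-c)<0$ for $i>K$; on each interval between consecutive poles it is strictly decreasing, because $g_-'(\mu)=-\sum_i\omega_i(v_i-c)^2(T_i+\mu(v_i-c))^{-2}<0$, and it satisfies $g_-(0)=\int\frac{v-c}{T_-(v-c)}\,d\nu>0$ for $c<c^*$ together with $g_-(\mu)\sim\mu^{-1}$ as $\mu\to\pm\infty$. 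Comparing the signs of $g_-$ at the endpoints of each interval then gives exactly $K$ positive roots, interlaced with the positive poles precisely as in \eqref{eq:asymptot -}, plus $N-K-1$ negative roots; together with $\mu=0$ this accounts for all $N$ eigenvalues of $A_-$, and their monotonicity in $c$ is \eqref{eq:monotonicity lambda}, obtained by implicitly differentiating \eqref{eq:dispersion k-}. A solution of $\partial_z f=A_-f$ is bounded on $(-\infty,0)$ exactly when it lies in the sum of the eigenspaces with $\mu\ge0$; as all eigenvalues are real and simple, that sum is spanned by the constant mode and the $K$ exponentially decaying modes $\mathfrak f^k_-=e^{\lambda^k_- z}F^k_-$. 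The constant mode carries a nonzero flux $\int(v-c)f\,d\nu=\int\frac{v-c}{T_-(v-c)}\,d\nu\neq0$, incompatible with the vanishing-flux identity \eqref{eq:fluxes} obeyed by any localized stationary solution, so it is discarded; this leaves exactly the $K$-dimensional space $\mathrm{span}\{\mathfrak f^1_-,\dots,\mathfrak f^K_-\}$, and the decomposition \eqref{eq:Case1} is unique because eigenvectors attached to distinct eigenvalues are linearly independent. The half-line $\{z>0\}$ is treated identically, now with $T_+$ in place of $T_-$: since $\int\frac{v-c}{T_+(v-c)}\,d\nu<0$ for $c>c_*$ by \eqref{eq:condition c*}, one positive root is shifted to the negative axis, so $g_+$ has $N-K$ negative roots $-\lambda^k_+$ $(K+1\le k\le N)$ interlaced as in \eqref{eq:asymptot +} and $K-1$ positive ones, and the decaying modes on $(0,+\infty)$ span the $(N-K)$-dimensional space \eqref{eq:Case2}.

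The main obstacle is the spectral bookkeeping rather than any single hard estimate: one must rule out ``hidden'' modes at the singular values $\mu=-T_i/(v_i-c)$ and any Jordan block at $\mu=0$, and then pin down the \emph{exact} number of roots of \eqref{eq:dispersion k-}--\eqref{eq:dispersion k+} on each component of $\R$ minus the poles. Once the spectrum is in hand, the characterization of bounded solutions, the localization \eqref{eq:asymptot -}--\eqref{eq:asymptot +}, and the uniqueness of \eqref{eq:Case1}--\eqref{eq:Case2} are routine linear algebra; the only model-dependent inputs are the sign of $g_\pm(0)$, controlled by the definitions of $c_*$ and $c^*$, and the interlacing of roots with poles, which also confirms the notation of Proposition \ref{prop:number roots}.
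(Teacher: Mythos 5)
Your proof is correct in substance, but it follows a genuinely different route from the paper. The paper disposes of this proposition in two lines by invoking the half-space (Milne) analysis of Section \ref{sec:decoupled}: bounded solutions on a half-line are uniquely determined by the incoming velocity profile at $z=0$, and there are exactly $K$ incoming velocities for $z<0$ and $N-K$ for $z>0$. You instead treat \eqref{eq:kin discrete} as a constant-coefficient linear ODE system and do the finite-dimensional spectral bookkeeping directly: you rule out hidden modes at the singular values $-T_i/(v_i-c)$ and a Jordan block at $\mu=0$, count the roots of the dispersion relations by monotonicity and interlacing between the poles (thereby essentially re-deriving Proposition \ref{prop:number roots}, which you could simply have cited), and conclude that the matrix has $N$ distinct real eigenvalues, so the bounded solutions are spanned by the non-negative modes. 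One point deserves emphasis: taken literally, ``bounded'' admits the flat mode $F=1/T_\pm$ attached to $\mu=0$, so the space of bounded solutions is $(K+1)$- resp.\ $(N-K+1)$-dimensional; you correctly notice this and discard the flat mode through the zero-flux identity \eqref{eq:fluxes}, i.e.\ by restricting to the localized (integrable) profiles to which the proposition is applied — which is exactly the paper's own convention (``the flat mode is excluded to ensure integrability''), and in the paper's Milne formulation the exclusion is automatic because boundedness is imposed on the exponentially rescaled density rather than on $f$ itself. Your approach buys a self-contained, purely linear-algebraic proof that also yields the localization \eqref{eq:asymptot -}--\eqref{eq:asymptot +} as a by-product; the paper's approach is shorter, reuses the half-space theory already established for the continuous case, and avoids redoing the root count.
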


\begin{proof}
The number of degrees of freedom on $R_-$ is determined by the number of negative velocities (relatively to $c$), here $K$. Indeed, analysis in Section \ref{sec:decoupled} reveals that bounded solutions are entirely determined by the incoming velocity profile, corresponding to $v<c$. A similar argument holds true for $z>0$. 
\end{proof}

In order to reconstruct an entire solution on $\R\times V$, we shall solve a transfer eigenvalue problem. The continuity of $f$ at $z = 0$, $f_-(0,v) = f_+(0,v)$, writes equivalently as follows, 
\beq\label{eq:transfer relation}
(\forall i)\quad \sum_{k = 1}^K a_k F_-^k(v_i) = \sum_{k = K+1}^N b_k F_+^k(v_i)\, .
\eeq
This writes in matrix block form as follows
\[
\begin{pmatrix}
-F_- & F_+ 
\end{pmatrix}
\begin{pmatrix}
a \\ b 
\end{pmatrix} = \begin{pmatrix}
0 \\ 0 
\end{pmatrix}\, .
\]
We realize immediately that a non-trivial solution $(a,b)$ exists, since the square matrix  $\begin{pmatrix}
-F_- & F_+ 
\end{pmatrix}$ has a left eigenvector associated with eigenvalue zero:
\[
\left(\omega (v-c)\right)\begin{pmatrix}
-F_- & F_+ 
\end{pmatrix} = \begin{pmatrix} - \sum_{i=1}^N \omega_i (v_i-c) F_-(v_i) & \sum_{i=1}^N \omega_i (v_i-c) F_+(v_i) \end{pmatrix} = \begin{pmatrix}
0 & 0 
\end{pmatrix} \, ,
\] 
where we have used both dispersion relations \eqref{eq:dispersion k-}-\eqref{eq:dispersion k+}. 
We can characterize $(a,b)$ in a unique way, by prescribing a unit total mass \eqref{eq:f unit mass},
\[
\iint f(z,v)\, d\nu(v) dz = \sum_{k = 1}^K \dfrac{a_k}{\lambda_-^k} \la F_-^{k} \ra + \sum_{k = K+1}^N \dfrac{b_k}{\lambda_+^k} \la F_+^{k} \ra = 1\, .
\]

Monotonicity of both $\rho_+$ and $\rho_-$ can be deduced from the results obtained in the continuous velocity case \Ac, by approximation of the discrete measure $\nu$ by a sequence of absolutely continuous measures. Indeed, under the crucial assumption $(\forall i) \; c\neq v_i$, regularity of the approximating sequence is guaranteed, provided it is chosen so as to avoid $c$ in its support. We are not aware of any direct argument based on the decomposition in Case's modes \eqref{eq:Case1}-\eqref{eq:Case2}.  

\subsection{Exchange of modes as $c$ crosses some discrete velocity}\label{sec:exchange of modes}
We investigate carefully the case where $c$ crosses one of the velocities $(v_i)$, say $v_{K+1}$, with increasing values. According to Section \ref{sec:spectral}, there is  a swap: one mode will disappear on the right side $(z>0)$, this is mode $\mathfrak{f}_+^{K+1}$, and one mode will appear on the left side, the new mode $\mathfrak{f}_-^{K+1}$. The other modes will be essentially unchanged. During the swap, we expect a singular transition around the origin, as the largest exponents become arbitrarily large on both sides. 

\begin{figure} 
\begin{center}
\includegraphics[width = 0.48\linewidth]{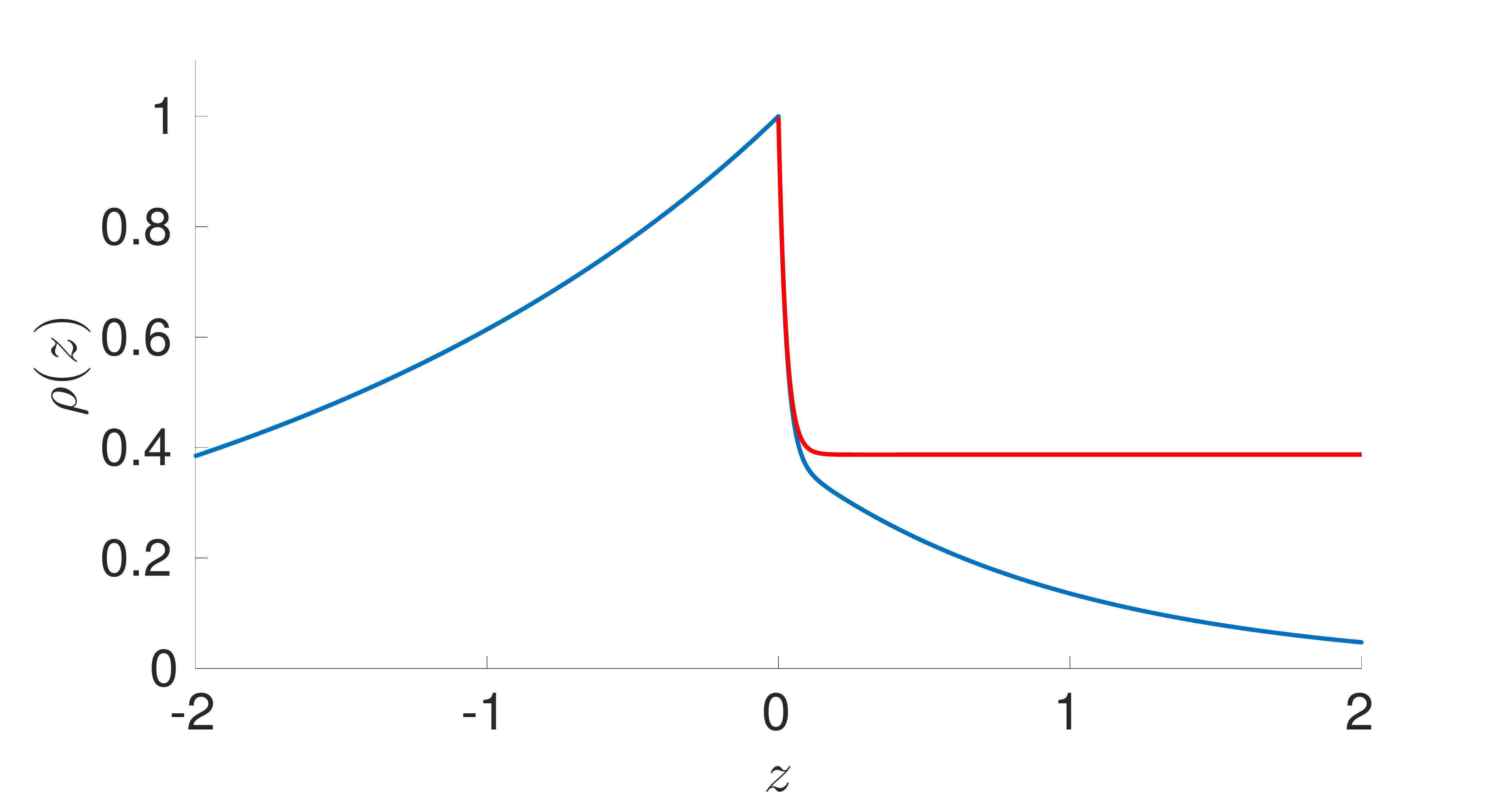}\; 
\includegraphics[width = 0.48\linewidth]{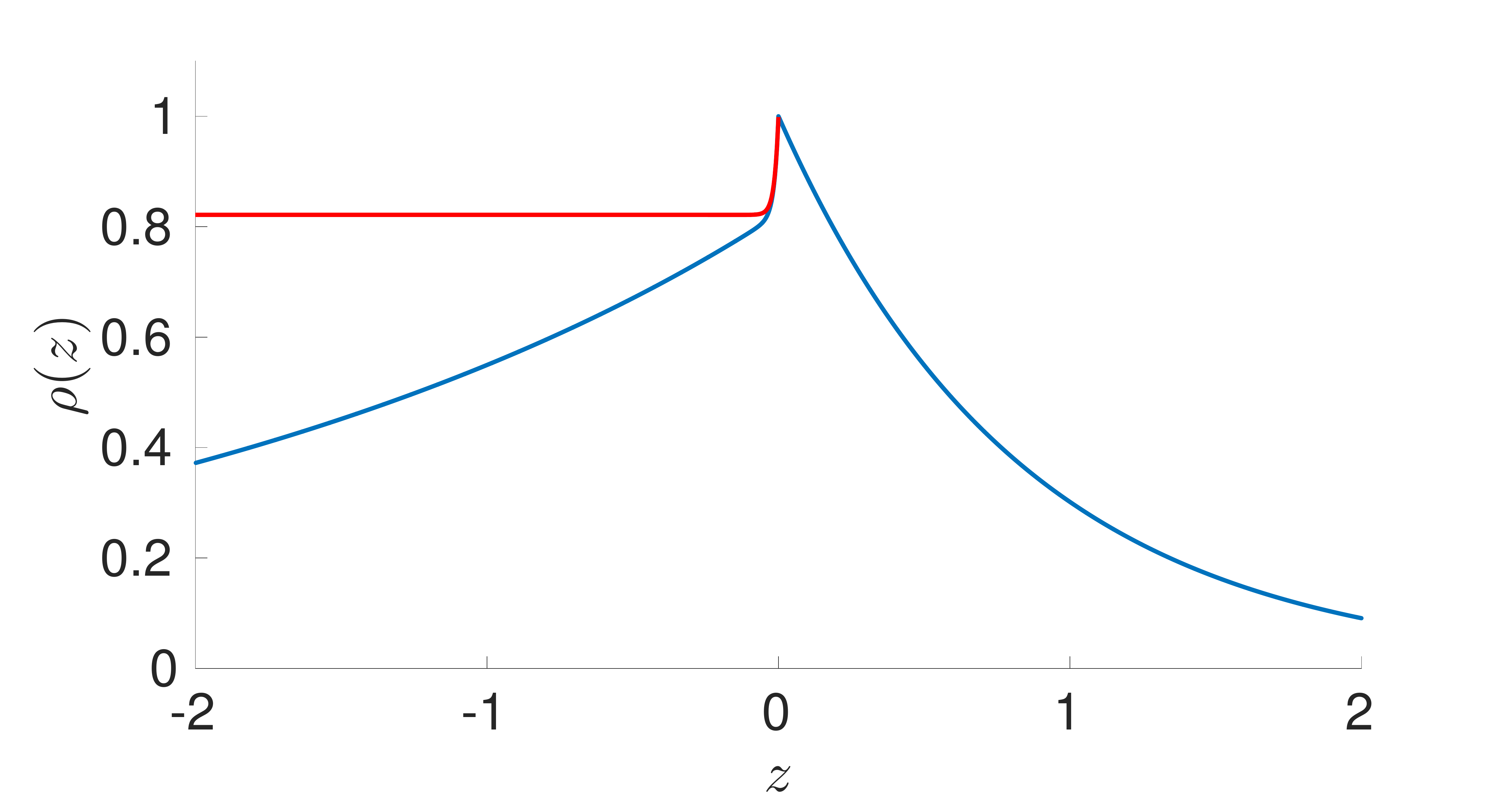}
\caption{\small Illustration of the swapping behaviour when $c$ crosses one of the discrete velocities, say $v_{K+1}$. The spatial density profile $\rho$ (blue line) experiences a strong shape discontinuity. (Left) As $c \nearrow v_{K+1}$, the profile becomes arbitrarily narrow on the right side (see Proposition \ref{prop:swap 1}). Moreover, during the  exchange of modes,   the amplitude of the mode that disappears on the right is bounded below explicitly (red line), see Proposition \ref{prop:swap 2}. (Right) The same behaviour occurs on the right side as $c \searrow v_{K+1}$. This shape discontinuity has dramatic consequences on the determination of travelling waves (Section \ref{sec:jumps})}
\label{fig:swap}
\end{center}
\end{figure}

\begin{proposition}\label{prop:swap 1}
Assume $c$ crosses $v_{K+1}$ with increasing values. Then, for $c<v_{K+1}$, we have
\[ \lim_{c \nearrow v_{K+1}} \lambda_+^{K+1}(c) = +\infty\, . \]
On the other hand, for $c>v_{K+1}$, we have
\[ \lim_{c \searrow v_{K+1}} \lambda_-^{K+1}(c) = +\infty\, . \]
The other roots have finite limits, denoted by $\lambda_\pm^k(v_{K+1})$ ($k\neq K+1$), which are roots of the function $\check{Q}_{\pm}$, obtained from $Q_\pm$ by removing the term $i = K+1$ from the summation, and evaluating at $c = v_{K+1}$. 
\end{proposition}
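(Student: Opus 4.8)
The plan is to analyze the dispersion functions $Q_+$ and $Q_-$ directly, tracking how their roots move as $c$ crosses the discrete velocity $v_{K+1}$. Recall that for $c < v_{K+1}$ the right-side exponents are the roots of
\[
Q_+(\lambda) = \sum_{i=1}^N \omega_i \frac{1}{\frac{T_+(v_i-c)}{v_i-c} - \lambda}\,,
\]
and by Proposition \ref{prop:number roots}(ii) the largest root $\lambda_+^{K+1}(c)$ lies in the interval $\left(\frac{T_+^+}{v_{K+2}-c}, \frac{T_+^+}{v_{K+1}-c}\right)$ (with the obvious modification if $K+1=N$, namely $\lambda_+^{N}(c) \in \left(0, \frac{T_+^+}{v_N-c}\right)$). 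First I would observe that the term $i=K+1$ in $Q_+$ has the pole $\lambda = \frac{T_+(v_{K+1}-c)}{v_{K+1}-c} = \frac{T_+^+}{v_{K+1}-c}$, and as $c\nearrow v_{K+1}$ this pole location diverges to $+\infty$ since $v_{K+1}-c \searrow 0^+$ while $T_+^+ = 1+\chi_S-\chi_N > 0$ is bounded away from zero. Since $\lambda_+^{K+1}(c)$ is trapped between $\frac{T_+^+}{v_{K+2}-c}$ (which stays bounded) and this diverging pole, one must argue that it actually follows the pole upward rather than staying near the lower bound.

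The key step is a quantitative squeeze. For $\lambda$ in the relevant window, split $Q_+(\lambda) = \omega_{K+1} \frac{v_{K+1}-c}{T_+^+ - \lambda(v_{K+1}-c)} + \check{Q}_+(\lambda)$, where $\check Q_+$ collects the other $N-1$ terms. As $c\nearrow v_{K+1}$, the function $\check Q_+$ converges locally uniformly (away from its own finite poles) to the function $\check Q_+$ evaluated at $c = v_{K+1}$, which is a fixed rational function with finitely many roots $\lambda_+^k(v_{K+1})$, $k\neq K+1$; these are exactly the roots obtained by deleting the $i=K+1$ term. On any compact $\lambda$-interval not containing a root of $\check Q_+(\,\cdot\,;v_{K+1})$, the quantity $\check Q_+$ is bounded away from zero, whereas the deleted term $\omega_{K+1}\frac{v_{K+1}-c}{T_+^+ - \lambda(v_{K+1}-c)}$ tends to $0$ as $c\nearrow v_{K+1}$ (numerator $\to 0$, denominator $\to T_+^+ > 0$). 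Hence on such compacts $Q_+(\lambda;c) \to \check Q_+(\lambda; v_{K+1})$, so the $N-1$ small roots of $Q_+$ converge to those of $\check Q_+$. Since $Q_+$ has exactly $N-K$ positive roots by Proposition \ref{prop:number roots}(ii), the remaining root $\lambda_+^{K+1}(c)$ cannot stay in any fixed compact subset of $(0,\infty)$, hence $\lambda_+^{K+1}(c)\to +\infty$. The case $c\searrow v_{K+1}$ for $\lambda_-^{K+1}$ is entirely symmetric, using $Q_-(\lambda) = \sum_i \omega_i \big(\frac{T_-(v_i-c)}{v_i-c} + \lambda\big)^{-1}$, the localization \eqref{eq:asymptot -}, and the fact that the $i=K+1$ pole $-\frac{T_-(v_{K+1}-c)}{v_{K+1}-c} = -\frac{T_-^-}{v_{K+1}-c}$ diverges as $v_{K+1}-c \nearrow 0^-$.

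I would then assemble the convergence statement for the other roots cleanly: fix $k\neq K+1$; by the above, $\lambda_+^k(c)$ stays in a fixed compact interval (between consecutive bounded poles $\frac{T_+^+}{v_{k}-c}$, which converge to $\frac{T_+^+}{v_k - v_{K+1}}$), and on a neighborhood of $\lambda_+^k(v_{K+1})$ the function $Q_+(\,\cdot\,;c)$ converges uniformly to $\check Q_+(\,\cdot\,;v_{K+1})$; since the latter is strictly monotone near its simple root (each $Q$ is decreasing on each interval of definition), a standard continuity-of-simple-roots argument gives $\lambda_+^k(c)\to \lambda_+^k(v_{K+1})$, which is the root of $\check Q_+$. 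The main obstacle, modest as it is, is the bookkeeping near the endpoints: one must be careful that the compact $\lambda$-intervals used for the non-swapping roots do not themselves shrink or collide with the diverging pole, and that the monotonicity/simple-root structure of $Q_\pm$ on each interval of definition is invoked correctly so that ``converging coefficients implies converging roots'' is legitimate; once these intervals are pinned down explicitly using \eqref{eq:asymptot +}--\eqref{eq:asymptot -}, the argument is routine. Everything else is elementary real analysis of rational functions.
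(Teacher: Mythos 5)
Your argument is correct and follows essentially the same route as the paper: both rest on splitting off the $i=K+1$ term of $Q_\pm$, observing that it vanishes in the limit $c\to v_{K+1}$ so that any finite limit point of a root must be a root of $\check Q_\pm$, and then using the root count and localization of Proposition \ref{prop:number roots} to force the swapping root out to $+\infty$ while the others converge to the roots of $\check Q_\pm$. The only blemish is the stray ``$N-1$ small roots'' (it should be $N-K-1$ on the positive side), which your subsequent counting corrects.
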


\begin{proof}
Limits exist by monotonicity of $\lambda^k_\pm$. For any $k\neq K+1$, the limit is clearly finite by \eqref{eq:asymptot -}-\eqref{eq:asymptot +}. Assume for instance that $\lambda_+^{K+1}(c)$ has finite limit $\bar{\lambda}_+^{K+1}$ as $c \nearrow v_{K+1}$. Then, it should satisfy the relation 
\beq \label{eq:count contradiction}\sum_{i \neq K+1 } \omega_i \dfrac{ 1}{\dfrac{T_+(v_i - v_{K+1})}{(v_i - v_{K+1})} - \bar{\lambda}_+^{K+1}} = 0\, ,\eeq
and so should the limits $\lambda^k_+(v_{K+1})$ for $K+2\leq k \leq N$. However, we deduce from Proposition \ref{prop:number roots} that relation \eqref{eq:count contradiction} has only $N-K-1$ roots. Moreover, it cannot have a root larger than $\frac{T_+^+}{v_{K+2}-v_{K+1}}$. This is a contradiction. We argue similarly for $c \searrow v_{K+1}$.
\end{proof}

The next result concerns the limit of the weights $(a_k,b_k)$ as $c\to v_{K+1}$. More precisely, we aim at describing the higher order contribution in the following decomposition of the spatial density in exponential modes (given here for $c<v_{K+1}$),
\beq\label{eq:rho Case} 
\begin{cases}
(\forall z>0)\quad &\displaystyle\rho_+(z) = \sum_{k = K+1}^N b_k e^{-\lambda_+^k z} \la F_+^k \ra\, , \medskip\\
(\forall z<0)\quad &\displaystyle\rho_-(z) = \sum_{k = 1}^K a_k e^{\lambda_-^k z} \la F_-^k \ra\, ,
\end{cases} 
\eeq
where we have opted for the notation $\la F  \ra = \sum_{i = 1}^N \omega_i F(v_i)$.  
Interestingly, the contribution of the higher mode, here mode $K+1$, has a non trivial contribution as $c\nearrow v_{K+1}$. The same behaviour happens as $c\searrow v_{K+1}$.  This emphasizes the dramatic discontinuity in the shape of the spatial profile $\rho$ as $c$ crosses the discrete velocity $v_{K+1}$, as illustrated in Figure \ref{fig:swap}.   
\begin{proposition}\label{prop:swap 2}
The weights $(a_k)_{1\leq k\leq K},(b_k)_{K+2\leq k\leq N}$ have finite limits $(\underline{a_k})_{1\leq k\leq K},(\underline{b_k})_{K+2\leq k\leq N}$ as $c \nearrow v_{K+1}$, resp. $(\overline{a_k})_{1\leq k\leq K},(\overline{b_k})_{K+2\leq k\leq N}$ as $c \searrow v_{K+1}$. The limits coincide up to a constant factor.  In addition, $b_{K+1} \la F_+^{K+1}\ra$ has a finite limit which satisfies,
\beq\label{eq:lim bF}  \lim_{c \nearrow v_{K+1}}  b_{K+1} \la F_+^{K+1}\ra > 0\, . \eeq
Similarly, $a_{K+1} \la F_-^{K+1}\ra$ has a finite limit which satisfies,
\[  \lim_{c \searrow v_{K+1}} a_{K+1}\la F_-^{K+1}\ra > 0\, . \]
\end{proposition}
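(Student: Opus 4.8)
The plan is to exploit the two linear constraints that pin down the weights $(a_k,b_k)$ — the transfer relation \eqref{eq:transfer relation} (equivalently continuity of $f$ at $z=0$) together with the mass normalization \eqref{eq:f unit mass} — and to track what happens to each term as $c\nearrow v_{K+1}$. The key observation is that, by Proposition \ref{prop:swap 1}, $\lambda_+^{K+1}(c)\to+\infty$ while all the other exponents $\lambda_\pm^k$ converge to finite roots of the reduced dispersion functions $\check Q_\pm$; correspondingly the velocity profiles $F_\pm^k(v_i)=\bigl(T_\pm(v_i-c)\mp\lambda_\pm^k(v_i-c)\bigr)^{-1}$ converge pointwise in $i$ to the analogous profiles built from $v_{K+1}$, \emph{except} that $F_+^{K+1}(v_{K+1})=\bigl(T_+^+-\lambda_+^{K+1}(v_{K+1}-c)\bigr)^{-1}\to 0$ at every node and in particular $\la F_+^{K+1}\ra\to 0$, while the product $b_{K+1}\la F_+^{K+1}\ra$ need not vanish. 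So the correct variable to follow is $\beta_{K+1}:=b_{K+1}\la F_+^{K+1}\ra$ (and likewise $a_{K+1}\la F_-^{K+1}\ra$ on the other side), not $b_{K+1}$ itself.

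First I would normalize: fix $b_{K+1}\la F_+^{K+1}\ra$ (or any one nonzero weight) to be $1$ and solve the remaining $N-1$ transfer equations \eqref{eq:transfer relation} for the other weights; since the coefficient matrix $\bigl(-F_-\ \ F_+\bigr)$ converges to the corresponding matrix at $c=v_{K+1}$ with its $(K+1)$-th right column tending to zero, one checks (using the interlacing bounds \eqref{eq:asymptot -}--\eqref{eq:asymptot +}, which guarantee distinctness of the limiting exponents and hence invertibility of the relevant minor) that the reduced system is nonsingular in the limit and depends continuously on $c$. This gives finite limits $\underline{a_k},\underline{b_k}$ for $k\neq K+1$, and these are exactly the weights of the Case decomposition of the stationary solution built with the reduced mode set at $c=v_{K+1}$ — hence they agree with the limits $\overline{a_k},\overline{b_k}$ obtained from $c\searrow v_{K+1}$ up to the overall scalar, because both describe the \emph{same} limiting kinetic problem (same velocity set, same $c=v_{K+1}$, same dispersion relations $\check Q_\pm$) and the stationary solution is unique up to a constant factor by the uniqueness argument in the proof of Lemma \ref{lem:closed}. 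Re-imposing the unit-mass normalization \eqref{eq:f unit mass} then fixes the scalar on each side and shows $b_{K+1}\la F_+^{K+1}\ra$ has a finite limit.

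It remains to prove the strict positivity \eqref{eq:lim bF}, and this is the step I expect to be the main obstacle: one must show the limiting amplitude is not merely nonnegative but bounded away from zero. The strategy is to use positivity of $f$ together with the monotonicity/lower-bound estimates already established. Concretely, $b_{K+1}e^{-\lambda_+^{K+1}z}\la F_+^{K+1}\ra$ is, up to the other (uniformly bounded, slowly decaying) modes, the part of $\rho_+(z)$ that lives on the scale $z\sim 1/\lambda_+^{K+1}\to 0$; by Corollary \ref{cor:bounds rho++} we have the two-sided bound $\kappa_+\la F_+\ra e^{-\lambda_+ z}\le\rho_+(z)\le\rho(0)e^{-\lambda_+ z}$ for the genuine (continuous-approximated) solution, and in particular $\rho_+(0)=\rho(0)$ is bounded below by a positive constant uniformly in $c$ by Lemma \ref{eq:bound mu rho0}. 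Evaluating \eqref{eq:rho Case} at $z=0$ gives $\rho(0)=\sum_{k\ge K+1}b_k\la F_+^k\ra=\beta_{K+1}+\sum_{k\ge K+2}\underline{b_k}\la F_+^k\ra+o(1)$; since the tail sum converges to a finite limit, if $\beta_{K+1}\to 0$ then $\rho(0)$ would converge to the finite value $\sum_{k\ge K+2}\underline{b_k}\la F_+^k\ra$, and one must rule out that this equals the liminf of $\rho(0)$ — equivalently one must produce an \emph{upper} obstruction. Here I would instead argue by contradiction using the \emph{shape}: if $\beta_{K+1}\to 0$ then $\rho_+$ converges locally uniformly on $(0,\infty)$ to $\sum_{k\ge K+2}\underline{b_k}e^{-\lambda_+^k z}\la F_+^k\ra$, which extends to a solution with one fewer degree of freedom; but the limiting kinetic problem at $c=v_{K+1}$ has \emph{exactly} $N-K-1$ right-side degrees of freedom (Proposition \ref{prop:number roots}), all of which are needed to match the incoming velocity profile at $z=0$ dictated by the left side via \eqref{eq:transfer relation}, and the continuity constraint at the node $v_i=v_{K+1}$ forces $\beta_{K+1}$ to equal a definite nonzero combination of the $\underline{a_k}$'s — which are themselves bounded below because the total mass is $1$ and the left side cannot carry all of it (again by the uniform lower bound on $\rho(0)$ and the zero-flux relations \eqref{eq:fluxes}). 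Making this last count fully quantitative — extracting an explicit positive lower bound for $\beta_{K+1}$ from the reduced linear system — is the delicate part; the argument on the other side, $c\searrow v_{K+1}$, is symmetric.
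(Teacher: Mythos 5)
Your first half (finite limits of the $(a_k)_{k\le K},(b_k)_{k\ge K+2}$ and coincidence of the two one-sided limits up to a scalar) follows essentially the paper's route of passing to the limit in the transfer system \eqref{eq:transfer relation}; invoking uniqueness of the limiting stationary profile instead of the explicit limit of the matrix $\begin{pmatrix}-F_- & F_+\end{pmatrix}$ is an acceptable variant. One factual slip there: $\la F_+^{K+1}\ra$ does \emph{not} tend to $0$. By the interlacing \eqref{eq:asymptot +}, $\lambda_+^{K+1}(v_{K+1}-c)\in(0,T_+^+)$, so $F_+^{K+1}(v_{K+1})\ge 1/T_+^+$ and $\la F_+^{K+1}\ra\ge \omega_{K+1}/T_+^+$; only the off-node values $F_+^{K+1}(v_i)$, $i\neq K+1$, vanish (this is precisely why the $(K+1,K+1)$ entry of the limiting transfer matrix is undetermined in the paper). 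Your conclusion that the right quantity to track is $b_{K+1}\la F_+^{K+1}\ra$ is nonetheless correct.

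The genuine gap is the strict positivity \eqref{eq:lim bF}, which is the substantive content of the proposition (it is what produces the positive jumps of $\Upsilon$ in Section \ref{sec:jumps}), and your sketch does not close it. The row $i=K+1$ of \eqref{eq:transfer relation} gives, in the limit, $\lim b_{K+1}F_+^{K+1}(v_{K+1}) = (T_-^+)^{-1}\sum_{k\le K}\underline{a_k} - (T_+^+)^{-1}\sum_{k\ge K+2}\underline{b_k}$, a \emph{difference} of two sums; the Case weights $a_k,b_k$ are not individually sign-definite (only $f\ge 0$ is), so neither the unit-mass normalization nor a lower bound on the mass carried by the left side controls the sign of this difference, and the "degree-of-freedom count" does not rule out that it vanishes or is negative. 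Likewise, "$\rho(0)$ bounded below" is not contradicted if $\beta_{K+1}\to 0$, since the remaining modes can carry $\rho(0)$. The paper's mechanism is different: compute $\frac{d\rho_+}{dz}(0^+)$ from the mode decomposition \eqref{eq:rho Case}, divide by $\lambda_+^{K+1}\to\infty$ to isolate $b_{K+1}\la F_+^{K+1}\ra$, then use the monotonicity of Theorem \ref{theo:mono} (so that $\frac{d\rho_+^-}{dz}(0^+)\le 0$ and only the positive-velocity part matters), the explicit formula $\frac{d\rho_+^+}{dz}(0^+)=\sum_{\{v_i>c\}}\omega_i\frac{1}{v_i-c}\left(I_+(0)-\mathfrak{f}_+(0,v_i)\right)$ whose dominant term involves the confinement identity $T_+^+I_-(0)-T_-^+I_+(0)=4\chi_S\rho^-(0)$ of Lemma \ref{ugpmig}, the quantitative lower bound on $\rho^-(0)$, and the bound $\lambda_+^{K+1}(v_{K+1}-c)<T_+^+$ from \eqref{eq:asymptot +}, yielding the explicit bound $\lim b_{K+1}\la F_+^{K+1}\ra\ge \omega_{K+1}\,4\chi_S\rho^-(0)/(T_+^+T_-^+)>0$. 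Some argument of this quantitative type is what is missing from your proposal.
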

\begin{proof}
\noindent\textbf{Step  \#1.} Firstly, we prove the existence of limits.
For that purpose, we pass to the limit in the transfer matrix $\begin{pmatrix}
-F_- & F_+ 
\end{pmatrix}$. We have
\[
\begin{pmatrix}
-F_- & F_+ 
\end{pmatrix} = \begin{pmatrix}
-F_-^1(v_1) &  \dots & - F_-^K(v_1) & F_+^{K+1}(v_1) & F_+^{K+2}(v_1) & \dots & F_+^{N}(v_1) \\
\vdots & \ddots & \vdots & \vdots & \vdots & \ddots & \vdots \\
-F_-^1(v_{K+1}) &  \dots & - F_-^K(v_{K+1}) & F_+^{K+1}(v_{K+1}) & F_+^{K+2}(v_{K+1}) & \dots & F_+^{N}(v_{K+1}) \\
\vdots & \ddots & \vdots & \vdots & \vdots & \ddots & \vdots \\
-F_-^1(v_N) &  \dots & - F_-^K(v_N) & F_+^{K+1}(v_N) & F_+^{K+2}(v_N) & \dots & F_+^{N}(v_N) 
\end{pmatrix}\, ,
\]
therefore, 
\beq \label{eq:limit pmatrix}
\lim_{c\nearrow v_{K+1}} \begin{pmatrix}
-F_- & F_+ 
\end{pmatrix} = 
\begin{pmatrix}
-F_-^1(v_1) &  \dots & - F_-^K(v_1) & 0 & F_+^{K+2}(v_1) & \dots & F_+^{N}(v_1) \\
\vdots & \ddots & \vdots & \vdots & \vdots & \ddots & \vdots \\
- \dfrac{1}{T_-^+} &  \dots & - \dfrac{1}{T_-^+} & ? & \dfrac{1}{T_+^+} & \dots & \dfrac{1}{T_+^+} \\
\vdots & \ddots & \vdots & \vdots &\vdots & \ddots & \vdots \\
-F_-^1(v_N) &  \dots & - F_-^K(v_N) & 0 & F_+^{K+2}(v_N)  & \dots & F_+^{N}(v_N) 
\end{pmatrix}\, ,
\eeq
where the question mark $?$ accounts for some undetermined limit.
The left eigenvector associated with eigenvalue zero becomes \begin{multline*} 
\omega(v-v_{K+1}) \\ = \begin{pmatrix}\omega_1 (v_1 - v_{K+1}) & \dots & \omega_K (v_K - v_{K+1}) & 0 & \omega_{K+2} (v_{K+2} - v_{K+1})& \dots& \omega_{N} (v_{N} - v_{K+1})\end{pmatrix}\, .
\end{multline*}
We deduce that the matrix $\begin{pmatrix}
-F_- & F_+ 
\end{pmatrix}_{(i\neq K+1, k\neq K+1)}$
obtained from \eqref{eq:limit pmatrix} by eliminating row $K+1$ and column $K+1$ also possesses an eigenvector $((\underline{a})_{1\leq k\leq K},(\underline{b})_{K+2\leq k\leq N})^T$. Finally, the missing term $b_{K+1}$ can be recovered using the identity \eqref{eq:transfer relation} for $i = K+1$. In the limit $c\nearrow v_{K+1}$, it becomes
\[ \sum_{k = 1}^K \underline{a_k} \frac{1}{T_-^+} =   \lim_{c \nearrow v_{K+1}} \left( b_{K+1} F_+^{K+1}(v_{K+1})\right) + \sum_{k = K+2}^N \underline{b_k}\frac{1}{T_+^+} \, .   \]  

We argue in a similar way in the case $c \searrow v_{K+1}$. Interestingly, the matrix $\begin{pmatrix}
-F_- & F_+ 
\end{pmatrix}_{(i\neq K+1, k\neq K+1)}$
is the same as when taking the limit from below $c \nearrow v_{K+1}$. Therefore, the weights coincide up to a constant factor.\medskip 

\noindent\textbf{Step  \#2.}  
Secondly, we prove that the limit \eqref{eq:lim bF} is positive. We consider the case where $c\nearrow v_{K+1}$. We argue by computing the derivative of $\rho(z)$ at $z = 0^+$ in two different manners. Differentiating \eqref{eq:rho Case} with respect to $z$, we obtain, 
\beq\label{mugmuifgmiu} \dfrac{d\rho_+}{dz}(0^+) = - \sum_{k = K+1}^N \lambda_+^k b_k \la F_+^k \ra\, . \eeq 
On the one hand, we notice that $\la F_+^{K+1} \ra \sim \omega_{K+1} F_+^{K+1}(v_{K+1})$ as $c\nearrow v_{K+1}$. Indeed, we have
\begin{multline*} (\forall i\neq K+1) \quad \dfrac{F_+^{K+1}(v_i)}{F_+^{K+1}(v_{K+1})} = \dfrac{T_+(v_{K+1} - c) - \lambda_+^{K+1}(v_{K+1}-c)}{T_+(v_i - c) - \lambda_+^{K+1}(v_{i}-c)} \\ \sim - \dfrac{T_+(v_{K+1} - c)}{\lambda_+^{K+1}(v_{i}-v_{K+1})} + \dfrac{v_{K+1} - c}{v_i - v_{K+1}} \to 0\, . 
\end{multline*} 
As a consequence, $ b_{K+1} \la F_+^{K+1} \ra $ has a finite limit too.
On the other hand, we learn from Section \ref{sec:monotonicity} that $\rho_+$ is decreasing. 
Dividing \eqref{mugmuifgmiu} by $\lambda_+^{K+1}$, then taking limit as $c\nearrow v_{K+1}$, we get immediately 
\[ 0\geq - \lim_{c \nearrow v_{K+1}} \left( b_{K+1} \la F_+^{K+1} \ra\right)\, . \]
Furthermore, we can separate the respective contributions of positive relative velocities, and negative relative velocities, $\rho_+ = \rho^+_+ + \rho^-_+$, as in the proof of Lemma \ref{lem:zero monotonicity}. In particular, we have:
\begin{align*}
\dfrac{d\rho_+^+}{dz}(0^+) & = \sum_{\{v_i>c\}} \omega_i \dfrac1{v_i - c} \left( I_+(0) - \mathfrak{f}_+(0,v_i)\right)& \\
& \sim \omega_{K+1} \dfrac1{v_{K+1} - c}\left( I_+(0) - \dfrac{T_+^+}{T_-^+}I_-(0)\right) & \text{as $c\nearrow v_{K+1}$}\, . 
\end{align*}
Recall the following identity from the proof of Lemma \ref{ugpmig} \eqref{eq:Delta I},
\begin{equation*}
T_-^+ I_+(0) - T_+^+I_-(0) = - 4\chi_S \rho^- (0)\, .
\end{equation*}
We get eventually,
\begin{align*}
\lim_{c \nearrow v_{K+1}} \left( b_{K+1} \la F_+^{K+1} \ra\right) &= - \lim_{c\nearrow v_{K+1}} \dfrac1{\lambda_+^{K+1}}  \dfrac{d\rho_+}{dz}(0^+)  \\
&\geq  - \lim_{c\nearrow v_{K+1}} \dfrac1{\lambda_+^{K+1}}  \dfrac{d\rho_+^+}{dz}(0^+) \\
& \geq   \lim_{c\nearrow v_{K+1}} \omega_{K+1}\dfrac{1}{ \lambda_+^{K+1}(v_{K+1} - c)} \dfrac{ 4\chi_S \rho^- (0)}{T^+_-}\, .   
\end{align*}
To conclude, recall from \eqref{eq:asymptot +} that $\lambda^{K+1}_+ (v_{K+1}-c) < T_+^+$. This yields the following bound from below,
\[ \lim_{c \nearrow v_{K+1}} \left( b_{K+1} \la F_+^{K+1} \ra\right) \geq \omega_{K+1} \dfrac{ 4\chi_S \rho^- (0)}{T_+^+T^+_-} >0\, . \]
\end{proof}

\subsection{Positive jumps in the function $\Upsilon$ and consequences} 

\label{sec:jumps}

In this section, we examine the matching condition on $c$, namely
\[ \Upsilon(c) =  \partial_z S(0) = 0\, . \]
Our main objective is to investigate existence, and possible uniqueness of $c$. As a matter of fact, we shall answer negatively to these questions, because the situation is by far more complicated than in the macroscopic diffusive limit discussed in Section \ref{sec:diffusion limit}. 

In the following proposition, we establish that variations of $\Upsilon(c)$ can be deduced from some monotonicity of the macroscopic profile $\rho$, with respect to $c$. We include the dependency with respect to $c$ in the notations to resolve any possible ambiguity. 

\begin{proposition}
Suppose the profile $\rho(z;c)$ has the following monotonicity, 
\beq\label{eq:monotonicity rho c} (\forall c)\quad  \begin{cases}
(\forall z<0)\quad & \dfrac{\partial\rho_-}{\partial c}(z;c) > 0    \medskip\\
(\forall z>0)\quad & \dfrac{\partial\rho_+}{\partial c}(z;c) < 0     \end{cases}\eeq
then the function $\Upsilon$ is decreasing.  
\end{proposition}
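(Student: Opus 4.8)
The plan is to differentiate the integral representation \eqref{eq:dxS0 pre} of $\Upsilon$ directly in $c$ and to show that every resulting contribution is negative. First I would fold the left half-line integral in \eqref{eq:dxS0 pre} onto $(0,+\infty)$ by the substitution $z\mapsto -z$, writing
\[
\Upsilon(c) = -\int_0^{+\infty} \mu_+(c)\, e^{-\mu_+(c) z}\, \widetilde\rho(z;c)\, dz + \int_0^{+\infty} \mu_-(c)\, e^{-\mu_-(c) z}\, \rho_+(z;c)\, dz,\qquad \widetilde\rho(z;c):=\rho_-(-z;c).
\]
By Theorem~\ref{theo:mono} and Corollary~\ref{cor:monotonicity++}, both $z\mapsto\rho_+(z;c)$ and $z\mapsto\widetilde\rho(z;c)$ are positive and \emph{strictly decreasing} on $(0,+\infty)$, bounded (by their value at $0^+$) with exponential tails uniform on compact $c$-subintervals of $(c_*,c^*)$ by Corollary~\ref{cor:bounds rho++}. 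I would also record from \eqref{eq:exposant mu} that $\mu_+(c)$ is increasing in $c$ while $\mu_-(c)$ is decreasing in $c$ (with $\mu_+\mu_-\equiv\alpha/D_S$), and that both stay in a fixed compact subset of $(0,+\infty)$ as long as $c$ varies in a compact subinterval of $(c_*,c^*)$.

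\textbf{Key steps.} Using the uniform exponential bounds of Corollary~\ref{cor:bounds rho++} together with the assumed differentiability and signs of $\partial_c\rho_\pm$ from \eqref{eq:monotonicity rho c}, dominated convergence legitimizes differentiation under the integral sign. Setting $g_\mu(z)=(1-\mu z)e^{-\mu z}$ and using $\partial_c\big(\mu(c)e^{-\mu(c)z}\big)=\mu'(c)\,g_{\mu(c)}(z)$, I would obtain
\begin{align*}
\frac{d\Upsilon}{dc}(c) &= -\mu_+'(c)\!\int_0^{+\infty}\! g_{\mu_+(c)}(z)\,\widetilde\rho(z;c)\, dz \;-\; \int_0^{+\infty}\!\mu_+(c)e^{-\mu_+(c)z}\,\partial_c\widetilde\rho(z;c)\, dz\\
&\quad +\mu_-'(c)\!\int_0^{+\infty}\! g_{\mu_-(c)}(z)\,\rho_+(z;c)\, dz \;+\; \int_0^{+\infty}\!\mu_-(c)e^{-\mu_-(c)z}\,\partial_c\rho_+(z;c)\, dz.
\end{align*}
The two ``full-kernel'' terms are immediately negative: $\partial_c\widetilde\rho=\partial_c\rho_-(-z;\cdot)>0$ and $\partial_c\rho_+<0$ by \eqref{eq:monotonicity rho c}. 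For the two remaining terms I would invoke the elementary rearrangement fact: for any $\mu>0$ and any positive, strictly decreasing $h$ with exponential decay,
\[
\int_0^{+\infty} g_\mu(z)\,h(z)\, dz>0 .
\]
Indeed $\int_0^{+\infty} g_\mu=\tfrac1\mu-\mu\tfrac1{\mu^2}=0$, and $g_\mu$ changes sign exactly once, from $+$ to $-$, at $z_0=1/\mu$; hence $\int_0^{+\infty} g_\mu h=\int_0^{+\infty} g_\mu(z)\,(h(z)-h(z_0))\,dz$, whose integrand is pointwise $\ge0$ (on $(0,z_0)$: $g_\mu>0$, $h\ge h(z_0)$; on $(z_0,+\infty)$: $g_\mu<0$, $h\le h(z_0)$) and strictly positive on a set of positive measure since $h$ is strictly decreasing. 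Applying this with $h=\widetilde\rho(\cdot;c)$ and $h=\rho_+(\cdot;c)$, and using $\mu_+'(c)>0$, $\mu_-'(c)<0$, shows the first and third terms are also negative. Adding up, $\tfrac{d\Upsilon}{dc}(c)<0$, so $\Upsilon$ is decreasing.

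\textbf{Main obstacle.} The genuinely delicate points are (a) justifying differentiation under the integral sign — which I would handle via the uniform-in-$c$ exponential tails of $\rho_\pm$ from Corollary~\ref{cor:bounds rho++} and the continuity/differentiability of $c\mapsto\rho_\pm(z;c)$ inherited from the uniqueness and stability arguments of Section~\ref{sec:monotonicity} — and (b) the rearrangement inequality, whose proof is the short zero-mean / single-sign-change computation above; here it is crucial that the profiles are \emph{strictly} monotone, which is exactly the enhancement of monotonicity provided by Proposition~\ref{lem:monotonicity}. Everything else reduces to bookkeeping of signs through the monotonicities of $\mu_\pm(c)$ and of the spatial profiles. (The deeper difficulty, establishing hypothesis \eqref{eq:monotonicity rho c} itself, lies outside the scope of this statement.)
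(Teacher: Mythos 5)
Your proof is correct and follows essentially the same route as the paper: differentiate \eqref{eq:dxS0} under the integral sign, observe that the two terms involving $\partial_c\rho_\pm$ are negative by hypothesis, and control the two terms involving $\mu_\pm'(c)$ via the spatial monotonicity of $\rho$. The only (cosmetic) difference is that the paper handles these latter terms by integrating by parts to produce $\int z\,e^{\pm\mu z}\partial_z\rho\,dz$ with a definite sign, whereas you use the equivalent zero-mean, single-sign-change kernel argument, which has the minor advantage of requiring only monotonicity rather than differentiability of $\rho$ in $z$.
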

\begin{proof}
Recall that $\Upsilon$ is given by the following integral representation formula, 
\beq \Upsilon(c) = \partial_z S(0;c) =  - \int_{-\infty}^0 \mu_+(c) e^{\mu_+(c) z} \rho(z;c)\, dz + \int_0^{+\infty} \mu_-(c) e^{-\mu_-(c) z} \rho(z;c)\, dz\, , \label{eq:dxS0}\eeq
where the reaction-diffusion exponents $\mu_{\pm}(c)$ are defined as in \eqref{eq:exposant mu}. 
For our purpose, we compute 
\begin{align*}
\dfrac{d \Upsilon}{dc}(c)  
&=  - \int_{-\infty}^0 \dfrac{d\mu_+}{dc}(c)\left( 1 + \mu_+(c) z\right)e^{\mu_+(c) z} \rho(z;c)\, dz - \int_{-\infty}^0 \mu_+(c) e^{\mu_+(c) z} \dfrac{ \partial \rho}{\partial c}(z;c)\, dz \\ 
& \quad + \int_0^{+\infty} \dfrac{d\mu_-}{dc}(c) \left( 1 -  \mu_-(c) z\right) e^{-\mu_-(c) z} \rho(z;c)\, dz
+ \int_0^{+\infty} \mu_-(c) e^{-\mu_-(c) z} \dfrac{ \partial \rho}{\partial c}(z;c)\, dz\, .
\end{align*}
We  integrate by parts the  first contribution in each line of the r.h.s., in order to get the following expression,
\begin{align}
\dfrac{d \Upsilon}{dc}(c) 
&=  \int_{-\infty}^0 \dfrac{d\mu_+}{dc}(c) z e^{\mu_+(c) z} \dfrac{ \partial \rho}{\partial z}(z;c)\, dz - \int_{-\infty}^0 \mu_+(c) e^{\mu_+(c) z} \dfrac{ \partial \rho}{\partial c}(z;c)\, dz \nonumber\\ 
& \quad - \int_0^{+\infty} \dfrac{d\mu_-}{dc}(c) z e^{-\mu_-(c) z} \dfrac{ \partial \rho}{\partial z}(z;c)\, dz
+ \int_0^{+\infty} \mu_-(c) e^{-\mu_-(c) z} \dfrac{ \partial \rho}{\partial c}(z;c)\, dz\, .\label{eq:dcdxS}
\end{align}
To conclude it is sufficient to notice that 
\[ \begin{cases}
\dfrac{d\mu_+}{dc}(c) = \dfrac1{2D_S}\left ( 1 + \dfrac{c}{\sqrt{c^2 + 4 D_S\alpha}}\right ) >0 \medskip\\
\dfrac{d\mu_-}{dc}(c) = \dfrac1{2D_S} \left (- 1 + \dfrac{c}{\sqrt{c^2 + 4 D_S\alpha}}\right ) <0 
\end{cases} \]
Therefore, all signs in \eqref{eq:dcdxS} coincide to give 
\[\dfrac{d \Upsilon}{dc}(c)  < 0\, .\]
\end{proof}

\begin{remark}
There is some ambiguity when we study the variation with respect to $c$. Indeed, the profile $\rho(z;c)$ is defined up to a constant factor. To resolve this ambiguity, notice that any smooth normalization of the type $\tilde \rho(z;c) = R(c) \rho(z;c)$, $R(c)>0$, can be handled in the following way,
\begin{align*}
\dfrac{d}{dc}\left(\partial_z \tilde S(0;c) \right) 
& = R(c)\dfrac{d}{dc}\left(\partial_z   S(0;c) \right) + \dfrac{dR}{dc}(c)   \partial_z  S(0;c)  \, .
\end{align*}
In particular, we see that $\frac{d}{dc}\left(\partial_z \tilde S(0;c) \right) <0$ when $ \partial_z   S(0;c)  $ vanishes, so zeros of $\Upsilon(c)$ and $\tilde\Upsilon(c)$ do coincide. 
\end{remark}

Note that monotonicity condition \eqref{eq:monotonicity rho c} is exactly what is expected in the case of two velocities only $V = \{\pm \v0\}$. There, the spatial density is given by 
\[ \rho(z) = \begin{cases}
\rho(0) e^{\lambda_-(c) z}  & \text{for $z<0$} \medskip\\
\rho(0) e^{-\lambda_+(c) z}  & \text{for $z>0$}  \end{cases} \]
where the exponents $\lambda_\pm$ have the appropriate monotonicity with respect to $c$  \eqref{eq:monotonicity rho c}, as in \eqref{eq:monotonicity lambda}.

However, the situation is more tricky for a larger number of velocities, due to the superposition of modes, and in particular the breaking of monotonicity during exchange of modes, as in Figure \ref{fig:swap}. It can be seen in the latter that $\rho$ has exactly the reverse monotonicity with respect to $c$, close to the origin $z = 0$ (compare \eqref{eq:monotonicity rho c} and Figure \ref{fig:swap}). Of course, monotonicity \eqref{eq:monotonicity rho c} is recovered far from the origin, as the shape of $\rho$ is dominated by the lower mode. 

The objective of the following calculation is to estimate the variations of $\Upsilon(c)$ as $c$ crosses some discrete velocity, following Section \ref{sec:exchange of modes}. 

Firstly, we suppose that $c<v_{K+1}$.  We opt for the alternative formulation of $\Upsilon(c)$ after integration by parts in \eqref{eq:dxS0}, and using the continuity of $\rho$ at $z = 0$,
\begin{align*} 
\Upsilon(c) &=   \int_{-\infty}^0  e^{\mu_+(c) z} \partial_z \rho(z;c)\, dz - \int_0^{+\infty}  e^{-\mu_-(c) z} \partial_z \rho(z;c)\, dz\\ 
& = \sum_{k = 1}^K \dfrac{\lambda^k_-(c)}{\lambda^k_-(c) + \mu_+(c)}  a_k \la F_-^k\ra - \sum_{k = K+1}^N \dfrac{\lambda^k_+(c)}{\lambda^k_+(c) + \mu_-(c)}  b_k \la F_+^k\ra \, .
\end{align*}
Taking the limit $c\nearrow v_{K+1}$, we obtain
\begin{multline*}\lim_{c\nearrow v_{K+1}} \Upsilon(c) = \sum_{k = 1}^K \dfrac{\lambda^k_-(v_{K+1})}{\lambda^k_-(v_{K+1}) + \mu_+(v_{K+1})}  \underline{a_k \la F_-^k\ra} - \sum_{k = K+2}^N \dfrac{\lambda^k_+(v_{K+1})}{\lambda^k_+(v_{K+1}) + \mu_-(v_{K+1})}  \underline{b_k  \la F_+^k\ra} \\ - \lim_{c \nearrow v_{K+1}} \left( b_{K+1} \la F_+^{K+1} \ra\right) \, .
\end{multline*}
Similarly, we obtain after taking the limit $c\searrow v_{K+1}$,
\begin{multline*}\lim_{c\searrow v_{K+1}} \Upsilon(c) = \sum_{k = 1}^K \dfrac{\lambda^k_-(v_{K+1})}{\lambda^k_-(v_{K+1}) + \mu_+(v_{K+1})}  \overline{a_k  \la F_-^k\ra} - \sum_{k = K+2}^N \dfrac{\lambda^k_+(v_{K+1})}{\lambda^k_+(v_{K+1}) + \mu_-(v_{K+1})}  \overline{b_k \la F_+^k\ra} \\ + \lim_{c \searrow v_{K+1}} \left( a_{K+1} \la F_-^{K+1} \ra\right) \, .\end{multline*}
By taking the difference, we should pay attention to the fact that the averages $\underline{\la F_\pm^k\ra}$ and $\overline{\la F_\pm^k\ra}$ do not coincide, because there is one term changing in the sum, due to the discontinuity of $T$. More precisely, we have
\[ \begin{cases} 
(\forall k\leq K) \quad & \underline{\la F_-^k\ra} - \overline{\la F_-^k\ra}  = \omega_{K+1}\left( \dfrac1{T_-^+} - \dfrac1{T_-^-} \right) \medskip\\
(\forall k\geq K+2)& \underline{\la F_+^k\ra} - \overline{\la F_+^k\ra}  = \omega_{K+1}\left( \dfrac1{T_+^+} - \dfrac1{T_+^-} \right)
\end{cases}
\]
As a conclusion, we can decompose the jump in $\Upsilon$ at $c = v_{K+1}$ into three different contributions,
\begin{align*}
\Upsilon (v_{K+1}^+) - \Upsilon (v_{K+1}^-)  & = \left ( \sum_{k = 1}^K \dfrac{\lambda^k_-(v_{K+1})}{\lambda^k_-(v_{K+1}) + \mu_+(v_{K+1})}  a_k \right ) \omega_{K+1}\left( \frac1{T_-^-} - \frac1{T_-^+} \right) \\ 
& \quad + \left (\sum_{k = K+2}^N \dfrac{\lambda^k_+(v_{K+1})}{\lambda^k_+(v_{K+1}) + \mu_-(v_{K+1})}   b_k\right ) \omega_{K+1}\left( \dfrac1{T_+^+} - \dfrac1{T_+^-} \right) \\
& \quad + \lim_{c \searrow v_{K+1}} \left( a_{K+1} \la F_-^{K+1} \ra\right)  + \lim_{c \nearrow v_{K+1}} \left( b_{K+1} \la F_+^{K+1} \ra\right)\\
& \geq 
\left ( \sum_{k = 1}^K \dfrac{\lambda^k_-(v_{K+1})}{\lambda^k_-(v_{K+1}) + \mu_+(v_{K+1})}  a_k \right ) \omega_{K+1}\dfrac{-2(\chi_S + \chi_N)}{T_-^- T_-^+} \\ 
& \quad + \left (\sum_{k = K+2}^N \dfrac{\lambda^k_+(v_{K+1})}{\lambda^k_+(v_{K+1}) + \mu_-(v_{K+1})}   b_k\right ) \omega_{K+1}\dfrac{2(\chi_N - \chi_S)}{T_+^+ T_+^-} \\
& \quad +  \omega_{K+1} \dfrac{ 4\chi_S \rho^+ (0)}{T_+^-T^-_-}  + \omega_{K+1} \dfrac{ 4\chi_S \rho^- (0)}{T_+^+T^+_-}
\end{align*}
Interestingly, the last line does not depend on the reaction-diffusion coefficients $(\alpha,D_S)$. On the other hand, the remainder is arbitrarily small as $\mu_\pm \gg \lambda_\pm^k$, for instance as $\alpha \to \infty$, all other parameters being unchanged. In the latter case, the jump is positive as a consequence of Proposition \ref{prop:swap 2} (see Figure \ref{fig:nonuniqueness}).

\begin{figure}
\begin{center}
\includegraphics[width = 0.8\linewidth]{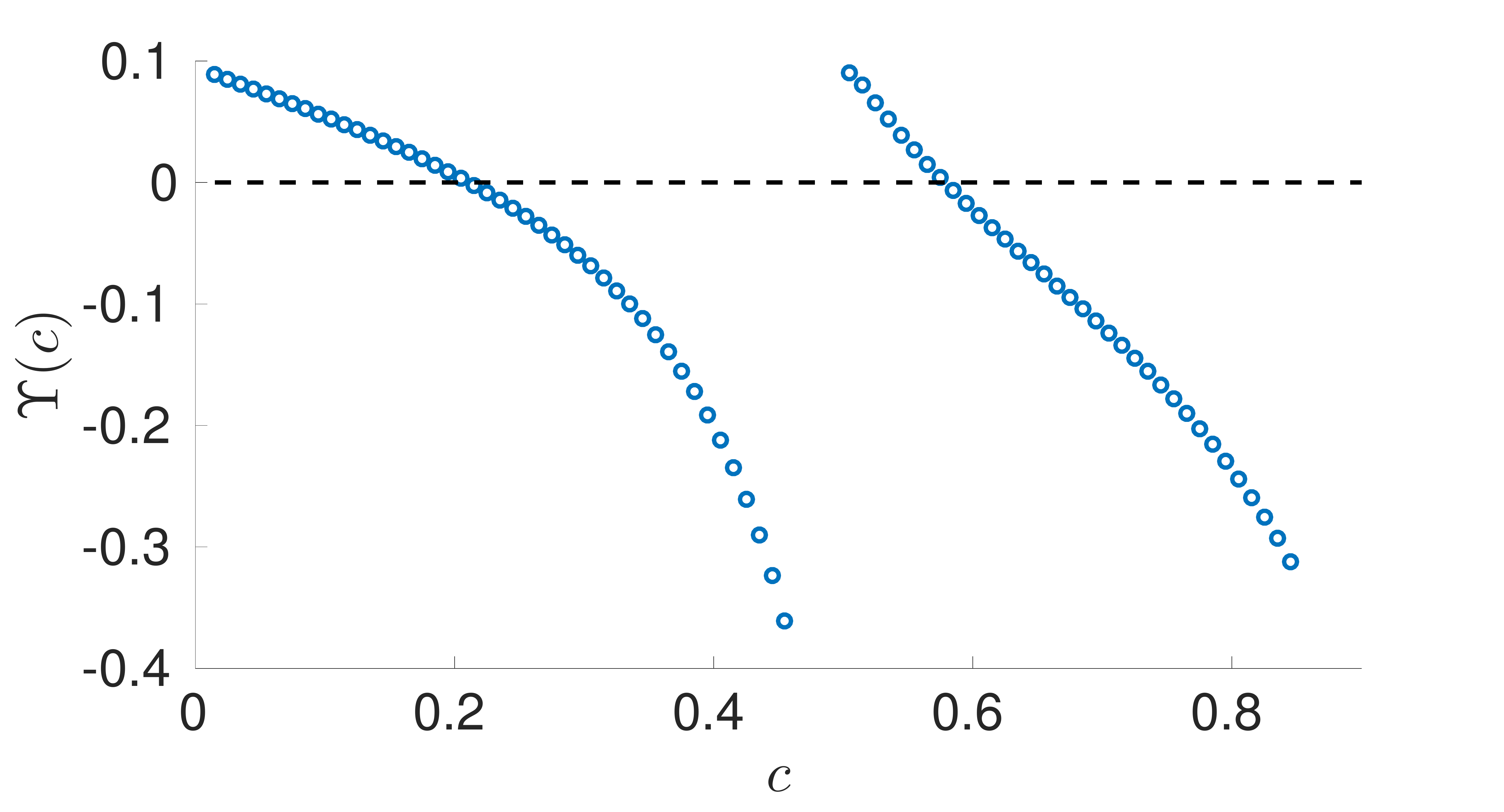}
\caption{\small Coexistence of multiple travelling waves, when the function $\Upsilon$ crosses the zero level several times.  Here, $V = \{-1,-0.5,0.5,1\}$, with uniform weights $\omega$. Other parameters are $\chi_S = 0.48$, $\chi_N = 0.44$, $\alpha = 50$ and $D_S = 0.5$.
}
\label{fig:nonuniqueness}
\end{center}
\end{figure}

\begin{figure} 
\begin{center}
\includegraphics[width = 0.8\linewidth]{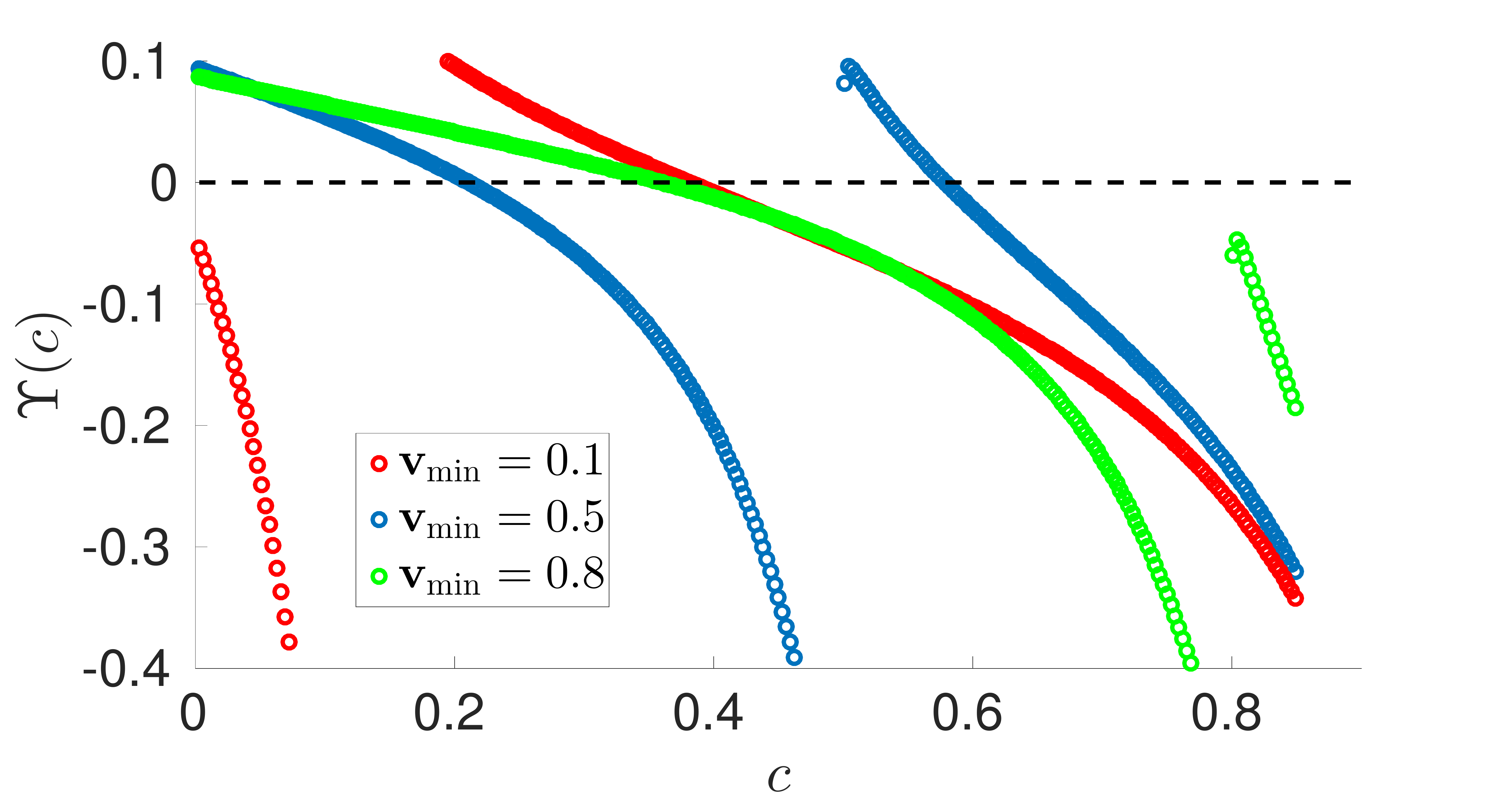}\; 
\caption{\small Variations around Figure \ref{fig:nonuniqueness}, with $\{-1,-\vmin,\vmin,1\}$. Interestingly, we observe a transition between coexistence of two travelling waves -- a slow wave ($c\approx 0.2$) and a fast wave ($c\approx 0.6$) -- and existence of a unique wave ($c\approx 0.4$ for both $\vmin = 0.1$ and $\vmin = 0.8$, by pure coincidence). There is some apparent paradox here: by increasing the minimal speed $\vmin$ from $0.5$ to $0.8$, the fast wave disappears (observe the behaviour of the "fast" branches on the right side). On the other hand, by decreasing the minimal speed, the slow wave disappears (observe the behaviour of the "slow" branches on the left side). In particular, decreasing $\vmin$ results in increasing the wave speed.}
\label{fig:bistability}
\end{center}
\end{figure}

\begin{figure} 
\begin{center}
\includegraphics[width = 0.48\linewidth]{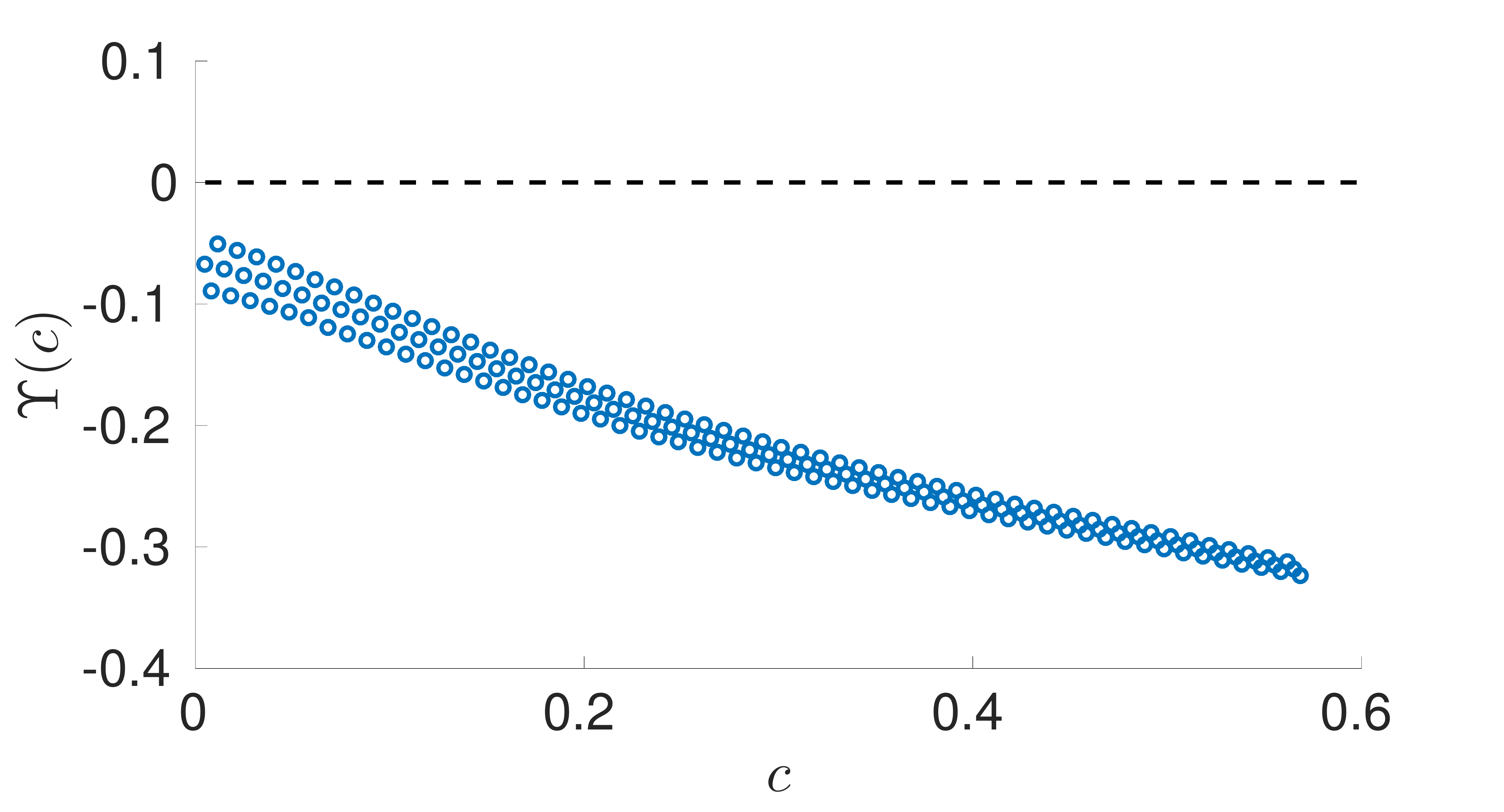}\; 
\includegraphics[width = 0.48\linewidth]{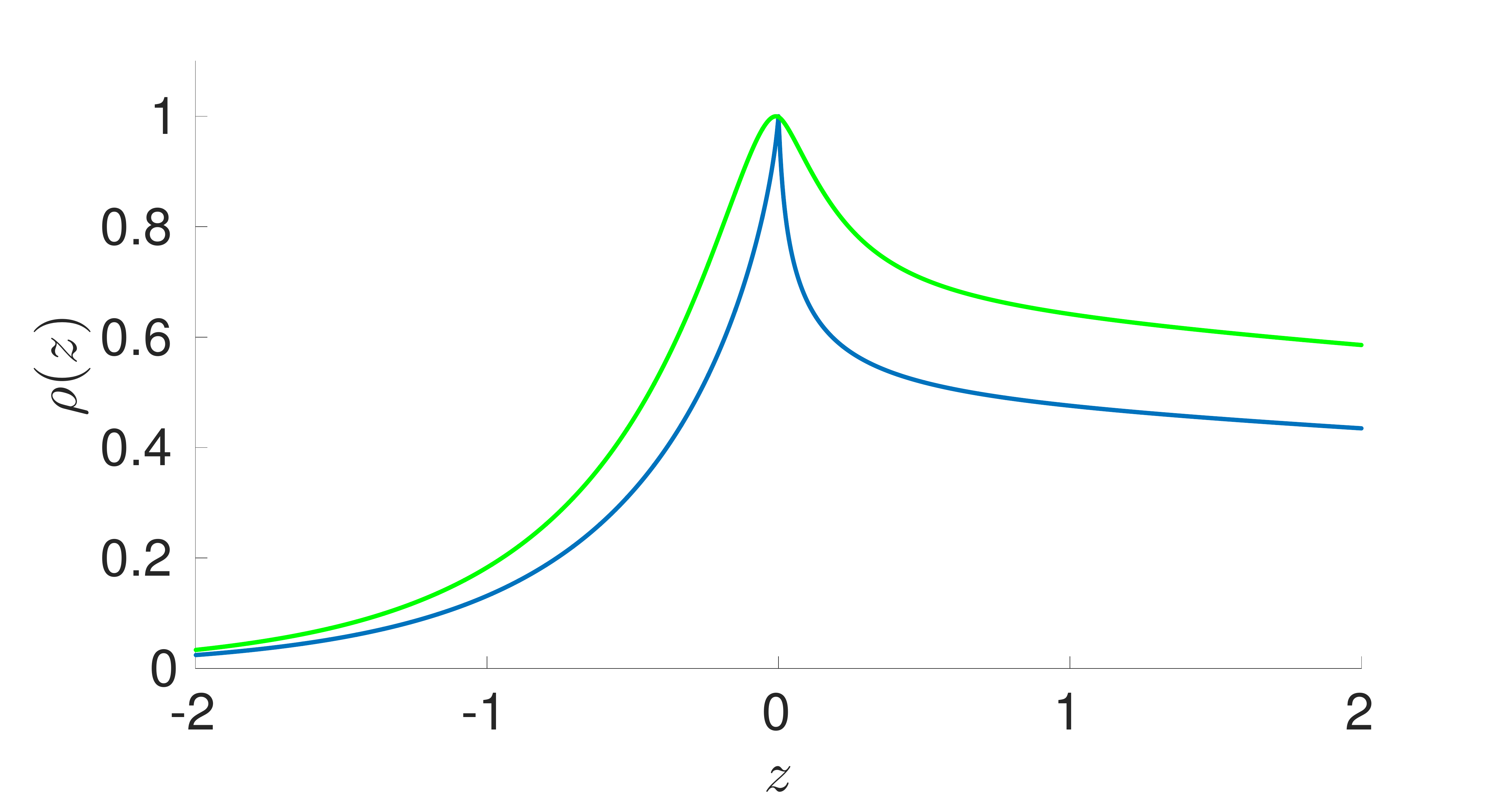}
\caption{\small Non existence of the travelling wave.  Here, $V = (-1,1)$, and $d\nu(v) = 1+5 \exp(-4|v|)dv$. Other parameters are $\chi_S = 0.48$, $\chi_N = 0.44$, $\alpha = 50$ and $D_S = 0.5$. (Left) The function $\Upsilon(c) = \partial_z S(0)$ is plotted for admissible values of $c\in (0,c^*)$. It is always negative, so that the peak of $S$ is always located on the left side of the origin. Hence, there is no solution to the travelling wave problem with basic monotonicity assumptions ($S$ uni modal, and $N$ increasing). The velocity set is discretized for numerical purposes, with a step $\Delta v = 0.01$. The smallness of increments in speed $c$ ($\Delta c = dv/3$) enables to catch small irregular structure of $\Upsilon$.  (Right) The macroscopic density profile $\rho(z)$ is shown for the specific value $c=0$ to illustrate the following paradox: despite the global tendency to move to the right ($\chi_N$ is relatively large), the profile $\rho$ is locally sharper on the right side of the origin, rather than on the left side. Of course, the global shift of the density is recovered at large scale (exponential decay is sharper on the left side), but only small scales matter here, because chemical parameters are chosen so that the typical length scale of the chemical signal transmission is $\sqrt{\alpha/D_S} = 0.1$. The stationary signal concentration $S$ is superimposed in green line.}
\label{fig:nonexistence}
\end{center}
\end{figure}

\subsection{Numerical investigation}

We conclude this section by some numerical illustrations.

Firstly, we investigate the case of four velocities, say $V = \{-1 , -\vmin, \vmin, 1\}$.  Figure \ref{fig:nonuniqueness} exhibits one particular case for which there is seemingly two roots of the matching equation $\Upsilon(c) = 0$. This is the exact counter-part of the exchange of modes happening when $c$ crosses the discrete velocity $\vmin$. There is a positive jump discontinuity in $\Upsilon$ as $c\to \vmin$. This corresponds precisely to the non-zero amplitude which is transferred instantaneously from the left to the right side of the origin as $c$ crosses $\vmin$. We believe that both travelling waves are stable (see \cite{companion} for more thorough discussion based on numerical experiments). This yields some interesting paradox (see Figure \ref{fig:bistability}). 

Secondly, we exhibit another intriguing case, with a larger number of velocities (an idealization of the continuous case) for which there is seemingly no solution to the matching equation $\Upsilon(c) = 0$ (Figure \ref{fig:nonexistence}). The long time behaviour of the Cauchy problem is very difficult to conceive in the mind.

\section{Perspectives}\label{sec:perspectives}

\subsection{Moderate signal integration}
 
It would be interesting to include more functions than the sign function $\boldsymbol\phi = - \sign$  in \eqref{eq:lambda} during the course of analysis. Indeed, a more appropriate choice would be a decreasing sigmoidal function with a stiffness parameter $\tau>0$ that is a typical time-scale \cite{saragosti_directional_2011},
\[ 
\phi\left( \left.\dfrac{D\log S}{Dt}\right|_{v'} \right) = - \tanh\left( \tau \left.\dfrac{D\log S}{Dt}\right|_{v'} \right)\, . 
\]
Time $\tau$ can be related to the adaptation time-scale of individual cells \cite{perthame_derivation_2015}. 

There are two important differences with the analysis performed here: first, the tumbling rate cannot be reduced to an elementary rule of signs. Also, the confinement effect is more involved, as the rate of confinement $\lambda$ strongly depends on the asymptotic decay of chemical concentrations. 

\subsection{Delay effects during signal processing}

Clearly, the assumption of instantaneous signal integration is very reductive. A way to relax this strong assumption consists in including more variables in the kinetic model, accounting for signal processing at the level of individual cells \cite{erban_signal_2005,xue_travelling_2010,
franz_travelling_2013,
xue_macroscopic_2013,perthame_derivation_2015}. However, this makes the analysis much more complicated. Reduced model different from \eqref{eq:meso model} could be investigated first.

\subsection{Angular persistence}

By relaxing the specific choice \eqref{eq:A1}, angular persistence during tumbling can be taken into account. In fact pre- and post-tumbling velocities $(v',v)$ are correlated. The mean value of the directional change is approx. $68^\circ$ \cite{berg_e._2004, saragosti_directional_2011}. This can be encoded in the model using a Gaussian p.d.f. to distribute the post-tumbling velocity, namely
\[ \boldsymbol K(v,v') = \dfrac1{Z}\exp\left(\dfrac{v\cdot v'}{\Sigma^2}\right)\, , \]
In \cite{saragosti_directional_2011}, it has been further evidenced that the standard deviation $\Sigma$ itself is modulated by the chemical gradients, as predicted in \cite{vladimirov_predicted_2010}. This can be put into the model via the dependency $\Sigma = \Sigma(t,z,v')$, where $\Sigma$ is an increasing function of the tumbling rate $\boldsymbol\lambda$: more likely is the tumble, larger the deviation between $v$ and $v'$ is expected. 

\subsection{Two-species concentration waves}

In \cite{almeida_existence_2014, emako_traveling_2016}, the authors investigate a two-species model for the interaction of two populations of bacteria with different intrinsic wave speeds. The analogue of the diffusive limit system \eqref{eq:macro model} is analysed by seeking travelling wave solutions. As far as we know, the corresponding kinetic system has not been studied yet.

\bibliographystyle{abbrv}
\bibliography{BIB-HDR}

\end{document}